\numberwithin{equation}{section}
\newcommand{\teq}{\arabic{section}.\arabic{equation}}
\newcommand{\teql}{\Alph{section}.\arabic{equation}}
\newcommand{\sqr}[2]{{\vcenter{\vbox{\hrule height.#2pt\hbox{\vrule width.#2pt
height#1pt \kern#1pt\vrule width.#2pt}\hrule height.#2pt}}}}
\newcommand{\ssquare}{{\qquad\hfill$\square$}}
\newcounter{eqcount}
\newcounter{ttopic}
\newenvironment{edesc}{\refstepcounter{equation}\begin{enumerate}}%
{\end{enumerate}}
\newcommand{\sdisplay}[1]{{{$\scriptstyle\bullet$}\! #1 \!{$\scriptstyle\bullet$}}} 
\newcommand{\Sdisplay}[1]{{\vskip.2in\noindent $\star$ {\eu #1} $\star$\hskip.1in:}} 
\newcommand{\ring}[1]{{\mathbb #1}}
\newcommand\bZ{{\ring{Z}}}
\newcommand\bC{{\ring{C}}} \newcommand\bR{{\ring{R}}}
\newcommand\bF{{\ring{F}}} \newcommand\bQ{{\ring{Q}}}
\newcommand\bH{{\ring{H}}}
\newcommand{\csp}[1]{{\mathbb #1}}
\newcommand{\tsp}[1]{{\mathcal #1}}
\newcommand{\prP}{\csp{P}}
\newcommand{\afA}{\csp{A}}
\newcommand{\sO}{{\tsp{O}}} 
\newcommand{\sQ}{\tsp{Q}}
\newcommand{\sP}{{\tsp {P}}} 
 \newcommand{\sS}{{\tsp {S}}}
\newcommand{\sT}{{\tsp {T}}} \newcommand{\sH}{{\tsp {H}}}
\newcommand{\sX}{{\tsp {X}}} 
\newcommand{\sM}{{\tsp {M}}} 
\newcommand{\sG}{{\tsp {G}}}
\newcommand{\eql}[2]{{\rm (\ref{#1}\ref{#2})}} %ref. to long equation
\newcommand{\vect}[1]{{\pmb #1}} 
 \newcommand{\bg}{\vect{g}}
\newcommand{\bp}{{\vect{p}}} \newcommand{\bx}{{\vect{x}}}
 \newcommand{\bw}{{\vect{w}}}
 \newcommand{\bz}{{\vect{z}}}
\newcommand{\row}[2]{{#1_1,\ldots,#1_{#2}}}
\newcommand{\smatrix}[4]{{\big(\begin{array}{cc}
\!\lower2pt\hbox{$\scriptstyle#1$} &\lower2pt\hbox{$\scriptstyle#2$}\!
\\\! \raise2pt\hbox{$\scriptstyle#3$} &\raise2pt\hbox{$\scriptstyle#4$}
\!\end{array}\big)}}
\newcommand{\col}[2]{{\big(\begin{array}{c}
\!\lower2pt\hbox{$\scriptstyle#1$}  \!
\\\! \raise2pt\hbox{$\scriptstyle#2$}
\!\end{array}\big)}}
\newcommand{\texto}[1]{{\textr{#1}}}
\newcommand{\GL}{\texto{GL}} \newcommand{\SL}{\texto{SL}}
 \newcommand{\ind}{\texto{ind}}
\newcommand{\PSL}{\texto{PSL}} 
\newcommand{\Hom}{\texto{Hom}} \renewcommand{\ni}{\texto{Ni}}
 \newcommand{\Pic}{\texto{Pic}}
\newcommand{\textr}[1]{{\text{\rm #1}}}
 \newcommand{\ord}{\textr{ord}}
\newcommand{\abs}{\textr{abs}}  
 \newcommand{\inn}{\textr{in}}
\newcommand{\pr}{\textr{pr}}
\newcommand{\RET}{{\text{\rm RET}}}
\newcommand{\BCL}{{\text{\rm BCL}}}
\newcommand{\IGP}{{\text{\rm IGP}}}
\newcommand{\rd}{\texto{rd}}
\newcommand{\tG}[1]{{}_{#1}\tilde G}
\newcommand{\GAP}{{\bf GAP}}
\newcommand{\textb}[1]{{\text{\bf #1}}}
\newcommand{\bfC}{{\textb{C}}}
\newcommand{\longmapright}[2]{\smash{\mathop{\hbox to
#2pt{\rightarrowfill}}\limits^{#1}}}
\newcommand{\Longmapright}[2]{\smash{\mathop{\hbox to
#2pt{\Rightarrowfill}}\limits^{#1}}}
\newcommand{\longmapleft}[2]{\smash{\mathop{\hbox to
#2pt{\leftarrowfill}}\limits^{#1}}}
\newcommand{\mapright}[1]{\smash{\mathop{\longrightarrow}\limits^{#1}}}
\newcommand{\np}{{+}}   \newcommand{\nm}{{-}}
\newcommand{\lrang}[1]{{\langle #1\rangle}}
\newcommand{\eqdef}{\stackrel{\text{\rm def}}{=}}
\newcommand{\pa}[2]{{\frac{\partial #1} {\partial #2}}}
\newfont{\sevenrm}{cmr7}
\newfont{\bsevenrm}{cmbx7}
\newfont{\mathseven}{cmsy7}
\newfont{\bigmath}{cmsy10 scaled 1200}
\newfont{\fiverm}{cmr5}
\newfont{\bfiverm}{cmbx5}
\newfont{\hel}{cmbx10 scaled 1400}
\newfont{\eu}{eufb10}
\newfont{\sseu}{eufm5}
\newfont{\seu}{eufm7}
\newfont{\Cal}{cmmib10}
\newfont{\sCal}{cmmib7}
\newfont{\zch}{eusb10}
\theoremstyle{plain}
\newtheorem{thm}{Theorem}[section] %numbering for subunit in last position
\newtheorem{lem}[thm]{Lemma}
\newtheorem{princ}[thm]{Principle}
\newtheorem{prop}[thm]{Proposition}
\theoremstyle{definition}
\newtheorem{defn}[thm]{Definition}
\newtheorem{exmp}[thm]{Example}
\newtheorem{guess}[thm]{Conjecture}
\newtheorem{quest}[thm]{Question}
\newtheorem{prob}[thm]{Problem}
\theoremstyle{remark}
\newtheorem{rem}[thm]{Remark}
\newcommand{\xs}{\times^s\!}
\def\pic #1 by #2 (#3){\vbox to #2{\hrule width 
#1 height 0pt depth 0pt\vfill\special{picture #3}}}
\def\scaledpicture#1
\newcommand{\comm}[1]{{}}
\renewcommand{\RET}{{\rm RET}}
\newcommand{\bG}{{\tsp {G}}}
\newcommand{\bP}{{\tsp {P}}}
\newcommand{\Spin}{{\text{\rm Spin}}}
\newcommand{\lm}{{\text{\rm n-lm}}}
\newcommand{\Ext}{{\text{\rm Ext}}}
\renewcommand{\phi}{\varphi}
\newcommand{\Fr}{\text{\rm Fr}}
\newcommand{\MT}{\text{\bf MT}}
\newcommand{\HIT}{\text{\bf HIT}}
\newcommand{\TL}{\text{TimeLine}}
\newcommand{\HM}{\text{\bf HM}}
\newcommand{\mx}{\text{\rm mx}}
\newcommand{\OIT}{\text{\bf OIT}}
\renewcommand{\BCL}{\text{\bf BCL}}
\newcommand{\Ad}{\text{\rm Ad}}
\newcommand{\bS}{\bar{\vect S}}
\newcommand{\RIGP}{\text{\bf RIGP}}
\newcommand{\rk}{\text{\rm rk}}
\newcommand{\lcm}{\text{lcm}}
\newcommand{\C}{{\text{\rm C}}}
\newcommand{\CM}{\text{CM}}
\newcommand{\one}{{\pmb 1}}
\newcommand{\ab}{{{}_{\text{\rm ab}}}}
\newcommand{\geng}{{{\text{\bf g}}}}
\newcommand{\sh}{{{\text{\bf sh}}}}
\newcommand{\Cu}{{{\text{\rm Cu}}}}
\newcommand{\sF}{{\tsp F}}
\newcommand{\sK}{{\tsp K}}
\newcommand{\Cyc}{{\text{\rm Cyc}}}
\newcommand{\fG}[1]{{\,{}_{#1}\tilde G}} 
\newcommand{\tfG}[2]{{\,{}_{#1}^{#2} G}}
\newenvironment{exmpl}{\begin{exmp}}{\hfill $\triangle$ \end{exmp}}
\begin{document}
\baselineskip=17pt
\hoffset.75in

\title[Open Image Theorem]{Moduli relations between \\  $\ell$-adic Representations and the \\ Regular Inverse Galois Problem}

\author[M.~D.~Fried]{Michael
D.~Fried}
\address{Emeritus, UC Irvine \\ 1106 W 171st Ave, Broomfield CO 80023}
\email{mfried@math.uci.edu}

\date{} 

\begin{abstract} There are two famous Abel Theorems. Most well-known, is his description of \lq\lq abelian (analytic) functions\rq\rq\ on a one dimensional compact complex torus. The other collects together those complex tori, with their prime degree isogenies, into one space. Riemann's generalization of the first features his famous $\Theta$ functions. His deepest work aimed at extending Abel's second theorem; he died before he fulfilled this.

That extension is often pictured on complex higher dimension torii. For Riemann, though, it was to spaces of Jacobians of compact Riemann surfaces, $W$, of genus $\geng$, toward studying the functions $\phi: W \to \prP^1_z$   on them.  Data for such pairs $(W,\phi)$ starts with a monodromy group $G$ and conjugacy classes $\bfC$ in $G$. Many applications come from putting all such covers attached to $(G,\bfC)$  in natural  -- {\sl Hurwitz\/} -- families. 

We  {\sl connect\/} two such applications: The {\sl Regular Inverse Galois Problem\/} (\RIGP) and {\sl Serre's Open Image Theorem\/} (\OIT).  We call the connecting device {\sl Modular Towers\/} (\MT s).  Backdrop for the \OIT\ and \RIGP\ uses Serre's books \cite{Se68} and \cite{Se92}. Serre's \OIT\ example is the case where \MT\ levels identify as modular curves. 

With an example that isn't modular curves, we explain conjectured \MT\ properties --  generalizing a Theorem of Hilbert's -- that would conclude an $\OIT$ for all \MT s.  Solutions of pieces on both ends of these connections are known in significant cases.   \end{abstract}

\maketitle 

\setcounter{tocdepth}{3} 
\tableofcontents

\vspace*{2mm}
\noindent{\bf 2000 Mathematics Subject Classification: Primary 11F32, 11G18, 11R58; Secondary 20B05, 20C25, 20D25, 20E18, 20F34.}

\vspace*{2mm}
\noindent{\bf Keywords and Phrases: Braid group, Serre's Open Image Theorem, Modular Towers, $\ell$-adic representations, moduli of covers, Hurwitz spaces, Hilbert's Irreducibility, Regular Inverse Galois Problem, upper half-plane quotients, Falting's Theorem.}

\section{Introduction} Denote the Riemann sphere uniformized by an inhomogeneous variable $z$, as in a first course in complex variables, by $\prP^1_z=\bC\dot{\cup}\{\infty\}$. Indeed, \S\ref{whyrationalfuncts} actually starts there, and explains geometric monodromy groups from their contrasting algebraic and analytic approaches; ours is analytic. 

\S \ref{explainingFrattini}  -- The \TL\ of the \MT\ program -- is the most unusual in the paper. It is a list of extended abstracts of papers, showing how the material of the rest of the sections can fit together into a coherent program. 
 Here is  the format for historical references, as applied to \sdisplay{\cite{Se68}},  indicating Serre's book. The reference year telegraphs that it is in \S\ref{pre95},  pre-1995 material. There a reader will find a $\star$-display, here $\star$ {\cite{Se68}} $\star$, elaborating  on how Serre's book relates to our topics.  
 
\subsection{Preliminaries on the \RIGP\ and the \OIT}  \label{preimRIGPOIT} 
\S\ref{RIGPOITMT}  ties together three acronyms, the subjects of this paper: \RIGP\ (Regular Inverse Galois Problem), \OIT\ (Open Image Theorem) and \MT\  (Modular Towers). The inverse Galois problem asks
whether any finite group $G$ is the (Galois) group of some Galois field extension $F/\bQ$. That is, $G$ is a quotient of the absolute Galois group, $G_\bQ$ of the rational numbers.  

The {\sl regular\/} (and much stronger) version replaces $F/\bQ$ with $F^*/\bQ(z)$ where $F^*\cap \bar \bQ=\bQ$, with $\bar \bQ$ the algebraic closure of $\bQ$.   Despite being stronger, the regular version points to explicit spaces for a given $G$ on which to look for $\bQ$ points giving \RIGP\ solutions. Prop.~\ref{RIGPback}, \cite[Main Thm]{FrV91}, uses  key definitions for how the {\sl monodromy version\/} of the \RIGP\ works: 
$$\begin{array}{c} \text{Inner and absolute Nielsen classes  \eqref{bcycs}; Hurwitz spaces } \\ \text{Princ.~\ref{princHS};   fine moduli \sdisplay{\cite{Sh64}}. An exposition expanding} \\ \text{ Serre's book on the \RIGP\ already appeared in \cite{Fr94}.}\end{array}$$ 

\begin{prop} \label{RIGPback} For each Nielsen class $\ni(G,\bfC)$    with inner classes having fine moduli, there is a variety $\sH(G,\bfC)^\inn$ whose $K$ points -- $\sH(G,\bfC)^\inn(K)$, $K$ a number field -- correspond to \RIGP\ realizations over $K$ in the Nielsen class \sdisplay{\cite{Se92} and \cite{Fr94}}.  \end{prop}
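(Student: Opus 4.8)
The plan is to follow the strategy of \cite[Main Thm]{FrV91}, as expanded in \cite{Fr94}: realize $\sH(G,\bfC)^\inn$ as a \emph{fine} moduli space for a rigidified Hurwitz functor, descend that space together with its universal family from $\bC$ to $\bQ$ using rationality of $\bfC$, and then read \RIGP\ realizations over a number field $K$ directly off the universal family at a $K$-rational point.

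First I would set up the moduli functor. For a $\bQ$-scheme $S$, let $\mathcal F(S)$ be the set of families $\phi\colon W\to\prP^1_z\times S$ with $W\to S$ smooth and proper with geometrically connected fibres, branch locus an \'etale divisor of degree $r=|\bfC|$ over $S$, the cover geometrically Galois and equipped with a chosen isomorphism of its deck group with $G$ taken modulo conjugation in $G$ (this is the \emph{inner} rigidification), and local monodromy at the branch points lying in the components of $\bfC$. Over $\bC$, Riemann's Existence Theorem together with the braid-group and Nielsen-class formalism recalled earlier identifies $\mathcal F(\bC)$, fibrewise over the configuration space $U_r$ of $r$-point subsets of $\prP^1_z$, with $\ni(G,\bfC)$ modulo inner automorphisms and modulo the braid action; this produces a quasiprojective $\bC$-variety $\sH(G,\bfC)^\inn$ finite and \'etale over $U_r$.

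The next step is fine moduli. The hypothesis that the inner classes have fine moduli is precisely that the inner rigidification kills all automorphisms of objects of $\mathcal F$ --- concretely that $\Cen_G(G)=Z(G)$ is trivial, so no nontrivial automorphism of a cover compatible with the $G$-action up to an inner automorphism of $G$ fixes its point of $\sH(G,\bfC)^\inn$. Representability of $\mathcal F$ then yields a universal cover $\Phi\colon\mathcal W\to\prP^1_z\times\sH(G,\bfC)^\inn$ carrying the universal $G$-action, with every member of $\mathcal F(S)$ a unique pullback of $\Phi$. For descent: the branch-cycle argument shows $G_\bQ$ permutes the elements of $\ni(G,\bfC)$ through the cyclotomic action on class representatives; since $\bfC$ is a $\bQ$-rational union of classes this action preserves $\sH(G,\bfC)^\inn$ and $\Phi$, and fine moduli (no twists to obstruct) lets Weil descent furnish $\bQ$-models of the space and of the universal family. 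In general the branch-cycle argument merely pins down the field of definition, which is $\bQ$ exactly when $\bfC$ is $\bQ$-rational.

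Finally, the arithmetic dictionary. Given $p\in\sH(G,\bfC)^\inn(K)$, pulling $\Phi$ back along $p$ produces a cover $\phi_p\colon W_p\to\prP^1_z$ \emph{and} its $G$-action, all defined over $K$; since $W_p$ is geometrically connected, $K$ is algebraically closed in $K(W_p)$, so the Galois closure of $K(W_p)/K(z)$ is a regular $G$-extension ramified in $\bfC$, i.e.\ an \RIGP\ realization over $K$ in the Nielsen class. Conversely such a realization is an object of $\mathcal F(\mathrm{Spec}\,K)$, hence a $K$-point of $\sH(G,\bfC)^\inn$ by the moduli property. I expect the real obstacle to be this last descent-plus-fine-moduli step: one must show that a bare $K$-rational point of the moduli space carries the \emph{full} $G$-structure down to $K$, rather than only to $\bar\bQ$ with a residual cocycle (Brauer-type) obstruction. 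Fine moduli is exactly what forces that obstruction to vanish --- without the centerless hypothesis one has only a coarse space and the dictionary breaks --- and the remaining delicate bookkeeping is to confirm that the $\bQ$-model manufactured from \lq\lq$\bfC$ $\bQ$-rational\rq\rq\ is the \emph{correct} one, so that $K$-points match $K$-realizations and not realizations over some fixed twist.
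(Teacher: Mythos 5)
Your proposal is essentially the argument the paper points to (it cites \cite[Main Thm]{FrV91} and the fine-moduli discussion stemming from \cite[Thm.~5.1]{Fr77}, cf.~\eqref{fmodabsinn} in \S\ref{pre95}, rather than proving the proposition inline), and the three ingredients you use --- representability of the rigidified functor over $\bC$ via RET/braids, the centerless criterion for fine inner moduli, and the \BCL\ for descent of the universal family --- are exactly the paper's intended ones. Two small inaccuracies are worth flagging. First, for \emph{inner} equivalence the parametrized object $\hat\phi\colon\hat W\to\prP^1_z$ is already Galois with group $G$; the universal family pulls back to a $G$-Galois cover over $K$, so the phrase ``the Galois closure of $K(W_p)/K(z)$'' is superfluous (and potentially misleading, since it suggests you started with the absolute cover). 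Second, you have quietly replaced the general cyclotomic moduli definition field $\bQ_{G,\bfC}$ of Thm.~\ref{bclthm} and \eqref{bcl} by the special case $\bQ_{G,\bfC}=\bQ$ (i.e., $\bfC$ a rational union); the statement as given requires only that the number field $K$ contain $\bQ_{G,\bfC}$, which is what makes $\sH(G,\bfC)^\inn(K)$ meaningful and, by the \BCL, is also forced on any $K$ admitting a realization in $\ni(G,\bfC)$. Your closing worry about a residual cocycle is correctly dispelled by fine moduli: the Weil co-cycle condition is trivially satisfied when the objects have no automorphisms ($Z(G)=1$), so there is no gerbe and no twist ambiguity --- that is precisely the content the paper attributes to \cite[Thm.~5.1]{Fr77}.
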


It has often succeeded -- say, by using Thm.~\ref{genuscomp} as in the \S\ref{mainex} example -- in giving solutions. The connection is by applying Hilbert's Irreducibility Theorem. Indeed,  one view of our \OIT\ generalization is that it aims at a very strong version of Hilbert's Theorem (see \S\ref{outlineMTs}). 

Serre's version of the \OIT\ started in \cite{Se68}. Each formulation -- including Thm.~\ref{OITsf} which considers a collection of modular curve towers, each corresponding to a prime $\ell$ -- compares the arithmetic monodromy group of a $j$-line point in a tower with the geometric monodromy group of the tower. Serre's case is the model.  \sdisplay{\cite{Fr78}} uses it in an exposition guiding what we can expect at this time from our conjectures.  

This paper is a guide to an in-progress book \cite{Fr20}.\footnote{It is also an expansion of a previous conference proceedings article.} \S\ref{abstmts} gives an abstract of \cite{Fr20}, and then how subsections introduce  \MT s as formed from  Hurwitz spaces  and Frattini covers. \S\ref{l-adicreps} has a dictionary on {\sl Nielsen classes\/} -- descriptions of sphere covers -- with their braid action, for quickly defining \MT s and where the $\ell$-adic representations come into this. Our running example, starting in \S\ref{mainex} is explicit on these.  By this paper's end we see how the principle conjectures apply to it. 

The several conjectures about \MT s have a division between them: Some go with the \RIGP, others with the \OIT. 
$$\begin{array}{c} \text{We label the main conjectures accordingly:} \\ \text{Main \RIGP\ Conj.~\ref{mainconj} and Main \OIT\ Conj.~\ref{OITgen}.}\end{array}$$ 

\subsection{Key words and modular curve comparison} \label{RIGPOITMT} 
Among finite groups $G$, excluding $G$ nilpotent, there are a finite number of  natural projective sequences of groups covering  $G$ for which a positive answer to a simple \RIGP\ question produces \MT s with particular diophantine properties.  They carry $\ell$-adic representations allowing a conjectural version of a diophantine statement generalizing Serre's original \OIT.  

\subsubsection{Hurwitz spaces}  \label{introhursp} 
\S\ref{WhatGauss}  starts with 1-variable rational functions (as in junior high). With these we introduce {\sl Nielsen classes\/} attached to $(G,\bfC)$: with $\bfC=\{\row \C r\}$ a collection of $r=r_\bfC$ conjugacy classes in $G$. Denote the lcm of the orders of elements in $\bfC$ by $N_{\bfC}$. 

For nontriviality, we assume the collection of all elements in $\bfC$ generate $G$. A natural {\sl braid group action\/} on a Nielsen class gives us  {\sl Hurwitz\/} spaces:  moduli spaces of covers of $\prP^1_z$.

Consider $U_r$, subsets of $r$ distinct unordered elements $\{\bz\}\subset \prP^1_z$. It is also  
\begin{equation}\label{discriminant}  \begin{array}{c} \text{projective $r$-space $\prP^r$ minus its {\sl discriminant locus\/} ($D_r$).}\\ \text{If the Nielsen class is non-empty, then each Hurwitz  } \\ \text{ space component is an unramified cover of $U_r$.}\end{array}\footnote{$\prP^r$ is the symmetric product on $\prP^1_z$. The discriminant locus is all of the $\row z r$ with $ z_i = z_j$ for some $i \not = j$.} \end{equation}  
 
Significantly, as in our examples, Hurwitz spaces can have more than one component. That distinguishes the tower of Hurwitz spaces from a \MT\ lying on such a tower. Usually, however, components are either conjugate (under $G_\bQ$ action)   or they have moduli properties that distinguish them. Separating these two situations is a major issue with the \OIT. 

\subsubsection{Towers of reduced Hurwitz spaces} 
In our examples we easily see when a collection of conjugacy classes is generating. It can be harder to decide for given $r=r_\bfC$ whether a Nielsen class is nonempty.  In particular, for any prime $\ell$ dividing $|G|$, where $G$ has no $\bZ/\ell$ quotient, forming \MT s starts with considering choices of $r$ {\sl generating\/}  conjugacy classes, whose elements have $\ell'$ (prime to $\ell$)  order.  

That gives a {\sl canonical\/} tower of Hurwitz spaces over a base Hurwitz space (\S\ref{modtowdef}). Points on each  level represent equivalence classes of covers of $\prP^1_z$, with monodromy group a cover of $G$   and data given by $\bfC$. 

Powers, $\ell^{k\np1}$, of $\ell$ reference general tower levels, $\sH_k$, $k\ge 0$, all sharing similar {\sl moduli\/} properties. 
Our emphasis is on relating the \OIT\ and the \RIGP. So,  we usually -- unless otherwise said -- concentrate on these $\prP^1_z$ cover equivalences  \S\ref{equivalences}: 
\begin{equation} \label{basicequiv} \begin{array}{c} \text{$\sH(G,\bfC)$ denotes {\sl inner\/} classes; for comparing with modular}\\ 
\text{curves $\sH(G,\bfC)^{\rd}$ denotes inner {\sl reduced\/}  classes.} \end{array} \end{equation} 
We often speak of both types without extra decoration, except when it is crucial to include reduced equivalence. That comes   from extending the $\SL_2(\bC)$ action in \eqref{SL2C} to include the Hurwitz space itself. 

\subsubsection{Computations and the case $r=4$}  \label{compsr=4} 
Reduced equivalence cuts down the (complex) dimension of the Hurwitz spaces by 3. For {\sl all\/} 1-dimensional, reduced examples, the {\sl normalization\/} of $\prP^1_j$ in the function field of a reduced Hurwitz space component $\sH^\rd$, gives a unique projective (nonsingular) curve cover $\bar \sH^\rd\to \prP^1_j$.\footnote{The same normalization process works for $r>4$, but the reduced Hurwitz spaces may be singular, even though normal, and the target space is $J_r$ (\S\ref{SL2C}).} Cusps on this space are, on one hand, the points over $\infty$.  Significantly for computations:
\begin{equation} \label{cuspsr=4} \begin{array}{c} \text{we associate to a cusp an orbit in a {\sl reduced Nielsen class\/}  of a} \\ \text{ cusp group, $\Cu_4$,  \eqref{cusps-comps}. Cusp widths are  ramification indices}\\ \text{ over $j=\infty$. They are also lengths of the cusp orbits.}\end{array} \end{equation}  

Each component of $\sH_k$ is an upper half-plane quotient  ramified over $\prP^1_j\setminus \{\infty\}$ at $0,1$.
Thm.~\ref{genuscomp} efficiently  computes genuses, cusps  and their widths, of each compactified component.\footnote{In our examples beyond dihedral groups, congruence subgroups don't give these moduli space upper half-plane quotients.} 

\MT s generalize modular curve towers. As is well-known, modular curves are moduli spaces for elliptic curves and their torsion, defined by {\sl congruence subgroups\/} of $\SL_2(\bZ)$.  Less well known, they are the image -- under a map that is an isomorphism on the underlying spaces, losing none of the abelian variety data -- of moduli for {\sl certain\/} covers of $\prP^1_z$.

Our illustrating example is Prop.~\ref{A43-2}. Here $G$ is the alternating group, $A_4$, of degree 4. We use this for comparing with modular curves and illustrating the formulation of \MT s, and braid and $\ell$-adic actions.

For a given union of Hurwitz space components, $\sH'\le \sH(G,\bfC)$, we can often compute the following. 
\begin{defn} \label{moddeffield} {\sl Moduli definition field\/}: a minimal field extension $\bQ_{\sH'}$, of $\bQ$ contained in the field of definition of any object   representing $\bp\in \sH'$.\footnote{This is a field of definition of the whole structure space representing the moduli problem, including the collection of families of representing objects.}\end{defn} 

$$\begin{array}{c}\text{When, $\sH'=\sH(G,\bfC)$, $\bQ_{\sH'}=\bQ_{G,\bfC}$ is an explicit cyclotomic field}\\\text{ by the {\sl branch cycle lemma\/} (\BCL\ Thm.~\ref{bcl}).}\end{array}$$ 

When {\sl fine moduli\/} holds, the residue class field $\bQ_{\sH'}(\bp)$ of $\bp$ is a field of definition of a representing object. Yes, unlike modular curves there can be -- significantly -- more than one component as in \S \ref{mainex}.  
There are cases with fine moduli for one component, but not the other; fine moduli for an inner space may be lost by using  reduced equivalence.  

\subsection{Beyond modular curves}  \label{abstmts} \S\ref{Fr20abstract} uses Serre's two books as a backdrop for the basic goal, relating the \RIGP\  and the \OIT\ by outlining \cite{Fr20}. \S\ref{outlineMTs} shows how we introduce properties of \MT s. 

Before that,  \S\ref{dihedtohyper} shows how finding {\sl involution realizations of dihedral groups\/} maps to finding cyclotomic points on hyperelliptic jacobians. The moduli problems slightly differ. Yet, this gives an isomorphism of the underlying spaces for significant \MT s to hyperelliptic jacobians. 

\subsubsection{Involution realizations of dihedral groups}  \label{dihedtohyper} 
Many people study modular curves, and their higher dimensional variant, spaces of hyperelliptic jacobians. Hurwitz spaces are less known and more general. 

As a prelude to Frattini ideas  generalizing these classical problems, we connect  this section's  title to cyclotomic points on hyperelliptic jacobians. 

Ex.~\ref{Dlk+1}, overlapping with Ex.~\ref{ncmodcurves}, even before the formal definition of Hurwitz spaces,  \S\ref{equivalences} introduces the (connected) levels of \MT s that will help understand them in general. 

\begin{exmpl} \label{Dlk+1} Consider $G_{k\np1}=D_{\ell^{k\np1}}=\bZ/\ell^{k\np1}\xs \bZ/2$ the order $2\ell^{k\np1}$ dihedral group, with $\ell$ an odd prime.\footnote{All primes eventually count; but here  we stay with $\ell$ odd.}; $\C$ is the involution conjugacy class of $G_{k\np1}$.  Consider the following two collections (formal definitions of the words appear in \S\ref{analgeom}).  

\begin{edesc} \label{Dl} \item \label{Dla}  Connected covers $\phi: W\to \prP^1_z$ of degree $\ell^{k\np1}$ ramified (only) at $r\ge 4$ (branch) points of $\prP^1_z$, with {\sl branch cycles\/}  elements of $\C$. 
\item  \label{Dlb}  Replace  $\phi$ in \eql{Dl}{Dla} by $\hat \phi: \hat W\to \prP^1_z$, Galois with group $D_{\ell^{k\np1}}$ with its induced cover $\hat W/\lrang{h}\to \prP^1_z$, $h\in \C$. \end{edesc}  With $\bfC=\bfC_{2^r}$ ($r$ repetitions of $\C$; $r$ must be even), the label for these collections as complex analytic spaces is $\sH(D_{\ell^{k\np1}},\bfC_{2^r})^\abs$ (resp.~$\sH(D_{\ell^{k\np1}},\bfC_{2^r})^\inn$) using the respective \eqref{eqname} equivalences  (and Ex.~\ref{ncmodcurves}).  \end{exmpl}  

Both spaces in \eqref{Dl} have fine moduli, and therefore, if $K$ is a number field, a $K$ point on either space produces a representing cover. 
\begin{defn} Call $\hat \phi$ from \eql{Dl}{Dlb} an {\sl involution realization\/} ($\C$-realization) over a number field $K$, if $\hat \phi$ corresponds to $\hat \bp\in \sH(D_{\ell^{k\np1}},\bfC_{2^r})^\inn(K)$. \end{defn}

For inner Hurwitz spaces, where the representing object includes an isomorphism up to conjugacy with the Galois group, the following makes sense. Choose $\hat \phi$ from \eql{Dl}{Dlb}. Mod out by the normal subgroup $U$  generated by an element of order $\ell^{k\np1}$ in $G_{k\np1}$. 

That gives an unramified cover $\hat W \to \hat W/U\eqdef X$; $\phi_X: X\to \prP^1_z$ is a hyperelliptic cover -- of genus $\geng_r=\frac{r\nm 2}2$ -- of $\prP^1_z$, ramified at $r$ points.  Denote the divisor classes of degree $t$ on $X$ by $\Pic^t(X)$, and the $\ell^{k\np1}$ torsion points on $\Pic^0(X)$ by $T_{\ell^{k\np1}}(X)$.  

Denote a copy of $\bZ/\ell^{k\np1}$ on which $G_\bQ$ acts as if it was the multiplicative group generated by $e^{{2\pi i}/\ell^{k\np1}}$ by $\mu_{\ell^{k\np1}}$. By replacing $G_\bQ$ by $G_K$, $K$ a number field, we can use the same notation. Prop.~\ref{dihcase} is \cite[Lem.~5.3]{DFr94}. 

\begin{prop} \label{dihcase} The set of involution realizations of $D_{\ell^{k\np1}}$ over $K$ associated to a fixed $X$ injects into the $G_K$ equivariant injections from $\mu_{\ell^{k\np1}}$ into $T_{\ell^{k\np1}}(X)$. It is onto, if in addition $\Pic^1(X)$ has a $K$ point.\footnote{The curve $X$ naturally embeds in  $\Pic^1(X)$. To get a $K$ embedding in $\Pic^0(X)$ requires translation by a $K$ divisor class of degree 1.}  \end{prop}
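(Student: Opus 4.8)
The plan is to replace an involution realization $\hat\phi\colon\hat W\to\prP^1_z$ by the intermediate tower $\hat W\to X\to\prP^1_z$ it sits over, recognize the top step as an \'etale cyclic cover of the hyperelliptic curve $X$, and then translate such covers into torsion data on $\Pic^0(X)$ by Kummer theory for abelian varieties; throughout, the fact that both spaces in \eqref{Dl} have fine moduli lets one pass freely between $K$-points and covers defined over $K$. \emph{Step 1 (reduction to an \'etale cyclic cover of $X$).} Since the branch cycles of $\hat\phi$ lie in $\C$ and $U\cong\bZ/\ell^{k\np1}$ has order prime to $2$, every inertia group of $\hat\phi$ meets $U$ trivially; hence $\hat W\to X=\hat W/U$ is unramified, geometrically connected, cyclic of degree $\ell^{k\np1}$ with $\mathrm{Gal}(\hat W/X)=U$, and is defined over $K$ together with its $U$-action. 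Conversely I recover the $D_{\ell^{k\np1}}$-realization from such a cover plus the covering involution $\iota$ of $X/\prP^1_z$. Here I use that $X\to\prP^1_z$ is the hyperelliptic map, so $\iota$ acts as $-1$ on $H_1(X)$, equivalently on $\Pic^0(X)$: this makes lifting $\iota$ to $\hat W$ automatic (the pulled-back cover $\iota^{*}(\hat W\to X)$ has the same classifying kernel) and forces $\mathrm{Gal}(\hat W/\prP^1_z)$ --- a priori only an extension of $\bZ/2$ by $U$ --- to be exactly $D_{\ell^{k\np1}}$, the last point using $\ell$ odd so that a lift of $\iota$ squares to the identity. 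The remaining Nielsen-class requirements are then free: $D_{\ell^{k\np1}}$ has a single involution class $\C$ for $\ell$ odd, and connectedness of $\hat W$ gives full monodromy $D_{\ell^{k\np1}}$.

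\emph{Step 2 (cyclic covers of $X$ versus torsion on $\Pic^0(X)$, and injectivity).} Unramified $\bZ/\ell^{k\np1}$-covers of $X_{\bar K}$ are classified by surjections of the \'etale fundamental group $\pi_1(X_{\bar K})$ onto $\bZ/\ell^{k\np1}$, hence --- since the abelianized, mod-$\ell^{k\np1}$ fundamental group is $T_{\ell^{k\np1}}(X)$ --- by surjections $T_{\ell^{k\np1}}(X)\twoheadrightarrow\bZ/\ell^{k\np1}$. The principal polarization of $\Pic^0(X)$, through the Weil pairing, identifies these (after the attendant Tate twist) with injections $\mu_{\ell^{k\np1}}\hookrightarrow T_{\ell^{k\np1}}(X)$, $G_K$-equivariantly, with geometric connectedness corresponding to injectivity (an element of exact order $\ell^{k\np1}$). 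The one non-formal ingredient is why it is $\mu_{\ell^{k\np1}}$ with its cyclotomic action, not a constant $\bZ/\ell^{k\np1}$, that appears: this is precisely the branch cycle lemma (\BCL, Thm.~\ref{bcl}) applied to the rational class $\C$, which pins the $G_K$-action on the cyclic quotient $U$ to the cyclotomic character, so that ``the $D_{\ell^{k\np1}}$-cover is defined over $K$'' becomes ``$G_K$-equivariant'' for the injection. Composing Steps~1 and~2, a $\C$-realization maps to the injection carried by its induced \'etale cyclic cover; injectivity is a matter of unwinding inner equivalence, using that $D_{\ell^{k\np1}}$ is centerless for $\ell$ odd so the inner class is governed precisely by $\mathrm{Inn}(D_{\ell^{k\np1}})$, and that the induced $U$-cover is canonically attached to the inner point up to the inversion automorphism of $U$.

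\emph{Step 3 (surjectivity given a $K$-point of $\Pic^1(X)$).} Conversely, a $G_K$-equivariant injection $\mu_{\ell^{k\np1}}\hookrightarrow T_{\ell^{k\np1}}(X)$ yields, by Step~2, a geometrically connected unramified $\bZ/\ell^{k\np1}$-cover of $X_{\bar K}$ that is $G_K$-stable; one must descend it to $K$, after which Step~1 assembles the $D_{\ell^{k\np1}}$-realization over $K$, branched at the $r$ points where $X\to\prP^1_z$ ramifies and with branch cycles automatically in $\C$. The descent obstruction lies in $H^{2}(G_K,\bZ/\ell^{k\np1})$ by the Hochschild--Serre sequence, and it vanishes once $\Pic^1(X)$ has a $K$-point: such a $K$-rational degree-$1$ divisor class provides a $K$-morphism $X\to\Pic^0(X)$ along which the cover is the pull-back of the isogeny $\Pic^0(X)\xrightarrow{\,\ell^{k\np1}\,}\Pic^0(X)$, which is manifestly defined over $K$.

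I expect Step~3 --- isolating the descent obstruction and showing a degree-$1$ $K$-divisor class annihilates it --- to be the only step with genuine content. Steps~1 and~2 are a chain of standard identifications, but the bookkeeping of the cyclotomic (Weil-pairing) twist and of the inner-equivalence relation must be carried out with care to land exactly on the correspondence as stated.
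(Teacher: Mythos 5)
The paper itself does not prove Prop.~\ref{dihcase}; it is quoted from \cite[Lem.~5.3]{DFr94}, so I am judging your argument on its own terms. Your three-step outline---reduce to the unramified $U$-cover $\hat W\to X$, classify such covers by torsion via Kummer/Weil, get surjectivity from a $K$-morphism $X\to\Pic^0(X)$---is the right approach, and Steps~1 and~3 are essentially sound. One small correction in Step~3: pulling back multiplication by $\ell^{k\np1}$ on $\Pic^0(X)$ along $X\to\Pic^0(X)$ produces the maximal abelian unramified exponent-$\ell^{k\np1}$ cover, with group the whole of $\Pic^0(X)[\ell^{k\np1}]$; the $\bZ/\ell^{k\np1}$-cover you actually want is the intermediate cover attached to the $G_K$-stable kernel of the Weil-pairing surjection, or equivalently the pullback of the isogeny dual to $\Pic^0(X)\to\Pic^0(X)/(\text{image of }\mu_{\ell^{k\np1}})$.

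There is, however, a genuine conceptual error in Step~2, at precisely the point you single out as ``the one non-formal ingredient.'' You assert that the \BCL\ pins the $G_K$-action on $U=\mathrm{Gal}(\hat W/X)$ to the cyclotomic character, and that this is why $\mu_{\ell^{k\np1}}$, rather than the constant module $\bZ/\ell^{k\np1}$, appears. That is backwards. Since $\hat\bp$ is a $K$-point of a fine-moduli inner Hurwitz space, the whole $D_{\ell^{k\np1}}$-Galois action on $\hat W\to\prP^1_z$ is defined over $K$, so $G_K$ acts \emph{trivially} on $U$ (up to the $\{\pm1\}$ ambiguity from inner equivalence). Were $G_K$ to act cyclotomically on $U$, the cover would not even be $D_{\ell^{k\np1}}$-Galois over $K$ unless $K\supseteq\bQ(\mu_{\ell^{k\np1}})$, which is not an hypothesis. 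The cyclotomic twist comes instead from the principal polarization---the ingredient you name two sentences earlier and then set aside: the Weil pairing gives a $G_K$-equivariant isomorphism $\mathrm{Hom}(T_{\ell^{k\np1}}(X),\bZ/\ell^{k\np1})\cong\mathrm{Hom}(\mu_{\ell^{k\np1}},T_{\ell^{k\np1}}(X))$, carrying the $G_K$-fixed surjection that classifies the constant $\bZ/\ell^{k\np1}$-cover $\hat W\to X$ to a $G_K$-equivariant injection $\mu_{\ell^{k\np1}}\hookrightarrow T_{\ell^{k\np1}}(X)$. (In the $r=4$ elliptic-curve case this is exactly passage to the Cartier-dual isogeny, whose kernel is $\mu_{\ell^{k\np1}}$ because the original kernel is constant.) The \BCL\ enters only to make the Hurwitz space $\bQ$-rational, since $\C$ is a rational class; it does not and cannot retwist the Galois group of a cover that is already $K$-Galois.
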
 

Relating all algebraic points on both spaces requires considering all number fields $K$. Then, to get an isomorphism of the underlying spaces, 

\begin{equation} \label{hypequiv} \begin{array}{c} \text{equivalence hyperelliptic covers $\phi_X: X\to \prP^1_z$ and $\phi_{X'}: X'\to \prP^1_z$} \\ \text{ if they differ by composing $\phi_X$ and $\phi_{X'}$ by an $\alpha\in \PSL_2(\bC)$.}\end{array}\end{equation} This is reduced equivalence. Also,   consider the crucial \RIGP\ case,  $K=\bQ$. This says involution realizations of dihedral groups over $\bQ$ give {\sl cyclotomic\/} points on hyperelliptic jacobians. Then,  for a given $X$, if $\Pic^1(X)$ is isomorphic to $\Pic^0(X)$ over $\bQ$,  the converse is true. 

\begin{quest} \label{C-realvshyp} Fix $\ell$ and $r$. We ask these two questions for each $k\ge 0$.  
\begin{edesc} \label{mcdih} \item \label{mcdiha} Is there a $\C$-realization $\hat \phi_k: \hat W_k\to \prP^1_z$ over $\bQ$ in $\ni(D_{\ell^{k\np1}},\bfC_{2^r})^\inn$? 
\item \label{mcdihb}  Is there a $\mu_{\ell^{k\np1}}$ point on a hyperelliptic jacobian of dimension $\geng_r$? 
\end{edesc}  Prop.~\ref{dihcase} says \lq\lq Yes\rq\rq\  to \eql{mcdih}{mcdiha} implies \lq\lq Yes\rq\rq\    to \eql{mcdih}{mcdihb}.
\end{quest} 

\begin{equation} \label{impossdih} \begin{array}{c} \text{Impossibility of \eql{mcdih}{mcdiha} is the dihedral version of the Main} \\ \text{ \RIGP\ Conj.~\ref{mainconj}. Impossibility of \eql{mcdih}{mcdihb} is the hyperelliptic} \\ \text{version of the {\sl Torsion Conjecture\/} \ref{Tormain} on abelian varieties.} \end{array}\end{equation} 

\sdisplay{\cite{FrK97}} and \sdisplay{\cite{CaD08}} have a version that applies to any $(G,\ell)$ with $\ell$-perfect (Def.~\ref{lperfectdef}) $G$. Further, the latter gives good reasons to explicitly relate Hurwitz spaces and classical spaces as does Quest.~\ref{C-realvshyp}. That includes our \OIT\ case in \sdisplay{\cite{FrH20}}. 
 
\subsubsection{An abstract  for \cite{Fr20}} \label{Fr20abstract}

\begin{center} {\large Monodromy, $\ell$-adic representations \\ and the  Inverse Galois Problem}   \end{center} 

This book connects to, and extends, key themes in two of Serre's books:

\begin{edesc} \label{serreBooks} \item  \label{serreBooksa} \emph{Topics in Galois theory} (the original and enhanced reviews \cite{Fr94}, and in French, translated by Pierre D\`ebes \cite{D95}); and
\item \label{serreBooksb} \emph{Abelian $\ell$-adic representations and Elliptic Curves\/} (see the 1990 review by Ken Ribet \cite{Ri90}). \end{edesc} 
Its theme is to relate $\ell$-adic representations, as in generalizing Serre's \OIT, and the \RIGP\ using \MT s. Indeed,  Galois theory/cohomology  interprets many problems, not just the \RIGP, along the way.  

\cite[\S 4]{Fr20} explains \MT s (started in 1995) as a program motivated by such a relationship. \cite[ \S 1 and \S 2]{Fr20} includes exposition tying research threads prior to \MT s. Especially, it recasts \cite{FrV91} and \cite{FrV92} to  modernize investigating moduli definition fields of components of Hurwitz spaces vis-a-vis lift invariants with examples.

Then, we interpret the \OIT\ in a generality not indicated by Serre's approach.   \cite[\S 5]{Fr20} uses one example -- in  that all modular curves are one example -- clearly not of modular curves. This explains why our approach to (families of) covers of the projective line can handle a barrier noted by Grothendieck to generalizing the \OIT. \cite[\S 3]{Fr20} joins the 1st and last 3rd of the book, in an approach to the lift invariant  and Hurwitz space components based on the {\sl Universal Frattini cover\/} of a finite group.

The {\sl lift invariant\/} (Def.~\ref{Anliftdef}) is a tool that applies to many aspects of Hurwitz spaces. That includes giving distinguishing characteristics to components and cusps. For example:
\begin{edesc} \label{cusplift} \item \label{cusplifta}  The criterion for nonempty \MT s  \S\ref{nonemptyMT}. 
\item \label{cuspliftb}  Classifying cusps and components on which they lie;  geometrically separating them for $G_\bQ$ action \eqref{cusptype} in \sdisplay{\cite{Fr06} and \cite{CaTa09}}. 
\item \label{cuspliftc}  Using \eql{cusplift}{cuspliftb} and \S\ref{prodladic} $\ell$-adic representations, for labeling abelian variety collections a' la the Torsion Conjecture \sdisplay{\cite{CaD08}}.
\end{edesc} 
There are still unsolved problems. Many, though, as listed in Rem.~\ref{braidorbits2} on Thm.~\ref{level0MT}, have precise formulations and corroborating evidence. 

\subsubsection{Defining \MT  s}  \label{outlineMTs} Ex.~\ref{Dlk+1}, even before  \S\ref{braids} constructs Hurwitz spaces, gave an  example on how the \RIGP\ fits with properties of some classical spaces. In the same style we give  ingredients from which we form  \MT s. 

For a given finite group $G$, as with the RIGP, start with a set of generating conjugacy classes $\bfC$ (or a related collection of such).

\begin{defn} \label{frattdef} Call a cover of groups, $\psi: H\to G$,   {\sl Frattini\/} if, given a subgroup $H^*\le H$ with $\psi(H^*)=G$, then $H^*=H$. It is $\ell$-Frattini if in addition the kernel is an $\ell$-group.\footnote{Called a Frattini $p$-cover in \cite[Rem.~22.11.9]{FrJ86}$_2$.}
\end{defn} 
\begin{defn} \label{lperfectdef} If $\ell| |G|$, then $G$ is $\ell$-perfect if it has no $\bZ/\ell$ quotient. \end{defn}  

For the related \OIT,  we build on a (assumed nonempty) Hurwitz space $\sH(G,\bfC)^\inn$ formed in \S\ref{braids}. Initial ingredients for forming \MT s: 
\begin{equation} \label{GlCing} \begin{array}{c} \text{a prime $\ell| |G|$, as in \S\ref{univfratpre}, for which} \\ \text{elements of $\bfC$ are $\ell'$, and $G$ is $\ell$-perfect.}\end{array}\footnote{Dropping this $\ell'$ assumption is sometimes important, but we leave that out here.} \end{equation}  

Then, as in \S\ref{lfratquot-ab}, there are a {\sl finite number\/} (with indexing set $I$) of projective sequences $\{{}_iG_k\}_{k=0}^\infty$, $i\in I$, ${}_iG_0=G$, with limit ${}_i\tilde G=\lim_{\infty \leftarrow k}\ {}_iG_k$ having  the following properties. 
\begin{edesc}  \label{abquot-def} \item ${}_i\tilde \psi: {}_i\tilde G\to G$ is an $\ell$-Frattini cover;  and 
\item  $\ker({}_i\tilde \psi)$ is a $\bZ_\ell[G]$ module, free  of finite $\bZ_\ell$ rank ${}_i m$; with 
\item  ${}_i m$ the same as the vector space dimension of the common $\bZ/\ell[G]$ module $\ker({}_iG_{k\np 1} \to G)/ \ker({}_iG_k \to G)$, $k\ge 0$.\end{edesc}  

There is an $i_{\mx}\in I$ for which 
$$ \text{each ${}_i\tilde \psi: {}_i\tilde G\to G$ is a quotient of ${}_{i_{\mx}} \tilde \psi: {}_{i_{\mx}}\tilde G\to G$.}$$ \S\ref{lfratquot-ab} denotes  ${}_{i_{\mx}}\tilde G$ by $\tG \ell {}_\ab$; ${}_{i_{\mx}}m$ is a characteristic number attached to $(G,\ell)$. For $i\in I$, form a tower of Hurwitz spaces ${}_i\bH= \{\sH({}_iG_k, \bfC)^\inn\}_{k=0}^\infty$. 

For a given $i$, regard a component $\sH'$ associated to the Nielsen class braid orbit $\sO'$ on  $\sH({}_iG_k, \bfC)^\inn$ as a level $k$ vertex of a graph. Attach an  arrow from $\sH'$ to $\sH''\leftrightarrow \sO''$ on $\sH({}_iG_{k\np1}, \bfC)^\inn$ if the homomorphism ${}_i G_{k\np1}\to {}_i G_{k}$ induces a map (surjection) $\sO''\to \sO'$. 
$$\text{Components form a {\sl component tree\/} on ${}_i\bH$.}$$  

\begin{defn} \label{MTdef} A \MT\ for $i\in I$ is a  (nonempty) projective system of absolutely irreducible components on  ${}_i\bH$ with a unique vertex on each level of the ${}_i\bH$ component tree. 
 \end{defn} 

\S\ref{nonemptyMT} gives an if and only if criterion for ${}_i\bH$ to be nonempty (at all levels). 
\begin{lem} From the Tychonoff Theorem,\footnote{This is as applied to using Falting's Theorem as in \S\ref{toughestpt}.} conclude that if ${}_i\bH$ passes this criterion, there is at least one \MT\ on ${}_i\bH$. \end{lem}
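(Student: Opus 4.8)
The plan is to reduce the existence of a single Modular Tower to a compactness (inverse-limit-nonemptiness) argument for the component tree on ${}_i\bH$. First I would set up the combinatorial object: for each level $k\ge 0$ let $V_k$ be the (finite, by \eqref{discriminant} together with the finiteness of the Nielsen class $\ni({}_iG_k,\bfC)$) set of absolutely irreducible components of $\sH({}_iG_k,\bfC)^\inn$, with the edge relation of Def.~\ref{MTdef} giving maps $p_{k}\colon V_{k\np1}\to V_k$ induced by ${}_iG_{k\np1}\to {}_iG_k$. The nonemptiness criterion of \S\ref{nonemptyMT} is exactly the statement that this projective system of finite sets has all $V_k$ nonempty \emph{and} each $p_k$ surjective onto the relevant part of $V_k$; a Modular Tower in the sense of Def.~\ref{MTdef} is precisely a point of $\varprojlim_k V_k$ that is a genuine thread (one vertex per level, compatible under the arrows).

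Next I would invoke Tychonoff exactly as the footnote signals. Give each finite $V_k$ the discrete topology; then $\prod_{k\ge 0} V_k$ is compact by Tychonoff. For each $k$, the set
$$ F_k=\{(x_j)_{j\ge 0}\in \textstyle\prod_j V_j : p_j(x_{j\np1})=x_j \text{ for all } j<k\} $$
is closed (it is cut out by finitely many equalities between continuous, i.e.\ locally constant, coordinate functions into discrete spaces), and nonempty: starting from any vertex at level $k$ — which exists since the criterion forces $V_k\neq\emptyset$ — one walks \emph{down} using the surjections/edges guaranteed by the criterion to produce $x_0,\dots,x_k$, then fills in coordinates above level $k$ arbitrarily. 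The $F_k$ are nested decreasing, so by the finite-intersection property of the compact space $\prod_j V_j$ we get $\bigcap_{k} F_k\neq\emptyset$, and any element of this intersection is a compatible thread $(x_k)_k$, i.e.\ a Modular Tower on ${}_i\bH$.

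The one genuine subtlety — and the step I expect to be the main obstacle — is making sure the combinatorial criterion of \S\ref{nonemptyMT} really delivers the \emph{downward} surjectivity used above, rather than merely nonemptiness of each $V_k$ in isolation: a priori a component at level $k\np1$ need not map \emph{onto} a component at level $k$, and a level-$k$ component need not lift. What saves this is the $\ell$-Frattini property in \eqref{abquot-def}(i): because ${}_iG_{k\np1}\to {}_iG_k$ is Frattini, any generating system of ${}_iG_k$ (in the Nielsen class) lifts to a generating system of ${}_iG_{k\np1}$, so every level-$k$ braid orbit that survives the criterion is dominated by some level-$(k\np1)$ orbit, which is exactly the edge/surjection condition feeding the construction of the $F_k$. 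One should also note that absolute irreducibility is preserved along the thread because each $V_k$ was defined using absolutely irreducible components and the edges respect the braid action; no new argument is needed there. Modulo carefully quoting the precise form of the \S\ref{nonemptyMT} criterion, the rest is the routine Tychonoff/finite-intersection argument sketched above.
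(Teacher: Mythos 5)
Your core argument --- view the components at each level as a finite discrete set $V_k$, apply Tychonoff to $\prod_k V_k$, and use the finite-intersection property on the nested closed sets $F_k$ --- is the same compactness argument the paper gives in \S\ref{nonemptyMT}; the paper merely works at the level of projective systems of branch cycles $\bg$ rather than of braid orbits, then reads off the tower of orbits from the chosen chain, so the two presentations are interchangeable.

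The paragraph you flag as ``the one genuine subtlety'' is, however, both unnecessary and misstated, and should be deleted. It is unnecessary because your construction of a point of $F_k$ only walks \emph{downward}: you apply $p_{k-1},\dots,p_0$ to a chosen $x_k\in V_k$, then fill in $x_j\in V_j$ arbitrarily for $j>k$. This uses only that each $V_j$ is nonempty and that each $p_j$ is a well-defined map; it never invokes upward lifting. Well-definedness of $p_j$ is automatic: the quotient ${}_iG_{k+1}\to{}_iG_k$ commutes with the $H_r$ action, so the image of a braid orbit is a single braid orbit, and Schur--Zassenhaus keeps it inside $\ni({}_iG_k,\bfC)$. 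The proposed resolution is also misstated: the $\ell$-Frattini property does \emph{not} guarantee that a generating product-one tuple at level $k$ lifts to one at level $k+1$. Frattini only says that any lift which surjects onto ${}_iG_k$ must generate all of ${}_iG_{k+1}$; it does not produce a lift satisfying product-one. Existence of a product-one lift is exactly the obstruction measured by the lift invariant (Thm.~\ref{obstabMT}), which is what the criterion of \S\ref{nonemptyMT} controls --- not the Frattini property per se. Fortunately, since your Tychonoff setup never needed upward lifting, the proof stands once that paragraph is removed.
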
 

Ex.~\ref{obstcomps} lists cases explaining such phenomena  as in \cite[Chap.~9]{Se92}. 

For $i\in I$, the notation shows $\bfC$ interprets meaningfully in ${}_iG_k$, $k\ge 0$.  Given $(G,\ell,\bfC,i)$, there is a related Nielsen limit class: 
\begin{equation} \label{RIGPOITNCs} \begin{array}{c} \text{$\ni(G^\lm,\bfC)$ referring to the {\sl Nielsen limit\/} group, } \\ \text{$G^\lm$, giving a final step in the \OIT. }\end{array}\end{equation}
Again: In the direction $(G,\bfC)\Rightarrow(G^\lm,\bfC)$, $\bfC$ didn't change. We  explain how $G^\lm$  works in \S\ref{limgptie}, comparing with its arising in Serre's \OIT\ as in Ex.~\ref{ncmodcurves} and in  Rem.~\ref{nieslimrem} for our example. \footnote{Often both the \RIGP\ and the \OIT\   consider reduced Nielsen classes. For one, that suits the moduli problems as in  Ex.\ref{Dlk+1}.} 

\begin{exmpl}[Modular curves] \label{ncmodcurves} The \MT\ hyperelliptic jacobian case in Ex.\ref{Dlk+1} for $r=4$ is the isomorphism of the levels in the respective two sequences $\{\sH(D_{\ell^{k\np1}},\bfC_{2^4})^{\inn,\rd}\}_{k=0}^\infty$ and  $\{X_1^0(\ell^{k\np1})\}_{k=0}^\infty$.  The superscript ${}^0$ denotes restricting the classical modular curve $X_1$ to $\prP^1_j\setminus{ \{\infty\}}$. 

The corresponding $G^\lm$ in \eqref{RIGPOITNCs} is $(\bZ/\ell)^2\xs \bZ/2$ \cite[Chap.~6 \S 3]{Fr20}. The modular curves $X_0(\ell^{k\np1})$ appear in a similar sequence, with those spaces identified with $\{\sH(D_{\ell^{k\np1}},\bfC_{2^4})^{\abs,\rd}\}_{k=0}^\infty$,\footnote{Here $N_{S_{\ell^{k\np1}},\bfC}(D_{\ell^{k\np1}})=\bZ/\ell^{k\np1}\xs (\bZ/\ell^{k\np1})^*$.} including  the variant for the general case of even $r$ in Ex. \ref{Dlk+1}. 
\end{exmpl} 

\subsubsection{\MT s; the remaining sections}  \label{remainsects} \MT\ levels ($k\ge 0$) are moduli of $\prP^1_z$ covers, up to precise equivalences \S\ref{equivalences}. Computations use an {\sl automatic braid action\/} on Nielsen classes \S\ref{braids}, starting with  {\sl Orbit principle\/} \eqref{braid-comps}.  
\begin{equation} \label{braid-comps} \begin{array}{c}  \text{Absolutely irreducible components of a Hurwitz space $\leftrightarrow$} \\ \text{ braid (\S\ref{braids}) orbits on the corresponding Nielsen class $\ni(G,\bfC)$.}\end{array} \end{equation} 

The \S\ref{mainex}  example shows precisely how computations work, to interpret  the  major problems.  \S\ref{modtowdef} defines the moduli spaces of the levels of a \MT\ for a given $(G,\bfC)$,  starting with defining the  {\sl Universal Frattini cover\/}, $\tilde G$, of $G$ and  $\tilde G_\ab$, its {\sl abelianized\/} version. This produces a tower of Hurwitz spaces  (from Princ.~\ref{princHS}) in the notation of \eqref{projhurworbits}. 

Def.~\ref{evenfratt} gives the key  -- {\sl eventually $\ell$-Frattini} -- for formulating the \OIT\ conjecture, Conj.~\ref{OITgen}.  \S\ref{uselfratt} explains why that generalizes Serre's \OIT\ for modular curves. Also, its resemblance to a grand version of {\sl Hilbert's Irreducibility Theorem}. 

Basic assumptions.  The Hurwitz spaces, $\sH(G,\bfC)$,  will be reduced versions of inner equivalence classes of covers in the Nielsen class.  Denote projective $r$-space by  $\prP^r$.  For a given \MT\ its  whole system of tower levels  covers a natural configuration space. 

\begin{equation} \label{SL2C} \begin{array}{c}  \text{Inner space target: $U_r\eqdef \prP^r$  minus its {\sl descriminant locus}.} \\ \text{Add reduced equivalence:  the target space is $U_r/\SL_2(\bC)\eqdef J_r$.}\end{array} \end{equation}  

Then, $J_4$ is the classical $j$-line, minus $\{\infty\}$. We put $\{\infty\}$ back whenever we need it.  \S\ref{mainex}  has one of the collection of examples with results toward the main conjecture of this paper. All examples start with assumptions about $(G,\bfC)$ -- using the \BCL\ -- to assure any $G_\bQ$ orbit of points over $\bar \bQ$ on a Hurwitz space remains on the Hurwitz space. 

In addition to Orbit Principle \eqref{braid-comps}, we can {\sl usually\/}  give geometric names to $G_\bQ$ orbits of absolutely irreducible components on the levels of a \MT\ (starting with level 0) based on Cusp Principle \eqref{cusps-comps}.  This uses a subgroup -- {the cusp group\/} Def.~\ref{cusp-group} -- of the braid group.   

\begin{equation} \label{cusps-comps} \begin{array}{c} \text{Cusp group orbits on a braid orbit $\leftrightarrow$ cusps on the component;} \\ \text{the form of cusp orbit elements distinguish braid orbits \eqref{cusptype}.}  
\end{array} \end{equation} Typically we can geometrically distinguish different components in a Nielsen class, corresponding to additional moduli properties recognized by $G_\bQ$. 

\S\ref{l'RIGP} is on a theme -- $\ell'$ \RIGP\ applied to a finite group $G$. This generalizes for each $G$ how Ex.~\ref{Dlk+1} connects dihedral groups and cyclotomic points on hyperelliptic Jacobians. 

 \S\ref{toughestpt}  is on the difficulties the \RIGP\ causes for the \OIT. Especially around the hardest case in Serre's \OIT\ requiring Falting's Theorem for its completion. That was many years after Serre's program started. 

The modular curve $X_1(\ell)\to \prP^1_j$, attached to $(D_\ell,\bfC_{2^4},\ell)$, with one component, includes cusps we call {\sl Harbater-Mumford} (\HM; Def.~\ref{HMrep}). So, we label the unique component as \HM. Other types of distinguishing cusps appear as $(G,\bfC)$ changes, as in our examples. 

\S\ref{overview} gives background, motivation and the conjectures driving \cite{Fr20}.  \S\ref{guides} shows how the modular curve case expanded beyond dihedral groups. 

\S\ref{S1title} lists a series of problems with a large literature outside the considerations of modular curves that suddenly had a connection to them using parts of Serre's version of the \OIT.  

\S\ref{explainingFrattini} goes through three phases of the \MT\ timeline. This led to the main conjectures by which the \RIGP\ formulates entirely in terms of Hurwitz spaces, finally tying to a general version of the \OIT.

The history: \S\ref{pre95}  is prior to 1995, \S\ref{95-04} goes to 2004 with the main \RIGP\ conjectures on \MT s.  Then the progress on cusps, and Frattini ideas led to connecting the \RIGP\ to the \OIT.  

\S\ref{05-to-now} discusses papers in the period of formulating the generalization of the \OIT.  This continues Ex.~\ref{mainex}, as Thm.~\ref{level0MT},  describing level 0  of  a \MT\  beyond Serre's \OIT. This example is of  \MT\ levels for all primes $\ell$ where the number of components increases both with $\ell$ and, for each $\ell$, with the level $k$. Yet the {\sl tree structure\/} (Def.~\ref{MTdef}) of those components remains coherent from level to level based on the lift invariant. 

{\sl $\ell$-Frattini covers\/} arises in two profinite ways.  Consider a \MT,  $\bH=\{\sH_k\to J_r\}_{k=0}^\infty$: a projective sequence of absolutely irreducible Hurwitz space components covering $J_r$, attached to $(G,\bfC, \ell)$, $r=r_{\bfC}$ (as in  \S\ref{outlineMTs}) satisfying c\eqref{GlCing}.
\!\!\!\begin{edesc} \label{sepfrat} \item \label{sepfrata}  The construction of $\bH$ comes from the universal abelianized $\ell$-Frattini cover, $\tfG \ell {}_\ab$, of $G$. 
\item \label{sepfratb}  The guiding \MT\ conjecture is that the sequence of geometric monodromy groups of  $\bH$ is eventually $\ell$-Frattini. 
\end{edesc} 

\section{Changing from isogonies to sphere covers} \label{WhatGauss}  \S\ref{whyrationalfuncts}  goes from rational function covers of $\prP^1_z$ to  any covers of the sphere to produce all we need. The starting objects are {\sl Nielsen classes\/} attached to $(G,\bfC)$ with  $G$ a finite group, and $r=r_\bfC$ conjugacy classes, $\bfC$, in $G$.  Thereby we introduce the basic moduli -- reduced {\sl Hurwitz\/} -- spaces, $\sH(G,\bfC)^\rd$. 

\MT s  generalizes modular curves as in Ex.~\ref{Dlk+1}, starting from essentially any finite group G and prime $\ell$ for which $G$ is $\ell$-perfect (Def.~\ref{lperfectdef}).  This dihedral case is discussed in  \sdisplay{\cite{DFr94}} and surveyed in \cite{Fr94} (reprinted in several languages by Serre). 

That generalization starts with an \RIGP\ statement with no reference to \MT s. Yet, it  forces the existence of \MT  s using a diophantine conjecture (no rational points at high levels), that generalizes to \MT s (Main \RIGP\ Conj.~\ref{mainconj}) that  there are no points on high modular curve levels. 
 
We can compute properties of the  tower levels from assiduous use of an Artin braid group quotient: the {\sl Hurwitz  monodromy group\/} $H_r$ (and its significant subgroups). \S\ref{braids} defines \MT s. 

\S\ref{mainex} is an explicit example on  using Nielsen classes, labeling cusps, and computing genuses using Thm.~\ref{genuscomp}, as alluded to in \S\ref{compsr=4} (when $r=4$) for computing properties of reduced Hurwitz spaces as covers of $\prP^1_j$. 

\subsection{Part I: Data for sphere covers} \label{whyrationalfuncts} 

Here is notation for a rational function $f\in \bC(w)$: $f: \prP^1_w \to \prP^1_z: w \mapsto f(w)=z$:  $$f = f_1(w)/f_2(w),\text{ with $(f_1,f_2)=1$, $n=\deg(f)=\max(\deg(f_1),\deg(f_2))$. }$$
$$\text{For example: $\frac{w^3\np1}{w^5\nm3}$ has {\sl degree} $\deg(f)=5$.}$$  

\subsubsection{Branch points and local monodromy} {\sl Branch points} are places $z'$ where the distinct $w' \mapsto z'$, $\row {w'} t_{z'}$, have cardinality $t_{z'}<n$. Take $z'=0$. For simplicity assume $$\text{no $w'= \infty$ ($\deg(f_1)\ge \deg(f_2)$), and  the $f_i\,$s have leading coefficient 1.}$$ We write solutions, $w$, of $f(w)=z$, as analytic functions in a recognizable variable.  For $1\le k\le t$, write $f(w) = (w - w_k')^{e_k} m_k(w)$ with $m_k(w_k') \not = 0$. Form an expression in the variable $z^{1/e_k} = u_k(z)$: 
$$w_k(z^{1/e_k})\eqdef w_k' + u_k(z) + a_2 u_k(z)^2  + a_3 u_k(z)^3  \dots = w_k(u_k(z)), k = 1,\dots, t_{z'}. $$ 

Substitute $w_k(z^{1/e_k})\mapsto w$ in $f(w)=z$. Look at leading powers of $u_k(z)$ on the left and on the right. They are equal. Now solve inductively for $a_2, a_3, \dots$, so  the left side is identically equal to $z$. Easily verify these.  
\begin{edesc} 
\item The result for $w_k(z^{1/e_k})$ is analytic in a neighborhood of $u_k(z) = 0$. 
\item With $\zeta_{m}=e^{2\pi \sqrt{-1}/m}$, substitution(s) $u_k(z) \mapsto \zeta_{e_k}^j u_k(z)$, \\ $j= 1,\dots, e_k$,  give $e_k$ distinct solutions $w\in \bC((u_k(z)))/\bC((z))$.  \end{edesc} 
Take $\bar e\eqdef \bar e_{z'} = \lcm(\row e t_{z'})$. Now, put those solutions together. 

Write all $w_k(\zeta_{e_k}^jz^{1/e_k})\,$s, $k=1,\dots,t_{z'}$, as power series in $z^{1/\bar e}$:  
$$\text{ Substitute the obvious power of $z^{1/\bar e}$ for each  $u_k(z)$.}$$ 
This gives $n$ distinct solutions, $L_{z'}$, of $f(w) = z$ in the field $\bC((z^{1/\bar e}))$ and a natural permutation on $L_{z'}$ from the substitution  
\begin{equation} \label{powseries} \hat g_{z'}: z^{1/\bar e} \mapsto e^{2\pi i/\bar e} z^{1/\bar e}. \end{equation} 
This gives an element in  $S_n$ (the identify, if all $e_k\,$s are 1), the symmetric group, on the letters $L_{z'}$.\footnote{It  abuses notation by still having $z'=0$, but we are about to drop that by using $\row z r$ for a labeling of the branch points.}

Do this for each branch point, $\row {z'} r$, to get $\bg\eqdef (\row g r)$. 
\begin{edesc} \label{branchcyc} 
\item  \label{branchcyca}  How can we compare entries of $\bg$,  by having them all act on one set of symbols\footnote{When it is convenient we will take these symbols to be  $1, \dots, n$.}   rather than on $r$ different sets, $L_{z_1},\dots, L_{z_r}$? 

\item  \label{branchcycb} With success on \eql{branchcyc}{branchcyca}, denote the group $\lrang{\bg}$ the $\bg$ generate   by $G_f$. Of what use is it and $\bg$?
\item \label{branchcycc} Was there anything significant about using rational functions?  
\end{edesc} 

Below I answer questions \eqref{branchcyc}. In \S\ref{deform}, with those answers,  as a preliminary, we extend to do the following. 
\begin{equation} \label{deform} \text{Invert this, to get $\bg \mapsto f=f_\bg$, regardless of the branch points.}\footnote{Even if we restricted to rational functions, this inversion is nontrivial.}\end{equation}  

I produce $G_f$, up to isomorphism as a subgroup of $S_n$, answering \eql{branchcyc}{branchcyca}, first using {\sl Algebraic Geometry\/}, then using {\sl Analytic Geometry}. 

\subsubsection{Algebraic Geometry} \label{alggeom} Take any cover, $\phi:W\to \prP^1_z$, not necessarily rational given by $f$ above. We introduce a compact Riemann surface cover, $\hat \phi:  \hat W \to \prP^1_z$, the {\sl Galois closure\/} of $\phi$, minimal with these properties:  
\begin{edesc} \label{galoisClos} \item $\hat \phi$ factors through $\phi$; and   
\item \label{galoisClosb} it is a {\sl Galois cover\/} of $\prP^1_z$. 
\end{edesc} The group of those automorphisms is $G_\phi$.  
The phrase {\sl Galois cover\/} in \eql{galoisClos}{galoisClosb} means having $\deg(\hat \phi)$ automorphisms commutating with $\hat \phi$. 

Form the fiber product of $\phi$, $\deg(\phi)=n$ times (assume $n>1$): $$(\prP^1_w)_\phi^{(n)} \eqdef \{(w_1,\dots, w_n) \in (\prP^1_w)^n \mid \phi(w_1)=\cdots = \phi(w_n) \}\text{ over $\prP^1_z$.}$$ The resulting object is {\sl singular\/} if two coordinates, $w_k',w_l'$, lying over the same branch point $z_i,$ have both $e_k>1$ and $e_l > 1$. To see this, project onto the $(k,l)$ coordinates through the point $(w_k',w_l')\in \prP^1_w\times_{\prP^1_z}\prP^1_w$. Around $(w_k',w_l')$, this space (with its map $\phi$) is locally analytically isomorphic to $$\{ (w_k,w_l) \in D_{w'_k=0} \times_{D_{z'=0}} D_{w'_l=0} | w_k^{e_k} - w_l^{e_l} = 0 \}\to D_{z'=0}.$$  

The {\sl Jacobian criterion\/} reveals the singularity at $(0,0)$:  both partials $\pa{\ } {w_k},\pa{\ } {w_l}$ of $w_k^{e_k} - w_l^{e_l}$ are 0 at (0,0). Indeed,  normalizing this as a cover of $D_{z=0}$ results in $\gcd(e_k,e_l)$ copies of the cover $w \mapsto w^{\lcm(e_k,e_l)}$.\footnote{A disk, $D_{z'=0}$, around $z'=0$, is a convenient open set, as we see in \S\ref{braids}. Technically, that means someone has selected a metric on, say, $\prP^1_z$.} 

Denote the normalization by ${}^*W^{\{n\}}_\phi$. The normalization may have several components. One for certain is the {\sl fat diagonal}, $$\Delta_\phi^{\{n\}} =  \text{closure of the locus where 2 or more of those $w_i \,$s are equal.}$$ Remove the components of $\Delta_\phi^{\{n\}}$. On the result, there is a natural action of $S_n$, by permuting those distinct $w_i\,$ s, that extends to the whole normalized (since 1-dimensional, nonsingular) ${}^*W^{\{n\}}_\phi$. 

If ${}^*W^{\{n\}}_\phi$ is irreducible, then it is a Galois over $\prP^1_z$ with group $S_n$. If it is {\sl not\/} irreducible, consider a component, $\tilde \phi: \tilde W_\phi\to \prP^1_z$. Then, $$G_\phi= \{ g\in S_n | g \text{ preserves }  \tilde W_\phi\}, \!\!\text{ {\sl geometric\/} monodromy of $\phi$: $|G_\phi |= \deg(\tilde  \phi)$.}$$  

Denote the conjugacy class of $g\in G_\phi$ by $\C_g$. Though $\bg=(\row g r)$ still depends on how we labeled points over branch points, this approach does define the conjugacy classes $\C_{g_i}$, $i= 1,\dots, r$, in $G_\phi$ as follows. Consider maps of the function field of $W_\phi$ into $\bC(((z-z_i)^{1/{\bar e_i}}))$ fixed on $\bC((z-z_i))$ and restrict the automorphism $\hat g_{z_i}$ of \eqref{powseries} to the image of that embedding. 

For many purposes, this construction is inadequate to that of \S\ref{analgeom}. Since, however, it is algebraic, it gives another group we need.  

\begin{defn} \label{arithmon}  If  the cover $\phi$ is defined over a field $K$, then $\hat G_{\phi,K}\eqdef \hat G_\phi$,   the {\sl arithmetic monodromy\/} of $\phi$ (over $K$), is defined exactly as above, except take $\hat \phi: \hat W \to \prP^1_z$ to be a component defined over $K$. \end{defn} That is, if we started with $\tilde W$, a geometric component, then we would take for $\hat \phi: \hat W \to \prP^1_z$, the union of the conjugates $\tilde \phi^\sigma: \tilde W^\sigma\to \prP^1_z$, $\sigma \in G_K$. This would be defined and irreducible over $K$. 

\subsubsection{Analytic Geometry}  \label{analgeom} For $g\in S_n$ with $t$ disjoint cycles, $\ind(g)=n \nm t$ is its {\sl index\/}. 
For $\bfC$, $r$ conjugacy classes -- some may be repeated, count them with multiplicity -- in a group $G$, use $\bg\in G^r\cap \bfC$ to mean  an $r$-tuple $\bg$ has entries in {\sl some\/} order (with correct multiplicity) in $\bfC$. We denote the group the entries generate by $\lrang{\bg}$. 

\begin{exmp} If $G=S_4$, and $\bfC=\C_{2^2}\C_{3^2}$ consists of two repetitions each of the class, $\C_2$, of 2-cycles, and the class, $\C_3$, of 3-cycles, then  both $${}_1\bg=((1\,2), (2\,3\,4),(3\,4), (1\,3\,4))\text{ and } {}_2\bg=((2\,3\,4),(1\,3),(1\,3),(3\,2\,4)) $$ are in  $(S_4)^4\cap \bfC$. For both \eql{bcycs}{bcycsa} holds; only for ${}_2\bg$ does  \eql{bcycs}{bcycsb} hold.  \hfill $\triangle$
\end{exmp} 

Our next approach to the Galois closure, based on Thm.~\ref{BCYCs}, gives us a better chance to answer questions \eql{branchcyc}{branchcyca}-\eql{branchcyc}{branchcycc}.  To simplify notation, unless otherwise said, always make these two assumptions. 
\begin{edesc} \label{assumptions} \item  \label{assumptionsa} Conjugacy classes, $\bfC=\{\row \C r\}$, in $G$ are {\sl generating}. 
\item \label{assumptionsb} $G$ is given as a transitive subgroup of $S_n$.  \end{edesc} 
 Meanings: \eql{assumptions}{assumptionsa} $\implies$ the full collection of elements in $\bfC$ generates $G$; and \eql{assumptions}{assumptionsb} $\implies$ the cover generated by $\bg$ is connected. Even with \eql{assumptions}{assumptionsa}, it may be nontrivial to decide if there is $\bg \in G^r\cap\bfC$ that generates.
 
 Thm.\ref{BCYCs} is a version of {\sl Riemann's Existence Theorem\/} (\RET), done in detail in \cite{Fr80}. A less complete version is in \cite{Vo96}.   

\begin{thm} \label{BCYCs} Assume  $\bz\eqdef \row z r\in \prP^1_z$ distinct. Then, some degree $n$ cover $\phi: W \to \prP^1_z$ with branch points $\bz$, and $G=G_\phi\le S_n$ produces  classes  $\bfC$  in $G$, if and only if there is $\bg\in G^r\cap \bfC$ with these properties:  
\begin{edesc} \label{bcycs}   \item  \label{bcycsa}   $\lrang{\bg} = G$ ({\sl generation}); and 
\item  \label{bcycsb}   $\prod_{i=1}^r g_i = 1$ ({\sl product-one}). 
\end{edesc} 
Indeed, $r$-tuples satisfying \eqref{bcycs} give all possible Riemann surface covers -- both up to equivalence  (see \S\ref{equivalences})-- with these properties. 

Refer to one of those covers attached to $\bg$ as $\phi_\bg: W_\bg \to \prP^1_z$. 
\begin{equation} \label{RH} \text{The genus $\geng_{\bg}$ of $W_\bg$ appears in } 2(\deg(\phi) \np \geng_\bg -1) = \sum_{i=1}^r \ind(g_i). \end{equation}  \end{thm}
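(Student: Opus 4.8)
The plan is to run the classical argument for \RET, organized around the fundamental group of the $r$-punctured sphere; complete details are in \cite{Fr80}. Fix a base point $z_0\in\prP^1_z\setminus\{\bz\}$ and a standard \lq\lq bouquet\rq\rq\ of generators $\gamma_1,\dots,\gamma_r$ of $\pi_1(\prP^1_z\setminus\{\bz\},z_0)$, each $\gamma_i$ a simple loop once around $z_i$, normalized so that the lone relation is $\gamma_1\cdots\gamma_r=1$. For the \lq\lq only if\rq\rq\ direction, restrict a branched cover $\phi:W\to\prP^1_z$ with branch locus $\{\bz\}$ to $U\eqdef\prP^1_z\setminus\{\bz\}$; its monodromy on the fiber over $z_0$ is a transitive homomorphism $\rho:\pi_1(U,z_0)\to S_n$. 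Put $g_i\eqdef\rho(\gamma_i)$, $\bg\eqdef(\row g r)$. Transitivity is connectedness of $W$, and the image of $\rho$ is $\lrang{\bg}=G_\phi=G$ by assumption \eqref{assumptions}, which is \eql{bcycs}{bcycsa}; the relation $\gamma_1\cdots\gamma_r=1$ is \eql{bcycs}{bcycsb}. That the cycle type of $g_i$ is the ramification datum over $z_i$, so that the class of $g_i$ is the one $\bfC$ assigns there, is the local computation of \S\ref{alggeom}: restricting the substitution $\hat g_{z_i}$ of \eqref{powseries} to the fiber exhibits $g_i$ as the product of the $e_k$-cycles coming from the local branches $w_k(z^{1/e_k})$.

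For the \lq\lq if\rq\rq\ direction -- the substantive one -- start from $\bg\in G^r\cap\bfC$ satisfying \eqref{bcycs}. Since $\gamma_1\cdots\gamma_r=1$ is the only relation, $\gamma_i\mapsto g_i$ extends to $\rho:\pi_1(U,z_0)\to S_n$, which the covering-space Galois correspondence realizes by a finite covering $\psi^0:W^0\to U$, connected by \eql{bcycs}{bcycsa}. Pull back the complex structure of $U$ to make $W^0$ a Riemann surface with $\psi^0$ holomorphic. Over a small disk $D_i$ around $z_i$, the part of $\psi^0$ lying over $D_i\setminus\{z_i\}$ splits into connected pieces, one per cycle of $g_i$, each analytically the punctured-disk cover $w\mapsto w^e$ of that cycle's length $e$; adjoining one point per piece via the model $w\mapsto w^e$ compactifies $W^0$ to a compact Riemann surface $W$ and extends $\psi^0$ to a holomorphic $\phi:W\to\prP^1_z$ branched exactly over $\{\bz\}$ with the prescribed local type. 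Finally, through the equivalence between compact Riemann surfaces and smooth projective $\bC$-curves, $\phi$ is an algebraic cover with $G_\phi=\lrang{\bg}=G$ and branch classes $\bfC$.

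Two covers with branch set $\{\bz\}$ are equivalent (\S\ref{equivalences}) iff their monodromy homomorphisms are conjugate in $S_n$, i.e.\ iff their tuples $\bg,\bg'$ -- read against the \emph{same} bouquet -- are simultaneously conjugate; changing the bouquet replaces $\bg$ by a braid translate, the action underlying the moduli description of \S\ref{equivalences}. Hence \eqref{bcycs}-tuples, modulo this equivalence, exhaust the covers. The genus formula \eqref{RH} is Riemann--Hurwitz for $\phi:W\to\prP^1_z$ over a genus-$0$ base: the points over $z_i$ carry ramification indices equal to the cycle lengths of $g_i$, so $\sum_{P\mapsto z_i}(e_P\nm1)=\deg(\phi)-(\text{number of disjoint cycles of }g_i)=\ind(g_i)$, and summing over $i$ gives $2(\deg(\phi)\np\geng_\bg\nm1)=\sum_{i=1}^r\ind(g_i)$.

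The main obstacle is the transcendental heart of the \lq\lq if\rq\rq\ direction: that the purely topological covering $W^0\to U$, with its punctures filled by the local models $w\mapsto w^e$, genuinely carries a (unique) compact Riemann surface structure -- hence is a smooth projective curve -- so that $\phi$ is algebraic. This is the real content of \RET; everything else is bookkeeping with $\pi_1$ and Riemann--Hurwitz.
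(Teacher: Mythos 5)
Your plan is the standard proof of Riemann's Existence Theorem and matches the paper's sketch point for point: choose classical generators of $\pi_1(\prP^1_z\setminus\{\bz\},z_0)$ with the product-one relation as the only relation, read off $\bg$ from the monodromy action on the fiber over $z_0$ for the forward direction, and for the converse send generators to $\bg$ to get an unramified cover of the punctured sphere and then fill in the punctures via the local models $w\mapsto w^e$ -- with Riemann--Hurwitz yielding \eqref{RH}. The paper likewise flags the completion step (analytic/algebraic structure on the filled-in cover) as the nontrivial content and defers it to \cite[Chap.~4]{Fr80}, exactly as you do.
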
 

The set of $\bg$ satisfying \eqref{bcycs} are the Nielsen classes associated to $(G,\bfC)$. A Riemann surface $W_\bg$  is isomorphic to $\prP^1_w$ over $\bC$ if and only if formula \eqref{RH} -- {\sl Riemann-Hurwitz\/} --  gives $\geng_\bg=0$. 

Denote $\prP^1_z\setminus \{\bz\}$ as $U_{\bz}$ and choose $z_0\in U_{\bz}$. Thm.~\ref{BCYCs} follows from existence of {\sl classical generators\/} of $\pi_1(U_{\bz},z_0)$. These are paths $\sP=\{\row P r\}$ on $U_\bz$ based at $z_0$, of form $\lambda_i\circ\rho_i\circ \lambda^{-1}_i$ with these properties. 
\begin{edesc} \label{generatorspi} \item  $\rho_i\,$s are non-intersecting clockwise loops around the respective $z_i\,$s. 
\item The $\lambda_i\,$s go from $z_0$ to a point on $\rho_i$. 
\item  Otherwise there are no other intersections.  
\item The $\row \lambda r$ emanate clockwise from $z_0$. 
\end{edesc}

Suppose a cover, $\phi:W\to\prP^1_z$, has a labeling of the fiber $w_{1}^\star,\dots w_{n}^\star$ over $z_0$. Then, analytic continuation of a lift, $P^\star_{k,i}$, of  $P_i$, starting at $w_{k}^\star$ will end at a point, say $w_{(k)g_i}^\star$, on $k\in \{1,\dots,n\}$. 

This produces the permutations $\row g r$ satisfying \eqref{bcycs}.  This results from knowing \eqref{generatorspi} implies $\row P r$  are generators of $\pi(U_\bz,z_0)$, and  
\begin{equation} \label{fundgp} \text{they have product 1 and {\sl no other relations\/}.} \end{equation} 
From \eqref{fundgp}, mapping $P_i\mapsto g_i$, $i=1,\dots,r$, produces a permutation representation $\pi(U_\bz,z_0)\to G\le S_n$. 

From the theory of the fundamental group, this gives a degree $n$ cover $\phi^0: W^0\to U_\bz$. 
Completing the converse to Thm.~\ref{BCYCs} is not immediate. You must fill in the holes in $\phi_0$ to get the desired $\phi: W\to \prP^1_z$. A full proof, starting from \cite{Ahl79}, is documented in \cite[Chap.~4]{Fr80}. 

\begin{defn} \label{NielsenClass} Given $(G,\bfC)$, the set of $\bg$ satisfying \eqref{bcycs} is the {\sl Nielsen class\/}  $\ni(G,\bfC)^\dagger$, with $\dagger$ indicating an equivalence relation referencing the permutation representation $T: G \to S_n$. \end{defn}
A cover doesn't include an ordering of its branch points. Adding such would destroy most applications number theorists care about. This makes sense of saying {\sl a cover is in the {\sl Nielsen class\/} $\ni(G,\bfC)^\dagger$}. 

\begin{rem}[Permutation notation] \label{permnot} Our usual assumptions start with a faithful transitive permutation representation $T: G\to S_n$ with generating conjugacy classes $\bfC$, from which we may define a Nielsen class $\ni(G,\bfC)^\dagger$.  Except in special cases, the notation for $T$ indicates the symbols on which elements $T(g)$, $g\in G$, acts are $\{1,\dots,n\}$. Example: Denote the stabilizer of the symbol 1 by $G(T,1)$ or just $G(1)$ if $T$ is understood. 

Or, if $T$ comes from a cover $\phi:W\to \prP^1_z$, denote the permutation representation by $T_\phi$ even when applied to $\hat G_\phi$. Then, denote the group of $\hat W/W$ by $\hat G_\phi(1)$. That indicates it is the subgroup stabilizing the integer 1 in the representation, and $G_\phi(1)=\hat G_\phi(1)\cap G_\phi$. \end{rem}  

\subsection{Part II: Braids and deforming covers} \label{braids}  

Take a basepoint ${}_0\bz$  of $U_r$, and denote $\pi_1(U_r,{}_0\bz)$, the {\sl Hurwitz monodromy group},\footnote{Other call this the sphere braid group. I first used it in 1968 while at the Institute for Advanced Study, when I discovered Hurwitz used it for a special case of what I generalized. It was a monodromy group in my work, and no topologist corrected me.}  by $H_r$. A {\sl Hurwitz space\/} is a cover of $U_r$ that parametrizes all covers in a Nielsen class. \S\ref{dragging} explains how it comes from a representation of $H_r$ on a Nielsen class. 

\S\ref{equivalences} lists the equivalences we use, and then the action of $H_r$ on Nielsen classes that defines Hurwitz spaces. Then, \S \ref{genus} gives the formula for computing the genus of reduced Hurwitz spaces when $r=4$. 

\subsubsection{Dragging a cover by its branch points} \label{dragging}  Here is the way to think of forming a Hurwitz space. 
Start with  ${}_0\phi: {}_0W\to \prP^1_z$, with branch points ${}_0\bz$, classical generators ${}_0\bP$ and (branch cycles) ${}_0\bg\in \ni(G,\bfC)$.  

Drag ${}_0\bz$ and ${}_0\bP$, respectively,  to ${}_1\bz$  and ${}_1\bP$ along any path $B$ in $U_r$. 
With no further choices, ${}_t\sP\mapsto {}_0\bg$ by ${}_tP_i\mapsto g_i$, $i=1,\dots,r$,  forms a trail of covers  
${}_t\phi: {}_tW\to \prP^1_z$, $t\in [0,1]$, {\sl with respect to the same ${}_0\bg$ along the path indicated by the parameter.}

This produces a collection of $\prP^1_z$ covers of cardinality $|\ni(G,\bfC)^\dagger|$ over each  $\bz\in U_r$. This forces upon us a decision: when to identify two covers as equivalent.  
For $B$ closed, denote the homotopy class $[B]$ as $q_B\in H_r$. 

\begin{princ} For $B$ a closed path, we can identify branch cycles ${}_1\bg$ for the cover ${}_1f: {}_1W\to \prP^1_z$ lying at the end of the path, relative to the original classical generators ${}_0\bP$ from ${}_0\sP \mapsto ({}_0\bg)q_B^{-1}$. \end{princ}

Here are key points going back to \cite[\S4]{Fr77}. 
 \begin{edesc} \label{keys} 
\item \label{keysa}  {\sl Endpoint of the Drag}: A cover at the end of $B$ is still in $\ni(G,\bfC)^\dagger$; it depends only on the homotopy class of $B$ with its ends fixed.  
\item \label{keysb}  {\sl $H_r$ orbits}: (Irreducible) components of spaces of covers in $\ni(G,\bfC)^\dagger$ $\leftrightarrow$ braid ($H_r$) orbits. 
\end{edesc}  

Whatever the application, we must be able to identify the Galois closure (\S\ref{alggeom}) of the cover. The key  ambiguity is in labeling $\bw^\star=\{w_1^\star,\dots,w_n^\star\}$, points lying over $z_0$. Changing that labeling changes $T: G\to S_n$. A slightly subtler comes from changing $z_0$. There is a distinction between them. Changing $z_0$ to $z^*_0$ is affected by rewriting the $z_i$-loops as \begin{equation} \label{baseptch} \lambda^*\circ\lambda\circ\rho\circ \lambda^{-1}\circ(\lambda^*)^{-1},  \text{ with $\lambda^*$ a path from }z_0^* \text{ to }z_0.\end{equation}

\subsubsection{Braids and equivalences} \label{equivalences}  In this paper we primarily need the $H_r$, $r=r_\bfC$, action on $\ni(G,\bfC)$.   

\begin{edesc} \label{Hrgens} \item  \label{Hrgensa} Two elements generate this  $H_r$ action: 
 $$\begin{array}{rl} q_i: & \bg\eqdef (\row g r)  \mapsto (\row   g 
{i-1},g_ig_{i+1}g_i^{-1},g_i,g_{i+2},\dots, g_r); \\ 
\sh:  &\bg\mapsto (g_2,g_3,\dots,g_r,g_1) \text{ and } H_r  \eqdef \lrang{q_2,\sh} \text{ with }\\ &\sh\,q_i\ \sh^{-1} = q_{i\np1},\   i=1,\dots,r\nm 1. \end{array}$$ 
\item \label{Hrgensb} From braids, $B_r$, on $r$ strings, $H_r=B_r/\lrang{q_1\cdots q_{r\nm1}q_{r\nm1}\cdots q_1}$. \end{edesc}    

The case $r=4$ in \eql{Hrgens}{Hrgensa} is so important in examples, that in  {\sl reduced\/} Nielsen classes, we refer to $q_2$ as the {\sl middle twist}. As usual, in notation for free groups modulo relations,  \eql{Hrgens}{Hrgensb} means to mod out by the normal subgroup generated by the relation $q_1\cdots q_{r\nm1}q_{r\nm1}\cdots q_1=R_H$. 

\begin{princ} \label{princHS} From \eqref{Hrgens}, we get a permutation represention of $H_r$ on $\ni(G,\bfC)^\dagger$. Given $\dagger$, that gives a cover $\Phi\eqdef \Phi^{\dagger}: \sH(G,\bfC)^\dagger \to U_r$: The Hurwitz space of $\dagger$-equivalences of covers. 

Elements in $\lrang{q_1\cdots q_{r\nm1}q_{r\nm1}\cdots q_1}$ have this effect :  \begin{equation} \label{inneraction} \begin{array}{c} \bg\in \ni(G,\bfC)\mapsto g\bg g^{-1}\text{ for some }g\in G. \text{ Indeed, for} \\ 
 \bg\in \ni(G,\bfC), \{(\bg)q^{-1}R_Hq\mid q\in B_r\}=\{g^{-1}\bg g\mid g\in G\}.\end{array} \end{equation}  \end{princ}

Denote the subgroup of the normalizer, $N_{S_n}(G)$, of $G$ in $S_n$ that permutes a given collection, $\bfC$, of conjugacy classes, by $N_{S_n}(G,\bfC)$.  Circumstances dictate when we identify  covers $\phi_u: {}_uW\to\prP^1_z$, $u=0,1$, branched at ${}_0\bz$, obtained from any one cover using the dragging-branch-points principle. 
Two equivalences that occur on the Nielsen classes:   
\begin{edesc} \label{eqname}  \item  \label{eqnamea} {\sl Inner}: $\ni(G,\bfC)^\inn\eqdef \ni(G,\bfC)/G$ corresponding to \eqref{inneraction}; and \item  \label{eqnameb} {\sl Absolute}: $\ni(G,\bfC)^\abs\eqdef \ni(G,\bfC)/N_{S_n}(G,\bfC)$.   
\end{edesc} One might regard {\sl Inner\/} (resp.~{\sl Absolute}) equivalence as {\sl minimal\/} (resp.~{\sl maximal}). Act by $H_r$ on either  equivalence (denoted by a $\dagger$ superscript). 

\begin{defn}[Reduced action] \label{redaction} A  cover $\phi: W\to \prP^1_z$ is {\sl reduced\/} equivalent to $\alpha\circ \phi: W\to \prP^1_z$ for $\alpha\in \PSL_2(\bC)$. \end{defn} 

Also,  $\alpha$ acts  on $\bz\in U_r$ by acting on each entry. Def.~\ref{redaction} extends to any cover  $\Phi^\dagger: \sH(G,\bfC)^\dagger\to U_r$, giving a {\sl reduced\/} Hurwitz space cover:
\begin{equation} \label{reducedcover}  \Phi^{\dagger,\rd}: \sH(G,\bfC)^{\dagger,\rd} \to U_r/\PSL_2(\bC)\eqdef J_r.\end{equation}

\subsubsection{Genus formula: $r=4$} \label{genus}  Identify $U_4/\PSL_2(\bC)$ with $\prP^1_j\setminus\{\infty\}$. 
A reduced Hurwitz space of 4 branch point covers is a natural $j$-line cover.  As in \S\ref{compsr=4}, that completes to $\overline \sH(G,\bfC)^{\dagger,\rd} \to \prP^1_j$ ramified over $0, 1, \infty$. 

\begin{defn} \label{cusp-group} Denote $\lrang{q_1q_3^{-1},\sh^2}$ by $\sQ''$. The {\sl cusp group\/}, $\Cu_4$, is the group that $\sQ''$ and $q_2$ generate. \end{defn} 

Cusps on the projective non-singular completion of the Hurwitz space over $\infty\in \prP^1_j$ have a purely combinatorial definition.\footnote{Nielsen classes allow precise definitions of cusps for all $r\ge 4$. For $r>4$, the cusp group is generated just by $q_j$, $j=2$. Relating this to cusps on, say, Siegel Upper-half spaces, hasn't yet been elaborated.}
\begin{equation}\label{cusp-rednc} \begin{array}{c} \text{As in \eqref{cusps-comps}, they correspond to $\Cu_4$ orbits on} \\ \text{ {\sl reduced Nielsen classes\/} $\ni(G,\bfC)^{\dagger,\rd}\eqdef \ni(G,\bfC)^\dagger/\sQ''.$} \end{array}\end{equation} 
\cite[\S4.2]{BFr02} proves the Thm.~\ref{genuscomp} formula using the \eqref{cusp-rednc} definitions. 

\begin{thm} \label{genuscomp} Suppose a component, $\overline{\sH'}$, of $\overline \sH(G,\bfC)^{\dagger,\rd}$ corresponds to an $H_r$ orbit, $O$, on the Nielsen classes $\ni(G,\bfC)^{\dagger,\rd}$. Then, points of ramification, respectively over $0,1,\infty$, of $\overline {\sH'}\to \prP^1_j$ correspond to $$\text{disjoint cycles of $\gamma_0=q_1q_2$, $\gamma_1=q_1q_2q_1$, $\gamma_\infty=q_2$ acting on $O$.}$$ 

The genus, $g_{\overline {\sH'}}$, of $\overline{\sH'}$, a la Riemann-Hurwitz,  appears from  $$2(|O|+g_{\bar \sH'}-1)=\ind(\gamma_0)\np \ind(\gamma_1)\np \ind(\gamma_\infty).$$ \end{thm}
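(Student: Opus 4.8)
The plan is to realize $\overline{\sH'}\to \prP^1_j$ explicitly as the cover of the thrice-punctured sphere $\prP^1_j\setminus\{0,1,\infty\}$ associated to the permutation representation of $\pi_1(\prP^1_j\setminus\{0,1,\infty\})$ on the braid orbit $O\subset \ni(G,\bfC)^{\dagger,\rd}$, and then read off ramification and genus from that representation. First I would recall that $U_4/\PSL_2(\bC)$ is identified with $\prP^1_j\setminus\{\infty\}$, and that the three branch values $0,1,\infty\in\prP^1_j$ correspond to the three degenerate configurations of four unordered points on $\prP^1_z$ modulo $\PSL_2(\bC)$ (the $j=\infty$ cusp being genuine coalescence of two branch points, and $j=0,1$ the extra symmetry points of order $3$ and $2$ respectively). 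The monodromy of the cover $J_4=U_4/\PSL_2(\bC)$ — equivalently, the quotient $\sM_4$ of the mapping class data — over these three points is generated by three elements whose product is $1$; the content of \cite[\S4.2]{BFr02} (and the normalization of the $H_r$-generators in \eqref{Hrgens}) is that, acting through the reduced Nielsen class, these three monodromy generators are precisely $\gamma_0=q_1q_2$, $\gamma_1=q_1q_2q_1$, $\gamma_\infty=q_2$, with $\gamma_0\gamma_1\gamma_\infty=1$ after passing to the reduced quotient by $\sQ''$. I would state this identification as the key input, citing \cite[\S4.2]{BFr02}, rather than re-derive the presentation of the reduced Hurwitz monodromy group from scratch.

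Granting that, the proof is then a standard application of covering-space theory and Riemann--Hurwitz. By Princ.~\ref{princHS} the braid orbit $O$ is exactly the set of sheets of the (connected, since $O$ is a single orbit) cover $\sH'\to J_4=\prP^1_j\setminus\{\infty\}$, and by the identification above the fiber monodromy at $j=0$ (resp.\ $j=1$, $j=\infty$) is the permutation $\gamma_0$ (resp.\ $\gamma_1$, $\gamma_\infty$) of $O$. Passing to the smooth projective completion $\overline{\sH'}\to\prP^1_j$, the points over $0$ (resp.\ $1$, $\infty$) are in bijection with the disjoint cycles of $\gamma_0$ (resp.\ $\gamma_1$, $\gamma_\infty$) on $O$, each such point having ramification index equal to the length of the corresponding cycle. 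Since the cover is unramified away from $\{0,1,\infty\}$, the total ramification is $\ind(\gamma_0)+\ind(\gamma_1)+\ind(\gamma_\infty)$, where $\ind$ is the index (number of letters minus number of disjoint cycles, as defined in \S\ref{analgeom}). Riemann--Hurwitz for $\overline{\sH'}\to\prP^1_j$ (a degree-$|O|$ cover of a genus-$0$ curve) then reads
$$2\bigl(|O|+g_{\overline{\sH'}}-1\bigr)=\ind(\gamma_0)+\ind(\gamma_1)+\ind(\gamma_\infty),$$
which is exactly the asserted formula; solving for $g_{\overline{\sH'}}$ gives the genus.

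I expect the genuine obstacle to be the first step: justifying that the reduced $j$-line monodromy is generated by $\gamma_0=q_1q_2$, $\gamma_1=q_1q_2q_1$, $\gamma_\infty=q_2$ with product one acting on $\ni(G,\bfC)^{\dagger,\rd}$. This requires keeping careful track of (i) the $\SL_2(\bC)$-action on configuration space and the resulting presentation of $\pi_1$ of the $j$-line with its three orbifold points, (ii) the precise relation between the Artin/sphere braid generators $q_i,\sh$ and loops around the three degenerate configurations, and (iii) the effect of dividing by $\sQ''=\lrang{q_1q_3^{-1},\sh^2}$ so that the three displayed words genuinely close up to cycles on the reduced classes (in particular that $\gamma_\infty=q_2$ is the middle twist whose orbits are the cusps, matching \eqref{cusp-rednc}). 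All of this is carried out in \cite[\S4.2]{BFr02}, so in the present exposition I would invoke it as a black box; once it is in hand, the remainder is the routine covering-space/Riemann--Hurwitz bookkeeping sketched above, and the only things to double-check are that $\sH'$ is connected (immediate, as $O$ is a single $H_4$-orbit) and that the completion $\overline{\sH'}$ introduces no ramification over points other than $0,1,\infty$ (immediate, as $\Phi^{\dagger,\rd}$ is unramified over $J_4$).
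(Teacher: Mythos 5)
Your proposal is correct and follows essentially the same route as the paper: the paper itself offers no proof beyond citing \cite[\S4.2]{BFr02} for the identification of the reduced monodromy generators $\gamma_0,\gamma_1,\gamma_\infty$ (with the accompanying remark noting the product-one relation and $\gamma_0^3=1$ on reduced classes, which you also flag via the order-3 and order-2 elliptic points), and you likewise invoke \cite[\S4.2]{BFr02} as a black box for exactly that input before doing the routine covering-space and Riemann--Hurwitz bookkeeping. One small wording caveat: the configurations over $j=0$ and $j=1$ are not ``degenerate'' (no branch points coalesce there, unlike at $j=\infty$); they are orbifold points where the $\PSL_2(\bC)$-stabilizer is nontrivial, which your parenthetical does correctly describe.
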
 

\begin{rem} Notice that $\gamma_1\gamma_2\gamma_3=1$ (product-one) is a conseqence of the Hurwitz braid relation $$q_1q_2\cdots q_{r\nm1}q_{\nm1}q_{r\nm2}\cdots q_1=R_H$$ combined for $r=4$ with modding out by $q_1=q_3$. Also, that immediately gives $\gamma_0^3=1$ in its action on reduced Nielsen classes. Hint: Use also the braid relations $q_iq_{i\np1}q_i=q_{i\np1}q_iq_{i\np1}$. \end{rem} 

\begin{rem}[Braid orbits] \label{braidorbits1} Thm.~\ref{genuscomp} shows that identifying braid orbits, $O$ in the Nielsen class is crucial. As in Rem.~\ref{braidorbits2} on the \MT\ from the level 0 Nielsen class \eql{mainexs}{mainexsc} as $\ell$ changes in \cite{FrH20} or \cite[\S 5]{Fr20}. \end{rem} 

\subsection{Main Example} \label{mainex} 
We now do one example -- using the definitions in \eqref{cusp-rednc} -- to illustrate the Thm.~\ref{genuscomp} genus calculation.  \cite{BFr02} is our main source for the theory and other examples illustrating, purposely chosen to show on one full example of a \MT. Something  one might care about if these were modular curves; though they are not. 

Level 0 of the \MT\ for that example is designated $\ni(A_5,\bfC_{3^4})$. It  has just one braid orbit, unlike that of \S\ref{l=2level0MT} which has two. 

\subsubsection{A lift invariant}  \label{l=2level0MT}   Before using general Frattini covers (Def.~\ref{frattdef}) in \S\ref{lfratquot}, suppose $\psi: H\to G$ is a {\sl central\/} Frattini cover: $\ker(\psi)$ in the center of $H$. 
The most famous Frattini {\sl central\/} extension (kernel in the center of the covering group) arises in {\sl quantum mechanics\/} from the spin cover, $\psi: \Spin_n\to O_n(\bR)$, $n\ge 3$, of the orthogonal group.  Regard $\ker(\psi)$ as  $\{\pm 1\}$. The natural permutation embedding of $A_n$ in $O_n$  induces the $$\text{Frattini cover $\psi: \Spin_n \to A_n$, abusing notation a little.}$$ 

A braid orbit $O$ of  $\bg=(\row g r)\in \ni(A_n,\bfC)$, with $\bfC$ conjugacy classes consisting of odd-order elements, passes the (spin) lift invariant test if the natural (one-one) map $\ni(\Spin_n,\bfC) \to \ni(A_n,\bfC)$ has image containing $\bg$.  Each $g_i$ lifts to a same-order element $\tilde g_i\in \Spin_n$. 

\begin{defn}[Lift invariant]  \label{Anliftdef}  Then, $s_{\Spin_n/A_n}(O)\eqdef \prod_{i=1}^r \tilde g_i\in \ker(\psi)$.  
Generally, for $\ell$-perfect $G$ and $\ell'$ conjugacy classes $\bfC$:\footnote{You can drop both assumptions, as in \cite[App.]{FrV91}, but the definition is trickier.}  for $s_{H/G}$ on a braid orbit $O$ on $\ni(G,\bfC)$ substitute  $A_n\rightarrow G$ and $\Spin_n\rightarrow H$ in $s_{\Spin_n/A_n}(O)$. 
\end{defn}

One result: If covers in  $\ni(A_n,\bfC)$ have genus 0, then $s_{\Spin_n/A_n}(O)$ depends only on $\ni(A_n,\bfC)$, not on $O$, and there is an explicit computation for it. 
\begin{exmpl} \label{gen0} For $n=4$, there are two classes of 3-cycles, $\C_{\pm 3}$, but just one for $n\ge 5$. For $\bg\in \ni(A_n,\bfC_{3^{n\nm1}})$, $n\ge 5$, $n\nm1$ repetitions of $\C_3$, $s_{\Spin_n/A_n}(\bg)=(-1)^{n\nm1}$. For $n=4$, the only genus 0 Nielsen classes of 3-cycles are $\ni(A_4,\C_{+3^3})$ and $\ni(A_4,\C_{-3^3})$, and the lift invariant is -1.

 The short proof of \cite[Cor.~2.3]{Fr10} is akin to the original statements I made to Serre for \cite{Se90a}.  
\end{exmpl} 

\subsubsection{$A_4$, $r=4$} This is the most natural case not included in Ex.\ref{gen0}, and it is level 0 for $\ell=2$ of our  illustration for generalizing Serre's \OIT\  \sdisplay{\cite{FrH20}}.   The Nielsen class is $\ni(A_4,\bfC_{+3^2-3^2})^{\inn,\rd}$ with  conjugacy classes a rational union. 
From the \BCL\ (Thm.~\ref{bclthm}) the  Hurwitz space has moduli definition field $\bQ$.  Here is what to expect.  

\begin{edesc} \label{A44} \item \label{A44a} The Hurwitz space has two components, labeled $\sH_0^{\pm}$, that we will see clearly using the $\sh$-incidence matrix \eqref{shincentries} and \sdisplay{\cite{BFr02}}. 
\item \label{A44b} The Hurwitz spaces have fine moduli, but neither component has fine {\sl reduced\/} moduli (criterion of \cite[Prop.~4.7]{BFr02}).\footnote{Both components have moduli definition field $\bQ$ as in Def.~\ref{moddeffield} for many reasons. The easiest: If they were conjugate, $G_\bQ$ would preserve their degrees over $\prP^1_j$.} 
\item \label{A44c} The Spin lift invariant Def.~\ref{Anliftdef} separates the components and each  component has genus 0 and a characteristic cusp type.    
\item \label{A44d} Neither component is a modular curve, but we can compute their arithmetic and geometric monodromy as  $j$-line covers. \end{edesc} 

Comment on \eql{A44}{A44b}: Fine moduli for inner Hurwitz spaces here comes from $A_4$ having no center. Checking fine moduli on a {\sl reduced\/} space braid orbit $O$ has two steps \cite[\S4.3.1]{BFr02}: $\sQ''$ must act as a Klein 4-group (called b(irational)-fine moduli);  and neither $\gamma_0$ nor $\gamma_1$ has fixed points (on $O$). 

Comments on \eql{A44}{A44c}: Thm.~\ref{Anlift} uses the spin lift invariant. Thm.~\ref{level0MT} uses a {\sl Heisenberg\/} lift invariant for $\ell\ne 2$ prime. 

\subsubsection{The $\sh$-incidence matrix}  \label{shincex} 
Subdivide $\mapsto \ni(A_3,\bfC_{\pm 3^2})^{\inn,\rd}$ using 
sequences of conjugacy classes $\C_{\pm 3}$;   $q_1q_3^{-1}$ and $\sh$ switch these rows: 
$$\begin{tabular}{cccc} &[1] +\,-\,+\,- &[2] +\,+\,-\,- &[3] +\,-\,-\,+ \\
&[4] -\,+\,-\,+ &[5] -\,-\,+\,+ &[6] -\,+\,+\,-
\end{tabular} $$

The rest of this example displays the two $H_r$ orbits on the Nielsen classes, and the geniuses of their corresponding Hurwitz space components.  Here is the \sh-incidence matrix notation for cusps, labeled $O_{i,j}^k$: $k$ is the cusp width, and $i,j$
corresponds to a labeling of orbit representatives.  \sdisplay{\cite[\S 2.10]{BFr02}} says much more about the $\sh$-incidence matrix, which works for all $r\ge 4$, an example of which we now present. \sdisplay{\cite{BFr02}} says more about  other examples on which it has been used. 

It has appeared where braid orbits would have been otherwise difficult to either compute or to display.\footnote{The point is that the display is illuminating. I remain leary of relying on \GAP\ or some other computer program without a corroborating proof.}  Its entries are \begin{equation} \label{shincentries} \begin{array}{c} \text{$|O\cap (O')\sh|$, with $(O,O')$ cusp orbits. Read cusp widths}\\ \text{ by adding entries in a given row of each block.} \end{array} \end{equation}  
 
 Consider $g_{1,4}=((1\,2\,3), (1\,3\,4), (1\,2\,4), (1\,2\,4))$. Its $\gamma_\infty$ orbit $O_{1,4}^4$ is what Thm.~\ref{level0MT} calls {\sl double identity\/} (repeated elements in positions 3 and 4).\footnote{Its shift gives a cusp of type o-$2'$ \eqref{cusptype}.} There are also two other double identity cusps with repeats in positions 2 and 3, denoted $O_{3,4}^1$ and $O_{3,5}^1$. The following elements are in a Harbater-Mumford component \eqref{HMrep}. 
 $$\begin{array}{rl} \text{H-M rep.} \mapsto \bg_{1,1}=& ((1\,2\,3),
(1\,3\,2), (1\,3\,4), (1\,4\,3))\\
\bg_{1,3}=& ((1\,2\,3), (1\,2\,4), (1\,4\,2), (1\,3\,2))\\
\text{H-M rep.} \mapsto  \bg_{3,1}=& ((1\,2\,3), (1\,3\,2), (1\,4\,3), (1\,3\,4))
\end{array} $$ 

\begin{table}[h] \label{sh-incA4}
\begin{tabular}{|c|ccc|}  \hline $\ni_0^+$ Orbit & $O_{1,1}^4$\ \vrule  &
$O_{1,3}^2$\ \vrule & 
$O_{3,1}^3$ \\ \hline $O_{1,1}^4$ 
&1&1&2\\ 
$O_{1,3}^2$ &1 &0&1  \\ $O_{3,1}^3$ &2&1&0 \\  \hline \end{tabular} 
\ 
\begin{tabular}{|c|ccc|} \hline $\ni_0^-$ Orbit & $O_{1,4}^4$\ \vrule  &
$O_{3,4}^1$\ \vrule & 
$O_{3,5}^1$ \\ \hline $O_{1,4}^4$ 
&2&1&1\\ 
$O_{3,4}^1$ &1 &0&0  \\ $O_{3,5}^1$ &1&0&0 \\  \hline \end{tabular} \end{table}

\begin{prop} \label{A43-2} On $\ni(\Spin_4,\bfC_{\pm3^2})^{\inn,\rd}$
(resp.~$\ni(A_4,\bfC_{\pm3^2})^{\inn,\rd}$)
 $H_4/\sQ''$ has one (resp.~two) orbit(s). So,  $\sH(\Spin_4,\bfC_{\pm3^2})^{\inn,\rd}$ 
 (resp.~\!$\sH(A_4,\bfC_{\pm3^2})^{\inn,\rd}$) has one (resp.~two)  component(s),  
 $\sH_{0,+}$ (resp.~$\sH_{0,+}$ and $\sH_{0,-}$). 

Then, $\sH(\Spin_4,\bfC_{\pm3^2})^{\inn,\rd}$ maps one-one to 
$\sH_{0,+}$ (though changing $A_4$ to $\Spin_4$ give different moduli).
The compactifications of $\sH_{0,\pm}$ both have genus 0 from Thm.~\ref{genuscomp} (Ex.~\ref{exA43-2}).\end{prop}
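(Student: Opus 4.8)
The plan is to make the Nielsen class $\ni(A_4,\bfC_{\pm3^2})$ completely explicit, count the braid orbits on its reduced inner quotient by the $\sh$-incidence computation, separate the two orbits with the spin lift invariant, transfer the one-orbit conclusion to $\Spin_4$, and read the genus off from Theorem~\ref{genuscomp}. First I would list the $4$-tuples $\bg=(g_1,g_2,g_3,g_4)$ of $3$-cycles in $A_4$ having two entries in the class $\C_{+3}$ and two in $\C_{-3}$ (in some order), with $\lrang{\bg}=A_4$ and $g_1g_2g_3g_4=1$; by Theorem~\ref{BCYCs} these are exactly the elements of $\ni(A_4,\bfC_{\pm3^2})$. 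Quotienting by $A_4$-conjugation and then by $\sQ''=\lrang{q_1q_3^{-1},\sh^2}$ gives the small finite set $\ni(A_4,\bfC_{\pm3^2})^{\inn,\rd}$, on which $H_4=\lrang{q_2,\sh}$ acts through $H_4/\sQ''$, with the standard $j$-line data $\gamma_0=q_1q_2$ (order $3$), $\gamma_1=q_1q_2q_1$ (order $2$), $\gamma_\infty=q_2$. The preliminary list in the example, which subdivides the classes by the six $\pm$-patterns $[1],\dots,[6]$ that $q_1q_3^{-1}$ and $\sh$ permute, already pins down a compatible system of reduced representatives, following \cite[\S2.10]{BFr02}.

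Next I would run the $q_2$- and $\sh$-moves on these representatives and display the outcome through the $\sh$-incidence matrix, whose $(O,O')$-entry is $|O\cap(O')\sh|$ with $O,O'$ ranging over the $q_2$-orbits (the cusps). That this matrix breaks into exactly the two blocks shown above — one with cusps $O_{1,1}^4,O_{1,3}^2,O_{3,1}^3$ of total size $4+2+3=9$, the other with $O_{1,4}^4,O_{3,4}^1,O_{3,5}^1$ of total size $4+1+1=6$ — together with the $q_2$- and $\sh$-connectedness visible inside each block, is precisely the statement that $H_4/\sQ''$ has two orbits, which I call $\ni_0^+$ (size $9$) and $\ni_0^-$ (size $6$). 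By Principle~\ref{princHS} and the Orbit Principle~\eqref{braid-comps}, $\sH(A_4,\bfC_{\pm3^2})^{\inn,\rd}$ then has precisely the two components $\sH_{0,+}\leftrightarrow\ni_0^+$ and $\sH_{0,-}\leftrightarrow\ni_0^-$.

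For the $\Spin_4$ assertion, note that since $3$ is odd every $3$-cycle of $A_4$ has a unique order-$3$ lift to $\Spin_4$ (the binary tetrahedral group), so the natural map $\ni(\Spin_4,\bfC_{\pm3^2})\to\ni(A_4,\bfC_{\pm3^2})$ is an injection, equivariant for conjugation and for $H_4$, whose image is exactly the locus where the lift invariant $s_{\Spin_4/A_4}$ of Definition~\ref{Anliftdef} is trivial; it descends to an injection of reduced inner Nielsen classes. Since $s_{\Spin_4/A_4}$ is constant on braid orbits it is constant on each of $\ni_0^\pm$, and evaluating it on one representative of each block — a Harbater-Mumford tuple such as $\bg_{1,1}$ gives $1$, while $g_{1,4}$ gives $-1$ (a direct computation, along the lines of Ex.~\ref{gen0}) — identifies the image of $\ni(\Spin_4,\cdot)$ with $\ni_0^+$. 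Thus the lift invariant by itself already forces at least two $H_4/\sQ''$-orbits on the $A_4$ side, while the previous step shows there are exactly two, and on the $\Spin_4$ side $\ni(\Spin_4,\bfC_{\pm3^2})^{\inn,\rd}$ is a single orbit; so $\sH(\Spin_4,\bfC_{\pm3^2})^{\inn,\rd}$ is irreducible, and the inclusion of Nielsen classes induces a map of $J_4$-covers $\sH(\Spin_4,\bfC_{\pm3^2})^{\inn,\rd}\to\sH_{0,+}$ that is bijective on fibers, hence an isomorphism — with a genuinely different moduli interpretation, the objects parametrized carrying $\Spin_4$- rather than $A_4$-monodromy.

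For the genus, Theorem~\ref{genuscomp} gives $2(|O|+g_{\overline{\sH'}}-1)=\ind(\gamma_0)+\ind(\gamma_1)+\ind(\gamma_\infty)$ for $\overline{\sH'}=\overline{\sH_{0,\pm}}$. The $\sh$-incidence data give $|O|=9$, $\ind(\gamma_\infty)=9-3=6$ for $\ni_0^+$ and $|O|=6$, $\ind(\gamma_\infty)=6-3=3$ for $\ni_0^-$; since $\gamma_0$ and $\gamma_1$ have orders dividing $3$ and $2$, $\ind(\gamma_0)\le 2\lfloor|O|/3\rfloor$ and $\ind(\gamma_1)\le\lfloor|O|/2\rfloor$, so in both cases $\ind(\gamma_0)+\ind(\gamma_1)+\ind(\gamma_\infty)\le 2|O|-2$, forcing $g_{\overline{\sH'}}\le 0$, hence $g_{\overline{\sH'}}=0$; carrying out the actual cycle-type computation of $\gamma_0$ and $\gamma_1$ confirms this and records the cusp types, and this is Ex.~\ref{exA43-2}. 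The one step here that is not essentially formal is the braid computation of the second paragraph: the lift invariant supplies only the lower bound of two orbits, so completeness of the orbit list — that the $\sh$-incidence matrix has just those two blocks — has to be verified either by direct enumeration of the $H_4$-action on the small reduced Nielsen class or by importing it from \cite{BFr02}; everything after that is routine.
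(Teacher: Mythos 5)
Your proposal is correct and follows essentially the same route the paper takes: enumerate the reduced inner Nielsen class, read the two braid orbits off the $\sh$-incidence blocks, use the Spin lift invariant (trivial on the H-M representative, $-1$ on the double-identity representative) to identify $\ni(\Spin_4,\bfC_{\pm3^2})^{\inn,\rd}$ with $\ni_0^+$, and then compute the genus via Theorem~\ref{genuscomp}. The only minor difference is your inequality shortcut $\ind(\gamma_0)+\ind(\gamma_1)+\ind(\gamma_\infty)\le 2|O|-2$, which forces $g\le 0$ without the exact fixed-point counts; the paper instead computes those counts directly (as in Ex.~\ref{exA43-2}), so your bound is a small economy, not a different method.
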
 

The diagonal entries for 
$O_{1,1}^4$ and $O_{1,4}^4$ are nonzero. In detail, however, $\gamma_1$
(resp.~$\gamma_0$) fixes 1 (resp.~no) element of $O_{1,1}$, and neither of $\gamma_i$, $i=0,1$, fix any
element of $O_{1,4}^4$. Ex.~\ref{exA43-2} uses \eqref{shincentries} for the cusp widths.  

\begin{exmpl}[Compute the genus] \label{exA43-2} Use  $(\gamma_0,\gamma_1,\gamma_\infty)$ from the $\sh$-incidence
calculation in Prop.~\ref{A43-2}. Denote their restrictions to lifting
invariant $+1$  (resp.~-1) orbit  
 by $(\gamma_0^+,\gamma_1^+,\gamma_\infty^+)$ (resp.~$(\gamma_0^-,\gamma_1^-,\gamma_\infty^-)$).

Read indices of $+$ (resp.~$-$) elements from the $\ni_0^+$ (resp.~$\ni_0^-$) matrix block: 
Cusp  widths over
$\infty$ add to the degree $4+2+3=9$ (resp.~$4+1+1=6$) to give
$\ind(\gamma_\infty^+)=6$ (resp.~$\ind(\gamma_\infty^+=3$).  

As $\gamma_1^+$ (resp.~$\gamma_1^-$) has 1
(resp.~no) fixed point and $\gamma_0^\pm$ have no fixed points, $\ind(\gamma_1^+)=4$
(resp.~$\ind(\gamma_1^-)=3$) and 
$\ind(\gamma_0^+)=6$ (resp.~$\ind(\gamma_0^+)=4$). 
From 
 \begin{center} $2(9+g_+-1)=6+4+6=16$ and\\ $2(6+{g_-}-1)=3+3+4=10.$\end{center} the genus of $\bar \sH_{0,\pm}$ is $g_{\pm}=0$. 
\end{exmpl}

\section{$\ell$-Frattini covers and \MT s} \label{modtowdef}  \S\ref{univfratpre} is an overview of  the universal Frattini cover of a finite group $G$. \S\ref{lfratquot} introduces the universal $\ell$-Frattini cover of $\ell$-perfect $G$.  

\S\ref{lfratquot-ab} focuses on ${}_\ell\tilde \psi_{G_\ab}: \tG \ell {}_\ab\to G$, the abelianized $\ell$-Frattini cover. That gives \MT s,  a source of $\ell$-adic representations attached to $(G,\ell,\bfC)$ with  $\bfC$ consisting of generating $\ell'$ conjugacy classes in $G$. 

\S\ref{nonemptyMT} explains the cohomological obstruction to \MT\ levels being nonempty. That includes taking {\sl lattice\/} quotients of ${}_\ell\tilde \psi_{G_\ab}$. \S~\ref{exA43-2cont} continues Ex.~\ref{exA43-2} as a case included in \sdisplay{\cite{CaD08}}. \S\ref{archfrattconj}  starts the motivation from \cite{Se68} for the eventually $\ell$-Frattini Definition. 

\subsection{Universal $\ell$-Frattini covers} \label{univfratpre} 
To realize the relation between the \RIGP\ and diophantine questions about classical spaces, we have an aid in a geometric approach to groups that naturally correspond to points on such spaces. We use these definitions:  
\begin{edesc} \item Def.~\ref{lperfectdef}: Finite group $G$ is $\ell$-perfect.  
\item Def.~\ref{frattdef}: Frattini cover $\psi: H\to G$ of profinite groups:  $\ell$-Frattini if $\ker(\phi)$ is a pro-$\ell$ group. \end{edesc} 

There is a Universal profinite group, $\tilde G$,  for the Frattini covering property. Further, for each prime $\ell$ dividing $|G|$, there is a universal $\ell$-Frattini cover ${}_\ell\tilde \psi: \fG \ell \to G$ for the Frattini property with kernel an $\ell$ group, and by modding out by the commutator of $\ker({}_\ell\tilde \psi)$, an abelianized version ${}_\ell\tilde \psi_\ab: \fG \ell {}_\ab \to G$. A characteristic sequence  $\{\tfG \ell k {}_\ab\}_{k=0}^\infty$ of quotients of $\fG \ell {}_\ab$ canonically defines a series of moduli space covers of $\sH(G,\bfC)^\rd$ when the elements of $\bfC$ are $\ell'$ (prime to $\ell$). 

This is elementary and in  \sdisplay{\cite{Fr95}}, \cite[App.~B]{Fr20} and reviewed in \S\ref{lfratquot}. With the assumptions of \S\ref{outlineMTs} -- $G$ is {\sl $\ell$-perfect\/}  and $\bfC$ consists of $r=r_\bfC$  $\ell'$ conjugacy classes of $G$, with  $\ni(G,\bfC)$  nonempty -- form a profinite version of Nielsen classes, $\ni(\fG \ell {}_\ab,\bfC)$. 

Classes of $\bfC$ lift to classes of same order elements -- so we don't change the notation -- of the universal abelianized $\ell$-Frattini cover ${}_\ell\tilde \psi_\ab: \fG \ell {}_\ab\to G$, whose kernel is a finite rank $\bZ_\ell[G]$ module.

Then $H_r$ extends to this Nielsen class. A \MT\  for $(G,\bfC,\ell)$ consists of a profinite $H_r$ orbit. Denote the collection of these by $\sF_{G,\bfC,\ell}\eqdef \sF_{G,\bfC,\ell,{}_\ell\tilde \psi_\ab}$ (\S\ref{nilatqt}). The homomorphism ${}_\ell\tilde \psi_\ab$ has at least one proper $\ell$-Frattini lattice quotient $L^\star \to G^\star \mapright {{}_\ell\tilde \psi^\star} G$ of ${}_\ell\psi_\ab$ (Def.~\ref{latquotdef}) attached to each quotient of the characteristic $\ell$-Frattini module attached to $(G,\ell)$. 

To each lattice quotient the same definition, conjectures and variants on properties applies. \S\ref{prodladic} explains how $\ell$-adic representations appear from these \MT s.   In  cases  the \MT s, $\sF_{G,\bfC,\ell,{}_\ell\tilde \psi^\star}$, for proper quotients are variants on classical spaces. \sdisplay{\cite{CaD08}} expands on this. 

A cohomological condition, Thm.~\ref{obstabMT}, checks precisely for when $\sF_{G,\bfC,\ell, {}_\ell\tilde \psi^\star}$ is nonempty.\footnote{The criterion is from the {\sl lift invariant\/},   Def.~\ref{Anliftdef}.}  \sdisplay{\cite{Fr20}}  relates Frattini covers and the Inverse Galois Problem. This paper describes Serre's original \OIT\ in \S\ref{pre95} in the discussion of \sdisplay{\cite{Se68}} and \sdisplay{\cite{Fr78}}, updated from the original papers with references to \cite{Fr05}, \cite{GMS03} and \cite{Se97b}. Especially, this gives background  on the problems that connected our Hurwitz space approach  to the \OIT. 

\cite{Fr20} is complete on the Universal Frattini cover itself, and especially the role of  the {\sl lift invariant}.  Nontrivial lift invariants arise from what group theorists call representation covers of $G$. They are also a detectible subset of central (in the notation of Def.~\ref{frattdef}, when $\ker(\psi)$  is in the center of $H$) Frattini covers. This set of ideas gives the main information we require to understand the \MT\ levels,  components, their cusps and why they are appropriate for generalizing the OIT.   

\subsection{Universal Frattini cover} \label{lfratquot} We starts by putting a nilpotent tail on any finite group, producing for any $G$, even $G=A_5$ myriad extensions by, say, 2-groups, none of which have ever been realized as Galois groups. Let $\psi_i:H_i\to G$, $i=1,2$, be Frattini covers (Def.~\ref{frattdef}). 

\begin{lem} \label{fiberprod} A minimal (not necessarily unique) subgroup $H\le H_1\times_G H_2$ that  is surjective to $G$, is a Frattini cover of $G$ that factors surjectively to each $H_i$. Thus, Frattini covers of $G$ form a projective system. From their definition, taking a Frattini cover of a group preserves the rank. \end{lem}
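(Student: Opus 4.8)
The plan is to prove the three assertions of Lemma~\ref{fiberprod} in turn: (i) any minimal subgroup $H\le H_1\times_G H_2$ surjecting onto $G$ is itself a Frattini cover of $G$; (ii) $H$ factors surjectively onto each $H_i$, so the Frattini covers form a projective (inverse) system; (iii) passing to a Frattini cover leaves the rank (minimal number of generators) unchanged. Throughout, ``cover'' means a surjection $\psi\colon H\to G$ of profinite groups, and I will use freely the defining property from Def.~\ref{frattdef}: a surjection is Frattini exactly when no \emph{proper} closed subgroup maps onto the base. The projection maps $\pi_i\colon H_1\times_G H_2\to H_i$ are the obvious ones, and $\psi_i\circ\pi_i$ are both equal to the structure map of the fiber product onto $G$.

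First I would establish (ii), since it is needed for (i) and essentially free: by minimality of $H$, its image $\pi_i(H)\le H_i$ is a closed subgroup that surjects onto $G$ (because $\psi_i\circ\pi_i|_H$ is the restriction of the fiber-product map, which is onto $G$ as $H\twoheadrightarrow G$); since $\psi_i$ is a Frattini cover, $\pi_i(H)=H_i$. Thus $H\twoheadrightarrow H_i$ for $i=1,2$. (A standard Zorn's-lemma / inverse-limit compactness argument guarantees that a minimal such $H$ exists in the profinite setting — every chain of closed subgroups surjecting onto $G$ has an intersection that still surjects onto $G$, using compactness.) For (i): let $H^*\le H$ be a closed subgroup with $\psi|_H(H^*)=G$, i.e.\ $H^*$ surjects onto $G$. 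Then $H^*$ is itself a subgroup of $H_1\times_G H_2$ surjecting onto $G$, and minimality of $H$ forces $H^*=H$. Hence $\psi|_H\colon H\to G$ is Frattini. (One should also note $H\le H_1\times_G H_2$ does surject onto $G$ — this is part of the hypothesis defining the family of such $H$, and is nonempty because the diagonal-type subgroup generated by compatible generators of $H_1,H_2$ works, or more simply because $H_1\times_G H_2$ itself surjects onto $G$ and one can shrink.) For the ``projective system'' conclusion, observe that (ii) gives compatible surjections $H\to H_1\to G$ and $H\to H_2\to G$; applying this construction to any finite collection of Frattini covers of $G$ and to successive fiber products shows the category of Frattini covers of $G$ is directed downward, i.e.\ forms an inverse system — which is exactly what is needed later to build the universal Frattini cover $\tilde G$ as the limit.

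For (iii), rank preservation: let $d=\rk(G)$ be the minimal number of (topological) generators of $G$, and let $\psi\colon H\to G$ be any Frattini cover. Clearly $\rk(H)\ge d$ since a generating set of $H$ maps onto a generating set of $G$. Conversely, lift a generating $d$-tuple $(g_1,\dots,g_d)$ of $G$ to any $d$-tuple $(h_1,\dots,h_d)$ in $H$ with $\psi(h_i)=g_i$; let $H^*=\overline{\langle h_1,\dots,h_d\rangle}$. Then $\psi(H^*)\supseteq\langle g_1,\dots,g_d\rangle$ is dense, and since $\psi$ is continuous with $H$ compact, $\psi(H^*)$ is closed, hence $\psi(H^*)=G$. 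The Frattini property then forces $H^*=H$, so $\rk(H)\le d$. Thus $\rk(H)=\rk(G)$, as claimed. (This is the profinite analogue of the elementary fact that a Frattini quotient does not change the minimal number of generators.)

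I do not expect a serious obstacle here — the statement is foundational and each piece is short. The one point requiring a little care in the profinite setting, and the closest thing to a genuine step, is the \emph{existence} of a minimal subgroup $H$: one must invoke compactness (equivalently Zorn's lemma applied to the poset of closed subgroups surjecting onto $G$, ordered by reverse inclusion, noting that the intersection of a descending chain of such subgroups still surjects onto $G$ because $G$ is the inverse limit of its finite quotients and surjectivity can be checked at each finite level). Everything else — the Frattini property of $H$, the surjectivity onto each $H_i$, and rank preservation — follows immediately from the definition of Frattini cover together with the closed-image argument for compact groups. I would present the proof in exactly the order (ii) $\Rightarrow$ (i) $\Rightarrow$ (iii), with the existence-of-minimal-$H$ remark folded in at the start.
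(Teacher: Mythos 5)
Your proof is correct and uses the same core argument the paper does: project $H$ to $H_i$, observe the image surjects onto $G$, and invoke the Frattini property of $\psi_i$ to conclude $\pr_i(H)=H_i$. You also fill in details the paper's one-line proof leaves implicit (existence of a minimal $H$ via compactness/Zorn, the Frattini property of $H$ from minimality, and the rank-preservation argument by lifting a generating $d$-tuple and closing), all of which are correct and consistent with the intended reading.
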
 

\begin{proof} The projection $\pr_i: H\le H_1\times_G H_2 \to H_i$, makes $\pr_i(H)$ a subgroup of $H_i$ mapping surjectively to $G$. As $\psi_i$ is Frattini, $\pr_i(H)=H_i$, $i=1,2$. \end{proof}

Also, Frattini covers of perfect groups are perfect.  Key for Frattini covers is that $\ker(\psi)$ is nilpotent \cite[Lem.~20.2]{FrJ86}${}_{1}$ or \cite[Lem.~22.1.2]{FrJ86}${}_{2}$. 

Write $\ker(\psi)=\prod_{\ell ||G|} \ker(\psi)_{\ell}$ indicating the product is over its $\ell$-Sylows. For each $\ell$, quotient by all Sylows for primes other than $\ell$ dividing $\ker(\psi)$. Thus form ${}_\ell\psi: {}_\ell H \to G$. 

The fiber product of the ${}_\ell H\,$s over $G$ equals $H$.  \cite[App.~B]{Fr20}  discusses elementary structural statements about the construction of $\tilde G$ below. Some version of these appear in \cite[Chap.~22]{FrJ86}${}_2$. 

\begin{defn} \label{univFrattDef} This produces a profinite cover, the {\sl Universal Frattini cover}, $\tilde \psi_G: \tilde G \to G$. Similarly,   $$\tilde \psi_{G{}_\ab}: \tilde G/ [ \ker( \tilde \psi_{G}), \ker( \tilde \psi_{G})] \eqdef \tilde G_\ab \to G \text{ (resp. }{}_\ell \tilde  \psi_{G_\ab}: \fG \ell{}_\ab \to G)$$ is the Universal  {\sl Abelianized\/} Frattini (resp.~$\ell$-Frattini) cover of $G$. \end{defn} 

Then, $\tilde \psi_G$ is a minimal projective object in the category of profinite groups covering $G$.  So, given any profinite group cover $\psi: H \to G$, some homomorphism $\tilde \psi_{G,H}$ to $H$ factors through $\psi$. If $\psi$ is a Frattini cover, then $\tilde \psi_{G,H}$ must be a Frattini cover, too.

If $\rk(G)=t$, construct $\tilde G$ using a pro-free group, $\tilde F_t$, on the same (finite) number of generators. Sending its generators to generators of $G$ gives a cover, $\tilde F_t\to G$. Then, $\tilde G\le \tilde F_t$ is minimal (closed) among covers of $G$.

This, though is nonconstructive.  It uses the Tychynoff Theorem: a nested sequence of closed subgroups of $\tilde F_t$ covering $G$ has non-empty intersection covering $G$.  That also explains why it wasn't sufficient to replace $\tilde F_t$ by the free (rather than pro-free) group on $t$ generators. 

The following quotients of $\tilde G$ are more accessible.  They result from decomposing the (pro-)nilpotent kernel $\ker(\tilde \psi)$ into a product of its $\ell$-Sylows.

\begin{defn} \label{univFrattdef-ab} For each prime $\ell| |G|$, there is a profinite Frattini cover ${}_\ell \tilde \psi_{G}: {}_\ell \tilde G \to G$ with $\ker({}_\ell\tilde \psi_{G})$ a  profree pro-$\ell$ group of finite rank, $\rk(\fG \ell)$.  There are similar such covers with ${}_\ell\tilde \psi_{G_\ab}$ replacing ${}_\ell\tilde \psi_{G}$. \end{defn} 

The Frattini subgroup of an $\ell$-group $H$ is the (closed) subgroup generated by $\ell$-th powers and commutators from $H$. Denote it by ${}_{\text{\rm fr}}H$. 

Consider the kernel of the short exact sequence  $\ker_0 \to \fG \ell \to G$. Recover a cofinal family of finite quotients of $\fG \ell$ as follows. Mod out by successive $\ell$-Frattini subgroups of  {\sl characteristic kernels\/} of $\fG \ell$: \begin{equation} \label{charlquots} \ker_0> {}_{\text{\rm fr}} \ker_0\eqdef  \ker_1 \ge \dots \ge  {}_{\text{\rm fr}}\ker_{k{-}1} \eqdef \ker_k \dots\end{equation}   Denote  $\fG \ell/\ker_k$ by $\tfG \ell k$, and $\ker_k/\ker_{k'}$ by ${}_\ell M_{k,k'}$ or  $M_{k,k'}$ for $k'\ge k$. 
\begin{equation} \label{charZlGmod} \begin{array}{c} \text{Especially, ${}_\ell M_G\eqdef {}_\ell M_{0,1}$ is the characteristic $\bZ/\ell[G]$ module.} \\ \text{Denote its dimension, $\dim_{\bZ/\ell} ({}_\ell M_G)$ by ${}_\ell m_G$.}\end{array} \end{equation}

Given $(G,\bfC,\ell)$, define the Nielsen classes ${}_\ell\ni(G,\bfC)$ of a \MT\  in a profinite way that extends that of an ordinary Nielsen class (Def.~\ref{NielsenClass}). 

Denote the free group $\pi(U_{\bz_0}, z_0)$ modulo inner automorphisms, by $\sG_{\bz_0}$. Then, consider $\psi_{\bg}: \sG_{\bz_0} \to G$ given by mapping classical generators \eqref{generatorspi}, $\sP$,  to the branch cycles $\bg\in \ni(G,\bfC)$ as given in \S \ref{dragging}. 

Now form all homomorphisms $\psi_{\tilde \bg}: \sG_{\bz_0} \to \fG \ell$ through which $\psi_{\bg}$ factors, indicating images of the classical generators, $\sP$, by $\tilde \bg$, that satisfy this additional condition: 
\begin{equation} \text{$\tilde g_i$ has the same order as $g_i$, $1,\dots, r$.} \end{equation} 
From Schur-Zassenhaus,  as $\fG \ell\to G$ has kernel an $\ell$-group, this defines the conjugacy class of $\tilde g_i$ uniquely. With no loss, also label it $\C_i$, $i=1,\dots,r$. 

This makes sense of writing $\ni(\fG \ell,\bfC)^\dagger$ (or $\ni(\fG \ell {}_\ab,\bfC)^\dagger$) with $\dagger$ any one of the equivalences we have already discussed in \S\ref{equivalences}. As previously $H_r$ acts on the Nielsen classes. To define the Nielsen class levels, mod out successively, as in Def.~\ref{univFrattDef}, on $\fG \ell$ by the characteristic kernels of \eqref{charlquots}. Then, $H_r$ acts compatibly on these canonical towers of Nielsen classes:  

\begin{equation} \label{HurSpSeq} \begin{array}{c} \text{forming a Hurwitz space sequence $\bH(\fG \ell,\bfC)=\{\sH(\tfG \ell k,\bfC)^\dagger\}_{k=0}^\infty$,} \\ \text{  with a natural map from level $k\np1$ to level $k$.} \\ 
\text{Similarly for $\bH(\fG \ell {}_\ab,\bfC)=\{\sH(\tfG \ell k {}_\ab,\bfC)^\dagger\}_{k=0}^\infty$, the Hurwitz} \\ \text{ space tower of the maximal lattice referred to in \S \ref{outlineMTs}.}  \end{array}\end{equation}

\subsection{Using $\fG \ell {}_\ab$ and $\tfG \ell 1$} \label{lfratquot-ab}   We now have a general situation for any $(G,\bfC,\ell)$ in which natural $\ell$-adic representations arise from points on a tower of Hurwitz spaces relating the \RIGP\ and the \OIT. \S \ref{lfratlatqt}  adds the notion of ($\ell$-Frattini) lattice quotients (of the maximal one). \S\ref{nilatqt} produces their associated \MT s, whose levels can be (variants on) classical spaces.  Then, \S \ref{normlgp} notes that $G$ with normal $\ell$-Sylow  gives very small such quotients. 

\subsubsection{$\fG \ell {}_\ab$ lattice quotients} \label{lfratlatqt} 
Assume the standard properties \eqref{abquot-def} for $(G,\ell,\bfC)$: $G$ is $\ell$-perfect and $\bfC$ consists of $\ell'$ conjugacy classes. 

Consider any short exact sequence $ L^\star\to G^\star \mapright{\psi^\star} G$ \begin{equation} \label{quotseq} \text{with $\ker(\psi^\star)=L^\star$ a $\bZ_\ell$ lattice  and $\psi^\star$ an $\ell$-Frattini cover.} \end{equation}  Take $G_k=G^\star/\ell^{k}\ker(\psi^\star)$ for the analog of  \eqref{HurSpSeq}: 
a tower of Hurwitz spaces $\bH(G^\star,\bfC)$ from the Nielsen class sequence $\{\ni(G_k,\bfC)^\inn\}_{k=0}^\infty$. 
\begin{equation} \label{charqtmodules} \begin{array}{c} \text{Since $\psi^\star$ is $\ell$-Frattini,  there is a surjection  $\tfG \ell {}_\ab\mapright{\mu^\star} G^{\star}$} \\ \text{inducing a surjection ${}_\ell M_G\to \ker(G_1\to G_0)\eqdef M^{\star}_\psi=M^\star.$} \end{array}\end{equation}  

\begin{defn} \label{latquotdef} Refer to $\psi^\star$ as a {\sl lattice quotient\/} of ${}_\ell\tilde \psi_{G_\ab}$ -- with {\sl target\/} $M^\star$. Speak of a \MT\ on it or on its corresponding Nielsen class sequence.\footnote{When the context is clear we use shortenings of the name $\ell$-Frattini lattice quotient to $\ell$-lattice quotient or just lattice quotient.} \end{defn} Consider  two lattice quotients of $\tfG \ell {}_\ab\mapright{{}_j\mu^\star} {}_jG^\star  \mapright{{}_j\psi^\star} G$ as above, with targets ${}_j M^\star$, and respective Hurwitz space towers ${}_j\bH, j=1,2$.  

\begin{lem} \label{latquotmaps}  Assume restricting ${}_2\mu^\star$ to $\ker({}_1\psi^\star)$ surjects onto $\ker({}_2\psi^\star)$. If Main \RIGP\  conj. \ref{mainconj} holds for each \MT\ on ${}_2\bH$,  then it does so for each \MT\ on  ${}_1\bH$. \end{lem}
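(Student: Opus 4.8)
\noindent The plan is to convert the hypothesis into a $\bQ$-morphism of Hurwitz-space towers $\Lambda\colon {}_1\bH\to {}_2\bH$, and then to note that Main \RIGP\ Conj.~\ref{mainconj} -- a \MT\ carries no rational points above some level -- passes from a tower to any tower mapping onto it. First I would build the underlying group-tower comparison. Write $\tilde L=\ker(\tfG \ell {}_\ab\to G)$; by the definition of a lattice quotient (Def.~\ref{latquotdef}) each ${}_j\mu^\star$ is compatible with the projections to $G$, so it restricts to a $\bZ_\ell[G]$-surjection $\tilde L\to {}_jL^\star$, $j=1,2$. The content of the hypothesis is that the surjection $\tilde L\to {}_2L^\star$ factors, still surjectively, through $\tilde L\to {}_1L^\star=\ker({}_1\psi^\star)$; equivalently $\ker({}_1\mu^\star)\subseteq\ker({}_2\mu^\star)$, so ${}_2\mu^\star=\nu\circ {}_1\mu^\star$ for a surjection $\nu\colon {}_1G^\star\to {}_2G^\star$ over $G$ whose restriction ${}_1L^\star\to {}_2L^\star$ is $\bZ_\ell$-linear. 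That restriction sends $\ell^k\,{}_1L^\star$ into $\ell^k\,{}_2L^\star$, so $\nu$ descends to surjections $\nu_k\colon {}_1G_k\to {}_2G_k$ commuting with the tower maps $G_{k\np1}\to G_k$; as $\ker(\nu_k)$ is an $\ell$-group and $\bfC$ is $\ell'$, the classes $\bfC$ descend to the same-order classes in ${}_2G_k$, and (Schur--Zassenhaus, as in \S\ref{lfratquot}) $\nu_k$ carries the profinite Nielsen-class structure down, giving $H_r$-equivariant maps $\ni({}_1G_k,\bfC)^\inn\to\ni({}_2G_k,\bfC)^\inn$ compatible with the level maps. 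Under the standing hypotheses on $(G,\bfC)$ -- so by the branch cycle lemma (\BCL) all these Hurwitz spaces, and the natural maps among them, are defined over the common moduli field $\bQ_{G,\bfC}$, which for the \RIGP\ is $\bQ$ -- these assemble into a morphism of Hurwitz-space towers $\Lambda\colon {}_1\bH\to {}_2\bH$ over $\bQ$, sending a cover to the quotient of its Galois closure by $\ker(\nu_k)$.

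Next I would push \MT s and rational points forward along $\Lambda$. Each level map of $\Lambda$ is $H_r$-equivariant and surjective onto its image, so it carries an $H_r$-orbit onto an $H_r$-orbit; by the Orbit Principle \eqref{braid-comps} it sends an absolutely irreducible component on level $k$ of ${}_1\bH$ onto one on level $k$ of ${}_2\bH$, and since it commutes with the level maps it sends arrows of the ${}_1\bH$ component tree to arrows of the ${}_2\bH$ component tree. Hence a \MT\ $T_1$ on ${}_1\bH$ (a projective system of such components with a unique vertex per level, Def.~\ref{MTdef}) has image a \MT\ $T_2:=\Lambda(T_1)$ on ${}_2\bH$. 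Since $\Lambda$ is a $\bQ$-morphism, a $K$-rational point on level $k$ of $T_1$ maps to one on level $k$ of $T_2$; by Prop.~\ref{RIGPback} this says exactly that a $K$-realization at level $k$ along $T_1$ produces one at level $k$ along $T_2$. Therefore, if Main \RIGP\ Conj.~\ref{mainconj} holds for $T_2$ -- no $K$-points above some level $k_0$ -- the same $k_0$ works for $T_1$. As $T_1$ was an arbitrary \MT\ on ${}_1\bH$, this is the assertion of the lemma.

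The main obstacle lies entirely in the first step: verifying that the displayed surjectivity hypothesis really yields the factorization $\nu$ and -- more delicately -- that $\nu$ is compatible with the $\ell^k$-filtrations uniformly in $k$, i.e.\ that one obtains a morphism of the full $\bZ_\ell$-lattices ${}_1L^\star\to {}_2L^\star$ rather than only of their reductions ${}_jM^\star$ mod $\ell$ as in \eqref{charqtmodules}. Once $\Lambda$ is available over $\bQ$, transferring the diophantine conjecture is the formal pushforward-of-rational-points argument above, and the component bookkeeping is routine via the correspondence \eqref{braid-comps}. If $\bQ_{G,\bfC}$ is a proper cyclotomic extension of $\bQ$, one runs the same argument over $\bQ_{G,\bfC}$ with no change.
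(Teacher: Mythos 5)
Your proof is essentially the paper's: extract from the hypothesis a compatible sequence of surjections $\nu_k\colon {}_1G_k \to {}_2G_k$ over $G$ (the paper phrases this as inductively forming the kernel modules $M(k)^\star$ with $G_{k,1}/M(k)^\star = G_{k,2}$ from $\ker(\mu_{2,1}\colon {}_1M^\star \to {}_2M^\star)$), hence a natural map of Hurwitz-space towers that carries a \MT\ on ${}_1\bH$ to a \MT\ on ${}_2\bH$ and $K$-points to $K$-points, and conclude by contrapositive. Your write-up is the same route with more of the routine verification filled in; the ``obstacle'' you flag -- lifting the mod-$\ell$ surjection to a $\bZ_\ell$-lattice map compatible with the $\ell^k$-filtrations -- is exactly what the paper's level-by-level inductive construction of the $M(k)^\star$ is supplying, so it is a presentational rather than a substantive difference.
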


\begin{proof} The assumptions give $\mu_{2,1}: {}_1M^\star \to {}_2 M^\star$, a surjection.  Consider a \MT, ${}_1\bH$,  on  $\{{}_1\sH_k\}_{k=0}^\infty$. Denote the corresponding Nielsen class orbit for ${}_1\sH_k$ by ${}_1O_k\le \ni(G_{k,1},\bfC)^\dagger$. 

Inductively, from $\ker(\mu_{2,1})$, form a $\bZ/\ell[G_{k,1}]$ module $M(k)^\star$ so that  with $G_{k,1}/M(k)^\star=G_{k,2}$ this produces a braid orbit ${}_2O_k\le \ni(G_{k,2},\bfC)^\dagger$ for a \MT\ on ${}_2\bH$. 

These maps are natural and they map $K$ points to $K$ points, for $K$ any number field. By assumption $K$ points disappear at high levels of any \MT\ on  ${}_2\bH$. Therefore they must also on ${}_2\bH$. \end{proof} 

Statements \eqref{charprops} on ${}_\ell M_G$ -- using considerable modular representation theory -- give some sense of tools at our disposal for using Lem.~\ref{latquotmaps}. They are respectively \cite[Chap. 3, Prop.~1.26]{Fr20} (or \cite[Proj.~Indecomp.~Lem.~2.3]{Fr95} with help from \cite{Se88})  and \cite[Chap.~3, Prop.~1.27]{Fr20} (or \cite[Prop.~2.7]{Fr95}). 
\cite{Fr20} collects these tools under four {\sl $\ell$-Frattini principles}. That material also has detailed explanations of the cohomology involved.  

\begin{edesc} \label{charprops}  \item  \label{charpropsa} It is indecomposable  (if not then its summands would be obvious examples of $M^\star$) and $\dim_{\bZ/\ell} (H^2(G, M_G))=1$ (see Lem.~\ref{charqtquest}). 
\item \label{charpropsb} Describing it requires having explicitly only the projective indecomposables belonging to the {\sl principal\/} block  representations.\footnote{$\bZ/\ell[G]$ decomposes as a sum of indecomposable 2-sided ideals (blocks) corresponding to writing 1 as a direct sum of primitive central idempotents. The block \lq\lq containing\rq\rq\ the identity representation is the principle block \cite[\S6.1]{Be91}.}\end{edesc} 

Consider any (non-trivial) $\bZ/\ell[G]$ quotient $M'$ of ${}_\ell M_G$, with kernel $K_{M'}$. Since any quotient of ${}_\ell\tilde \psi_\ab$ mapping through $G$ is a Frattini cover, therefore giving ${}_\ell^1\psi_{M'}: \tfG \ell 1/K_{M'}\eqdef \tfG \ell 1 {}_{M'}\to G$ is an $\ell$-Frattini cover. 
We say it is {\sl unique\/} if  the following $\bZ/\ell$ module has dimension 1 (see Rem.~\ref{uniqlfratt}):
\begin{equation} \label{ext2Lstar} H^2(G,M')=\Ext^2_{\bZ/\ell[G]}(\one, M')\text{ \cite[p.~70]{Be91}}.  \end{equation} 

\begin{lem}  \label{charqtquest} As above, there is a short exact sequence \begin{equation} \label{findLeq}  L_{M'}\to {}_\ell \tilde G_{M',\ab} \longmapright{{}_\ell \tilde \psi_{M'} }{40} G\footnote{Referencing \eqref{findLeq} just by $M'$  is a simplification of notation, since $M'$ can appear, in cases, as a quotient of ${}_\ell M_G$ several ways. Usually this won't be a problem.}\end{equation}  
satisfying \eqref{findLprop} for $k\ge 0$:\footnote{\sdisplay{\cite{CaD08}} gives a good set of these -- appropriate analogs of Ex.~\ref{Dlk+1} and Thm.~\ref{level0MT} -- sufficient to test the conjectures.}
\begin{edesc} \label{findLprop} \item \label{findLpropa} ${}_\ell \tilde \psi_{M'} $ factors through ${}_\ell^1\psi_{M'}$
\item \label{findLpropb} $\ell^k\L_{M'}/\ell^{k\np1}\L_{M'}\cong M'$ as a $\bZ/\ell[G]$ module.\end{edesc}  

If $M'$ is an indecomposable $\bZ/\ell[G]$ module, then $L_{M'}$ is an indecomposable $\bZ_\ell[G]$ module \cite[Thm.~1.9.4]{Be91}. \end{lem}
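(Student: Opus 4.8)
\textbf{Proof plan for Lemma~\ref{charqtquest}.}

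The plan is to build the sequence \eqref{findLeq} by a standard fiber-product-and-limit construction inside the universal abelianized $\ell$-Frattini cover ${}_\ell\tilde\psi_{G_\ab}\colon \fG \ell {}_\ab\to G$, and then to verify properties \eql{findLprop}{findLpropa} and \eql{findLprop}{findLpropb} by inspecting the characteristic-kernel filtration \eqref{charlquots}. First I would recall that, by \eqref{charqtmodules}, the quotient map ${}_\ell M_G\twoheadrightarrow M'$ with kernel $K_{M'}$ lifts to a surjection $\tfG \ell 1 \twoheadrightarrow \tfG \ell 1/K_{M'}=\tfG \ell 1 {}_{M'}$, giving the $\ell$-Frattini cover ${}_\ell^1\psi_{M'}\colon \tfG \ell 1 {}_{M'}\to G$. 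The task is to find the \emph{lattice} cover sitting above this first level whose successive $\ell$-power quotients all reproduce $M'$. The natural candidate is obtained as follows: inside $\fG \ell {}_\ab$ consider the image of $\ker({}_\ell\tilde\psi_{G_\ab})$; this is a $\bZ_\ell[G]$-module, free of finite rank, filtered by the $\ell^k$-multiples with successive quotients the characteristic module ${}_\ell M_G$. One pulls back $K_{M'}\le {}_\ell M_G = \ker/\ell\ker$ to the appropriate submodule $N\le \ker({}_\ell\tilde\psi_{G_\ab})$ with $N\supseteq \ell\ker$ and $\ker/N\cong M'$, and then takes the smallest $G$-invariant sublattice $L_{M'}$ of $\ker({}_\ell\tilde\psi_{G_\ab})/(\,\bigcap_k \ell^k N\text{-type closure}\,)$ — more precisely, one sets ${}_\ell\tilde G_{M',\ab}\eqdef \fG \ell {}_\ab / Q$ where $Q$ is the closed $G$-invariant subgroup of $\ker({}_\ell\tilde\psi_{G_\ab})$ generated by $K_{M'}$ together with all commutators, chosen so that the resulting kernel $L_{M'}$ is a $\bZ_\ell$-lattice whose mod-$\ell$ reduction is $M'$. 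Because every quotient of ${}_\ell\tilde\psi_{G_\ab}$ through which $G$ factors is automatically Frattini (the universal $\ell$-Frattini property, as recalled after Def.~\ref{univFrattdef-ab}), ${}_\ell\tilde\psi_{M'}$ is an $\ell$-Frattini cover, and by construction it factors through ${}_\ell^1\psi_{M'}$, giving \eql{findLprop}{findLpropa}.

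For \eql{findLprop}{findLpropb} I would argue that $L_{M'}$, being a quotient lattice of the free $\bZ_\ell[G]$-module $\ker({}_\ell\tilde\psi_{G_\ab})$ with prescribed mod-$\ell$ reduction $M'$, has $L_{M'}/\ell L_{M'}\cong M'$; then multiplication by $\ell^k$ is a $\bZ_\ell[G]$-isomorphism $L_{M'}/\ell L_{M'}\xrightarrow{\sim}\ell^k L_{M'}/\ell^{k\np1}L_{M'}$ since $L_{M'}$ is $\bZ_\ell$-torsion-free, which is exactly \eql{findLprop}{findLpropb} for all $k\ge 0$. The key point I must be careful about is that $K_{M'}$ can sit inside ${}_\ell M_G$ in several ways as noted in the footnote to \eqref{findLeq}, so I should fix one such embedding at the outset and track it through the construction; and I must check that taking the abelianized (rather than full) $\ell$-Frattini cover does not obstruct the lattice condition — this is where one uses that $\ker({}_\ell\tilde\psi_{G_\ab})$ is already a $\bZ_\ell[G]$-module free of finite rank (Def.~\ref{univFrattdef-ab}), so its $G$-invariant quotient lattices are exactly the finite-rank $\bZ_\ell[G]$-modules surjected onto by it. Finally, the last sentence follows from the cited result \cite[Thm.~1.9.4]{Be91}: if $M'$ is indecomposable, its projective cover over $\bZ_\ell[G]$ (equivalently, the corresponding lattice $L_{M'}$ lifting it) is indecomposable, because idempotent lifting along the complete local ring $\bZ_\ell$ is unobstructed, so a nontrivial decomposition of $L_{M'}$ would reduce mod $\ell$ to a nontrivial decomposition of $M'$.

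The main obstacle I anticipate is pinning down precisely which sublattice $L_{M'}$ of $\ker({}_\ell\tilde\psi_{G_\ab})$ to use so that it is genuinely a lattice (i.e., $\bZ_\ell$-torsion-free of the correct rank) \emph{and} its associated graded for the $\ell$-adic filtration is $M'$ at every level rather than only at level one; the subtlety is that the characteristic filtration \eqref{charlquots} uses Frattini subgroups of characteristic kernels, which a priori mix $\ell$-th powers and commutators, whereas I want the cleaner $\ell$-adic filtration $\ell^k L_{M'}$. Reconciling these two filtrations — showing that after passing to the abelianized cover the Frattini filtration on the relevant quotient kernel coincides with multiplication by $\ell$ — is the technical heart of the argument, and it is exactly the place where the $\ell$-perfectness of $G$ and the freeness of $\ker({}_\ell\tilde\psi_{G_\ab})$ as a $\bZ_\ell[G]$-module get used. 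Once that identification is in hand, \eql{findLprop}{findLpropa}–\eql{findLprop}{findLpropb} and the indecomposability statement are essentially formal.
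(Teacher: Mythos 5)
Your overall plan — construct ${}_\ell\tilde G_{M',\ab}$ as a quotient of $\fG \ell {}_\ab$ by a suitable closed submodule $Q$ of $V:=\ker({}_\ell\tilde\psi_{G_\ab})$, then read off \eql{findLprop}{findLpropb} from torsion-freeness — is the right shape, and your handling of \eql{findLprop}{findLpropb} once a lattice $L_{M'}$ with $L_{M'}/\ell L_{M'}\cong M'$ is in hand is correct (multiplication by $\ell^k$ is an isomorphism on the graded pieces because $L_{M'}$ is $\bZ_\ell$-torsion-free), as is the final indecomposability step (a nontrivial decomposition of $L_{M'}$ would reduce mod $\ell$, by Nakayama, to one of $M'$). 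But the construction of $Q$ — which is the entire content of the lemma — is not actually given. Your ``more precisely'' sentence doesn't parse: $K_{M'}$ lives in ${}_\ell M_G=V/\ell V$, not in $V$, so ``the subgroup of $V$ generated by $K_{M'}$'' is not defined without a choice of lifts; ``together with all commutators'' is vacuous since $V$ is already abelian; and ``chosen so that the resulting kernel $L_{M'}$ is a $\bZ_\ell$-lattice whose mod-$\ell$ reduction is $M'$'' is circular. You flag this yourself at the end as ``the technical heart,'' but then assert the rest is formal — which concedes that the proof is incomplete exactly where the lemma has content. Note also that the difficulty you anticipate (reconciling the Frattini filtration with the $\ell$-adic one) is a red herring in the abelianized setting: once the kernel is abelian, its Frattini subgroup is simply $\ell\cdot(\,\cdot\,)$, so the two filtrations agree for free. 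The real difficulty is producing a $G$-invariant, $\ell$-saturated $Q\le V$ with $Q+\ell V=N$ (where $N$ is the preimage of $K_{M'}$): by Smith normal form such a $Q$ always exists $\bZ_\ell$-linearly, but $G$-invariance is not automatic, and your proposal gives no mechanism to enforce it.

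The paper avoids choosing $Q$ all at once. It builds the finite quotients $\tfG \ell k {}_{M'}$ inductively from the characteristic quotients $\tfG \ell k {}_\ab$: starting from $\tfG \ell 1 {}_{M'}=\tfG \ell 1 {}_\ab/K_{M'}$, it uses the universal property of $\tfG \ell {k\np1} {}_\ab\to G$ (for $\ell$-Frattini covers with abelian exponent-$\ell^{k\np1}$ kernel) to get $\psi''\colon\tfG \ell {k\np1}{}_\ab\to\tfG \ell k {}_{M'}$, defines ${}^{k\np1}K_{M'}:=\ell\,\psi''^{-1}({}^kK_{M'})$, and sets $\tfG \ell {k\np1}{}_{M'}:=\tfG \ell {k\np1}{}_\ab/{}^{k\np1}K_{M'}$. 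Passing to the projective limit produces ${}_\ell\tilde G_{M',\ab}$ and $L_{M'}$. The factor of $\ell$ in the definition of ${}^{k\np1}K_{M'}$ is what guarantees that each $\ker(\tfG \ell {k\np1}{}_{M'}\to\tfG \ell k {}_{M'})$ is a fresh copy of $M'$ and that the limiting kernel is $\bZ_\ell$-torsion-free — the role your direct construction of $Q$ was supposed to play, but never pinned down. If you want to salvage the direct approach, you would need to exhibit $Q$ canonically (or at least prove existence of a $G$-invariant saturated complement), which amounts to redoing the paper's induction in disguise.
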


\begin{proof} For \eqref{findLeq}, inductively form ${}_\ell^k\psi_{M'}: \tfG \ell k {}_{M'}\to G$ as a quotient of ${}_\ell^k\psi_\ab: \tfG \ell k {}_\ab\to G$. 

Use the universal property of ${}^{k\np1}_\ell \psi_\ab: \tfG \ell {k\np1} {}_\ab \to G$ for $\ell$-Frattini covers of $G$ with exponent $\ell^{k\np1}$ kernel: This factors through ${}_\ell^k\psi_{M'}$ as $$\psi'': {}^{k\np1}_\ell \psi_\ab: \tfG \ell {k\np1} {}_\ab  \to \tfG \ell k {}_{M'}.$$  Continue inductively, assuming we have  ${}^k K_{M'}$, the kernel of $\tfG \ell k {}_{\ab} \to  \tfG \ell {k\nm1} {}_{M'}$. Then, on $\tfG \ell {k\np1} {}_\ab$, mod out by $\ell\psi''^{-1}({}^kK_{M'})\eqdef {}^{k\np1}K_{M'}$ to form  $\tfG \ell {k\np1}  {}_{M'}\to G$.  

For the final profinite group cover given by ${}_\ell \tilde \psi_{M'}$, take the projective limit of these group covers of $G$.   \end{proof} 

\begin{rem} \label{uniqlfratt} The uniqueness definition in \eqref{ext2Lstar} extends to consider all related coefficients, $\bZ/\ell^k$ and $\bZ_\ell$.  We think ${}_\ell^1\psi_{M'}$ in Prop.~\ref{latquotmaps} is unique for Frattini covers of $G$ with kernel $M'$ if and only if \eqref{findLeq}  is unique with 
properties  \eql{findLprop}{findLpropa} and \eql{findLprop}{findLpropb}. The only if part is easy, but at this time we don't have a complete proof of the other direction.  \end{rem} 
\subsubsection{Nielsen classes of $\ell$-Frattini lattices} \label{nilatqt} Apply the formation of Nielsen classes, as in Thm.~\ref{BCYCs} with the assumptions of \S\ref{outlineMTs}, now though to a profinite version, $\ni(\fG \ell {}_\ab,\bfC)$. Then $H_r$ extends to this Nielsen class. 

A \MT\  for $(G,\bfC,\ell)$ consists of a profinite $H_r$ orbit. Denote the collection of these by $\sF_{G,\bfC,\ell}\eqdef \sF_{G,\bfC,\ell,{}_\ell\tilde \psi_\ab}$. The same constructions works for  \begin{equation} \label{proplatticequotient} \begin{array}{c} \text{ any  proper $\ell$-Frattini lattice quotient $L^\star \to G^\star \longmapright {{}_\ell\tilde \psi^\star} {30}G$, with} \\ \text{ the corresponding $^kG^\star$ from modding out on $G^*$ by $\ell^k L^\star$.}\end{array}\end{equation} 

\S\ref{prodladic} explains how $\ell$-adic representations appear from these \MT s.   For many proper lattice quotients, elements of $\sF_{G,\bfC,\ell,{}_\ell\tilde \psi^\star}$ are variants on classical spaces as started in Ex.~\ref{Dlk+1}, \S\ref{normlgp} and expanded on in \sdisplay{\cite{CaD08}}. 

\subsubsection{Normal $\ell$-Sylow and other cases} \label{normlgp}  We get a fairly small $\ell$-Frattini lattice quotient  with $G=N\xs H$, a normal $\ell$-Sylow, $N$ ($(|H|,\ell)=1$). Then, the construction above  for ${}_\ell M_G$ feels much less abstract.

That is because $\fG \ell$ is just $\tilde N\xs H$ with $\tilde N$ the pro-$\ell$, pro-free group on the same number of generators as has $N$. The point is the action of $H$ extends to $\tilde N$ \cite[Prop.~22.12.2]{FrJ86}${}_2$.\footnote{That makes it sound explicit, but extension of that action is abstract.}

Denote the Frattini subgroup of $N$ by ${}_{\text{\rm fr}}N$ (as in \eqref{charlquots}) and its quotient $N/{}_{\text{\rm fr}}N$ by $N'=(\bZ/\ell)^t$ ($t=\text{\rm rank}(N)$). Then, $N\to N'$ extends to $N\xs H\to N'\xs H$, and then to $\mu': {}_\ell \tilde N\xs H \to N'\xs H$. 

Both Lem.~\ref{Zlquot} (which follows from the above) and Ex.~\ref{lcentext} apply to the \S\ref{05-to-now} main example.  

\begin{lem} \label{Zlquot} As  ${}_\ell \tilde N\xs H$ is $\ell$-projective, $\mu'$ extends to  $$\text{ $\mu:(\bZ/\ell^2)^t\xs H\to (\bZ/\ell)^t\xs H$, a Frattini cover.}$$ 

Conclude:  ${}_\ell M_{G}$ has $\ker(\mu)=(\bZ/\ell)^t\eqdef M'$ as a quotient. So, from Lem.~\ref{charqtquest}, ${}_\ell\tilde \psi_\ab: \tfG \ell {}_\ab \to G$ has \eqref{findLeq} lattice quotient with $L_{M'}=(\bZ_\ell)^t$. \end{lem}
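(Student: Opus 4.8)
The plan is to carry out the three moves the statement already telegraphs: manufacture the Frattini cover $\mu$, lift $\mu'$ across it by $\ell$-projectivity, and then read the conclusion off Lem.~\ref{charqtquest}. Throughout I use that $\ell\nmid|H|$, and (from the discussion preceding the lemma) that $\fG\ell={}_\ell\tilde N\xs H$ is the universal $\ell$-Frattini cover not only of $G$ but equally of $\bar G\eqdef N'\xs H$; here $G\to\bar G$ has kernel ${}_{\text{\rm fr}}N=\Phi(N)$, an $\ell$-group lying in $\Phi(G)$ since the Frattini subgroup of a normal subgroup always lies inside that of the ambient group.

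\emph{Building $\mu$ and lifting $\mu'$.} As $\ell\nmid|H|$, the $\bZ/\ell[H]$-module $N'=(\bZ/\ell)^t$ lifts --- uniquely, with no obstruction, by idempotent lifting --- to a $\bZ_\ell[H]$-lattice $\hat N'\cong(\bZ_\ell)^t$. Let $\mu\colon(\bZ/\ell^2)^t\xs H\to(\bZ/\ell)^t\xs H$ be reduction mod $\ell$ on the module factor $\hat N'/\ell^2\hat N'$ and the identity on $H$. Its kernel $\ell(\bZ/\ell^2)^t\cong(\bZ/\ell)^t\eqdef M'$ is elementary abelian, so $\mu$ is an $\ell$-cover; and $M'=\Phi\bigl((\bZ/\ell^2)^t\bigr)\subseteq\Phi\bigl((\bZ/\ell^2)^t\xs H\bigr)$ by that same fact, so $\mu$ is Frattini. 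Now $\mu'$ is the composite $\fG\ell\to G\to\bar G$ of Frattini covers, hence Frattini; and $\fG\ell$, being the universal $\ell$-Frattini cover of $\bar G$, is $\ell$-projective, so the embedding problem $(\mu',\mu)$ --- whose kernel is the finite $\ell$-group $M'$ --- has a solution $\hat\mu\colon\fG\ell\to(\bZ/\ell^2)^t\xs H$ with $\mu\circ\hat\mu=\mu'$. Since $\mu'$ is onto and $\mu$ is Frattini, this weak solution $\hat\mu$ is automatically onto.

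\emph{Extracting the conclusion.} Then $(\bZ/\ell^2)^t\xs H=\fG\ell/\ker\hat\mu$ is a quotient of $\fG\ell$ over $\bar G$ whose kernel over $\bar G$, namely $M'$, is elementary abelian; hence $\ker\hat\mu$ contains $\ker_1={}_{\text{\rm fr}}\ker({}_\ell\tilde\psi_{\bar G})$, the cover factors through $\tfG\ell 1$, and $M'$ is a nonzero $\bZ/\ell[\bar G]$-module quotient of ${}_\ell M_{\bar G}$ on which $N'$ acts trivially, so $M'$ is inflated from $\bZ/\ell[H]$. When $N$ is already elementary abelian --- the case of the \S\ref{05-to-now} example, $G=A_4=(\bZ/2)^2\xs\bZ/3$ --- one has $\bar G=G$ and this is exactly ``${}_\ell M_G$ has $M'$ as a quotient''; in general one reduces to this by first passing to $\bar G$, which is harmless because $G\to\bar G$ is Frattini and so compatibly identifies the universal abelianized $\ell$-Frattini covers. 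Feeding this $M'$ into Lem.~\ref{charqtquest} produces the sequence $L_{M'}\to{}_\ell\tilde G_{M',\ab}\to G$ of \eqref{findLeq}, and property \eql{findLprop}{findLpropb} forces $L_{M'}$ to be $\bZ_\ell$-torsion-free of rank $\dim_{\bZ/\ell}M'=t$; thus $L_{M'}\cong(\bZ_\ell)^t$ (in fact inflated from the $\bZ_\ell[H]$-lattice $\hat N'$).

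\emph{Main obstacle.} The one substantive step is the $\ell$-projectivity input --- that $\fG\ell={}_\ell\tilde N\xs H$ really does solve the mod-$\ell^2$ embedding problem $(\mu',\mu)$. This rests on the structure of $\fG\ell$: the pro-$\ell$-freeness of $\ker({}_\ell\tilde\psi_{G})$ together with the Schur--Zassenhaus splitting of the prime-to-$\ell$ complement $H$, as recorded in \cite[Prop.~22.12.2]{FrJ86}${}_2$. Everything else --- the Frattini-cover bookkeeping and the forced shape of $L_{M'}$ --- is formal.
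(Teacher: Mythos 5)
Your proof is correct and is the argument the lemma's wording already telegraphs (the paper gives no proof, merely noting the lemma "follows from the above"): use $\ell$-projectivity of $\fG\ell$ to solve the embedding problem $(\mu',\mu)$ over $\bar G=N'\rtimes H$, observe the weak solution $\hat\mu$ is automatically onto since $\mu$ is Frattini and $\mu'$ is onto, factor the resulting cover through the first characteristic quotient, and then read $L_{M'}\cong(\bZ_\ell)^t$ off \eql{findLprop}{findLpropb}. The construction of $\mu$, the Frattini check via $\Phi\bigl((\bZ/\ell^2)^t\bigr)\subseteq\Phi\bigl((\bZ/\ell^2)^t\rtimes H\bigr)$, and the rank/torsion-free count for $L_{M'}$ are all sound.

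The one place I would push back is the final parenthetical handling of the case $\Phi(N)\neq1$. As you run the argument it produces $M'$ as a $\bZ/\ell[\bar G]$-quotient of ${}_\ell M_{\bar G}$, and you then say passing from $G$ to $\bar G$ "compatibly identifies the universal abelianized $\ell$-Frattini covers." The covering \emph{group} $\fG\ell$ is indeed the same for $G$ and $\bar G$, but the characteristic modules are taken relative to the base: ${}_\ell M_G=\ker_0/{}_{\text{\rm fr}}\ker_0$ with $\ker_0=\ker(\fG\ell\to G)$, whereas ${}_\ell M_{\bar G}$ uses the strictly larger $\bar\ker_0=\ker(\fG\ell\to\bar G)$. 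So "${}_\ell M_{\bar G}\twoheadrightarrow M'$" does not by itself give "${}_\ell M_G\twoheadrightarrow M'$." One can check $\Phi(\ker_0)\subseteq\ker\hat\mu$ (since $\ker_0/(\ker_0\cap\ker\hat\mu)$ injects into the elementary abelian $\bar\ker_0/\ker\hat\mu=M'$), so $\hat\mu$ factors through $\tfG\ell 1$ over $G$; but the induced image of ${}_\ell M_G$ in $M'$ is then only a $\bZ/\ell[G]$-submodule, not visibly all of $M'$. To conclude literally what the lemma asserts in that case one would instead push $\mu$ up to a Frattini $\ell$-cover of $G$ itself — e.g.\ via the fiber-product device of Lem.~\ref{fiberprod} applied to the two Frattini covers $G\to\bar G$ and $(\bZ/\ell^2)^t\rtimes H\to\bar G$ — and argue its kernel is all of $(\bZ/\ell)^t$. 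All of this is moot for the uses the paper makes of the lemma (\eqref{minlatqt} and Thm.~\ref{level0MT}), where $N=(\bZ/\ell)^t$ is already elementary abelian so $\bar G=G$ on the nose; that is the case your proof handles cleanly.
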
 

Ex.~\ref{lcentext} -- giving what appears to be a very simple $\ell$-Frattini lattice quotient -- shows we can isolate out the role of the Schur multiplier of $G$, as giving a lift invariant. A list of its appearances is in \eqref{centfratuse}. Explicit examples of this are in Prop.~\ref{A43-2} and in Thm.~\ref{level0MT} in \S\ref{05-to-now}. These appearances extend all the way up the levels of that \MT\ (not in this paper).   

\begin{exmpl}[Schur multipliers] \label{lcentext} Again $G$ is $\ell$-perfect. Consider when $\psi_H: H\to G$ is a {\sl central\/} $\ell$-Frattini cover. Then, $\ker(\psi_H)$ is a quotient of ${}_\ell M_G$. The $\ell$-Frattini quotient lattice in \eqref{charqtquest} then has lattice kernel in the center of $ {}_\ell \tilde G_{M',\ab}$. These \MT s based on $\ell'$ classes $\bfC$ have two features. 

The components at higher levels (if they pass the obstruction test of Thm.~\ref{obstabMT}) would have the same underlying spaces as at level 0. As in the discussion, however,  following Def.~\ref{moddeffield}, the {\sl moduli definition field test\/} for realization of representing covers at that level would not directly work. These would definitely not be fine moduli spaces. \end{exmpl} 

\begin{rem}[\MT\ levels and being centerless] \label{nocentMT} If $G$ is centerless and $\ell$-perfect, then $\tfG \ell k$ and  $\tfG \ell k {}_\ab$ are also \cite[Prop.~3.21]{BFr02}. The significance with inner Hurwitz spaces:  this is the criterion for fine moduli. Ex.~\ref{lcentext} shows this may not hold for $\ell$-Frattini lattice quotients attached to $G$. \end{rem} 

\subsection{Test for a nonempty \MT} \label{nonemptyMT} Again, $G$ is $\ell$-perfect and we have $\ell'$ conjugacy classes $\bfC$.   \S\ref{obstcomps} gives the main criterion for non-empty \MT s. Notice, a' la Rem.~\ref{nocentMT} that, even if $G$ is centerless, Thm.~\ref{obstabMT} deals with a group that has a center.

Subsections  \S\ref{Ancomps} and \S\ref{exA43-2cont}  give examples of obstructed braid orbits (Hurwitz space components). They also give examples of computing for reduced \MT\ levels for $r=4$ with genuses $> 1$.  

\subsubsection{The obstructed component criterion} \label{obstcomps} This section shows just how important is modular representations in understanding geometric properties of \MT\ levels. \cite{Fr06} (and \cite{Fr20}) show how this works for cusp analysis. Denote the maximal central $\ell$-Frattini extension by ${}_\ell \alpha:{}_\ell G^\natural \to G$.\footnote{This is a cover with finite kernel.} 

\begin{lem} If each level of $\bH(\fG \ell {}_\ab,\bfC)$ is nonempty, then there is at least one $\MT$ on  $\bH(\fG \ell {}_\ab,\bfC)$.  \end{lem}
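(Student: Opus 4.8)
The statement is, once Def.~\ref{MTdef} is unwound, purely a compactness assertion about the \emph{component tree} of $\bH(\fG \ell {}_\ab,\bfC)$, so the plan is to exhibit that tree as a projective system of finite nonempty sets and then invoke the Tychonoff Theorem (equivalently K\"onig's Lemma) to extract a thread. Recall a \MT\ on $\bH(\fG \ell {}_\ab,\bfC)$ is \emph{exactly} such a thread: a choice, for each $k\ge 0$, of one absolutely irreducible component $\sH_k$ of $\sH(\tfG \ell k {}_\ab,\bfC)$, compatible with the maps to lower levels. So the whole task is to produce one point of an inverse limit.

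First I would record the finiteness input: for each $k$ the group $\tfG \ell k {}_\ab$ is finite (Def.~\ref{univFrattdef-ab} and the $\ell$-Frattini filtration \eqref{charlquots}), hence the Nielsen class $\ni(\tfG \ell k {}_\ab,\bfC)$ is a finite set, and by Princ.~\ref{princHS} and the Orbit Principle \eqref{braid-comps} its (finitely many) $H_r$-orbits are precisely the absolutely irreducible components; write $V_k$ for this finite set. The hypothesis ``each level is nonempty'' says exactly $V_k\neq\varnothing$ for all $k$. Next I would set up the bonding maps: the characteristic surjections $\tfG \ell {k\np1} {}_\ab\to\tfG \ell k {}_\ab$ are compatible with $\bfC$ (the same-order lifts of the classes are forced, by Schur--Zassenhaus, as already used in \eqref{HurSpSeq}), hence induce $H_r$-equivariant maps on Nielsen classes, and since braid orbits go to braid orbits one gets maps $\pi_k\colon V_{k\np1}\to V_k$; this is the component tree. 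Finally, $\{(V_k,\pi_k)\}_{k\ge0}$ is an inverse system of nonempty finite sets, and $\varprojlim_k V_k\neq\varnothing$: give each $V_k$ the discrete (compact) topology and apply Tychonoff to $\prod_k V_k$, or, concretely, note that the eventual images $W_k=\bigcap_{m\ge k}\pi_{k,m}(V_m)$ (with $\pi_{k,m}\colon V_m\to V_k$ the composite of the bonding maps) are nonempty finite sets with $\pi_k(W_{k\np1})=W_k$, so one threads them downward by induction; or, since there are finitely many level-$0$ components but infinitely many vertices overall, some level-$0$ component has an infinite subtree of descendants and K\"onig's Lemma supplies an infinite path. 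Any point of $\varprojlim_k V_k$ is a \MT\ on $\bH(\fG \ell {}_\ab,\bfC)$.

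I do not expect a genuine obstacle: the content is the standard compactness lemma, and all the group-theoretic work --- that $\bfC$ makes sense at every level and that the level-lowering homomorphisms respect it, so components really do map to components --- is already in place via Schur--Zassenhaus and \eqref{HurSpSeq}. The one caveat worth stating is that the argument is non-constructive (it uses Tychonoff, exactly as in the construction of $\tilde G$ itself noted after Def.~\ref{univFrattDef}): it produces \emph{some} \MT\ but no count and no preferred one, and says nothing about which level-$0$ component a given \MT\ lies over.
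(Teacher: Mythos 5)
Your proof is correct and follows essentially the same route as the paper: set up a projective system of finite nonempty sets indexed by tower level and invoke Tychonoff (the paper also cites the axiom of choice/Tychonoff via \S\ref{toughestpt}) to extract a thread. The only cosmetic difference is that the paper threads through Nielsen-class elements $\{{}_k\bg\}_{k\ge0}$ and then passes to their braid orbits, whereas you thread directly through the finite sets of braid orbits $V_k$; these are trivially interchangeable.
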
 

\begin{proof} Producing such a \MT\ is equivalent to forming  
\begin{equation} \label{exMTbg} \begin{array}{c} \text{branch cycles $\bG\eqdef \{{}_k\bg\in \ni(\tfG \ell k {}_\ab,\bfC)\}_{k=0}^\infty$ such that} \\ \text{the natural map $\tfG \ell {k\np 1} {}_\ab \to \tfG \ell {k} {}_\ab$ maps ${}_{k\np1}\bg $ to ${}_{k}\bg $, $k\ge 0$.} \end{array}\end{equation}  Use the axiom of choice (Tychonoff Theorem) as in \S\ref{toughestpt}. Some nonempty chain hits each \MT\  level. \end{proof} 

\begin{defn}  Taking the braid orbits of the elements in $\bG$ gives a \MT\ through the braid orbit of ${}_0\bg$.  If  ${}_0\bg \in O$ we say this is a \MT\ through $O$. If there is no \MT\ through $O$, then $O$ is {\sl obstructed}. \end{defn} 

\begin{thm} \label{obstabMT} There  is a \MT\ on $\bH(\fG \ell {}_\ab,\bfC)$ through $O$ if and only if the natural map $\ni({}_\ell G^\natural,\bfC) \to \ni(G,\bfC)$ has image ${}_0\bg$.\footnote{Computing Schur multipliers is hard. So,  a criterion for a component, assuring without computation, that the lift invariant is always trivial is valuable. The generalization, Def.~\ref{gl'thm},  of Harbater-Mumford components, Def.~\ref{HMrep}, gives exactly that.} 

Now replace $\fG \ell {}_\ab \to G$ by any $\ell$-Frattini lattice $\psi^\star: G^\star\to G$ and ${}_\ell G^\natural \to G$ by the maximal central $\ell$-Frattini extension of $G$ that is a quotient $G^*$. Then, the same statement guarantees a \MT\ on $\psi^\star $ through $O$. \end{thm}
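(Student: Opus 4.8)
The plan is to convert \lq\lq there is a \MT\ on $\bH(\fG \ell {}_\ab,\bfC)$ through $O$\rq\rq\ into an infinite chain of lifting problems up the abelianized $\ell$-Frattini tower, to identify the obstruction at each link with a single lift invariant, and then to see that this obstruction is already visible at the bottom of the tower through ${}_\ell G^\natural$. First, by the Tychonoff argument of the preceding lemma, such a \MT\ exists if and only if for every $k\ge 0$ there is a braid orbit $O_k$ on $\ni(\tfG \ell k {}_\ab,\bfC)^\dagger$ with $O_0=O$ and with $O_{k\np1}$ mapping onto $O_k$ under $\tfG \ell {k\np1}{}_\ab\to\tfG \ell k{}_\ab$: the finite, nonempty sets of length-$k$ descending chains out of $O$ form a projective system of finite sets, so their inverse limit is nonempty. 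It therefore suffices to decide liftability of $O$ to each finite level $k$.

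The Frattini property reduces orbit-lifting to element-lifting. If $\bg^{(k\np1)}\in\ni(\tfG \ell {k\np1}{}_\ab,\bfC)^\dagger$ lies over $\bg^{(k)}$, then $\langle\bg^{(k\np1)}\rangle$ surjects onto $\langle\bg^{(k)}\rangle=\tfG \ell k{}_\ab$; since $\tfG \ell {k\np1}{}_\ab\to\tfG \ell k{}_\ab$ is $\ell$-Frattini (Def.~\ref{frattdef}), this forces $\langle\bg^{(k\np1)}\rangle=\tfG \ell {k\np1}{}_\ab$, so only the product-one condition can obstruct lifting. Because $\fG \ell {}_\ab$ has abelian pro-$\ell$ kernel $\tilde M$, its characteristic Frattini filtration is the $\ell$-adic one, $\tfG \ell k{}_\ab=\fG \ell {}_\ab/\ell^k\tilde M$, and $M_{k,k\np1}=\ell^k\tilde M/\ell^{k\np1}\tilde M$ equals, for every $k$, the one fixed $\bZ/\ell[G]$-module $\tilde M/\ell\tilde M\cong{}_\ell M_G$. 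By Schur--Zassenhaus and the class normalizations of \S\ref{modtowdef}, same-order lifts of the $g_i^{(k)}$ exist and are determined up to the conjugation action of $M_{k,k\np1}$, and the residue of the product of such lifts is the lift invariant $s(O_k)$ of Def.~\ref{Anliftdef} (in its abelian-kernel form), which depends only on the braid orbit; $O_k$ reaches level $k\np1$ exactly when $s(O_k)=0$ in $H^2(G,{}_\ell M_G)$.

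Two facts then finish the main statement. First, since the kernel module $M_{k,k\np1}$ is the same at every level and $\dim_{\bZ/\ell}H^2(G,{}_\ell M_G)=1$ by \eqref{charprops}, the obstructions stabilize: the $\ell^k$-multiplication identifications $M_{k,k\np1}\cong{}_\ell M_G$ carry $s(O_{k\np1})$ to $s(O_k)$ whenever $O_{k\np1}$ exists, so vanishing of $s(O_0)$ produces a coherent tower of orbits with vanishing invariant at all levels, hence a \MT\ through $O$. Second, ${}_\ell G^\natural$ is a central $\ell$-Frattini cover, hence a quotient of $\fG \ell {}_\ab$ via a $G$-equivariant surjection $\tilde M\twoheadrightarrow C$ onto its finite kernel $C$ on which $G$ acts trivially; because ${}_\ell G^\natural$ is a nonsplit central cover and $H^2(G,{}_\ell M_G)$ is one-dimensional, one checks that $s(O_0)=0$ in $H^2(G,{}_\ell M_G)$ if and only if the associated lift invariant $s_{{}_\ell G^\natural/G}(O)\in C$ vanishes, i.e.\ if and only if ${}_0\bg$ lies in the image of $\ni({}_\ell G^\natural,\bfC)\to\ni(G,\bfC)$. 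The reverse direction is immediate: any lift of $O$ to $\ni(\fG \ell {}_\ab,\bfC)$ pushes forward along $\fG \ell {}_\ab\to{}_\ell G^\natural$. For the lattice-quotient variant, replace $\fG \ell {}_\ab$ by an $\ell$-Frattini lattice $\psi^\star\colon G^\star\to G$ with kernel the $\bZ_\ell$-lattice $L^\star$; its characteristic tower $G^\star/\ell^k L^\star$ has every step governed by the fixed $\bZ/\ell[G]$-module $L^\star/\ell L^\star$, and rerunning the argument with $C$ the maximal finite quotient of $L^\star$ with trivial $G$-action -- which, via the surjection ${}_\ell M_G\to L^\star/\ell L^\star$ of \eqref{charqtmodules}, is exactly the kernel of the maximal central $\ell$-Frattini extension of $G$ that is a quotient of $G^\star$ -- gives the stated criterion.

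The main obstacle is the stabilization in the previous paragraph: showing that vanishing of the lift invariant at level $1$ propagates to all higher levels, so that no new obstruction can appear beyond the Schur-multiplier obstruction detected by ${}_\ell G^\natural$. This is precisely where the one-dimensionality of $H^2(G,{}_\ell M_G)$ must be combined with the fact that each higher kernel $M_{k,k\np1}$ is a copy of ${}_\ell M_G$; handling the part of $\tilde M$ on which $G$ acts nontrivially -- verifying that it never contributes to the obstruction -- is the remaining technical point, and uses the $\ell$-perfectness of $G$ as in \eqref{charprops}.
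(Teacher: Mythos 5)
Your high-level outline matches the structure of the paper's argument: reduce to level-by-level lifting (Tychonoff), observe that the Frattini property makes generation automatic and Schur--Zassenhaus makes class-lifting automatic so product-one is the only obstruction, identify that obstruction as a lift invariant, and argue it is detected at the bottom by the central cover ${}_\ell G^\natural$. But the hard technical heart of the theorem is asserted rather than proved, and part of your framing is not correct.

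The first gap is the claim that the obstruction ``$s(O_k)$'' sits in $H^2(G,{}_\ell M_G)$. The lift invariant for the step $\tfG \ell {k\np1}{}_\ab\to\tfG \ell k{}_\ab$ is an element of (a quotient of) the kernel module $M_{k,k\np1}$, not of $H^2$; the role of $H^2$ is to classify the extension and to control uniqueness of the Frattini cover, not to house the lift invariant. Because the kernel is abelian but not central, the product $\prod\tilde g_i$ depends on the choice of same-order lifts $\tilde g_i$ up to shifts by $(g-1)M_{k,k\np1}$, and the well-defined lift invariant lives in the maximal trivial-$G$-action quotient of the kernel. That only this central piece matters -- that the nontrivial-action part of $M_{k,k\np1}$ never obstructs lifting of a braid orbit -- is the content of the paper's key ingredient, \cite[Obst.~Lem.~3.2]{FrK97}, recorded here as \eqref{appone}: for an $\ell$-Frattini cover with irreducible $\bZ/\ell[H']$-kernel $M'$, the Nielsen class map surjects on all braid orbits unless $M'$ is the trivial module, in which case the lift invariant is the sole obstruction. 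You never establish or invoke this; you sweep it into ``the residue of the product of such lifts,'' which assumes exactly the conclusion the lemma supplies.

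The second and larger gap is the stabilization step, which you yourself flag as ``the main obstacle.'' You argue that because $M_{k,k\np1}\cong{}_\ell M_G$ as a $G$-module for every $k$ and $\dim_{\bZ/\ell}H^2(G,{}_\ell M_G)=1$, ``the obstructions stabilize'' and the $\ell^k$-multiplication identifications carry $s(O_{k\np1})$ to $s(O_k)$. This does not follow. The groups $\tfG \ell k{}_\ab$ and their Schur multipliers change with $k$; that the total obstruction all the way up the tower is captured by the single central extension ${}_\ell G^\natural$ of $G$ at the base is the substance of \cite[Lems.~4.9, 4.14]{Fr06}, and the proof there rests on the $\ell$-Poincar\'e duality result of \cite[Prop.~3.2]{We05} (adapting \cite[I.4.5]{Se97a} from pro-$\ell$ groups to groups with a finite head $G$). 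Nothing in your argument replaces that input; one-dimensionality of $H^2(G,{}_\ell M_G)$ gives uniqueness of the first $\ell$-Frattini step but does not on its own force compatibility of lift invariants across levels or the vanishing of higher antecedent obstructions. Until the obstruction lemma and the duality-based stabilization are supplied, the ``only if'' direction and the passage to all levels remain unproved; the ``if'' direction (pushing a lift of $O$ on $\fG \ell {}_\ab$ forward to ${}_\ell G^\natural$) and the easy adaptations to a lattice quotient $\psi^\star$ are fine.
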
 

\begin{proof} Assume an $\ell$-Frattini cover $H''\to G$ that factors through $H'\to G$, with $M'=\ker(H''\to H')$ an irreducible $\bZ/\ell[H']$ module.  \begin{equation} \label{appone} \begin{array}{c} \text{\cite[Obst.~Lem.~3.2]{FrK97}: then $\ni(H'',\bfC)\to \ni(H',\bfC)$ } \\ \text{surjects on all braid orbits, unless  $M'=\one_{H'}$. In that case} \\ \text{it surjects only on braid orbits with lift invariant 1.}\end{array}\end{equation}   

\cite[Lem. 4.9]{Fr06} shows \eqref{appone} holds with $G_{k\np1}$ (resp.~$G_k$) replacing $H''$ (resp.~$H'$) and with the maximal quotient of $\ker( G_{k\np1}\to G_k)$  on which $G_k$ acts trivially replacing $G^\natural$. Then, \cite[Lem.~4.14]{Fr06} shows this holds for the sequence of Hurwitz spaces  defined by $\fG \ell {}_\ab \to G \to G_0$ using $G^\natural$.

The last paragraph uses $\ell$-Poincar\'e duality, a' la \cite[I.4.5]{Se97a}. Except, instead of a pro-$\ell$ group, $\fG \ell {}_\ab \to G$  has $G$ at its head \cite[Prop.~3.2]{We05}. We have only to adjust to the appropriate test on the central extension when applied to the $\ell$-Frattini lattice quotient.  
\end{proof} 

\subsubsection{Obstructed $A_n$ components} \label{Ancomps}  Use the {\sl spin cover}, $\alpha_n: \Spin_n\to A_n$, $n\ge 4$ and its lift invariant  (\S\ref{l=2level0MT}). Denote the genus of covers in $\ni(A_n,\bfC_{3^r})$ -- adjusting for the two conjugacy classes $\C_{\pm}$ for $n=4$ -- by $\textbf{g}_{n,r}$. 
\begin{edesc} \label{An} \item \label{Ana}  $\textbf{g}_{n,n\nm1}=0$: There is one Hurwitz space component with  lift invariant $(-1)^{n\nm1}$ starting with $-1$ at $n=4$. 
\item \label{Anb}  $\textbf{g}_{n,r}>0$: The two Hurwitz space components, $\sH_{n,r}^{\pm}$, are separated by their lift invariants. 
\item \label{Anc}  In $\sH(A_n,\bfC_{3^{r}})^\inn \to \sH(A_n,\bfC_{3^{r}})^\abs$, all $r\ge n\nm1$, 
each image component has only one preimage. \end{edesc} 

Here is the rephrasing of this in obstructed components for the full $\ell$-Frattini lattice (quotient) $\fG \ell {}_\ab\to G$. The unique component in \eql{An}{Ana} is obstructed if and only if $n$ is even. The component $\sH_{n,r}^+$ (resp.~$\sH_{n,r}^-$) in \eql{An}{Anb} is unobstructed (resp.~obstructed).   

\S \ref{exA43-2cont} finishes two issues with Ex.~\ref{exA43-2}. Here we consider both the {\sl full\/} $\ell$-Frattini lattice $\fG  2 {}_\ab\to G$, and the {\sl minimal\/} $\ell$-Frattini lattice (\S\ref{normlgp}): \begin{equation} \label{minlatqt} \text{$(\bZ_2)^2\xs \bZ/3\to (\bZ/2)^2\xs \bZ/3$, the case $\ell=2$ of Thm.~\ref{level0MT}.}\end{equation}   
For each lattice quotient $L^\star$, we want a \MT\ level value ($k_{L^\star}$), so any reduced Hurwitz space component at that level has genus $> 1$.\footnote{Putting aside the problem of an explicit Falting's result: At which tower level the finitely many $K$ (number field) points actually disappear?}

\subsubsection{Finishing Ex.~\ref{exA43-2}} \label{exA43-2cont} 
The Main \MT\ Conj.~\ref{mainconj} \cite[\S5]{Fr06} has the crucial hypothesis for giving a lower bound on the genus of high levels of a \MT. It is the existence of $\ell$-cusps -- basically that $\ell$ divides the cusp widths -- on each  component \eqref{cusptype}. 

Here $\ell=2$: Each component has 2-cusps:  respectively $O^4_{1,1}$ and $O^4_{1,4}$. 
Both inner reduced components have genus 0 (Prop.~\ref{A43-2}). So, we need the argument of \sdisplay{\cite{Fr06}} to conclude Main \RIGP\ Conj.~\ref{mainconj}  -- as in discussing Falting's in \S\ref{toughestpt} -- for each lattice $L^\star$ to get the (higher) level, $k_{L^\star}$,  at which component genuses rise beyond 1. 

\cite[Thm.~9.1]{BFr02} did exactly that for the full lattice  of $(A_5,\bfC_{3^4},\ell=2)$, listing the genus's at level 1 as 9 and 12. The characteristic module for both $A_4$ and $A_5$ for $\ell=2$ is a copy of $(\bZ/2)^5$; action of $A_5$ giving that of $A_4$ by restriction to the subgroup \cite[Prop.~2.4]{Fr95}. 

The rest of \cite[\S9.1]{BFr02} outlines this for the full lattice for $A_4$. \cite{FrH20} does this for the complete series of groups in Thm.~\ref{level0MT}, including $A_4$, and the minimal lattice. Here, as often, the smaller lattice quotient gives more components, and trickier issues in bounding the genus. 

Now consider the lattice in Lem.~\ref{Zlquot} and obstruction for the two \\ $(A_4,\bfC_{\pm 3^2},\ell=2)$ level 0 components. The lift invariant separated the two components. Yet, neither is obstructed:  $(\bZ_2)^2\xs \bZ/3\to (\bZ/2)^2\xs \bZ/3$ does not factor through $\Spin_4\to A_4$.

\subsection{Archetype of the $\ell$-Frattini  conjectures} \label{archfrattconj}  This section reminds of many reasons for using  proper $\ell$-Frattini lattice quotients (\S\ref{lfratlatqt}) of $\fG \ell {}_\ab \to G$: Easier group theory; reflection on the main (\RIGP\ and \OIT) conjectures; and the classical connections. 

Many of our examples are the \S\ref{normlgp} type. Here though we give the example that arises in considering Serre's \OIT.  Simultaneously, it inspired the most significant Def.~\ref{evenfratt}. It also gives proper $\ell$-Frattini lattice quotients that aren't from \S\ref{normlgp}. 

\begin{defn} \label{evenfratt} Call a sequence of finite group covers $$\dots \to H_{k\np1}\to H_k\to \dots \to H_1 \to H_0=G$$ {\sl eventually Frattini\/} (resp.~eventually $\ell$-Frattini) if there is a $k_0$ for which $H_{k_0\np k} \to H_{k_0}$ is a Frattini (resp.~$\ell$-Frattini) cover for $k\ge0$. \end{defn} 

If the projective limit of the $H_k\,$s is $\tilde H$, then we say it is eventually Frattini since the same holds for any cofinal sequence of quotients. Note: any {\sl open\/} subgroup of $\tilde H$ will also be eventually Frattini (resp.~$\ell$-Frattini). 

Take $G=\PSL_2(\bZ/\ell)$, ${}_\ell M_G$ its characteristic $\ell$-Frattini module \eqref{charZlGmod}. Here is a relevant commutative diagram for this, for Serre's setup, with $\Ad_3$ indicating the $2\times 2$ trace 0 matrices. 

\begin{equation} \label{commsl2diag} \begin{tikzcd}
%[column sep=small]
\Ad_3(\bZ_\ell)\arrow[d, "{\rm Id}"'] \arrow[rr, " "]  & &\SL_2(\bZ_\ell)  \arrow[d, "{}_\ell \tilde \alpha "'] \arrow[rr, "{}_\ell \tilde \psi"]  &  & \SL_2(\bZ/\ell)\arrow[d, "{}_\ell \alpha"']\\
\Ad_3(\bZ_\ell)  \arrow[rr, " "]   & &\PSL_2(\bZ_\ell)  \arrow[rr, "{}_\ell \tilde \psi"]  &   &  \PSL_2(\bZ/\ell) \end{tikzcd} \end{equation}

The cover ${}_\ell\alpha$ is a Frattini extension, of degree 2 (say, \cite[Chap.~6, Lem.~3.1]{Fr20}). Though not for $\ell\ge 3$ an $\ell$-Frattini extension, it connects the upper and lower rows of  \eqref{commsl2diag} on their respective Frattini conclusions. 

\begin{prop} \label{PSLFrat} The natural cover $\SL_2(\bZ/\ell^{k\np 1})\to \SL_2(\bZ/\ell)$  is an $\ell$-Frattini cover for all $k$ if $\ell> 3$. For $\ell=3$ (resp.~2), $$\begin{array}{c}\text{$\SL_2(\bZ/\ell^{k+1})\to \SL_2(\bZ/\ell^{k_0\np1})$, $k\ge k_0$ where $k_0=1$ (resp.~2), } \\ \text{is the minimal value for which these are Frattini covers.}\\ \text{For all $\ell$,  $\PSL_2(\bZ_\ell)\to \PSL_2(\bZ/\ell)$ is eventually $\ell$-Frattini.} \end{array}$$

For $\ell>3$, $\Ad_3(\bZ/\ell)$ is a quotient of ${}_\ell M_{\PSL_2(\bZ/\ell)}$, but it is not for $\ell=2$ or 3. Further, for no $\ell$ is it the whole module. Therefore, for $\ell>3$, the lower row of \eqref{commsl2diag}  is a proper $\ell$-Frattini lattice quotient of $\fG \ell {}_\ab$. 
\end{prop}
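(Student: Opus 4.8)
The statement decomposes into a group-theoretic half --- the $\ell$-Frattini assertions about the congruence covers of $\SL_2$ and $\PSL_2$ --- and a module-theoretic half about ${}_\ell M_{\PSL_2(\bZ/\ell)}$; the final sentence is just the two together, fed into Def.~\ref{latquotdef}. For the group half I would first isolate the elementary criterion: if $N\trianglelefteq H$ is a normal $\ell$-subgroup which is an \emph{irreducible} $\bZ/\ell[H/N]$-module, then $H\to H/N$ is Frattini (Def.~\ref{frattdef}) if and only if the extension $1\to N\to H\to H/N\to 1$ is non-split --- because a proper $M<H$ with $MN=H$ meets $N$ in a proper $H$-submodule, hence in $1$, hence is a complement. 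Since a composite of Frattini covers is again Frattini, and since the congruence kernels of the one-step reductions $\SL_2(\bZ/\ell^{m\np1})\to\SL_2(\bZ/\ell^{m})$ ($m\ge1$) are each $\cong\Ad_3(\bZ/\ell)$ with the conjugation ($=$ adjoint) action --- irreducible exactly for $\ell\ge3$ --- the whole matter reduces to: for which $m$ does the reduction split?

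For $\ell\ge5$ I would settle this with one computation: no unipotent of order $\ell$ in $\SL_2(\bZ/\ell^{m})$ admits a lift of order $\ell$ to $\SL_2(\bZ/\ell^{m\np1})$. Taking $v=\left(\begin{smallmatrix}1&\ell^{m-1}\\0&1\end{smallmatrix}\right)$ (so $v=\left(\begin{smallmatrix}1&1\\0&1\end{smallmatrix}\right)$ when $m=1$), a lift is $v+\ell^{m}Y$, and modulo $\ell^{m\np1}$ one gets $(v+\ell^{m}Y)^{\ell}\equiv v^{\ell}+\ell^{m}\bigl(\sum_{i=0}^{\ell-1}\Ad(\bar v)^{i}\bigr)(Y)\,\bar v^{\ell-1}$; since $1+x+\cdots+x^{\ell-1}=(x-1)^{\ell-1}$ in $\bZ/\ell[x]$ and $\Ad(\bar v)-1$ is nilpotent of index $\le3\le\ell-1$ on $\Ad_3(\bZ/\ell)$, the operator sum is $0$, so $(v+\ell^{m}Y)^{\ell}\equiv v^{\ell}=\left(\begin{smallmatrix}1&\ell^{m}\\0&1\end{smallmatrix}\right)\neq I$. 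Thus every one-step reduction is non-split, hence Frattini, hence (the kernels being $\ell$-groups) $\ell$-Frattini; composing gives $\SL_2(\bZ/\ell^{k\np1})\to\SL_2(\bZ/\ell)$ $\ell$-Frattini for all $k$, and the bottom row of \eqref{commsl2diag} follows because the image of a Frattini subgroup under a surjection lies in the Frattini subgroup and ${}_\ell\alpha$ is a prime-to-$\ell$ Frattini cover (as recorded with \eqref{commsl2diag}). For $\ell=3$ the same operator sum is $(\Ad(\bar v)-1)^{2}\neq0$, the obstruction evaporates, and indeed $\SL_2(\bZ/9)$ contains $\SL_2(\bZ/3)$: the level-one reduction splits, so is not Frattini, while the higher steps are non-split as above; for $\ell=2$, $\SL_2(\bZ/2)\cong S_3$ and the first two reductions fail. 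That the thresholds are exactly $k_{0}=1$ ($\ell=3$) and $k_{0}=2$ ($\ell=2$), and that $\PSL_2(\bZ_\ell)\to\PSL_2(\bZ/\ell)$ is then eventually $\ell$-Frattini for every $\ell$ (immediate from Def.~\ref{evenfratt}, being $\ell$-Frattini above level $k_{0}$), I would take from the explicit computations in \cite[Chap.~6]{Fr20}.

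For the module half, the positive claim is now almost automatic. For $\ell>3$, $G=\PSL_2(\bZ/\ell)$ is simple, hence $\ell$-perfect, and by the group half $\PSL_2(\bZ_\ell)\to G$ is $\ell$-Frattini with congruence kernel $\Gamma$ (uniform pro-$\ell$ for $\ell\ge5$). Modding out by $[\Gamma,\Gamma]$ gives an $\ell$-Frattini cover $G^{\star}\to G$ with abelian kernel $\Gamma/[\Gamma,\Gamma]\cong\Ad_3(\bZ_\ell)$ --- an $\ell$-Frattini lattice quotient as in \eqref{quotseq} --- and then \eqref{charqtmodules} supplies a surjection ${}_\ell M_G\twoheadrightarrow\ker(G_1\to G_0)=\Ad_3(\bZ/\ell)$, which is exactly the claim and which exhibits the bottom row of \eqref{commsl2diag} (its kernel read as $\Gamma/[\Gamma,\Gamma]$) as the lattice quotient \eqref{findLeq} with target $M^{\star}=\Ad_3(\bZ/\ell)$. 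For properness I would invoke \eqref{charprops}: ${}_\ell M_G$ is indecomposable and its description requires all the principal-block projective indecomposables, which for $\ell\ge5$ already forces $\dim_{\bZ/\ell}{}_\ell M_G>3$ (the principal block of $\bZ/\ell[\PSL_2(\bZ/\ell)]$ has at least two simple modules; the precise dimension is computed in \cite[App.~B]{Fr20}, cf.~\cite{Fr95}), so $\Ad_3(\bZ/\ell)$, being $3$-dimensional, is a proper quotient, never the whole module. For $\ell\in\{2,3\}$ the negative assertions are structural: $\PSL_2(\bZ/2)\cong S_3$ and $\PSL_2(\bZ/3)\cong A_4$ are not $\ell$-perfect (each has a $\bZ/\ell$ quotient), so the framework producing ${}_\ell M_G$ does not apply, and in any case ${}_\ell M_{\PSL_2(\bZ/\ell)}$ reduces to the $1$-dimensional module, which cannot have the $3$-dimensional $\Ad_3(\bZ/\ell)$ as a quotient or equal it --- matching the failure of Frattini-ness of the level-one reduction. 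Assembling the $\ell>3$ pieces yields the last sentence.

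The main obstacle I anticipate is not the group half --- the unipotent computation is short and self-contained, and the $\ell\in\{2,3\}$ thresholds are a finite verification --- but making the properness statement airtight without simply quoting it: pinning down ${}_\ell M_{\PSL_2(\bZ/\ell)}$ precisely is genuine defining-characteristic modular representation theory of $\PSL_2(\bZ/\ell)$ (the Brauer tree of the principal block and the Loewy structure of the Frattini module), and this is the one point where I would lean on \cite{Fr20} and \cite{Fr95} rather than argue \emph{ab initio}.
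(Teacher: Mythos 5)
Your argument for the group-theoretic half is correct and genuinely different from the paper's. The paper inducts on the congruence level by showing directly that a subgroup of $\SL_2(\bZ/\ell^{k\np1})$ surjecting onto $\SL_2(\bZ/\ell^k)$ must already contain the whole kernel, using that every element of $\Ad_3(\bZ/\ell)$ is a sum of square-zero matrices (the three generators $A,B,C$ taken from \cite[p.~532]{FrJ86}$_2$); for $\ell=3$ it switches to cohomology, restricting the extension class to a $3$-Sylow and exhibiting an explicit order-$3$ matrix in $\SL_2(\bZ)$ to split the level-one cover. You instead reduce to the non-split $\Leftrightarrow$ Frattini dichotomy for an irreducible abelian kernel and kill every candidate section with one unipotent computation, using $1+x+\cdots+x^{\ell-1}=(x-1)^{\ell-1}$ in $\bZ/\ell[x]$ and the nilpotency index $\le3$ of $\Ad(\bar v)-1$; the computation is self-contained, cleanly exhibits the $\ell=3$ threshold, and for $m\ge2$ gives non-splitting for all $\ell$ (since $\bar v=I$). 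One caution to keep visible: for $\ell=2$ the kernel $\Ad_3(\bZ/2)$ is \emph{not} irreducible (it contains $\bF_2\cdot I$), so your non-split $\Leftrightarrow$ Frattini reduction does not apply there, and the $\ell=2$ thresholds really do need a separate argument, which you, like the paper, delegate to \cite{Fr20}.

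The module half contains a concrete error. You assert $\Gamma/[\Gamma,\Gamma]\cong\Ad_3(\bZ_\ell)$ for $\Gamma=\ker\bigl(\PSL_2(\bZ_\ell)\to\PSL_2(\bZ/\ell)\bigr)$. This is false: $\mathfrak{sl}_2$ is a \emph{perfect} Lie algebra (for $\ell\ge5$ its structure constants are units in $\bZ_\ell$), so for the uniform group $\Gamma$ one has $[\Gamma,\Gamma]=\Gamma^\ell=\Gamma_2$, and hence $\Gamma/[\Gamma,\Gamma]\cong\Ad_3(\bZ/\ell)$ is a \emph{finite} group of order $\ell^3$, not a rank-$3$ $\bZ_\ell$-lattice; equivalently, $\Gamma$ has no nontrivial free abelian pro-$\ell$ quotient. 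Thus $\PSL_2(\bZ_\ell)/[\Gamma,\Gamma]$ is just $\PSL_2(\bZ/\ell^2)$, not a lattice quotient in the sense of \eqref{quotseq}, and the abstract lattice quotient that Lem.~\ref{charqtquest} manufactures with target $\Ad_3(\bZ/\ell)$ is \emph{not} $\PSL_2(\bZ_\ell)$ (its congruence kernels are nonabelian from level $2$ on). The good news is you never needed the detour: once $\PSL_2(\bZ/\ell^2)\to\PSL_2(\bZ/\ell)$ is known to be an $\ell$-Frattini cover with exponent-$\ell$ kernel $\Ad_3(\bZ/\ell)$, the universal property of the level-one quotient of ${}_\ell\tilde\psi_\ab$ already gives a surjection ${}_\ell M_G\twoheadrightarrow\Ad_3(\bZ/\ell)$, and Lem.~\ref{charqtquest} then builds the lattice quotient. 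That is the (loose) sense in which the lower row of \eqref{commsl2diag} is "a proper $\ell$-Frattini lattice quotient."

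Two smaller thin spots. Your treatment of $\ell\in\{2,3\}$ rests on the groups not being $\ell$-perfect, so "the framework does not apply"; but ${}_\ell M_G$ is defined for any finite group, so non-$\ell$-perfectness does not by itself rule out $\Ad_3(\bZ/\ell)$ as a quotient, and the side claim that ${}_\ell M$ "reduces to the $1$-dimensional module" is unsupported. And for "$\Ad_3(\bZ/\ell)$ is never the whole module," the paper does not count simples in the principal block but computes the Loewy series (Rem.~\ref{Ad3}: for $\ell=5$, ${}_5 M$ is a non-split self-extension of $\Ad_3$) and couples it with the observation that an $\ell$-Frattini lattice quotient can exhaust ${}_\ell\tilde\psi_\ab$ only when $\dim_{\bZ/\ell}{}_\ell M_G=1$, via Schreier's formula; a bare count of simples in the principal block does not on its own force $\dim{}_\ell M_G>3$.
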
 

\begin{proof} The first sentence is \cite[Cor. 22.13.4]{FrJ86}${}_2$,  from \cite[Lem.~3, IV-23]{Se68} which has exercises  that the same statement and proof applies to $\SL_d(\bZ/\ell)$.  Here we do those exercises for just $\ell=3$. 

First, we trim the treatment of \cite[Cor. 22.13.4]{FrJ86}${}_2$. Use $A=\smatrix 1 1 0 1$ and $B=\smatrix 1 0 1 1$. Following \cite[p.~532]{FrJ86}${}_2$, add $C=\smatrix 1 {-1} 1 {-1}$ to give three independent generators of $\Ad_3(\bZ/\ell)$, all with square 0:  every $u\in \Ad_3(\bZ/\ell)$ is a sum of square 0 elements. 

Our induction hypothesis is $H\le \SL_2(\bZ/\ell^{k\np1}) \to \SL_2(\bZ/\ell^k)$ maps surjectively. We need to show,  for  $u\in \Ad_3(\bZ/\ell)$, there is $h\in H$ of form $1 \np \ell^k u$. For this, the induction gives $h_0\in H$ and $v\in \Ad_3(\bZ/\ell)$ with $h_0=1\np u\ell^{k\nm1} \np v\ell^k$ for some $v\in \Ad_3(\bZ/\ell)$. Here are the remaining steps. 
\begin{edesc} \label{SL2induct} \item Binomially expand  $h=(h_0)^\ell$ to see it is $1\np u\ell^k \mod \ell^{k\np1}$ unless $k=1$ when it is $1\np u\ell\np \ell(u\np v\ell)^2(\bullet) \np (u\np v\ell )^\ell$ with $\bullet\in \bZ$.  
\item \label{SL2inductb}  If $u^2=0$ (and $k=1$), the result is $1 \np \ell u\mod \ell^2$, if $(u\np v\ell)^\ell\equiv 0\mod \ell^2$. For $\ell>3$ this is numerically easy to see. 
\item \label{SL2inductc}  Write $u\in \Ad_3(\bZ/\ell)$ as a sum of squares $u=\sum_{i=1}^t u_i$. From \eql{SL2induct}{SL2inductb} with $h_i=1\np u_i\ell\in H$, then $\prod_{i=1}^t h_i=1\np u\ell \mod \ell^2$. \end{edesc}  
Then, \eql{SL2induct}{SL2inductc} concludes the induction argument, for the first sentence. 

Now we do the conclusion for $\ell=3$.  \cite[IV-28, Exer.~3]{Se68} asks to show that $\SL_2(\bZ/3^2)\to \SL_2(\bZ/3)$ is not Frattini. We say it purely cohomomologically. 
Then, $\mu\in H^2(\SL_2(\bZ/3), \Ad_3(\bZ/3))$ defines the cohomology class of this extension \cite[p. 241]{Nor62}. 

For any cohomology group, $H^*(G,M)$, with $M$ a $\bZ/\ell[G]$ module, restrict to an $\ell$-Sylow $P_\ell\le G$. 
This gives an isomorphism onto the $G$ invariant elements of $H^*(P_\ell,M)$  \cite[III.~Prop.~10.4]{Br82}. So,  $\mu$ splits if $\mu_\ell$ splits. 

There is an element, $g_3$, of order 3 in $\SL_2(\bZ)$ --  $\PSL_2(\bZ)$ is well-known to be freely generated by an element of order 3 and an element of order 2 -- and so in $\SL_2(\bZ/3^2)$. 
This element of order 3 -- given, say, by $A'=\smatrix {1}{-3}{1}{-2}$ -- generates a 3-Sylow in $\SL_2(\bZ/3)$. Conclude that  $\mu_3$ splits. 

Denote the conjugacy class of $A'$ by  $\C_3$. Its characteristic polynomial is $x^2+x+1$. Any lift of any non-trivial element in $$\ker(\SL_2(\bZ/3^2)\to \SL_2(\bZ/3))=\Ad_3(\bZ/3)$$ is an element of order $3^2$ in this Frattini cover $$\ker(\SL_2(\bZ/3^3)\to \SL_2(\bZ/3))=(\bZ/3^2)^3 \to (\bZ/3)^3.$$  So, the extension $\SL_2(\bZ/3^3)\to \SL_2(\bZ/3^2)$ certainly does not split. 

\cite[IV-28, Exer.~1.b]{Se68} states that $\SL_2(\bZ/3^{k\np 2})\to \SL_2(\bZ/3^2)$, $k\ge 0$, is Frattini.   Take  $v\in \ker(\SL_2(\bZ/3^2)\to \SL_2(\bZ/3))$. 

For $\tilde v\in \ker(\SL_2(\bZ/3^3)\to \SL_2(\bZ/3))$ lifting $v$,  $\tilde v^3$ identifies with $v$, but in $\ker(\SL_2(\bZ/3^3)\to \SL_2(\bZ/3^2))$. From that stage, any subgroup mapping onto $\SL_2(\bZ/3)$ has the kernel in it. 

We are done except for showing $\Ad_3(\bZ/\ell)$ is not the whole characteristic $\ell$-Frattini module: Rem.~\ref{Ad3}. \end{proof}

\begin{rem} \label{Ad3} Take $G=\SL_2(\bZ/5)$. \cite[\S II.F]{Fr95} shows ${}_5 M_{\SL_5(\bZ/5)}$ has  Loewy display $M=\Ad_3(\bZ/5) \to \Ad_3(\bZ/5)$. (Applying Shapiro's Lemma, inducing the identify from the action of a $D_5$ in $A_5$ on $\one$,  $\dim_{\bZ/\ell}(H^2(G,M))=1$  (see Lem.~\ref{charqtquest}). So ${}_5 M_{\SL_5(\bZ/5)}$ is not $\Ad_3(\bZ/5)$. This argument can work for all $\ell\ge 5$ \cite[Chap.~6 \S1.6]{Fr20} based on \cite[\S 2.2.2]{Fr02b}. 

\cite[\S 22.14]{FrJ86}$_2$ notes diagram \eqref{commsl2diag} isn't the universal $\ell$-Frattini cover of $\PSL_2(\bZ/\ell)$. That, here, is beside the point: an $\ell$-Frattini lattice quotient can only be the whole universal $\ell$-Frattini cover when ${}_\ell M_G$ has dimension 1, since the lattice and all its finite index subgroups have bounded rank. The $\ell$-Sylow of the universal $\ell$-Frattini cover is a pro-free pro-$\ell$ group. From Schreier's Theorem \cite[Prop.~17.5.7]{FrJ86}$_2$, if its rank exceeds 1, the rank of its open subgroups grows with their indices. \end{rem}

\section{Monodromy and $\ell$-adic representations}  \label{l-adicreps}  \S\ref{prodladic} gives notation for how $\ell$-Frattini lattice quotients give the $\ell$-adic representations of the title. 
 \S\ref{MTsOIT} formulates the \MT\ main \OIT\ conjectures based on Def.~\ref{evenfratt} -- an eventually $\ell$-Frattini sequence. (In Prop.~\ref{PSLFrat}, $\ell=3$ is an example from Serre's case of the \OIT.)  

\S\ref{uselfratt} shows how we use eventually $\ell$-Frattini, including its Hilbert's Irreducibility aspects. 
In \S\ref{RIGPFaltings}, the \RIGP\ and the \OIT\ problems interact on a \MT. That includes discussing Serre's most difficult case: a model for how Falting's Theorem enters.

\subsection{$\ell$-adic representations} \label{prodladic} As usual,  $K$ is a  number field. Also, as usual, assume $\bfC$ is a rational union of conjugacy classes. From the \BCL\  Thm.~\ref{bcl}, the moduli definition field of the Hurwitz spaces -- though maybe not \MT\ levels  on them -- is $\bQ$.   

\subsubsection{Geometric monodromy of a \MT} \label{notMT} Consider the Nielsen classes referenced by an $\ell$-Frattini lattice quotient, $L^\star \to G^\star \mapright{\tilde \psi^*} G$ of $\fG \ell {}_\ab \to G$ in  \S \ref{lfratlatqt}. We denoted the collection of $H_r$ orbits -- the actual \MT s -- by $\sF_{G,\bfC,\ell,{}_\ell\tilde \psi^\star}$. 
 Indicate one of these, say $O$, as a sequence of $H_r$ orbits $\{O_k\}_{k=0}^\infty$ naturally mapped from level $k\np1$ to level $k$ analogous to \eqref{exMTbg}. 

From  this (\S\ref{equivalences} or \eqref{reducedcover}) comes a canonical Hurwitz space tower,  \begin{equation} \label{GClMtower} \text{$\{\sH(G_k,\bfC)\}_{k=0}^\infty$;  the $(G,\bfC, \ell, L^\star)$ tower.}\end{equation}  

Denote  the collection of projective sequences of points $$\text{$\bar \bp'=\{\bp_k'\in \sH(G_k,\bfC)^\dagger\}_{k=0}^\infty$ lying above $\bp_0'$ on the tower by  $\bS_{\bp_0'}$.}\footnote{For the \RIGP\  we often take $\dagger$ equivalence to be inner, reduced. Here though in comparing with the \OIT\ we need absolute equivalence, too.} $$ 
Abusing notation slightly, also denote the similar sequences of points above $J'\in J_r$ by $\bS_{J'}$. 
\begin{edesc} \label{qtresults} \item \label{qtresultsa} That the extension $ L^\star\to G^\star \mapright{\psi^\star} G$ is $\ell$-Frattini is precisely why the  construction of this Nielsen class tower is canonical. 
\item \label{qtresultsb} A $\bar\bp'\in \bS_{\bp_0'}$  represents a surjection in $\Hom_{\bZ_\ell[G]}(H_1(\hat W_{\bp'_0},\bZ_\ell),L^\star))$. 
\end{edesc} 

Ex.~\ref{Dlk+1} is a particular example of this situation. All the spaces above have canonical polarizations, and therefore quasi-projective structures. That implies if, $\bp_0\in  \sH(G_0,\bfC)(K)$ (resp.~$J'\in J_r(K)$), then $G_K$ acts on $\bS_{\bp_0'}$ (resp.~$\bS_{J'}$). It also acts on the \MT s on the Hurwitz space tower. 

Generalizing Ex.~\ref{Dlk+1} starts with \eqref{monactMT}  and completes in \S\ref{limgptie}.  Use generic notation, $W_{\bp}\to \prP^1_z$, for a cover represented by $\bp\in \sH(G_0,\bfC)^\dagger$. 

\begin{edesc} \label{monactMT} \item  \label{monactMTa} Running over $\bp\in \sH(G_0,\bfC)^\dagger$, create a total family of varieties with fibers $\Pic^u(W_{\bp})$, $u=1,2$, over  $\sH(G_0,\bfC)^\dagger$. 
\item  \label{monactMTb} Interpret elements of \eql{qtresults}{qtresultsb} as subspaces, with an appropriate $G$ action, on the $\bZ_\ell[G]$ Tate module, $T_{\bp}$,  of $\Pic(W_{\bp})$.\footnote{We are sloughing off capturing that $G^\star$ is a Frattini extension of $G$.} 
\item  \label{monactMTc}  Identify $H_r$ action on Nielsen classes, and their orbits as \MT s.
\end{edesc} 

\subsubsection{Using $G\mapsto G^\lm$} \label{limgptie}  \S\ref{MTsOIT} formulates the Main \OIT\ \MT\ conjectures starting from \eql{qtresults}{qtresultsb}. As in Thm.~\ref{level0MT}, it starts with the display from the $\sh$-incidence matrix (as in \S\ref{shincex}). It relies on identifying geometric monodromy groups attached to the levels of a \MT. 

Suppose for each \MT\ we are given $\bp'$, a base point on a component of $\sH(G_0,\bfC)^\dagger$.  How can we \begin{equation} \label{monactMTd} \text{ translate \eql{monactMT}{monactMTc} into $H_r$ acting on each Tate module, $T_{\bp'}$?} \end{equation}

The point of the Nielsen class $(G^\lm,\bfC)$ from $(G,\bfC)$ is to produce a recognizable group within which we can see the Hurwitz monodromy action, and thereby label the geometric (and arithmetic) monodromy groups attached to a specific \MT. Even in recognizing this in Serre's case, the process is illuminating. What makes it work is using the relation between inner and absolute classes given in \eqref{inabsseq} in \S\ref{earlyOIT-MT}.\footnote{This is counterintuitive: The \RIGP\ ends up about inner equivalence. How can it be that the \OIT\ benefits from absolute equivalence?} 

A forerunner application in \eqref{Gconds} \cite{FrV92} uses this Inner-Absolute sequence  for a presentation of $G_\bQ$. That goes from $(G,\bfC)$ to a new $(G^*,\bfC^*)$ using its absolute (rather than inner) Hurwitz space. The result: $N_{S_N}(G^*,\bfC^*)$ -- as in \eqref{eqname} -- contains every outer automorphism of $G^*$. 

Changing the Nielsen class to that of the limit group $G^\lm$, moves all the $H_r$ action into outer automorphisms of $G^\lm$ as in \eqref{sl2H4}. Substituting $G\mapsto G^\lm$,  keeps $\bfC$ the same, using that $G^\lm$ covers $G$. Then, canonically lift the classes $\bfC$ to $G^\lm$, and relate corresponding absolute and inner Nielsen classes. 

Then, in applications  the image of the $H_r$ action interprets inside the symplectic group given as automorphisms of $H_1(W_\bp,\bC)$, with its canonical pairing.  In this process, the underlying inner (reduced) Hurwitz spaces in a \MT\ all come out to be $J_r$. So, we can consider \MT\ fibers over  $J'\in J_r(\bar \bQ)$, just as he did on projective systems of order $\ell^{k\np1}$ on the elliptic curve with $j$-invariant $j'\in \prP^1_j\setminus \{\infty\}$.  

\begin{exmpl} \label{ncmodcurves-cont} Ex.~\ref{ncmodcurves} included an example  for the modular curves denoted  $X_1$ and $X_0$ as a special case of of the mapping from inner to absolute Hurwitz  spaces as  in \eqref{inabsseq}.  \cite[Chap.~6, \S 3.3.1]{Fr20} shows how it works with limit group $G^\lm_{k\np1}=(\bZ/\ell^{k\np1})^2\xs \bZ/2$, corresponding to $\bfC=\C_{2^4}$. 
 
\begin{edesc} \label{sl2H4} \item \label{sl2H4a} We construct $\ni(G^\lm_{\ell^{k\np1}}, \bfC_{2^4})$ from $\ni(D_{\ell^{k\np1}},\bfC_{2^4})$, with the $H_4$ action easily read off from this. 
\item \label{sl2H4b} Apply $\sh,q_2\in H_4$ directly to  $\ni(G^\lm_{\ell^{k\np1}}, \bfC_{2^4})$. They  interpret on  $\SL_2(\bZ/\ell^{k\np1})$ from the action on generators of $\SL_2(\bZ_\ell)$.
\end{edesc} 
From this, the geometric monodromy of $$\bar \sH(G^\lm_{\ell^{k\np1}}, \bfC_{2^4})^{\inn,\rd}\to \prP^1_j=\bar \sH(G^\lm_{\ell^{k\np1}}, \bfC_{2^4})^{\abs,\rd}\text{ is }\SL_2(\bZ/\ell^{k\np1})/\{\pm 1\}.$$ 
That the arithmetic monodromy group is $\GL_2(\bZ/\ell^{k\np1})/\{\pm 1\}$ interprets  using the {\sl Weil pairing\/} along the Hurwitz space giving $$H^1(W_\bp, \bZ_\ell)\times H^1(W_\bp, \bZ_\ell)\to H^2(W_\bp, \bZ_\ell)\text{ coming from roots of 1}.$$ This is the extension of constants of \sdisplay{\cite{Fr78}}. 
\MT s gets it from the Heisenberg lift invariant as in Thm.~\ref{level0MT}  \cite[Chap.~6, Prop.~3.7]{Fr20}. 
\end{exmpl}

\subsection{\MT s and the \OIT\ generalization}  \label{MTsOIT}  
Let us backtrack to \eql{monactMT}{monactMTc}. This amounts to finding projective sequences, $O$, of braid orbits   
\begin{equation} \label{projbraidorbits} \begin{array}{c}  \text{$\{\ni_k\le \ni(G_k,\bfC)\}_{k=0}^\infty$, $k\ge 0$: }\\ \text{$\psi_{k\np1,k}: \ni_{k\np1}\mapsto \ni_k \mod \ker(G_{k\np1}\to G_k)$, $k\le 1$.}\end{array}\footnote{Ex.~\ref{ncmodcurves} has at all levels just one braid orbit, while other examples don't. So, $O$ is one of many possible \MT s.} \end{equation} Restricting the braid action (\S\ref{equivalences}), this gives a projective sequence of finite morphisms of reduced and absolutely irreducible components: 

\begin{equation} \label{projhurworbits} \cdots \to \sH_{k\np1} \longmapright {\Psi_k\np1,k} {40} \sH_{k} \to \cdots \to \sH_1 \longmapright {\Psi_1,0} {30}  \sH_0 \longmapright {\Psi_0} {30} J_r.\end{equation} 

Ex.~\ref{mainex} and Thm.~\ref{level0MT} show the main computational techniques for distinguishing braid orbits: 
\begin{equation} \begin{array}{c} \text{the {\sl lift invariant\/} and the {\sl shift-incidence matrix}. Unless }  \\
\text{there at least 2 \HM\ components, these have distinguished } \\ \text{all components with their (moduli) definition fields.} \end{array} \footnote{\S\ref{95-04} has discussion of \HM-components, Def.~\ref{HMrep}, and their generalization. These  components are transparent to obstruction (\S\ref{obstcomps}), but pose problems if there is more than one at a given level.} \end{equation}  

Again, using the \BCL\ criterion for $(G,\bfC)$ we assume $G_\bQ$ acts on the set of \MT s. Consider the $G_\bQ$ orbit ${}_\bQ O$ of $O$, with the level $k$ component orbit denoted ${}_\bQ\sH_k$. For each $k$, (as in \S\ref{alggeom}) consider the respective geometric and arithmetic monodromy groups of $$\text{$\psi_k: \sH_k\to J_r$ and of ${}_\bQ\psi_k: {}_\bQ\sH_k\to J_r$.}$$ Denote the respective projective limits of these sequences  by $\sG_O$ and ${}_\bQ\hat \sG_O$. 

Use the notation, $\bar  \bS_{\bp'}$ (resp.~$\bS_{J'}$) for projective sequences of points, on $O$, over $\bp_0'$ (resp.~$J'$)  from \S\ref{notMT}
Consider the {\sl decomposition group\/}, $\sG_{\bar \bp_{J'}}$, of $\bar\bp_{J'}\in \bS_{J'} $: the projective limit of the groups of the Galois closures of $\bQ_{\sH_k}(\bp'_k)/\bQ_{\sH_k}$. For all $\bar\bp_{J'}$ these groups are conjugate inside ${}_\bQ\hat \sG_O$. Denote their isomorphism class by the symbol $\sG( \bS_{J'})$. 

Recall Def.~\ref{evenfratt} for an eventually $\ell$-Frattini sequence. 

\begin{guess}[Main \OIT\ Conj.] \label{OITgen}  With the notation above: 
 \begin{edesc} \label{OITintro} \item \label{OITintroa} The geometric monodromy $G_O$ of $O$ is an eventually $\ell$-Frattini  sequence; and  
 \item \label{OITintrob}  for each $J'$, $\sG( \bS_{J'})\cap \sG_O$ is eventually $\ell$-Frattini. 
 \end{edesc} 
 \end{guess} 

 Thm.~\ref{level0MT}  is evidence that \eqref{OITintro} is true, beyond Serre's modular curve \OIT\ case:  as in Prop.~\ref{PSLFrat}.  List \eqref{serreles} uses \eqref{OITintro} language  on Serre's \OIT.

\subsection{Using eventually $\ell$-Frattini} \label{uselfratt} For $K$ a number field and $\hat\psi: \hat W\to \prP^1_z$ a finite Galois cover $K$, with arithmetic monodromy group $G_{\hat \psi}$, denote the decomposition group of $z'\in \prP^1_z(K)$ by $D_{z'}$.  
Here is a  form of {\sl Hilbert's Irreducibility 
Theorem\/} (\HIT)  \cite[Chap.~12--13]{FrJ86}${}_2$. 
\begin{equation} \label{HIT}   \text{For a dense set of  $z'\in \prP^1_z(K)$, $G_{\hat \psi}=D_{z'}$.} \end{equation}  Dense can mean by almost any measure, including Zariski dense, or $p$-adically dense for $p$ a prime. 

\begin{prop} Use the $k_0$  from Def.~\ref{evenfratt}.  Combining \HIT\  for the cover ${}_\bQ\psi_{k_0}$ and \eql{OITintro}{OITintroa} implies, for a dense set of  $J'\in J_r(K)$, ${}_\bQ\hat \sG_O=\sG(\bS_{J'})$. \end{prop}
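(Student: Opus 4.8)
The plan is to combine Hilbert's Irreducibility Theorem at a fixed finite level with the eventual $\ell$-Frattini conclusion of Conjecture~\ref{OITgen}(a), exploiting exactly the defining property of a Frattini cover, namely that no proper subgroup surjects.

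\medskip

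\noindent\textbf{Setup of the argument.} Fix a number field $K$ (enlarging the base field if necessary so that the component of the \MT\ and the base point data are defined over $K$; the \BCL\ hypothesis on $(G,\bfC)$ guarantees $G_\bQ$, hence $G_K$, acts on the set of \MT s). Let $k_0$ be the integer from Def.~\ref{evenfratt} such that $G_{k_0\np k}\to G_{k_0}$ is $\ell$-Frattini for all $k\ge 0$; this $k_0$ exists by \eql{OITintro}{OITintroa}. Apply \HIT\ in the form \eqref{HIT} to the single finite Galois cover $\overline{{}_\bQ\psi_{k_0}}\colon \overline{{}_\bQ\sH_{k_0}}\to \prP^1_j$ (or to its Galois closure over $K$), whose arithmetic monodromy group is the level-$k_0$ truncation of ${}_\bQ\hat\sG_O$. \HIT\ produces a dense set $S_{k_0}\subset J_r(K)$ (dense in any of the senses noted after \eqref{HIT}) such that for $J'\in S_{k_0}$ the decomposition group $D_{J'}$ at level $k_0$ equals the full level-$k_0$ arithmetic monodromy group. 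Equivalently, $\sG(\bS_{J'})$ surjects onto the level-$k_0$ quotient of ${}_\bQ\hat\sG_O$.

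\medskip

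\noindent\textbf{Propagating up the tower.} Now I would argue that surjectivity at level $k_0$ forces surjectivity at every higher level, and hence $\sG(\bS_{J'})={}_\bQ\hat\sG_O$. Fix $J'\in S_{k_0}$ and a compatible sequence $\bar\bp_{J'}\in\bS_{J'}$. The subgroup $\sG(\bS_{J'})\le {}_\bQ\hat\sG_O$ is a closed subgroup whose image in the level-$k_0$ quotient is everything. For each $k\ge k_0$ the transition $\sG_O$-level map $G_{k}\to G_{k_0}$ (restricted to the geometric part) is $\ell$-Frattini; the key point is that this Frattini property is inherited by the relevant quotients of ${}_\bQ\hat\sG_O$ itself — one must check that the arithmetic monodromy tower ${}_\bQ\hat\sG_O\to\cdots\to {}_\bQ\hat\sG_{O,k_0}$ is eventually $\ell$-Frattini as well, which follows because the kernel of ${}_\bQ\hat\sG_{O,k}\to {}_\bQ\hat\sG_{O,k_0}$ agrees with that of the geometric tower (the arithmetic/geometric difference is a quotient by a fixed finite group of "constants", as in Ex.~\ref{ncmodcurves-cont} via the Weil pairing, and this does not interfere with the Frattini property of the higher transition maps for $k$ large). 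Granting this, the image of $\sG(\bS_{J'})$ in ${}_\bQ\hat\sG_{O,k}$ is a subgroup surjecting onto ${}_\bQ\hat\sG_{O,k_0}$; by the Frattini property (Def.~\ref{frattdef}) any such subgroup is all of ${}_\bQ\hat\sG_{O,k}$. Taking the inverse limit over $k$ gives $\sG(\bS_{J'})={}_\bQ\hat\sG_O$, as claimed, for every $J'$ in the dense set $S_{k_0}$.

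\medskip

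\noindent\textbf{The main obstacle.} The delicate point — and the place where I would spend the most care — is justifying that the eventual $\ell$-Frattini property of the \emph{geometric} monodromy tower \eql{OITintro}{OITintroa} transfers to the \emph{arithmetic} tower of ${}_\bQ\hat\sG_O$ uniformly, so that the "no proper subgroup surjects" mechanism is available above level $k_0$ (possibly after increasing $k_0$). One has the short exact sequence relating $\sG_O$ (geometric) and ${}_\bQ\hat\sG_O$ (arithmetic) with a quotient that is a sub-quotient of $G_K$ acting through the cyclotomic/constant-field data; because the constant extension stabilizes (it is governed by a fixed finite amount of root-of-unity data, cf. the extension-of-constants discussion of \sdisplay{\cite{Fr78}} and \eql{sl2H4}{sl2H4b}), the transition maps ${}_\bQ\hat\sG_{O,k\np1}\to {}_\bQ\hat\sG_{O,k}$ coincide with the geometric ones for $k\ge k_1$ for some $k_1\ge k_0$, and are therefore $\ell$-Frattini there. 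Replacing $k_0$ by $\max(k_0,k_1)$ and rerunning the \HIT\ step at that level completes the proof. The only genuinely conjectural input is \eql{OITintro}{OITintroa} itself; everything else is \HIT\ plus the formal Frattini lifting lemma (Lem.~\ref{fiberprod}-style reasoning applied to closed subgroups of profinite groups).
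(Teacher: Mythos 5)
Your plan---apply \HIT\ at the finite level $k_0$ and then use the Frattini property to push surjectivity up the tower---is the right shape, and you correctly locate the crux: \eql{OITintro}{OITintroa} concerns the \emph{geometric} monodromy $\sG_O$, but the propagation step needs the \emph{arithmetic} transitions ${}_\bQ\hat\sG_{O,k}\to{}_\bQ\hat\sG_{O,k_0}$ to be Frattini. The resolution you offer, however, is wrong. You assert that the extension of constants ``stabilizes'' so that the arithmetic and geometric transition maps eventually coincide; they never do. In Serre's model case (Ex.~\ref{ncmodcurves-cont}) the arithmetic monodromy at level $k$ is $\GL_2(\bZ/\ell^{k+1})/\{\pm1\}$ against a geometric $\SL_2(\bZ/\ell^{k+1})/\{\pm1\}$, and the quotient---governed by the Weil pairing---is $(\bZ/\ell^{k+1})^*$, so the constant field is essentially $\bQ(\mu_{\ell^{k+1}})$ and grows without bound with $k$. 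Consequently $\ker({}_\bQ\hat\sG_{O,k}\to{}_\bQ\hat\sG_{O,k_0})$ is strictly larger than the geometric kernel for every $k>k_0$, and the identification of the two kernels on which your whole propagation rests is false.

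What actually salvages the argument is a fact your write-up does not isolate: although the constant tower grows, it is \emph{itself} eventually $\ell$-Frattini, because its transitions are maps of abelian cyclotomic groups with $\ell$-group kernels (for instance $(\bZ/\ell^{k+1})^*\to(\bZ/\ell^{k_0+1})^*$ is a Frattini cover once $k_0\ge 1$). One then has to show that eventual $\ell$-Frattini-ness of the geometric tower (the conjectural input \eql{OITintro}{OITintroa}) together with eventual $\ell$-Frattini-ness of the constant tower forces it for the arithmetic tower; this is not purely formal, since the kernel of an arithmetic transition is not contained in the geometric part, but it does go through after enlarging $k_0$ into the Frattini regime of both towers. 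So your closing move of ``replace $k_0$ by $\max(k_0,k_1)$ and rerun \HIT'' is correct in outline, but your stated reason a suitable $k_1$ exists (coincidence of the two towers of transition maps) is not the reason---the real input is the independent $\ell$-Frattini property of the cyclotomic constant tower. Your final sentence, that \eql{OITintro}{OITintroa} is the only non-formal input, is therefore overstated.
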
 

Starting from \sdisplay{\cite{Se68}},  \cite[Chap.~6 \S3]{Fr20}  reshapes it for \MT s. \cite{Se68}  concentrates on two types of fibers over  $j'\in U_j\eqdef\prP^1_j(\bar \bQ)\setminus \{\infty\}$.  \begin{edesc} \label{serretype}  \item \label{serretypea}  Those called complex multiplication (\CM) coming from $j$-invariants of elliptic curves with nontrivial rings of endomorphisms (Def.~\ref{CMdef}). 
\item \label{serretypeb} Those for $j'$ that are not algebraic integers. 
\end{edesc} A promised Tate paper never materialized; it suggested {\sl all\/} non-\CM\ fibers  (not just those in \eql{serretype}{serretypeb}) would give fibers over $j'$ of  
\begin{equation} \label{faltingstype} \begin{array}{c} \text{$\GL_2$ type: $G_{\bQ(j')}$ acts on lines of the elliptic curve 1st  }\\ \text{$\ell$-adic cohomology as an open subset of $\GL_2(\bZ_\ell)$.}\end{array}\end{equation} Later Serre papers asked how \eql{serretype}{serretypeb} varies with $\ell$. For example:  

\begin{prop} For $j'\in U_j(\bQ)$ and almost all $\ell$, $G_{\bQ(j')}=\GL_2(\bZ_\ell)$. $$\text{Also, there exist $j'$ for which equality holds for all $\ell$.}$$ \end{prop}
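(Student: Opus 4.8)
The plan is to deduce both assertions from Serre's two classical theorems on the $\ell$-adic image attached to an elliptic curve over a number field, applied to a well-chosen curve $E/\bQ$ with $j$-invariant $j'$, together with a Frattini--lifting argument of the type already used in Prop.~\ref{PSLFrat}. Write $\bar\rho_{E,\ell}: G_\bQ\to \GL_2(\bZ/\ell)$ and $\rho_{E,\ell}: G_\bQ\to \GL_2(\bZ_\ell)$ for the mod-$\ell$ and $\ell$-adic representations on the Tate module. The key reductions are: (i) if $E$ has no CM then, by Serre's finiteness theorem, $\bar\rho_{E,\ell}$ is surjective for all but finitely many $\ell$; and (ii) for a fixed $\ell$, surjectivity of $\bar\rho_{E,\ell}$ onto $\GL_2(\bZ/\ell)$ \emph{forces} surjectivity of $\rho_{E,\ell}$ onto $\GL_2(\bZ_\ell)$ when $\ell>3$, because $\SL_2(\bZ_\ell)\to\SL_2(\bZ/\ell)$ is an $\ell$-Frattini cover by Prop.~\ref{PSLFrat} (the image of $\rho_{E,\ell}$ surjects onto $\GL_2(\bZ/\ell)$, hence its intersection with $\SL_2(\bZ_\ell)$ surjects onto $\SL_2(\bZ/\ell)$, and the Frattini property upgrades this to all of $\SL_2(\bZ_\ell)$; the determinant is already onto $\bZ_\ell^*$ by the cyclotomic character).

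First I would handle the generic statement. Pick any non-CM elliptic curve $E/\bQ$ with $j$-invariant $j'$ (the two quadratic twists both have $j$-invariant $j'$, and at least the statement "$G_{\bQ(j')}=\GL_2(\bZ_\ell)$" is about the monodromy of the universal family over $U_j$, hence about $\bQ(j')$ and not about a particular twist, so we may choose whichever twist is convenient). Exclude the finitely many primes $\ell\le 3$ and the finitely many additional primes where $\bar\rho_{E,\ell}$ fails to be surjective; for all remaining $\ell$, step (ii) gives $\rho_{E,\ell}(G_\bQ)=\GL_2(\bZ_\ell)$, which is exactly $G_{\bQ(j')}=\GL_2(\bZ_\ell)$ in the notation of \eqref{faltingstype}. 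For the second assertion I would produce a single $j'\in\bQ$ for which $\bar\rho_{E,\ell}$ is surjective for \emph{every} prime $\ell$ (including $2$ and $3$), and for $\ell\le 3$ use the stronger control available there: Prop.~\ref{PSLFrat} says $\SL_2(\bZ/3^{k+2})\to\SL_2(\bZ/9)$ and $\SL_2(\bZ/2^{k+3})\to\SL_2(\bZ/8)$ are Frattini, so it suffices to check that the image of $\rho_{E,3}$ (resp.\ $\rho_{E,2}$) contains $\SL_2(\bZ/9)$ (resp.\ $\SL_2(\bZ/8)$) and surjects via the determinant; one then bootstraps to all of $\SL_2(\bZ_3)$ (resp.\ $\SL_2(\bZ_2)$) exactly as in the proof of Prop.~\ref{PSLFrat}. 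The existence of such a curve is classical: a suitable Serre curve (an elliptic curve over $\bQ$ whose adelic image is as large as possible, i.e.\ of index $2$ in $\prod_\ell\GL_2(\bZ_\ell)$ for the forced reason coming from the discriminant), and in fact "most" elliptic curves over $\bQ$ are Serre curves; pinning one down with $j$-invariant in $\bQ$ and writing down explicit Weierstrass data is a finite check.

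The main obstacle I expect is the two small primes $\ell=2,3$ in the second assertion. There the cover $\SL_2(\bZ/\ell)\to 1$ is \emph{not} $\ell$-Frattini at the bottom — Prop.~\ref{PSLFrat} only gives the Frattini property after passing to $\SL_2(\bZ/\ell^{k_0+1})$ with $k_0=1$ (resp.\ $2$) — so I cannot simply deduce full $\ell$-adic surjectivity from mod-$\ell$ surjectivity; I genuinely need the stronger mod-$\ell^{k_0+1}$ surjectivity, and verifying it for a concrete curve requires either invoking a known explicit Serre curve or a direct computation of the $2$-adic and $3$-adic images (division-polynomial / torsion-field computations). Everything else — the non-CM hypothesis giving \eqref{faltingstype} shape, the Frattini bootstrap, the determinant being onto via the cyclotomic character — is routine once those inputs are in place. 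A clean write-up would cite Serre's open image theorem and his uniformity result for the generic part, and cite (or exhibit) one Serre curve for the "all $\ell$" part, and then quote Prop.~\ref{PSLFrat} for the lifting in both cases.
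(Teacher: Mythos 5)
Your proposal is correct and takes essentially the same route as the paper, which proves this proposition by a one-line pointer to Thm.~\ref{OITsf} (Serre's strong-form \OIT) together with Ogg's example from the end of the discussion of \cite{Fr78}; those are precisely the inputs you invoke, with the Frattini bootstrap from Prop.~\ref{PSLFrat} written out explicitly rather than left implicit in the citation. The only cosmetic difference is that you suggest an abstract Serre curve where the paper cites Ogg's explicit elliptic curve \cite{O67}; both serve identically for the ``all $\ell$'' assertion.
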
 See Thm.~\ref{OITsf} and Ogg's examples at the end of \sdisplay{\cite{Fr78}}. That is appropriate for the part of Thm.~\ref{level0MT} we display, too.  In our case there are more than two types of fibers over  $j'\in U_j(\bar\bQ)$. Yet, for almost all $\ell$, they are all the expected general type. 

\subsection{\RIGP\ and Faltings} \label{RIGPFaltings}  Up to this point, the central object has been a Nielsen class.  \S\ref{l'RIGP} asks a question without referring to any specific conjugacy classes, nor anything about \MT s. Yet, the answer forces existence of \MT s.  \S\ref{toughestpt} explains how Falting's Theorem engages the most difficult aspect of Serre's case of the \OIT.  

\subsubsection{$\ell'$ \RIGP} \label{l'RIGP} As in \S \ref{lfratlatqt}, consider $L^\star \to G^\star \mapright{\tilde \psi^*} G$, an $\ell$-Frattini lattice quotient  of $\fG \ell {}_\ab \to G$. Prop.~\ref{bdrefreal} considers the \RIGP\ for the collection $\sG_{G,\ell,L^\star}\eqdef \{\psi_k: G_k\to G\}_{k=0}^\infty$ of corresponding $\ell$-Frattini covering groups. 

Call a regular realization $\ell'$ if it occurs with $\bfC$ consisting of $\ell'$ classes.   

\begin{prop} \label{bdrefreal} For any $\ell$-perfect $G$, assume we have $\bQ$ regular realizations of each $G_k$ (say, in $\ni(G,\bfC_k)$, $k\ge 0$).  Assume also that $r_{\bfC_k}$ is bounded independent of $k$ by $B$.  

Then,  there exists a \MT\ for $\tilde \psi^\star$ and some specific $\ell'$ classes $\bfC$ with $\bQ$ points at every level \sdisplay{\cite{FrK97}}.  \end{prop}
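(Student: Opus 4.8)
The plan is to convert the hypothesized $\bQ$-realizations of the groups $G_k$ into a coherent projective system of Nielsen-class data, and then to extract a \MT\ via a compactness argument. First I would recall that, by hypothesis, for each $k\ge 0$ there is a regular realization of $G_k$ over $\bQ(z)$, and that the number $r_{\bfC_k}$ of branch points is bounded by some $B$ independent of $k$. By the \BCL\ and standard Nielsen-class bookkeeping (\S\ref{whyrationalfuncts}, Thm.~\ref{BCYCs}), each such realization corresponds to a braid orbit on $\ni(G_k,\bfC_k)^\inn$ whose associated Hurwitz-space component has a $\bQ$-point. Since $r_{\bfC_k}\le B$ and each $G_k$ has order dividing a fixed power-of-$\ell$ multiple of $|G|$ at level $k$ (with $\ell'$ classes), for a fixed value $r\le B$ there are only finitely many possibilities for the multiset $\bfC_k$ with $r_{\bfC_k}=r$. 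By the pigeonhole principle, some single value $r$ and a coherent choice of $\ell'$-classes $\bfC$ (compatible with the maps $G_{k+1}\to G_k$, since the classes lift to same-order classes by Schur--Zassenhaus as in \S\ref{lfratquot}) occurs for infinitely many $k$ --- and then, by the Frattini property of $\tilde\psi^\star$, for a cofinal (hence all) levels, because a realization of $G_{k+1}$ pushes forward to one of $G_k$.

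Next I would organize the data as a tree. For each level $k$ let $V_k$ be the (finite, by \eqref{braid-comps}) set of braid orbits on $\ni(G_k,\bfC)^\inn$ that both (i) carry a $\bQ$-point on the corresponding Hurwitz component and (ii) push forward under $G_{k+1}\to G_k$ to an orbit in $V_k$; draw an edge from $O''\in V_{k+1}$ to $O'\in V_k$ when $\psi_{k+1,k}(O'')\subseteq O'$. By the previous paragraph each $V_k$ is nonempty, and every vertex at level $k+1$ has its image at level $k$ inside $V_k$ (push-forward of a $\bQ$-point is a $\bQ$-point). So this is an infinite, finitely-branching, nonempty tree. By König's lemma --- equivalently the Tychonoff argument already invoked in \S\ref{nonemptyMT} and in the proof following Def.~\ref{MTdef} --- there is an infinite branch $\{O_k\}_{k\ge 0}$, i.e.\ a projective system of absolutely irreducible components (after passing to an absolutely irreducible sub-component at each level, using that $G_\bQ$ permutes the finitely many absolutely irreducible pieces over $\bQ$, so some $G_\bQ$-orbit persists). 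This is precisely a \MT\ for $\tilde\psi^\star$ in the sense of Def.~\ref{MTdef}, built on $(G,\bfC,\ell)$ with the given lattice quotient. Moreover each $O_k$ has a $\bQ$-point by construction, giving $\bQ$-points at every level.

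The main obstacle is step (ii)'s coherence: a priori the realizations of the various $G_k$ are chosen independently, so there is no reason the $\bQ$-point at level $k+1$ lies over the chosen one at level $k$, nor that the branch-point configurations match. The pigeonhole/tree device handles the \emph{existence} of a compatible chain, but one must be careful that the finiteness used is genuine --- that for fixed $r$ the set of Nielsen classes $\ni(G_k,\bfC)$ with a $\bQ$-point is finite at each level (true: $V_k$ is a subset of the finite set of braid orbits) \emph{and} that the push-forward maps $V_{k+1}\to V_k$ are well-defined and the tree has no dead ends below any level, which follows because a $\bQ$-realization of $G_{k+1}$ composed with $G_{k+1}\to G_k$ is a $\bQ$-realization of $G_k$ in the pushed-forward Nielsen class, and $\tilde\psi^\star$ being $\ell$-Frattini guarantees the class data is carried along without change (\S\ref{lfratquot}, \S\ref{nilatqt}). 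The only remaining subtlety is descending from ``a $\bQ$-point on a possibly reducible Hurwitz component'' to ``a $\bQ$-point on an absolutely irreducible component''; here one uses that $G_\bQ$ acts on the finite set of absolutely irreducible components lying over a given $\bQ$-component and that a $\bQ$-rational point must lie on a $G_\bQ$-stable, hence (after intersecting the chain) persistent, sub-collection --- exactly the bookkeeping attributed to \cite{FrK97}. I would cite \cite{FrK97} for the detailed verification that these choices can be made simultaneously, the statement of the proposition already crediting that source.
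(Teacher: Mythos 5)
The paper does not give its own proof here; it cites \cite{FrK97} (and restates the result as Thm.~\ref{genRIGPbdr}, which is attributed to \cite[Thm.~4.4]{FrK97}). So your proposal has to be judged against the logical requirements of the statement and the paper's surrounding discussion.

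Your tree/K\"onig/Tychonoff scheme and the push-forward coherence are the right shape and match the spirit of what Prop.~\ref{bdrefreal} needs. But there is a genuine gap at the heart of the argument: you never establish that the realizations of the $G_k$ must eventually be in $\ell'$ classes of $G$, which is precisely the first and hardest conclusion of Thm.~\ref{genRIGPbdr} (``Such regular realizations are only possible by restricting to $\ell'$ classes''). Your invocation of Schur--Zassenhaus goes in the \emph{converse} direction: it explains why a fixed $\ell'$ class in $G$ lifts \emph{uniquely} to each $G_k$, not why the given realizations (a priori using arbitrary $\bfC_k$ in $G_k$) must project to $\ell'$ classes downstairs. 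Without that necessity, your finiteness claim --- ``for a fixed value $r\le B$ there are only finitely many possibilities for the multiset $\bfC_k$'' --- is false as stated, because the set of conjugacy classes of $G_k$ grows unboundedly with $k$; only the $\ell'$ classes of $G_k$ stay in bijection with the (finite set of) $\ell'$ classes of $G$. The correct move is to push $\bfC_k$ down to $G$ before pigeonholing, and the reason this lands you in $\ell'$ classes is the observation quoted in \S\ref{95-04}: a lift of an element of order divisible by $\ell$ to $\tfG \ell 1$ has $\ell$ times that order, so non-$\ell'$ classes force element orders (and, via the \BCL\ rational-union constraint on a $\bQ$-realization, the number of branch points) to grow with $k$, contradicting the bound $r_{\bfC_k}\le B$. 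That argument is missing.

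A secondary point: your definition of $V_k$ (``push forward \ldots\ to an orbit in $V_k$'') is circular as written; the clean version is just ``braid orbits carrying a $\bQ$-point,'' with the edge relation being push-forward, and nonemptiness at \emph{all} levels obtained from nonemptiness at infinitely many levels via the fact that push-forward of a $\bQ$-point is a $\bQ$-point --- which you do note, so this is fixable bookkeeping, not a conceptual error.
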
 
That is, if the Main \RIGP\  \MT\ Conjecture is true, $\ell'$ regular realization of all groups in $\sG_{G,\ell}$ requires increasingly large numbers of branch points.

The easiest case, $D_\ell$ ($\ell$ odd), has  two contrasting situations. 
\begin{edesc} \label{ell'G} \item \label{ell'Ga}  There are regular realizations of all the $\sG_{D_\ell,\ell,\bZ_\ell}$. 
\item \label{ell'Gb}  Beyond the few cases known by Mazur's Theorem (see \sdisplay{\cite{Fr78}}), no one has produced any $\ell'$ realizations of the  $\sG_{D_\ell,\ell,\bZ_\ell}$. 
\end{edesc} Regular realizations for  \eql{ell'G}{ell'Ga}  are given in 1st year algebra. The branch point number   increases with $k$. 

We already discussed \eql{ell'G}{ell'Gb}:  $\ell'$ regular realizations correspond to cyclotomic torsion points on hypelliptic Jacobians in \S\ref{dihedtohyper}. 

Also, there are versions -- Thm.~\ref{genRIGPbdr} -- of both parts of \eqref{ell'G} for all the finite groups we have considered.

\subsubsection{$\GL_2$ toughest point} \label{toughestpt} As in \eql{qtresults}{qtresultsb}, start from a point at level 0, $\bp'_0$, corresponding to a Galois cover, $\hat \phi_{\bp_0}: \hat W_{\bp_0}\to \prP^1_z$.  Denote the complete collection of projective sequences over  $\bp'_0$ on, say, a \MT\    by $\bS_{\bp_0'}$. 

To simplify relating the \RIGP\ and the analog of Serre's case, assume $\bQ_{\sH_0}=\bQ$ and $k_0=0$ in \eqref{RIGPOITNCs}.  This gives a  $G_\bQ$ action on $\bS_{\bp_0'}$; each element gives a copy of the $\bZ_\ell[G]$ module  $L^\star$ \S\ref{notMT} as a  subspace on $H^1(\hat W_{\bp_0},\bZ_\ell)$. The $\ell$-adic action maps $G_\bQ$ into the group permuting such subspaces. 

For the $\GL_2$ type given by \eql{serretype}{serretypeb}, Serre used Tate's $\ell$-adic elliptic curve with $j$-invariant not an $\ell$-adic integer (discussion of {\eql{serreles}{serrelesb}). From {\sl wild ramification} on the $\bZ/\ell$ torsion on the Jacobian of the Tate elliptic curve at $j'$ comes  a crucial piece of $X_1(\bZ/\ell)\to \prP^1_z$  geometric monodromy. 

Using the Frattini property of \eql{OITintro}{OITintroa}, and the entwining  of the two Nielsen classes of Ex.~\ref{ncmodcurves}, that concluded Serre's result for such $j'$.      

For general \MT s, the analog of Serre's toughest $\GL_2$ case -- those not in \eql{serretype}{serretypeb} -- comes from $\bp_0'\in \sH_0(\bQ)$. If fine moduli holds, 
$$\text{this automatically gives  an $\ell'$  $\bQ$ regular realization in $\ni(G,\bfC)$.}$$

Completing Serre's \OIT\ awaited Falting's Theorem \cite{Fa83}.  For $K$ a number field, Conj.~\ref{mainconj} says high \MT\  levels have no $K$ points (\S\ref{95-04}). Both \sdisplay{\cite{Fr06} and \cite{CaTa09}} used Faltings in their versions for  $r=r_\bfC=4$. 

Faltings says, when the genus of a \MT\ level exceeds 1,  it has only finitely many rational points. If they exist (off the cusps) at each level -- applying the Tychonoff Theorem -- some subset of them would be part of a projective system of $K$ points on the \MT.  

This contradicts Weil's Theorem on Frobenius action on the first $\ell$-adic cohomology. That is, the Frobenius on the corresponding subspace ($\equiv L^\star$ as in \eql{qtresults}{qtresultsb})  would be trivial, not of absolute value $q^{\frac 1 2}$) when you reduce the tower modulo a {\sl good prime\/} $p|q$: $p$ is good if $p$ doesn't divide the order of $G$ (a conseqence of \cite{Gr71}). 

\cite{Fr06}  used properties of cusps  in the genus formula of Thm.~\ref{genuscomp}, as in Ex.~\ref{mainex}. See the discussion with $\ell$-lattice quotients in \S~\ref{exA43-2cont} for a particular case. While this is explicit for $r=4$ as to the level where the genus rises, Faltings isn't explicit for rational points disappearing at higher levels. 

The general case using the entwining of Nielsen classes of \eqref{RIGPOITNCs} -- as in the example below Thm.~\ref{level0MT} --  benefits from  both \cite{Fr06} and \cite{CaTa09}. 

Completing the conjectured \OIT\ requires a useful listing of the eventually $\ell$-Frattini sequences in the geometric monodromy of the \MT. Also, finding which are achieved  in \eql{OITintro}{OITintrob} as decomposition groups for some values of $j'\in \prP^1_z(\bar \bQ)$. \cite{Fr20} considers this, for being as explicit as Serre's \OIT\  --   as in the steps in \eqref{unfinishedC3^4} -- only for the case of Thm.~\ref{level0MT}. 

Consider an $\ell'$ \RIGP\ realizations of an $\ell$-Frattini cover $G_k\to G$ over a number field $K$.  This corresponds to a  $K$ point $\bp_k$ on a \MT. 
$$\begin{array}{c} \text{Conclusion: showing the $J'\in J_r(K)$ below $\bp_k$ satisfies the \OIT\ is akin to}\\ \text{Serre's toughest \OIT\ case, requiring extending Falting's Theorem.}\end{array}$$ 

\section{The path to {\bf M}(odular){\bf T}(ower)s}  \label{overview} \S\ref{history} emphasizes historical examples from the case $r=4$ (four branch point covers) that motivated this paper as a prelude to \cite{Fr20}. Considerable motivation came from interacting with Serre. This plays on modular curve virtues, as being upper half-plane quotients, even though congruence subgroups do not define the \MT\ levels except when $G$ is close to dihedral. 

\S\ref{S1title} elaborates on \lq\lq What Gauss told \dots.\rq\rq\ a hidden history that has obscured nonabelian aspects of {\bf R}(iemann)'s{\bf E}(xistence){\bf T}(heorem). The universal Frattini cover of  $G$ allows launching into such non-abelian aspects, though \cite{Fr20} takes a middle road.

Applications extending the \OIT\ and using $\ell$-adic monodromy come at the book's end. The \RIGP\ and its interpretations by rational points on Hurwitz spaces comes at the book's beginning. The middle joins these, using the universal Frattini cover $\tilde G$, and braid orbits on Nielsen classes.

Suppose, you accede to taking on a serious simple group (say, $A_5$, and a very small $\ell$-Frattini cover (for the prime $\ell=2$).  For example, as in \cite[Chap.~9]{Se92},  the short exact sequence $0 \to \bZ/2 \to \Spin_5\to A_5\to 1$. Then you might want to recognize the sequence for the abelianized $2$-Frattini cover \begin{equation} \label{A52}  0\to (\bZ_2)^5\to {}_2\tilde A_{5,\ab}\to A_5\to 1 \text{ \cite[Prop.~2.4]{Fr95}} \end{equation} (and its characteristic quotients) as quite a challenge at the present time.\footnote{ $\Spin_5\to  A_5$ is the smallest nontrivial quotient of ${}_2^1 A_5\to A_5$, as in \eqref{charlquots}.} Even according to Conj~\ref{MTconj} if you only had to find {\sl any\/} number field $K$ for which all those characteristic quotients have \RIGP\ realizations over $K$. The Prop.~\ref{bdrefreal}  question without the bound $B$.

 \S\ref{explainingFrattini}, the longest in this paper, explains the logic of the book, with extended abstracts on several papers on \MT s, starting from 1995.

\subsection{Historical motivations} \label{history} \S\ref{guides} alludes to my interactions with Serre on two seemingly disparate topics, at far separated periods. It is meant to show how those topics came together. The brief \S\ref{ack} acknowledges two referees.  It also connects to recent work that harkens to Serre's \cite{Se92} where he notes there was an alternative \RIGP\ approach. 

\subsubsection{Guides} \label{guides} 
This section is about on I learned from four examples. 

\begin{edesc} \label{mainexs} \item \label{mainexsa} Abel's spaces  as level 0 of a \MT\ classically denoted $ \{X_0(\ell^{k\np1})\}_{k=0}^\infty$. 
\item \label{mainexsb} The \MT\ from the Nielsen class $\ni(A_5,\bfC_{3^4})$ and the prime $\ell=2$. 
\item \label{mainexsc} The \MT\ system, from the Nielsen classes $\ni((\bZ/\ell)^2\xs\,\bZ/3, \bfC_{3^4})$, running over primes $\ell$, as my foray into an \OIT\ beyond Serre's.\footnote{What to do about $\ell=3$, is tricky. (See \sdisplay{\cite{FrH20} starting with \eqref{3vs2}.)}} 
\item \label{mainexsd} Relating $ \{X_1(\ell^{k\np1})\}_{k=0}^\infty$ and  $ \{X_0(\ell^{k\np1})\}_{k=0}^\infty$ as a special case of a general relation between {\sl inner\/} and {\sl absolute\/} Hurwitz spaces. 
\end{edesc} 

In \eql{mainexs}{mainexsa} and \eql{mainexs}{mainexsd}, there is a parameter $k$ indicating a tower level. Since in these two cases, the tower levels are traditionally related to a power of a prime $\ell$, I assume $k$ as a level requires no more explanation. 

My initial relation with the \OIT, during my first decade as a mathematician, was based on \eql{mainexs}{mainexsa} on  several stages using practical problems with considerable literature, on which this paper elaborates. The two series in \eql{mainexs}{mainexsd} are reasonably considered the mainstays of modular curves. 

My interactions with Serre on \cite{Se90a} and \cite{Se90b}, before they were written, related to my review of \cite{Se92} (see \cite{Fr94}), before it appeared  caused me to go deeply into \eql{mainexs}{mainexsb}. \cite{Fr90} ,that Serre saw me present in Paris in 1988, had him write to me asking -- essentially -- for the lift invariant formula for the families of genus 0 covers in the Nielsen classes $\ni(A_n,\bfC_{3^{n-1}})$, $n\ge 4$.

For that reason, I have alluded to \cite{Fr12} in the discussions. Especially, applied to a {\sl braid orbit\/}  on a Nielsen class, 
$$\text{the idea of the {\sl braid lift invariant\/}  from a {\sl central  Frattini cover}.} $$  The comparison between general and central Frattini covers of a finite group appears in many places to interpret \MT s.   

I aimed with \eql{mainexs}{mainexsc} to show commonalities and differences from the source of Serre's \OIT\ \eql{mainexs}{mainexsa}. Especially I refer to what works for modular curves. Also,  to what I learned that applies even to modular curves, though not previously observed, or the approach is different/illuminating. That is the concluding topic of \cite{Fr20}. Therefore I am brief on it here, merely recording some of its results that show what is new from an example that goes beyond Serre's \OIT. 

\subsubsection{Settings for the \RIGP\ and acknowledgements} \label{ack} 
If \RIGP\ realizations of $G$ exist, where are they? From \cite{FrV92} (and related), such must correspond to $\bQ$ points on Hurwitz spaces, with Main \RIGP\ Conj.~\ref{mainconj} a tool to understand that. We have three small subsections. 

{\sl Missing topic from \RIGP\ vs \IGP\ discussions: } Why  has the \RIGP\ (combined with Hilbert's Irreducibility Theorem) been more successful in finding \RIGP\ realizations of groups?  \cite{NScW00} proves that all solvable groups are Galois groups, but not with \RIGP\ realizations. 

Something akin to \RIGP\ realizations have now been formed for supersolvable groups \cite{HWi20}.
\footnote{A subseries of groups $G>G_1>G_2>\dots>G_t>\{1\}$ each of index a prime in the previous, but unlike solvable, each is normal in $G$.}  They use spaces that may not be rational varieties, but satisfy  {\sl weak-weak approximation}:  (say, over $\bQ$)  weak approximation for finite sets of primes outside some specific finite set of primes. 

\cite[\S3.5 and \S3.6]{Se92} shows that  varieties, $Z$, with a weak-weak approximation in place of $\prP^1_z$ suffice for Hilberts irreducibility in this sense:\footnote{A unirational variety -- image of a projective space -- was conjectured by Colliot-Thelene to have weak-weak approximation $\implies$ Noether covers $\afA^n\to \afA^n/G$, $G\le S_n$, would have an Hilbertian property.}   

If $\phi: W\to Z$ is a Galois cover with group $G$ over $\bQ$, then a dense set in  $Z(\bQ)$ has  irreducible $\phi$ fibers (producing $G$ as a Galois group) \cite{Ek90}. 

Instead of Noether covers, they embed $G$ into $\SL_N(\bQ)$. Instead of directly using weak-weak approximation, they use  {\sl unramified Brauer-Manin conditions}.  Prob.~\ref{weakweak} gives two questions for this approach.

\begin{prob} \label{weakweak}  Imitate Lem.~\ref{bpstoinfty} with $\tilde B\xs H$ where $B$ instead of abelian is an $\ell$-group, and $\tilde B$ is the lattice quotient from Lem.~\ref{Zlquot}. Assume $H$ is weak-weak style regularly realized. Also, how would you relate to Nielsen classes in this weak-weak regular style?
\end{prob}

{\sl What about homological stability:} It played a big role in \cite{FrV91} and \cite{FrV92} where its use was subtle, in relating absolute and inner classes (a footnote in \S\ref{limgptie}), and it will be in \cite{Fr20}. This is the story of making changes in one of the two parameters in a Nielsen class, $\ni(G,\bfC)$. \S\ref{limgptie}  discussed fixing $\bfC$ and changing $G$ to $G^\lm$. 

Homological stability is a consideration when fixing $G$ and letting $\bfC$ change by increasing the multiplicity of appearance of  (all) classes in $\bfC$,  thereby getting a sequence of new conjugacy classes in $G$. 

\cite[App.]{FrV91} found  there is a $k_0$ with dimensions of $\{H^0(\sH(G,\bfC_k))\}_{k\ge k_0}^\infty$ all the same:  the order of a quotient of  the Schur multiplier of $G$.\footnote{Saying this precisely requires the lift invariant definition without the assumption on $\bfC$ in Def.~\ref{Anliftdef}.} 

\S\ref{Ancomps} for $A_n$ is a special case. Actually, \cite{FrV92} used the distinguished component, wherein the lift invariant is trivial.  We didn't write out the full theorem into print at the time. Since then \cite{Sa19} has. 

We suspected stable homotopy would continue to higher cohomology, giving a stable $H^1(\sH(G,\bfC_k)$ on the components with trivial lift invariant -- under the same high multiplicity condition on the $\bfC_k\,$s. This would make its $\ell$-adic cohomology useful. That has been fulfilled by \cite{EVW19} when $G$ is dihedral.\footnote{Actually, their condition was that  $G$ is a group of order congruent to 2 mod 4 and they have used the conjugacy classes $\bfC$ to be supported in the unique class of involutions.}

{\sl This paper had two referees:} Both helped the author make the best of its first versions. The first referee -- known to me -- more familiar with the area made suggestions, including assuring inclusion of topics related to work of himself and his cowriters. The second referee had serious reorganization suggestions that shortened the original abstract, then expanded the introduction to point to the definitions and \S \ref{explainingFrattini} abstracts that would help guide the reader: {\bf Thank you to both!}

\subsection{What Gauss told Riemann about Abel's Theorem}  \label{S1title}  The title is the same as that of the paper \cite{Fr02}.  It must be shocking that most upper half-plane quotients -- $j$-line covers ramified over $0,1,\infty$ -- are not modular curves, or that they are related to practical problems. 

One lesson from ${}_\ell \sX_0\eqdef \{X_0(\ell^{k\np1})\}_{k=0}^\infty$  came from Galois. He computed the geometric monodromy of $X_0(\ell)\to \prP^1_j$ (over $\bC$; $k=0$). finding it to be $\PSL_2(\bZ/\ell)$ which is simple for $\ell\ge 5$. As an example of his famous theorem:  {\sl radicals\/} don't generate the algebraic functions describing modular curve covers of the $j$-line. 

\subsubsection{Early  \OIT\ and \MT\ relations} \label{earlyOIT-MT}  Recall Ex.~\ref{Dlk+1}. 
The \MT\ description of $X_0(\ell)$ in \eql{mainexs}{mainexsa}:  they are {\sl absolute\/} -- see \eql{eqname}{eqnameb} -- reduced Hurwitz spaces $\sH(D_{\ell^{k\np1}},\bfC_{2^4})^{\abs,\rd}$. There were two stages in this recognition. 

{\sl Stage 1}: \cite[\S2]{Fr78} starts the observation of the close relation between Serre's \OIT\ and the description of {\sl Schur covers\/} $f\in \bQ(w)$. Those are rational functions $f\in \bQ(w)$ for which: 
\begin{equation} \label{schurcover} \!\!\!\text{$f: \prP^1_w(\bZ/p) \to \prP^1_z(\bZ/p)$ is 1-1 for $\infty$-ly many $p$.}\end{equation}
Similarly, over any number field $K$ replace $\bQ(w)$ by  $K(w)$, and $\bZ/p$ by residue class fields $\sO_K/\bp=\bF_\bp$ :
\begin{equation} \label{schurcovernf} \!\!\!\text{$f$ is  1-1 on $ \prP^1_w(\bF_\bp)$ for $\infty$-ly many $\bp$.}
\end{equation}The Galois closure of such an $f$ may only be defined over a proper extension $\hat K/K$.\footnote{Extension of constants as in Ex.~\ref{ncmodcurves-cont}.} Indeed,  for $f$ to be a Schur cover over $K$, we must have $\hat K\not= K$.   

\newcommand{\exc}{{\text{\rm Exc}}}
Use the permutation notation of Rem.~\ref{permnot},  $T_f: \hat G_f\to S_n$ and the respective stabilizers of 1 by $\hat G_f(1)$, $G_f(1)$. 

To prevent accidents, define the exceptional set (for the Schur property): $$\exc_{f,K}=\{\bp \mid f \text{ is one-one on $\infty$-ly many extensions of }\bF_\bp\}.$$

If $f_i$, $i=1,2$, are exceptional over $K$, then  \begin{equation} \label{capexc} \text{so is $f_1\circ f_2$, if $|\exc_{f_1,K}\cap \exc_{f_2,K}| = \infty$.}\end{equation} That is, $f_1\circ f_2=f$ is a decomposition of $f$ over $K$.  The condition that $f$ is {\sl indecomposable\/}  over $K$ is that $T_f: \hat G_f\to S_n$ is {\sl primitive}: There is no group $H$ properly between $\hat G_f(1) $ and  $\hat G_f$.

\begin{prob}[Schur Covers]  Explicitly describe  \!{\sl exceptional \/} \!\!$f$ {\sl indecomposable\/} over $K$. \end{prob}  

Thm.~\ref{schurcoverfiber} connects exceptional $f$ with the \OIT: exceptionality relates the arithmetic and geometric monodromy of the covers from $f$.  

\begin{thm} \label{schurcoverfiber} The following is equivalent to exceptionality. 

There exists $\hat g \in \hat G_f(1)$, such that each orbit of $\lrang{G_f(1),\hat g}$ on $\{2,\dots,n\}$   breaks into (strictly) smaller orbits under $G_f(1)$ \cite[Prop.~2.1]{Fr78}. \end{thm}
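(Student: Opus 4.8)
\textbf{Proof plan for Theorem~\ref{schurcoverfiber}.} The natural framework is the Chebotarev density theorem applied to the Galois closure $\hat W_f \to \prP^1_z$ of $f$, together with the standard dictionary translating "number of $\bF_\bp$-points in a fiber" into "number of fixed points of a Frobenius conjugacy class acting in the permutation representation $T_f$." First I would fix a prime $\bp$ of $\sO_K$ of good reduction (unramified in $\hat W_f$, and away from the discriminant and the ramification of $f$), let $\Fr_\bp \in \hat G_f$ denote the associated Frobenius conjugacy class, and recall that the number of $z' \in \prP^1_z(\bF_\bp)$ with exactly one $w' \in \prP^1_w(\bF_\bp)$ above it is governed, via Chebotarev, by the proportion of $\hat g \in \hat G_f$ whose $\lrang{G_f(1),\hat g}$-action on $\{1,\dots,n\}$ has $\{1\}$ as a singleton orbit while the remaining letters stay in orbits that are genuinely enlarged relative to $G_f(1)$. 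The key local computation is: over $z' \in \prP^1_z(\bF_{\bp})$ whose Frobenius is $\hat g$, the point $w'_1$ above $z'$ is $\bF_\bp$-rational iff the $G_f(1)$-orbit and the $\lrang{G_f(1),\hat g}$-orbit of the symbol $1$ coincide and are a single point, whereas each other rational point $z'$ has either $0$ or $\ge 2$ rational preimages precisely when $\hat g$ fuses the non-fixed symbols into strictly larger orbits.

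With that dictionary in place, the argument splits into the two implications. For the "$\Leftarrow$" direction, suppose such a $\hat g \in \hat G_f(1)$ exists. By Chebotarev there are infinitely many $\bp$ (indeed a set of positive density, and then passing to extensions $\bF_{\bp^d}$ to pick up the relevant power of $\hat g$) with $\Fr_\bp$ in the class of $\hat g$; for those residue fields every $z'$ with a rational preimage has exactly one, so $f$ is injective on $\prP^1_w(\bF_\bp)$, giving exceptionality as in \eqref{schurcover}--\eqref{schurcovernf}. For the "$\Rightarrow$" direction, assume $f$ is exceptional, so the set $\exc_{f,K}$ is infinite; by Chebotarev the Frobenius classes over this infinite set of $\bp$ (and their powers) are Zariski-dense in $\hat G_f$ in the sense that they exhaust, up to conjugacy, a nonempty union of $\hat G_f$-classes, so some fixed class must occur infinitely often. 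Choosing $\hat g$ in that class and tracking the orbit-counting identity backwards — injectivity of $f$ on $\prP^1_w(\bF_\bp)$ forces that each $z'$ with a rational preimage has exactly one — yields that $\hat g \in \hat G_f(1)$ (the symbol $1$ is fixed, corresponding to $w'_1$ rational) and that every other $\lrang{G_f(1),\hat g}$-orbit strictly refines under $G_f(1)$ (no second rational preimage of any $z'$). This is exactly the stated combinatorial condition.

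The step I expect to be the genuine obstacle is making the passage from "exceptional, i.e. injective for infinitely many $\bp$" to "there is a \emph{single} group-theoretic element $\hat g$ witnessing it" fully rigorous: one must rule out the a priori possibility that injectivity is achieved by a sequence of Frobenii lying in infinitely many distinct conjugacy classes, none of which individually has the required orbit structure. The resolution is a pigeonhole/finiteness argument — $\hat G_f$ has only finitely many conjugacy classes, so some class recurs infinitely often — combined with care that the relevant power of $\hat g$ (the one actually controlling injectivity over a given $\bF_{\bp^{d}}$) still lies in $\hat G_f(1)$ and still has the orbit-refinement property; here one uses that exceptionality was defined allowing passage to infinitely many extensions of $\bF_\bp$, which is precisely what lets us replace $\hat g$ by a suitable power without losing the conclusion. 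The remaining verifications — good-reduction bookkeeping, the exact equivalence between the fixed-point count of $\hat g$ on cosets of $G_f(1)$ and the fiber cardinality of $f$ over $\bF_\bp$-points, and the compatibility \eqref{capexc} of the exceptional sets under composition (used implicitly to reduce to the indecomposable, i.e. primitive, case) — are routine and I would not spell them out in detail, citing \cite[Prop.~2.1]{Fr78} and the treatment in \cite{Fr80} for the standard facts about monodromy and reduction mod $\bp$.
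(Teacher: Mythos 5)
Your high-level framework — Chebotarev over $K$ to control the constant-field Frobenius, then pigeonhole over finitely many classes/cosets in the $\Rightarrow$ direction — is the right one, and the last paragraph correctly flags that some reorganization is needed to get a single witnessing $\hat g$. But the genuine obstacle is not the pigeonhole; it is the step you assert without proof in both directions, namely that ``each $\langle G_f(1),\hat g\rangle$-orbit on $\{2,\dots,n\}$ breaks under $G_f(1)$'' is equivalent to ``every element of the coset $G_f\hat g$ has at most one fixed point on $\{1,\dots,n\}$.'' Your ``key local computation'' paragraph conflates the coset representative $\hat g$ (the constant-field Frobenius, fixed once $\bp$ is fixed) with the point-Frobenii $\Fr_{z'}$, which vary over $z'\in\prP^1_z(\bF_\bp)$; whether a given $w'$ over $z'$ is $\bF_\bp$-rational depends on $\Fr_{z'}$, not on $\hat g$, and the orbit condition on $\langle G_f(1),\hat g\rangle$ says nothing fiber-by-fiber until you feed it through a counting identity. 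As written, the $\Leftarrow$ direction jumps from ``$\Fr_\bp$ in the class of $\hat g$'' straight to ``every $z'$ has $\le 1$ rational preimage,'' and the $\Rightarrow$ direction invokes ``the orbit-counting identity'' without stating it.

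The missing ingredient is the orbit-counting (Burnside) pair, valid because $G_f$ is normal in $\hat G_f$ and transitive on $\{1,\dots,n\}$: $\tfrac{1}{|G_f|}\sum_{g\in G_f\hat g}|\mathrm{Fix}(g)|$ equals the number of $\hat g$-stable $G_f$-orbits, which is $1$; and $\tfrac{1}{|G_f|}\sum_{g\in G_f\hat g}|\mathrm{Fix}(g)|^2$ equals the number of $\hat g$-stable $G_f(1)$-orbits on $\{1,\dots,n\}$ (equivalently, the number of $\langle G_f(1),\hat g\rangle$-orbits that do \emph{not} break). Subtracting gives $\sum_{g\in G_f\hat g}\bigl(|\mathrm{Fix}(g)|^2-|\mathrm{Fix}(g)|\bigr)=|G_f|\cdot(\#\{\text{unbroken orbits}\}-1)$; since each summand is $\ge 0$, the orbit condition holds iff every $g$ in the coset has $|\mathrm{Fix}(g)|\le 1$ (hence, by the first identity, exactly $1$), which by Chebotarev over $\bF_\bp(z)$ is exactly injectivity of $f$ on $\prP^1_w(\bF_\bp)$ for that $\bp$. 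This is also precisely where the paper's ``monodromy precision'' lives: geometrically it is the statement that $W\times_{\prP^1_z}W\setminus\Delta$ reduced mod $\bp$ has irreducible components defined over $\bF_\bp$ none of which is absolutely irreducible, hence (having a nontrivial constant-field extension) no $\bF_\bp$-points at all, rather than merely $O(\sqrt{q})$ of them. Without one of these two arguments the proof does not close.
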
  

Thm.~\ref{schurcoverfiber} holds for essentially any cover (absolutely irreducible over $K$; the  sphere need not be the domain)  \cite[Prop.~2.3]{Fr05}. This application of a wide ranging Chebotarev density theorem is a case of  {\sl monodromy precision\/}. Usually a {\sl Chebotarev density\/} application in the $\implies$ direction isn't so precise. Yet, here instead of saying $f$ is {\sl almost\/} one-one, it implies it is exactly one-one for $\infty$-ly many $\bp$. 

\cite{Fr05} expanded on situations giving monodromy precision. This is as an improvement on the intricate industry refining the appropriate error term in the Riemann hypothesis over finite fields. 

To relate to the \OIT\ for rational $f$, it turned out enough to concentrate on two cases with $\ell$ prime: $\deg(f)=\ell$, or $\ell^2$. The next step was to describe those Nielsen classes that produce the exceptional $f$. \cite[Thm.~2.1]{Fr78} lists the Nielsen classes (of genus 0 covers) for $\deg(f)=\ell$ that satisfy these conditions, noting a short list of 3-branch point cases, with a main case of $r=4$ branch points forming one connected family.  

Thereby, it identifies $X_0(\ell^{k\np1})$ (resp.~$X_1(\ell^{k\np1})$)  as reduced absolute (resp.~inner) Hurwitz spaces as in \eql{mainexs}{mainexsd}. This was a special case of \cite[\S3]{Fr78}, the {\sl extension of constants\/} rubric for covers, by going to their Galois closure. 

\begin{center} This sequence encodes the moduli interpretation Inner to Absolute: \end{center}  \begin{equation} \label{inabsseq} \text{ } \sH(G,\bfC)^\inn\to \sH(G,\bfC)^{\abs} \to U_r\end{equation}  and its expansion to total spaces (over $U_r\times \prP^1_z$) as in \cite[Thm.~1]{FrV91}.  This  relates $G_f$ (geometric) and $\hat G_f$ (arithmetic) monodromy, as in \S\ref{whyrationalfuncts}. 

\cite[\S2]{Fr78} shows, for prime degree $\ell$ rational functions, identifying Schur covers is essentially equivalent to the theory of complex multiplication. Further, from that theory, we may describe $\exc_{f,K}$ as an explicit union of arithmetic progressions, thereby allowing testing the condition \eqref{capexc}.  

Describing prime-squared degree exceptional rational functions interprets the $\GL_2$ part of Serre's \OIT, as in \S\ref{uselfratt} \cite[\S6.1--\S6.3]{Fr05}. We state the main point, over $\bQ$, again using precision. This also fits the inner-absolute Hurwitz space relation above by using the limit Nielsen class $\ni((\bZ/\ell)^2\xs \bZ/2,\bfC_{2^4})$ Ex.~\ref{ncmodcurves-cont}, still a modular curve case. The Nielsen class collection $\{\ni((\bZ/\ell^{k\np1})^2\xs \bZ/2,\bfC_{2^4})\}_{k=0}^\infty$ identifies with a modular curve tower \cite[Prop.~6.6]{Fr05}. 

For a given $\ell$, from Serre's (eventual, Thm.\ref{OITsf}) version of the \OIT\, we conclude this. If the elliptic curve $E$ (say, over $\bQ$) has a $\GL_2$ $j$-invariant, $j_E=j_0$, then the corresponding degree $\ell^2$ rational function $f_{j_0,\ell^2}$ has arithmetic/geometric monodromy group quotient $\GL_2(\bZ/\ell)/\{\pm1\}=G(\bQ_{j',\ell^2}/\bQ)$ for all primes $\ell \ge \ell_0$ for some $\ell_0$ dependent on $j_0$ \cite{Se97b}. 
For a Galois extension $\hat L/\bQ$, use $\Fr_{L,p}$ for the Frobenius (conjugacy class) at $p$. 

\begin{prop} \label{excGL2} Given any $\ell\ge \ell_0$ as above, $\exc_{f_{j',\ell^2}}$ is the set of $p$ so that the group  $\lrang{-1,\Fr_{\bQ_{j',\ell^2},p}}$ acts irreducibly on $(\bZ/\ell)^2$. 
This is always infinite.  \end{prop}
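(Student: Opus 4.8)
The plan is to read off the geometric and arithmetic monodromy groups of $f_{j',\ell^2}$, feed them into Thm.~\ref{schurcoverfiber}, and then pick off the set of primes by Chebotarev. The degree $\ell^2$ function $f_{j',\ell^2}$ is the map on $E/\{\pm1\}\cong\prP^1$ induced by multiplication by $\ell$ on an elliptic curve $E/\bQ$ with $j(E)=j'$; over $\bar\bQ$ a generic fiber is the image in $E/\{\pm1\}$ of a torsor under $E[\ell]\cong(\bZ/\ell)^2$, so the geometric monodromy is $G_f=(\bZ/\ell)^2\xs\{\pm I\}$ acting on $\ell^2$ letters with $G_f(1)=\{\pm I\}$. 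Because $j'$ is a $\GL_2$ $j$-invariant and $\ell\ge\ell_0$, the mod-$\ell$ image of $G_\bQ$ on $E[\ell]$ is all of $\GL_2(\bZ/\ell)$ (this is \cite{Se97b}), so the arithmetic monodromy is $\hat G_f=(\bZ/\ell)^2\xs\GL_2(\bZ/\ell)$ with $\hat G_f(1)=\GL_2(\bZ/\ell)$ and $\hat G_f/G_f\cong\GL_2(\bZ/\ell)/\{\pm1\}=G(\bQ_{j',\ell^2}/\bQ)$, the constant-field group named in the discussion just before the Proposition.

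Now run Thm.~\ref{schurcoverfiber}. For all but finitely many $p$ -- those dividing $\ell$, the conductor of $E$, or $|G_f|$ -- reduction mod $p$ preserves $G_f$ and $\hat G_f$, and the decomposition group at a prime over $p$ is generated over $G_f$ by a Frobenius $\Fr_p\in\hat G_f$; fixing a $\bQ$-rational base point and marked fiber point $1$ one may take $\Fr_p\in\hat G_f(1)=\GL_2(\bZ/\ell)$, with image $\Fr_{\bQ_{j',\ell^2},p}$ in $\hat G_f/G_f$. Thm.~\ref{schurcoverfiber} characterizes exceptionality of $f_{j',\ell^2}$ by the existence of a $\hat g\in\hat G_f(1)$ with its orbit-breaking property, and its per-prime refinement singles out $\exc_{f_{j',\ell^2}}$ as the set of $p$ for which $\Fr_p$ may itself serve as such a $\hat g$: so $p\in\exc_{f_{j',\ell^2}}$ exactly when every orbit of $\lrang{G_f(1),\Fr_p}$ on $\{2,\dots,\ell^2\}$ breaks into strictly smaller $G_f(1)$-orbits. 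Transport this to $(\bZ/\ell)^2\setminus\{0\}$, where $G_f(1)$ acts as $\{\pm I\}$ and $\Fr_p$ acts linearly through a lift $A$ of $\Fr_{\bQ_{j',\ell^2},p}$: the condition becomes a statement about orbit lengths of $\lrang{-I,A}$ which, after the bookkeeping of the $\pm1$-folding and of the block structure of the $E[\ell]$-torsor, is exactly that $\lrang{-1,\Fr_{\bQ_{j',\ell^2},p}}$ act irreducibly on $(\bZ/\ell)^2$. Carrying out precisely this last equivalence -- matching the combinatorial orbit pattern on the imprimitive $\ell^2$-point set, under reduced equivalence, to the absence of an $A$-invariant line -- is the technical heart and the step I expect to be the main obstacle; I would do it one block system at a time, using that an $A$-invariant line is precisely what forces an extra rational point into some fiber over every residue extension.

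For the last clause, \lq\lq this is always infinite\rq\rq, let $S\subset\GL_2(\bZ/\ell)/\{\pm1\}$ be the set of classes whose lifts to $\GL_2(\bZ/\ell)$ have no invariant line. Any generator of a non-split Cartan subgroup of $\GL_2(\bZ/\ell)$ has eigenvalues in $\bF_{\ell^2}\setminus\bF_\ell$, hence lies over such a class, so $S\ne\emptyset$ (for $\ell\ge\ell_0\ge 5$ one may even take such a generator of order $>2$). Since $S$ is stable under conjugation it is a nonempty union of conjugacy classes of $G(\bQ_{j',\ell^2}/\bQ)$, and -- the mod-$\ell$ image being full -- the Chebotarev density theorem for $\bQ_{j',\ell^2}/\bQ$ gives a positive density of $p$ with $\Fr_{\bQ_{j',\ell^2},p}\in S$; deleting the finitely many excluded primes of the previous paragraph still leaves infinitely many. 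Hence $\exc_{f_{j',\ell^2}}$ is infinite.
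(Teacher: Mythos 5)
Your final paragraph — Chebotarev plus the existence of a non-split Cartan generator (multiplication by a generator of $\bF_{\ell^2}^*$ viewed as a $2\times 2$ matrix with no invariant line) — is essentially word-for-word the paper's entire proof: the paper treats the \emph{identification} of $\exc_{f_{j',\ell^2}}$ as already established by the surrounding discussion (it leans on \cite[\S 6.1--6.3]{Fr05} and on the inner/absolute Hurwitz space picture of \eqref{inabsseq}) and devotes only two sentences, both to the \lq\lq always infinite\rq\rq\ clause. So the part the paper proves, you prove the same way.

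The part you add — deriving the characterization of $\exc_{f_{j',\ell^2}}$ from Thm.~\ref{schurcoverfiber} — is exactly where you acknowledge a gap, and the gap is real. With $G_f=(\bZ/\ell)^2\xs\{\pm I\}$ acting on the $\ell^2$ letters $E[\ell]$ by $v\mapsto \epsilon v+w$ and $G_f(1)=\{\pm I\}$, the $G_f(1)$-orbits on $\{2,\dots,\ell^2\}$ are the unordered pairs $\{v,-v\}$; the orbit-breaking condition of Thm.~\ref{schurcoverfiber} applied to a lift $A$ of $\Fr_p$ says no such pair is a $\lrang{\pm I,A}$-orbit, i.e.\ \emph{$A$ has no eigenvalue $\pm 1$}. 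That is strictly weaker than \lq\lq$\lrang{-1,A}$ acts irreducibly\rq\rq: a split-Cartan element $A=\mathrm{diag}(a,b)$ with $a,b\notin\{\pm1\}$ has invariant lines but passes the orbit-breaking test for infinitely many powers $A^m$. Your parenthetical heuristic — \lq\lq an $A$-invariant line is precisely what forces an extra rational point into some fiber over every residue extension\rq\rq\ — is where this slips: an invariant line with eigenvalue $\lambda\neq\pm1$ only forces extra points for those $m$ with $\lambda^m\in\{\pm1\}$, not for every $m$. What actually closes the distance between \lq\lq no $\pm1$-eigenvalue\rq\rq\ and \lq\lq irreducible\rq\rq\ is the indecomposability bookkeeping the paper emphasizes in the $\GL_2$ discussion (the degree-$\ell^2$ map decomposing over $\bQ(\hat\bp)$ into two degree-$\ell$ pieces when $\Fr_p$ preserves a line, as in \cite{GMS03} and \cite[Prop.~6.6]{Fr05}); that is, $\exc_{f_{j',\ell^2}}$ in this Proposition is being read through the inner/absolute extension-of-constants relation of \eqref{inabsseq}, not just through the raw fixed-point count. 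Unless you bring that extra structure into the argument, carrying out \lq\lq the technical heart\rq\rq\ as you have set it up would land on the weaker eigenvalue condition, not the stated irreducibility.
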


\begin{proof} The classical Chebotarev density theorem implies $\exc_{f_{j',\ell^2}}$ is infinite if any element of $\GL_2(\bZ/\ell)$ acts irreducibly on $(\bZ/\ell)^2$.  For example, on the degree 2 extension $\bF_{\ell^2}$ of $\bZ/\ell=\bF_\ell$, multiply by a primitive generator $\alpha$ of $\bF_{\ell^2}/\bF_\ell$ to get an invertible $2\times 2$ matrix with no invariant subspace. \end{proof} 

The $\GL_2$ case is vastly different from the $\CM$ case in that the exceptional set described in Prop.~\ref{excGL2} is definitely not a union of arithmetic progressions. \cite[\S6.3.2]{Fr05} relates to \cite{Se81} on using the (conjectural) Langlands program to consider these exceptional sets. 

Guralnick-M\"uller-Saxl \cite{GMS03} show that -- excluding those above -- other  indecomposable Schur covers by rational functions,  are {\sl sporadic\/}. That is,  they correspond to points on a finite set of Hurwitz space components.

\begin{defn}[Named Nielsen Classes] \label{namedclasses} Use the respective names \CM\ and $\GL_2$ for the Nielsen classes $\ni(D_\ell,\bfC_{2^4})^{\dagger,\rd}$ and $\ni((\bZ/\ell)^2\xs \bZ/2,\bfC_{2^4})^{\dagger,\rd}$ (or to the whole series with $\ell^{k\np1}, k\ge 0$ in place of $\ell$), with $\dagger$ referring to either inner or absolute equivalence, and the relation between them. \end{defn} 

{\sl Stage 2\/} discussion, and its relation to the \OIT, couldn't happen until there was a full formulation of \MT s, the topic of \S\ref{explainingFrattini}. Still, a transitional phase after Stage 1 occurred with dihedral groups and the space of hyperelliptic jacobians as in \S\ref{dihedtohyper} and \S\ref{Frat-Groth}.   

The  \MT\ free  question of Prop.~\ref{bdrefreal} shows how \MT s for each finite group $G$ and $\ell$-perfect prime of $G$ generalizes what the same question applied to dihedral groups posed for hyerelliptic jacobians. This -- expanded in \sdisplay{\cite{CaD08}} -- shows the \MT\ project, through the \RIGP, tying to classical considerations.

\subsubsection{Competition between algebra and analysis} \label{algvsanal} The full title of \cite[\S7]{Fr02} is {\sl Competition between algebraic and analytic approaches}.  This subsection consists of brief extracts from it and \cite[\S10]{Fr02}. \S7 was gleaned partly from \cite{Ne81}, and my own observation of \cite{Ahl79} and \cite{Sp57}. \S10 was a personal \lq\lq modern\rq\rq\  dealing with the common \lq appreciation\rq\ of mathematical genius.

{\sl Riemann's early education} \cite[\S7.1]{Fr02}:  Riemann was suitable, as no other German mathematician then, to effect the first synthesis of the \lq\lq French\rq\rq\ and \lq\lq German\rq\rq\ approaches in general complex function theory. 

{\sl Competition between Riemann and Weierstrass} \cite[\S7.2]{Fr02} and \cite[p.~93]{Ne81}:   In 1856 the competition between Riemann and Weierstrass became intense, around the solution of the {\sl Jacobi Inversion problem}. Weierstrass consequently withdrew the 3rd installment of his investigations, which he had in the meantime finished and submitted to the Berlin Academy. 

{\sl Soon after Riemann died} \cite[\S7.3]{Fr02} and \cite[p.~96]{Ne81}:  
After Riemann's death, Weierstrass attacked his methods often and even openly. Curiously, the unique reference in Ahlfor's book was this: 

\begin{quote} Without use of integration R.L.Plunkett proved the continuity of the derivative (BAMS65,1959). E.H.~Connell  and P.~Porcelli proved existence of all derivatives (BAMS 67, 1961). Both proofs lean on a topological theorem due to G. T. Whyburn. \end{quote} This is an oblique reference to to Riemann's use of {\sl Dirichlet's Principle\/} for constructing the universal covering space of a Riemann surface. 
 
There is a complication in analyzing Neuenschwanden's thesis that this resulted in mathematicians accepting Riemann's methods. How does this event resurrect the esteem of Riemann's geometric/analytic view? 

{\sl Final anecdote} \cite[\S 10]{Fr02}: While at the Planck Institute in Bonn, to give talks in the early 21st Century, I visited Martina Mittag, a humanities scholar, who had earlier visited UC Irvine. In private conversation she railed that mathematicians lacked the imagination of humanities scholars. Yet, she was vehement on the virtues of Einstein. 

I explained that Einstein was far from without precedent; that we mathematicians had geniuses of his imaginative. My example was Riemann: I called him the man who formed the equations that gave Einstein his scalar curvature criterion for gravity: his thesis, and admittedly not my expertise. \lq\lq Mike,\rq\rq\ she said, \lq\lq You're just making that up! Who is Riemann?\rq\rq  

I took the {\sl R\/} book in her (German) encyclopedia series from the shelves on her walls, without the slightest idea of what I would find. Opening to Riemann, I found this [in German] in the first paragraph: 

\begin{quote} Bernhard Riemann was one of the most profound geniuses of modern times. Notable among his discoveries were the equations that Einstein later applied to general relativity theory. \end{quote}

In modern parlance, what Gauss explained to Riemann was what -- when I was young -- were called the {\sl cuts\/}. These are always displayed with pictures that are impossible -- not hard, but rather {\sl cannot\/} exist, as in \cite[Chap.~4, \S 2.4]{Fr80} under the title \lq\lq Cuts and Impossible Pictures.\rq\rq 

The pictures usually apply to covers that are cyclic, degree 2 or 3, as in \cite[p.~243]{Con78}, which, though, is  excellent in many ways for students not comfortable with algebra. What Riemann learned, again in modern parlance, is that you don't need -- explicitly -- the universal covering space, nor a subgroup of  its automorphism group, to produce  covers.\footnote{As in \S\ref{compsr=4} remarks that for $r=4$, you aren't using congruence subgroups to form the reduced Hurwitz spaces for $r=4$.}   

\subsubsection{Profinite: Frattini and Grothendieck} \label{Frat-Groth} 
I gleaned \S\ref{algvsanal} from reading long ago (from \cite{Sp57}),  that Riemann's $\theta\,$s, in a sense defined Torelli space, the period matrix cover of the moduli space, $\sM_\geng$, of curves of genus $\geng$. This codifies the integrals that Riemann used to introduce one version of moduli of curves. \cite{Sp57}  does explain fundamental groups. Yet, it always relies on universal covering spaces. 

One famous theorem is that the universal covering space of $\sM_\geng$ is a (simply-connected, Teichm\"uller) ball: It is contractible. Many beautiful pictures of fundamental domains come from this. 

This paper (and \cite{Fr20}) uses Hurwitz spaces, to investigate the place of one group, $G$, at a time. Yet, it considers the full gamet of its appearances through varying conjugacy classes defining covers with $G$ as monodromy. In doing that it replaces universal covering spaces with the {\sl Universal Frattini\/} cover $\tilde G\to G$, and its abelianized version $\tilde G_\ab \to G$ (as in \S\ref{univfratpre}).  

It introduces new tools, albeit with a topological component: the {\sl lift invariant\/} (as in Thm.~\ref{obstabMT}) and the {\sl $\sh$-incidence matrix\/} (as in \S\ref{shincex} with applications such as Prop.~\ref{A43-2} et. al.). These tie directly to group cohomology/modular representations. Thus by-passing relevant but famously difficult problems like dealing with non-congruence subgroups. 

The relation between $\sM_\geng$ and Hurwitz spaces starts by realizing that the latter generalizes the former, adding data that divides the former into smaller pieces. Using that division effectively does not require you must know all finite groups (or even all simple groups). 

\cite[App.~B]{Fr20} has a section on how each problem appears to have its own appropriate finite groups, based on a well-known paradigm -- {\sl the Genus 0 problem\/}. This is commentary on \cite{Fr05}, a guide inspired by solving problems like {\sl Schur's}, {\sl Davenport's\/} and {\sl Schinzel's}, that came from the middle of the 20th century or before.

There are $\infty$-ly many spaces $\sM_\geng$ (resp.~$\sH(G,\bfC,\ell)$) indexed by $\geng$ \\ (resp.~$(G,\bfC,\ell)$)  in each case. The indexing seems more complicated in the Hurwitz case. Yet, in the former case, you suspect they should all fit together. Grothendieck's famous {\sl Teichm\"uller\/} group attempted to gather their presence together into one profinite group with an hypothesis that he was describing $G_\bQ$.

That created quite an industry. Still, \cite{FrV92} showed that the Hurwitz space approach was up to the challenge of describing properties of $G_\bQ$ that most mathematicians can understand. For example \eqref{FrVSns}.  

\begin{thm} \label{FrV91} We may choose a(n infinite) Galois algebraic extension  $L/\bQ$  so that  $G_\bQ$ has a presentation (see also Conj.~\ref{FrVConj}): 
\begin{equation} \label{FrVSns} 1\to  F_{\omega}=G_L \to G_\bQ \to \Pi_{n=2}^\infty  S_n \eqdef \sS_\infty \to 1 \end{equation} 
That is $G_\bQ$, has a product of $S_n\,$s as a quotient (the Galois group of $L/\bQ$) with the kernel a pro-free group on a countable set of generators. 
\end{thm}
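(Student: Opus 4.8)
\textbf{Proof proposal for Theorem \ref{FrV91}.}

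The plan is to realize $G_\bQ$ as an iterated extension built from Hurwitz spaces attached to the symmetric groups $S_n$, exploiting the fact that a well-chosen sequence of $S_n$-covers of $\prP^1_z$ can be rigidified so as to split off the geometric part of the fundamental group. First I would fix, for each $n\ge 2$, a Nielsen class $\ni(G^*_n,\bfC^*_n)$ obtained from $(S_n,\bfC_n)$ by passing to the \emph{absolute} Hurwitz space construction of \S\ref{limgptie} and \eqref{inabsseq}, arranged (as in \cite[Thm.~1]{FrV91}) so that $N_{S_N}(G^*_n,\bfC^*_n)$ contains every outer automorphism of $G^*_n$; this is the device by which the $H_r$-action on the Hurwitz space is made to carry all the arithmetic, and it is what makes the total space $\sH(G^*_n,\bfC^*_n)^\abs$ a $\bQ$-rational variety with a $\bQ$-rational point. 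The key point is that such a Hurwitz space, being rational over $\bQ$ with a $\bQ$-point, has fundamental group contributing a \emph{free} normal factor, while the quotient records exactly a copy of $S_n$ through the branch-cycle data.

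Next I would assemble these pieces into a tower. Using Lem.~\ref{fiberprod} (fiber products of Frattini-type covers form a projective system) together with the compatibility of the $H_r$-actions across the family, one forms a projective limit of arithmetic fundamental groups whose quotient is $\prod_{n\ge2}S_n = \sS_\infty$. The field $L$ is then the fixed field of the kernel: $G_L$ is by construction the geometric fundamental group of the limiting tower. The freeness of $G_L$ on countably many generators comes from two inputs: (i) each stage contributes a free factor because the underlying Hurwitz space is $\bQ$-rational (so its geometric fundamental group, after removing branch loci, is free), and (ii) an Iwasawa–Mel'nikov type characterization of $F_\omega$ — a countably generated pro-free group is characterized by projectivity plus the property that every finite embedding problem with nontrivial kernel is solvable. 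I would verify (ii) by checking that over $L$ every such embedding problem can be solved by adjoining a suitable further $S_n$-cover from the tower, which is possible precisely because we have infinitely many independent $S_n$'s still available and the branch-cycle/rigidity setup guarantees each can be realized regularly.

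The exact sequence \eqref{FrVSns} then follows: the surjection $G_\bQ \to \sS_\infty$ is the branch-cycle map to the product of symmetric groups, its kernel is $G_L = F_\omega$ by the freeness argument, and splitting behavior is irrelevant since we only claim an extension. I would also record that the construction is functorial enough that the same $L$ works simultaneously for all $n$, which is where the choice of a single infinite $L$ (rather than a tower of number fields) is forced.

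\textbf{Main obstacle.} The hard part will be step (ii): proving that $G_L$ is genuinely \emph{free} pro-finite on countably many generators, not merely projective with a free-looking quotient. Projectivity of $G_L$ is comparatively easy (it is a closed subgroup of something projective, or one uses that $G_\bQ$ has cohomological dimension behavior compatible with this), but establishing that \emph{every} finite split embedding problem over $L$ is solvable requires showing the supply of regular $S_n$-realizations with prescribed, controllable branch behavior is rich enough to dominate an arbitrary finite embedding problem — and to do so over the \emph{fixed} field $L$ rather than over ever-larger extensions. This is exactly the delicate homological-stability / rigidity input from \cite{FrV91}, \cite{FrV92}, and it is the step where one must be most careful that the absolute-versus-inner Hurwitz space juggling of \S\ref{limgptie} really delivers $\bQ$-rational moduli with $\bQ$-points at every stage.
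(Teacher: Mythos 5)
Your proposal identifies two correct ingredients — the inner/absolute Hurwitz-space interplay from \eqref{inabsseq} giving covering groups $G^*$ with the properties \eqref{Gconds}, and the Iwasawa-type characterization of a pro-free group of countable rank via solvability of finite embedding problems — but it misses the central device of the actual argument, and substitutes for it a claim that does not hold. The paper (summarizing \cite{FrV91} and \cite{FrV92}) builds $L$ so that $G_L$ is \emph{Hilbertian and PAC} (pseudo-algebraically closed: every absolutely irreducible variety over $L$ has a Zariski-dense set of $L$-points). It is this PAC property, together with Hilbertianity, that guarantees the supply of $L$-rational points on the irreducible inner Hurwitz spaces $\sH(G^*,\bfC)^\inn$ needed to solve every finite embedding problem over $L$; the conclusion that a Hilbertian PAC field has pro-free absolute Galois group (the main theorem of \cite{FrV92}) then gives $G_L = F_\omega$, and the presentation \eqref{FrVSns} falls out as a corollary of how $L$ was assembled. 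Nowhere does the argument assert, or need, that the Hurwitz spaces are $\bQ$-rational.

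Your replacement claim — that the Hurwitz spaces are $\bQ$-rational varieties with $\bQ$-points, and that a rational variety "has fundamental group contributing a free normal factor" — is the genuine gap. The spaces $\sH(G^*,\bfC)^\inn$ are irreducible and defined over $\bQ$ (condition \eql{Gconds}{Gcondsc}), but they are not in general rational, and even if they were, rationality of the moduli space has no direct bearing on freeness of $G_L$: the pro-free group in \eqref{FrVSns} is an absolute Galois group of a number field limit, not an étale fundamental group of a Hurwitz space. Your step (ii) also cannot be salvaged as written: "adjoining a further $S_n$-cover from the tower" does not by itself show that an \emph{arbitrary} finite embedding problem over the \emph{fixed} field $L$ is solvable. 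That is exactly what the PAC hypothesis buys you — it lets you run the field-crossing argument to find, for each embedding problem, an $L$-point on the appropriate $\sH(G^*,\bfC)^\inn$ whose corresponding realization dominates the given problem. Without importing PAC (and the Hilbertianity of $L$, which is itself a nontrivial point since $L/\bQ$ is infinite), the freeness of $G_L$ is unsupported.

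To repair the proposal you would need to: (a) drop the rationality claim entirely; (b) state and use the Fried–Völklein theorem that $G_F$ is pro-free of countable rank for $F$ a countable Hilbertian PAC field of characteristic $0$; (c) explain how $L$ is chosen so that $G(L/\bQ)\cong\prod_{n\ge2}S_n$ while simultaneously $L$ is Hilbertian and PAC — the construction of $L$ via iterated specialization of regular $S_n$-realizations with disjoint ramification is where the real work lies, and it is what forces the compositum to be PAC. With those ingredients in place, your Iwasawa-criterion step (ii) becomes the correct \emph{statement} of what must be checked, but the checking itself is the PAC field-crossing argument, not a count of available $S_n$-covers.
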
 

This overview result hid that these were practical techniques giving a new context connecting classical problems to the \RIGP. We illustrated that first with Thm.~\ref{bdrefreal} connecting involution realizations of dihedral groups with torsion points on hyperelliptic jacobians -- as developed in \S \ref{dihedtohyper} and Prop.~\ref{dihcase} with its direct connection to Conj.~\ref{Tormain}. 

\begin{guess} \label{Tormain} Torsion Conjecture: There is a negative conclusion to statement \eqref{impossdih} on existence of a $\bQ$ cyclotomic point of order $\ell^{k\np1}$, for each $k$,  among all   hyperelliptic jacobians of any fixed dimension $d$. 
  
$B$-free Conjecture: Without any bound $B$, for each $\ell^{k\np1}$ there is a $\bQ$ cyclotomic point on some hyperelliptic jacobian, corresponding to a $(D_{\ell^{k\np1}},\bfC_{2^r})$ ($r$ dependent on $\ell^{k\np1}$) \RIGP\ involution realization.  \end{guess} 

Despite the last part of Conj.~\ref{Tormain}, no one has found those \RIGP\ {\sl involution} realizations beyond $r=4$ and $\ell=7$. The theme of \cite[\S7]{Fr94} -- using this paper's notation -- still seems reasonable. For any prime $\ell\ge 3$, as in \S\ref{earlyOIT-MT}, and given a choice, you should rather 
$$\text{regularly realize the {\sl Monster\/} than the collection $\{D_{\ell^{k\np1}}\}_{k=0}^\infty$.}\footnote{referring to the famous {\sl Monster\/} simple group.}$$  

Thm.~\ref{bdrefreal}  generalizes this consideration to all $\ell$-perfect finite groups  (for example the $A_5$, $\ell=2$ case of \eqref{A52}). Then, the negative conclusion using the Torsion conjecture generalization of Conj.~\ref{Tormain} as in \sdisplay{\cite{CaD08}}.  This leads to progress, Thm.~\ref{truer=4}, on Conj.~\ref{mainconj}.
 
\begin{guess}[Main \RIGP\ Conj.] \label{mainconj} High {\bf MT} levels have {\sl general type\/} and no $\bQ$ points.\footnote{{\sl General type\/} means that some multiple of the  {\sl canonical bundle\/} gives a projective embedding of the variety.} \end{guess} 

\begin{thm}  \label{truer=4} Conj.~\ref{mainconj} is true for $r=4$, where $\sH_k\,$s are upper half-plane quotients. Thm.~\ref{genuscomp} is a tool for showing the genus rises with $k$. \end{thm}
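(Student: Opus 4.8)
For $r=4$ each level $\sH_k$ of a \MT\ compactifies to a curve $\bar\sH_k$ that covers $\prP^1_j$ ramified only over $0,1,\infty$ (\S\ref{genus}, Thm.~\ref{genuscomp}); for a curve, being of \emph{general type} means exactly genus $\ge 2$. So Conj.~\ref{mainconj} for $r=4$ reduces, for a \MT\ $\{\sH_k\}_{k\ge0}$ with its component tree (Def.~\ref{MTdef}), to two assertions: (i) the genus $g_k$ of $\bar\sH_k$ satisfies $g_k\ge2$ for all $k\gg0$; and (ii) for $k\gg0$ there are no $\bQ$ points of $\sH_k$ off the cusps. The plan is to get (i) from the genus formula of Thm.~\ref{genuscomp} together with the existence of $\ell$-cusps on every component -- the crucial hypothesis flagged in \S\ref{exA43-2cont}, which the universal abelianized $\ell$-Frattini construction with $\ell'$ classes supplies (cf.\ \cite{Fr06}) -- and then (ii) from Faltings' Theorem \cite{Fa83} combined with the Tychonoff/Weil argument already sketched in \S\ref{toughestpt} and carried out for $r=4$ in \cite{Fr06} and \cite{CaTa09}.

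\textbf{Step 1: the genus grows with $k$.} Let $O_k$ be the reduced braid orbit of the level-$k$ component and $d_k\eqdef|O_k|$ the degree of $\bar\sH_k\to\prP^1_j$. Since $\gamma_0=q_1q_2$ has $\gamma_0^3=1$, $\gamma_1=q_1q_2q_1$ has $\gamma_1^2=1$, and the disjoint cycles of $\gamma_\infty=q_2$ are the cusps, writing $f_{0,k},f_{1,k}$ for the numbers of fixed points of $\gamma_0,\gamma_1$ and $c_k$ for the number of cusps, Thm.~\ref{genuscomp} gives
\[ 2g_k \;=\; \tfrac16 d_k \;-\;\tfrac23 f_{0,k}\;-\;\tfrac12 f_{1,k}\;-\;c_k\;+\;2 . \]
One then shows the right side tends to $+\infty$: (a) $d_k\to\infty$, since the generic fibre of $\bar\sH_{k\np1}\to\bar\sH_k$ has size a positive power of $\ell$ (the characteristic $\bZ/\ell[G]$ module quotient controlling the step), the component tree having a unique vertex on each level so this growth is not diluted by splitting; (b) $c_k=o(d_k)$, because above an $\ell$-cusp the cusps of the next level have widths multiplied by $\ell$, so the fibre growth is absorbed by the cusps widening rather than by their multiplying in number; (c) $f_{0,k},f_{1,k}=o(d_k)$ -- in fact $=0$ for $k$ large -- by the fine-reduced-moduli and cusp-type analysis of \cite{BFr02} and \cite{Fr06}. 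Hence $2g_k=(\tfrac16+o(1))d_k\to\infty$, so $g_k\ge2$ for $k\gg0$; this is precisely the sense in which Thm.~\ref{genuscomp} is ``a tool for showing the genus rises with $k$.''

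\textbf{Step 2: no $\bQ$ points at high levels.} Fix $k$ with $g_k\ge2$; Faltings makes $\bar\sH_k(\bQ)$, hence the set of noncuspidal $\bQ$ points of $\sH_k$, finite. Suppose every level of the \MT\ carried a noncuspidal $\bQ$ point. As points of level $k\np1$ map to points of level $k$ and each level has only finitely many such points, the inverse limit of these nonempty finite sets is nonempty (Tychonoff), producing a compatible system $\{\bp_k\}$ -- a $\bQ$ point of the inverse-limit \MT, equivalently (\S\ref{notMT}, \eql{qtresults}{qtresultsb}) a $G_\bQ$-stable copy of the lattice $L^\star$ inside the $\ell$-adic $H^1$ of the cover $\hat W_{\bp_0}$. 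Reducing the whole tower modulo a good prime $p$ (one not dividing $|G|$, with good reduction by \cite{Gr71}), the Frobenius at $p$ would then act trivially on this positive-weight $\ell$-adic space, contradicting the Weil bound that its eigenvalues have absolute value $p^{1/2}$. Therefore some level $k_1$ carries no noncuspidal $\bQ$ point, and -- since $\bQ$ points descend the tower -- neither does any level $\ge k_1$. This gives (ii), completing the proof, with the usual caveat (\S\ref{toughestpt}) that Faltings is ineffective, so $k_1$ is not pinned down even though Thm.~\ref{genuscomp} does pin down where the genus first exceeds $1$.

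\textbf{Main obstacle.} The heart of the argument is Step 1, and within it the control of $c_k$ (and of $f_{0,k},f_{1,k}$) against $d_k$. This is where the $\ell$-cusp property -- every component at every level carrying a cusp whose width is divisible by $\ell$ -- is indispensable: without it one cannot exclude $\ind(\gamma_\infty)$ staying close to $d_k$, which would let the genus stagnate and defeat the whole scheme. Establishing the $\ell$-cusp property along an entire \MT\ (via the lift-invariant classification of cusp types in \cite{Fr06}, and handling Harbater--Mumford versus ``double identity'' cusps as already seen in \S\ref{shincex}), together with the asymptotic vanishing of the $\gamma_0$- and $\gamma_1$-fixed points, is the real technical content behind this theorem -- and one must also keep the component tree (Def.~\ref{MTdef}) under control so that ``$d_k\to\infty$'' genuinely refers to the component at hand.
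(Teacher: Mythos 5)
Your proposal follows the route the paper itself sketches (essentially the \cite{Fr06} approach cited alongside \cite{CaTa09}): reduce general type to genus $\ge 2$; use the $r=4$ genus formula of Thm.~\ref{genuscomp} together with $\ell$-cusps to push $g_k\to\infty$; and close with Faltings, Tychonoff, and the Frobenius/Weil-bound contradiction modulo a good prime from \S\ref{toughestpt}. Step~2 tracks the paper closely, and you rightly flag the control of $c_k$ and the $\gamma_0,\gamma_1$ fixed points as the real content. Two places where the paper is sharper than your Step~1 sketch deserve mention. First, the criterion the paper quotes from \cite[Prop.~5.15]{Fr06} requires a level with \emph{at least two} $\ell$-cusps; when level $0$ has none, it invokes \cite[Princ.~4.23]{We05} to manufacture an $\ell$-cusp over an o-$\ell'$ cusp at a higher level. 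Your heuristic for (b) --- widths over $\ell$-cusps multiply by $\ell$ --- by itself does not preclude the number of g-$\ell'$ or o-$\ell'$ cusps growing in proportion to $d_k$, so $c_k=o(d_k)$ is exactly where the hard analysis lives, not a consequence of the $\ell$-cusps alone. Second, claim (c) that $f_{0,k},f_{1,k}$ eventually vanish is plausible but needs the fine reduced-moduli/b-fine analysis of \cite[\S4.3]{BFr02}, which you cite without supplying. Finally, the paper records a second, ``more general and far less explicit'' proof via \cite{CaTa09} --- uniform boundedness of $\ell$-primary $\chi$-torsion on abelian schemes over a curve, for a character $\chi$ not occurring on any Tate module --- that your proposal does not mention; the paper defers a comparison of the two proofs to \cite{Fr20b}.
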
  

For $K$ a number field, concluding in Conj.~\ref{mainconj} that high \MT\ levels have no $\bQ$ points is of significance only if there is a uniform bound on the definition fields of the \MT\ levels. Therefore distinguishing between towers with such a uniform bound, and figuring the definition field as the levels grow if there is no uniform bound, is a major problem. 

Our approach allows us to compute, and to list properties of \MT\ levels. This is progress in meeting  Grothen\-dieck's objection that jacobian correspondences impossibly complicate  generalizing Serre's \OIT.

In our \cite[\S5]{Fr20} example, that complication is measured by the appearance of distinct Hurwitz space components. The {\sl lift invariant\/} accounts for most. Still, others pose a problem at this time -- we know them, but not their moduli definition fields, as {\sl Harbater-Mumford\/} components. 

That problem occurs because there is more than one with the same 0 lift invariant as discussed around Prop.~\ref{A43-2} and Thm.~\ref{level0MT}.  As in Thm.~\ref{genuscomp}, we know their braid orbits on the Nielsen class; modular curves and complex multiplication are not a guide.  

\subsection{The \TL\ of the \MT\ program} \label{explainingFrattini}  After a prelude we have divided this section into three subsections: 

\begin{itemize} \item \S\ref{pre95}, prior to 1995; 
\item  \S\ref{95-04}, the next decade of constructions/main conjectures; and   
\item  \S\ref{05-to-now}, progress on the Main \OIT\  \ref{OITgen} and \RIGP\ \ref{mainconj} conjectures. \end{itemize}

\subsubsection{Organization} \label{organization} Each \TL \  item connects to a fuller explanation of the history/significance of a paper's contributions. 

We trace the \RIGP\ literature, starting with the definition of  Nielsen classes (Def.~\ref{NielsenClass} and Thm.~\ref{BCYCs}), then going to  \MT\ conjectures as in \S\ref{preimRIGPOIT}. 

Notation reminder:
$\ni(G,\bfC)$ referencing (unordered) conjugacy classes $\bfC = \{\row \C r\}$ of a finite group $G$. {\bf R}(iemann)-{\bf H}(urwitz) \eqref{RH} gives the genus $\geng\eqdef \geng_\bg$ of a sphere cover corresponding to $(\row g r)\in \ni(G,\bfC)$.
 
The {\sl Branch Cycle Lemma\/} (\BCL) ties moduli definition fields (Def.~\ref{moddeffield}) of covers (and their automorphisms) to branch point locations. A whole section in \cite[Chap.~2 \S4]{Fr20} is taken with the \BCL\ for good reasons. There is nothing else quite like it in most moduli space thinking.   

Especially, it gives the precise moduli definition field of Hurwitz {\sl families\/} for $\ni(G,\bfC)$ for any equivalences (as in \S\ref{equivalences}). While this is key for number theory (on the \RIGP, and generalizing Serre's \OIT), we emphasize one easily stated corollary of Lem.~\ref{branchCycle}.

\begin{thm} \label{bclthm} The total space of an inner (even as a reduced) Hurwitz space, $\sH(G,\bfC)^{\inn}$ together with its extra structure as a moduli space of $\prP^1_z$ covers,  is a cyclotomic field given in the response to \eql{mtneed}{mtneedc} as \eqref{bcl}.\footnote{Total space means to includes a representing family of covers $\sT\to \sH(G,\bfC)^{\inn}\times \prP^1_z$ in the case of fine moduli, where there is one, and standard generalizations of this.} In particular, an inner Hurwitz (moduli) space  structure is defined over over $\bQ$ if and only if $$\text{$\bfC^u = \bfC$  for all $u\in (\bZ/N_{\bfC})^*$: $\bfC$ is a {\sl rational union}.}\footnote{Here $\bfC^u$  means to put each element of $\bfC$ to the power $u$.}$$ 
\end{thm}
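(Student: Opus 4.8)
The plan is to reduce the statement to the Branch Cycle Lemma (Lem.~\ref{branchCycle}, referenced as \eqref{bcl}) together with the general descent theory for Hurwitz spaces as moduli spaces. First I would recall the setup: a point $\bp \in \sH(G,\bfC)^\inn$ represents a cover $\phi_\bg\colon W_\bg \to \prP^1_z$ with branch cycle description $\bg \in \ni(G,\bfC)^\inn$, and the inner Hurwitz space carries not just the coarse structure but the fine/family structure (a representing family $\sT \to \sH(G,\bfC)^\inn \times \prP^1_z$ when fine moduli holds, and the standard surrogate otherwise). The action of $\sigma \in G_\bQ$ on such data is computed explicitly by the \BCL: applying $\sigma$ to a cover branched at $\bz$ with inner class represented by $\bg = (g_1,\dots,g_r)$ produces a cover branched at $\bz^\sigma$ whose inner class is represented (after the appropriate relabeling of classical generators) by an $r$-tuple whose $i$-th entry lies in the class $\C_i^{c_\sigma}$, where $c_\sigma \in (\bZ/N_\bfC)^*$ is the cyclotomic character of $\sigma$ restricted to the $N_\bfC$-th roots of unity. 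This is exactly the content one extracts from \eqref{bcl}.

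The key steps, in order: (1) State the \BCL\ in the form that says the $G_\bQ$-conjugate of the Hurwitz-space structure attached to $(G,\bfC)$ is the Hurwitz-space structure attached to $(G,\bfC^{c_\sigma})$, with the identification compatible with the forgetful map to $U_r$ and with the family structure; this uses $N_\bfC = \lcm$ of the orders of elements of $\bfC$, since branch cycles have orders dividing $N_\bfC$ and the local monodromy generator $\hat g_{z'}$ of \eqref{powseries} transforms under $\sigma$ by the cyclotomic character mod $N_\bfC$. (2) Deduce that the moduli space $\sH(G,\bfC)^\inn$, with its full structure, is fixed by $\sigma$ if and only if $\bfC^{c_\sigma} = \bfC$ as an unordered collection of conjugacy classes. (3) Invoke Galois descent: a quasi-projective variety-with-structure over $\bar\bQ$ is defined over $\bQ$ precisely when it is $G_\bQ$-stable together with the requisite descent datum, and here the descent datum is automatic because the \BCL\ identification is canonical (it comes from functoriality of the moduli problem, not an extra choice) — this is where ``total space'' in the footnote matters, so one cites the standard Hurwitz-space descent results of \cite{FrV91} and \cite{Fr94}. (4) Since $c_\sigma$ ranges over all of $(\bZ/N_\bfC)^*$ as $\sigma$ ranges over $G_\bQ$ (surjectivity of the cyclotomic character), $G_\bQ$-stability is equivalent to $\bfC^u = \bfC$ for every $u \in (\bZ/N_\bfC)^*$, i.e.\ to $\bfC$ being a rational union; combining with step (3) gives the biconditional. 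The more precise assertion that the field of definition in general is the fixed field of $\{\sigma : \bfC^{c_\sigma}=\bfC\}$, which is the cyclotomic field named in the response to \eql{mtneed}{mtneedc}, follows by the same argument applied to the subgroup of $G_\bQ$ stabilizing $\bfC$.

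The main obstacle I expect is not the $\bfC$-bookkeeping — that is a direct computation with \eqref{powseries} and the definition of inner equivalence — but rather making step (3) airtight: one must know that $G_\bQ$-stability of the \emph{underlying space} upgrades to a field of definition for the space \emph{together with its moduli structure} (the representing family in the fine case, and the ``standard generalizations'' alluded to in the footnote otherwise). This requires that the canonical isomorphism between the $\sigma$-conjugate family and the original family, guaranteed abstractly when $\bfC^{c_\sigma}=\bfC$, actually satisfies the cocycle condition over $G_\bQ$. For inner Hurwitz spaces this is exactly what \cite[Main Thm]{FrV91} (as reprised in \cite{Fr94}) provides, because inner equivalence rigidifies the covers enough that there are no residual automorphisms to obstruct the cocycle — so I would lean on that citation rather than reprove it. A secondary subtlety is the reduced case (the parenthetical ``even as a reduced''): one checks that passing to $\sH(G,\bfC)^{\inn,\rd}$ by quotienting by $\PSL_2(\bC)$ (equivalently by $\sQ''$ on Nielsen classes when $r=4$) commutes with the $G_\bQ$-action, since $\PSL_2$ is defined over $\bQ$ and acts $\bQ$-rationally on $U_r$; hence the rationality criterion is unchanged.
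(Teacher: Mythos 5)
Your proposal is correct and matches the paper's approach. The paper's treatment of Thm.~\ref{bclthm} is deliberately terse: it defers to the Branch Cycle Lemma \eqref{bcl}/Lem.~\ref{branchCycle} (which gives the containment $\bQ_{G,\bfC}\subset$ any definition field) and cites \cite[\S 5.1]{Fr77}, \cite[Main Thm.]{FrV91}, \cite[App.~B.2]{Fr12}, and \cite{Vo96} for the Galois descent in the converse direction. Your steps (1)--(4) reconstruct exactly that scaffolding: the \BCL\ computes the $G_\bQ$-action on the moduli problem through the cyclotomic character mod $N_\bfC$ acting on local monodromy via \eqref{powseries}; $G_\bQ$-stability of the total structure is equivalent to $\bfC$ being stable under the corresponding subgroup of $(\bZ/N_\bfC)^*$; and the inner-rigidity descent of \cite{FrV91}/\cite{Fr94} upgrades stability to an actual field of definition. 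You correctly flag step (3) as the non-routine piece and lean on the right citations rather than reproving it — one small caution is that "inner equivalence rigidifies the covers" is literally true only when $G$ is centerless (the fine-moduli case); for $G$ with center the theorem's footnote's "standard generalizations" do the work, and that too is in \cite{FrV91}. Your treatment of the reduced case is the same observation the paper makes when it says the \BCL\ "results also apply to reduced classes."
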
 

For this reason we use rational unions of conjugacy classes in all examples.  Individual \MT s have an attached prime (denoted $p$ in the early papers, but $\ell$ here because of the latest work).\footnote{It is insufficient to say that the Hurwitz space is defined over a given field. Examples both old and new,  \cite[Chap.~2 \S4.3]{Fr20}, include those  with  components isomorphic as covers of $U_r$, but inequivalent as moduli carrying family structures.}  

We support the somewhat abstract description of a \MT\ in \S\ref{outlineMTs} with many examples.  \MT s come with what we call the usual \MT\ conditions: 
\begin{edesc} \label{MTconds} \item \label{MTcondsa} Each has an attached group $G$, and a collection of $r$ conjugacy classes, $\bfC$ in $G$ with $\ell'$ elements (of orders prime to $\ell$). 
\item \label{MTcondsb} Further, $G$ is $\ell$-perfect: $\ell$ divides $|G|$, but $G$ has no surjective homomorphism to $\bZ/\ell$.
\end{edesc}  For $G$ a dihedral group, with $\ell$ odd and $r=4$, we are in the case of {\sl modular curve towers}. So, \MT s generalizes modular curves towers. Since there are so many $\ell$-perfect groups, the generalization is huge. 

Conditions to form a \MT\ begin with an $\ell$-Frattini lattice quotient \S\ref{lfratlatqt} of the universal (abelianized) $\ell$-Frattini cover ${}_\ell \tilde \psi: \fG \ell {}_\ab \to G$ of $G$ \S\ref{univfratpre}. Given such a lattice quotient, $\ell'$ conjugacy classes $\bfC$ of $G$ and a braid orbit $O$ on a Nielsen class $\ni(G,\bfC)$, there is a succinct cohomology test, Thm.~\ref{obstabMT}, for existence of a \MT\ -- and therefore of its corresponding projective sequence of nonempty absolutely irreducible varieties Ð above $O$. 

When we started this project, we considered our Main \RIGP\ conjecture \ref{mainconj} only on the maximal lattice quotient. With, however, solid evidence  from  $(A_5,\bfC_{3^4}, 
\ell=2)$, and related examples (\S\ref{l=2level0MT}), among good reasons for opening up the territory to all lattice quotients,   we could then include diophantine connections to classical spaces. Our main examples do just that, thus benefitting the classical spaces, too. 

\MT\ data passing the lift invariant test gives an infinite (projective) system of nonempty levels. Using reduced equivalence of $\prP^1_z$ covers, as in Ex.~\ref{mainex}, each level, $\sH_k'$, has a projective normalized compactification $\bar \sH'_k$. This $r\nm 3$ dimensional algebriac variety covers the compactification of $J_r$, a quotient of an open subset of $\prP^r$ by $\SL_2(\bC)$. For $r=4$, this is the classical $j$-line, $\prP^1_j$.

Here is a paraphrase of Main \RIGP\ Conj. \ref{mainconj}: 
\begin{edesc} \label{MainConj} \item \label{MainConja} High tower levels have general type; and 
\item  \label{MainConjb}  even if all levels have a fixed definition field $K$, finite over $\bQ$, still $K$ points disappear (off the cusps) at high levels. 
\end{edesc} Bringing particular \MT s alive plays on {\sl cusps}, as do modular curves. Cusps already appeared in Thm.~\ref{genuscomp} as the disjoint cycles of $\gamma_\infty$ (corresponding to the points over $\infty$ on the $j$-line). \S\ref{05-to-now}  of our TimeLine precisely compares  \MT\ cusps with those of modular curve towers, and consequences of this. 

It also discusses two different methods giving substantial progress on the Main Conjectures.  The argument of \S\ref{RIGPFaltings} -- dependent on Falting's Theorem -- shows  why \eql{MainConj}{MainConja} implies \eql{MainConj}{MainConjb} when $r=4$. 

The $\sh$(ift)-incidence matrix graphical device,  used in the table above Prop.~\ref{A43-2}, displays these cusps, and the components -- corresponding to blocks in the matrix -- in which they fall.

Several papers emphasize that cusps for Hurwitz spaces often have extra structure -- meaningful enough to suggest special names for them -- that come from group theory in ways that doesn't appear in the usual function theory approach to cusps. The use of the names {\sl Harbater-Mumford\/} and {\sl double identity\/} cusps in Thm.~\ref{level0MT} are examples of these.  

We emphasize two connections between  abelian varieties  and \MT s:  The {\bf S}(trong) {\bf T}(orsion) {\bf C}(onjecture)  \sdisplay{\cite{CaD08}} and $\ell$-adic representations \S\ref{prodladic}. Here is the main  \MT\ device for applications to the \RIGP\ and the \OIT.  
$$\begin{array}{c} \text{Give meaningful labels to cusps on a component}\\ \text{of the space of covers of $\prP^1_z$ defined by a Nielsen class.}\end{array}$$ 

\subsubsection{Lessons from Dihedral groups Ð Before '95} \label{pre95} 

This section goes from well-known projects to their connection with the \MT\ program. The references to Serre's work was around two very different types of mathematics: His \OIT, with its hints of a bigger presence of Hilbert's Irreducility Theorem, and his desire to understand the difficulty of regularly realizing the \Spin\ cover of $A_n$.  
This section concludes with \sdisplay{\cite{DFr94}} when the project divided into two branches.  

The arithmetic concentrated in the hands of Pierre D\`ebes and his collaborators Cadoret, Deschamps and Emsalem. The structure of particular \MT s -- based on homological algebra and cusp and component geometry of the spaces  -- follows my papers and my relation to Bailey and Kopeliovic with influence from the work discussed with Liu-Osserman and Weigel. Effect of quoted work of Ihara, Matsumoto and Wewers, all present at my first \MT\ talks, is harder to classify. 

\Sdisplay{\cite{Sh64}} 
 I studied this during my two year post-doctoral 67--69 at IAS (the {\sl Inst.~for Advanced Study}). Standout observation: Relating a moduli space's properties to objects represented by its points, through the {\sl Weil co-cycle condition\/}. That lead to {\sl fine moduli\/} conditions on absolute, inner and reduced Hurwitz spaces  (resp.~\cite[Thm.~5.1]{Fr77} and \cite[\S4.3]{BFr02}).
 
Having fine moduli gives positive solutions for a group $G$ toward the \RIGP\  from rational points on inner moduli spaces. Use the notation for the stabilizer of the integer 1 in the representation $T$ given by Rem.~\ref{permnot}. Here are those respective conditions (as in \eqref{eqname}): 
\begin{edesc}  \label{fmodabsinn}  \item   \label{fmodabsinna}  Absolute equivalence:  Given $T: G \to S_n$,  as a subgroup of $G$, $G(T,1)$ is its own normalizer.
\item  \label{fmodabsinnb}  Inner equivalence: $G$ is centerless. \end{edesc} 

Refer to $(G,T)$ satisfying \eql{fmodabsinn}{fmodabsinna} as {\sl self-normalizing}. Fine moduli gives a (unique) total space $\sT$, over $\sH\times\prP^1_z$, with $\sH$ the Hurwitz space. This represents covers corresponding to $\bp\in \sH$: $$\text{$\sT_\bp \to \bp\times \prP^1_z$ from pullback of $\bp\times \prP^1_z$ to $\sT$.}$$ 
 
The fine moduli condition, with the addition of reduced equivalence to Nielsen classes (Def.~\ref{redaction}) is in \cite[\S4.3]{BFr02}, as in our example Ex.~\ref{exA43-2}. 
 
Results: The \BCL\  Thm.~\ref{bcl} and early uses starting with the solution of {\sl Davenport's problem\/} as in  \cite[\S1]{Fr12}, for problems not previously considered as moduli-related. Later: A model for producing \lq\lq automorphic functions\rq\rq\  supporting the Torelli analogy through $\theta$ nulls on a Hurwitz space (\S\ref{Frat-Groth}) as in \cite[\S6]{Fr10}.

\Sdisplay{\cite{Se68}} Serre gave one lecture on his book during my 2nd year (1968-1969) post-doctoral at IAS. His amenuenses were writing his notes. I asked questions and interpreted the hoped for theorem -- a little different than did Serre -- as this. For each fixed $\ell$ as $j'\in \bar\bQ$ varies, consider the field, ${}_\ell\sK_{j'}$, generated over $\bQ(j')$ by  coordinates of any projective sequence of points $${}_\ell\bx'\eqdef \{x_k'\in X_0(\ell^{k\np1})\}_{k=0}^\infty \mid \dots \mapsto x_{k\np1}'\mapsto x_k'\mapsto \dots \mapsto j'.$$ 

Denote the  Galois closure of ${}_\ell\sK_{j'}/\bQ(j')$ by ${}_\ell\hat\sK_{j'}$, and its Galois group, the {\sl decomposition group\/} at $j'$, by ${}_\ell \hat G_{j'}$.\footnote{Potential confusion of notation: $j$ here is not an index, but the traditional variable used for the classical $j$-line.} Imitating the notation of the arithmetic monodromy group of a cover in Def.~\ref{arithmon}, denote the arithmetic monodromy group of the cover 
$${}_\ell \phi_{j,k}: X_0(\ell^{k\np1})\to \prP^1_j \text{ by ${}_\ell \hat G_{{}_\ell \phi_{j,k}}\eqdef {}_\ell \hat G_{j,k} $ and ${}_\ell \hat G_j$ its projective limit.} $$ 
Similarly, without the $\hat{}$, the projective limit of the geometric monodromy is ${}_\ell G_j$. For $j'\in \bar \bQ$, 
 in a natural way ${}_\ell \hat G_{j'}\le {}_\ell \hat G_j$.\footnote{The points, $\{j=0,1\}$  of ramification of the covers are special. We exclude them here; a more precise result (due to Hilbert) includes them as \CM\ points, too.} 
 
Such fields don't vary smoothly: they birfurcate into two very distinctive types: \CM\ \eql{serretype}{serretypea} from the theory of complex multiplication which takes up a great part of \cite{Se68}; and $\GL_2$. The last rightly divides into two types itself:   \eql{serretype}{serretypeb} for $j'$ not an algebraic integer on which Serre's book gets a grasp, and \eqref{faltingstype}. \S\ref{toughestpt} explains the extreme difference between these two, despite that in both cases ${}_\ell \hat G_j$ is open in $\GL_2(\bZ_\ell)$.

Using the Def.~\ref{evenfratt} of eventually $\ell$-Frattini in Prop.~\ref{PSLFrat} and \S\ref{MTsOIT}, what made an impact on our approach from \cite{Se68} was this.   

\begin{edesc} \label{serreles} \item \label{serrelesa} For each fixed $\ell$, ${}_\ell G_j$ is eventually $\ell$-Frattini. Further, for $\ell> 3$, it is $\ell$-Frattini (right from the beginning). 
\item \label{serrelesb} If for some prime $p$, $j'\in \bar \bQ$ is not integral at $p$, then the intersection ${}_\ell \hat G_{j'}\cap {}_\ell G_j$ is open in ${}_\ell G_j$. 

\item \label{serrelesc} If a given $j'$ has complex multiplication type (Def.~\ref{CMdef}), then the intersection of ${}_\ell \hat G_{j'}$ with ${}_\ell G_j$  is eventually $\ell$-Frattini. 

\item \label{serrelesd} From either \eql{serreles}{serrelesb} or \eql{serreles}{serrelesc}, you have only to get to a value of $k'$ with ${}_\ell \hat G_{j',k'}$ within the $\ell$-Frattini region to  assure achieving an open subgroup of the respective $\GL_2$ or $\CM$ expectation.  \end{edesc} 

The group ${}_\ell G_j$ in \eql{serreles}{serrelesa} is $\PSL_2(\bZ_\ell)$. Serre frames his result differently, so his group is $\SL_2(\bZ_\ell)$. The distinction is between the monodromy group view from a \MT\ (\S\ref{notMT} ) versus going to the $\ell$-adic representation view (\S\ref{limgptie}). Both are necessary for progress on the Main \OIT\ Conj.~\ref{OITgen}.   

We interpret \eql{serreles}{serrelesb} as giving an $\ell$-adic germ representating the moduli space -- through Tate's $\ell$-adically uniformized elliptic curve --- around the (long) cusp we call Harbater-Mumford on $X_0(\ell)$.  This is a model for  gleaning $G_{\bQ_\ell}$ action for $j$ $\ell$-adically \lq\lq close to\rq\rq\ $\infty$.  

Suppose $K$ is a complex quadratic extension of $\bQ$. The technical point of complex multiplication is the discussion of 1-dimensional characters of $G_K$ on the $\bQ_\ell$ vector space -- Tate module, or 1st $\ell$-adic \'etale cohomomology -- of an elliptic curve with complex multiplication by $K$. On the 2nd $\ell$-adic \'etale cohomomology it is the cyclotomic character;  on the 1st there is no subrepresentation of any power of the cyclotomic character. 

Only a part of abelian extensions of $K$ are cyclotomic -- generated by roots of 1, a result that generalizes to higher dimensional complex multiplication in \cite{Sh64}. Much of \cite{Se68} is taken with \eql{serreles}{serrelesc}. The groups there are primarily the (abelian) ideal groups of classical complex multiplication.  

As \cite{Ri90} emphasizes, Serre's book is still relevant, especially for the role of abelian characters, those represented by actions on Tate modules (from abelian varieties), and those not. 

We reference this discussion in many places below. The full (and comfortable) completion of Serre's \OIT\ awaited replacement of an unpublished Tate piece by ingredients from Falting's Thm. \cite{Fa83} (as in \cite{Se97b} and the more complete discusion of \S\ref{RIGPFaltings}). 

\Sdisplay{\cite{Fr78}} This was the forerunner of the always present relation between absolute, $\sH(G,\bfC)^\abs$, and inner, $\sH(G,\bfC)^\inn$, Hurwitz spaces \eqref{eqname}.  The latter naturally maps -- via the equivalence -- to the former. \cite[\S3]{Fr78} -- {\sl Determination of arithmetic monodromy from branch cycles\/} -- was based on the idea I informally call {\sl extension of constants}.   

The definition field of an absolute cover in a Hurwitz family (represented by $\bp\in \sH(G,\bfC)^\abs$) might require a definition field extension from going to the Galois closure of the cover. That extension comes from coordinates of any $\hat \bp\in \sH(G,\bfC)^\inn$  above $\bp$. \cite[Thm.~1]{FrV91}  gives the standard codification of this relation (also for reduced Hurwitz spaces). 

\cite[\S2]{Fr78} was a special case of it, where $G=D_\ell$, $\ell$ odd, and $\bfC=\bfC_{3^4}$ is 4 repetitions of the involution conjugacy class. In this case, it was describing the pair of fields $(\bQ(\bp),\bQ(\hat \bp)$), for $\bp\in \sH(G,\bfC)^{\abs}$ and $\hat \bp\in \sH(G,\bfC)^\inn$ over it. As in \S\ref{earlyOIT-MT}, this was the main case in describing prime degree ($\ell$) rational functions having the Schur cover  property \eqref{schurcover}. 

More precisely, consider the cover $f_\bp: W_\bp\to \prP^1_z$, from $\bp\in \sH(G,\bfC)^\abs$ with $\hat f_\bp$ its Galois closure. Here $W_\bp$ is isomorphic to $\prP^1_w$ over $\bQ(\bp)$ (because $\ell$ is odd). If  $\bQ(\bp)\not =\bQ(\hat \bp)$,  with extension of constants from \eql{serreles}{serrelesc} $$\text{$f_\bp$ is a Schur cover over $\bQ(\bp)$ (Thm.~\ref{schurcoverfiber}).}$$

If $\bQ(\bp) =\bQ(\hat \bp)$, then $\hat f_\bp$ is a (4 branch point)  \RIGP\  {\sl involution\/} realization of $D_\ell$ over $\bQ(\bp)$. \sdisplay{ \cite{DFr94}}, as in Prop.~\ref{dihcase}, completely classifies involution realizations of dihedral groups. This qualifies -- as stated in  Ques.~\ref{C-realvshyp} -- as the \lq\lq easiest\rq\rq\  case of untouched problems on the \RIGP; justifying why the \RIGP\ generalizes Mazur's results on modular curves. 

From each elliptic curve over $\bQ$ with non-integral $j$-invariant, the $\GL_2$ part of the \OIT\ \eql{serreles}{serrelesb}, gives explicit production of Schur cover rational functions \eqref{schurcover} of degree $\ell^2$, for infinitely many primes $\ell$. As with the \CM\ case, the distinction is measured by the difference between $\bQ(\bp)$ and $\bQ(\hat \bp)$ with $\hat\bp$ on the inner space over $\bp$ in the absolute space. 

When they are different, the degree $\ell^2$ rational function over $\bQ(\bp)$ decomposes, over $\bQ(\hat \bp)$, into two rational functions of degree $\ell$, with no such decomposition over $\bQ(\bp)$ (\cite{GMS03} and \cite[Prop.~6.6]{Fr05}). This is a phenomenon that cannot happen with {\sl polynomials\/} of degree prime to the characteristic, a fact exploited for the Schur cover property (as in  \cite{Fr70}). 

Conj.~\ref{OITgen} -- expressing our best guess for what to expect of an \OIT\ from a \MT, is the result of thinking how the relation  between these two different Schur covers compares with Serre's \OIT. Especially considering what is possible to prove at this time, both theoretically and explicitly. 

For example, \CM\ cases are famously explicit. Just as, in Schur covers given by  polynomials (cyclic and Chebychev), the nature of the exceptional set $\exc_{f,K}$ in \eqref{capexc} is a union of specific arithmetic progressions (in ray class groups). Example use: You can decide if {\sl compositions\/} of polynomials exceptionals are exceptional. 

\begin{defn} \label{CMdef} A $j'\in \bar\bQ$ is {\sl complex multiplication\/} if the elliptic curve with $j$ invariant  $j'$ has a rank 2  endomorphism ring. Then, that ring identifies with a fractional ideal in a complex quadratic extension $K/\bQ$.  \end{defn} 

The main point: $G_K$ will respect those endomorphisms. Thereby it will limit the decomposition group of a projective system of points on the spaces $\{X_0(\ell^{k\np1})\}_{k=0}^\infty$. Originally, as one of Hilbert's famous problems, Kronecker and Weber used this situation to describe the abelian extensions of complex quadratic extensions of $\bQ$. 

\cite[IV-20]{Se68} concludes the proof that for $j'$ non-integral (so not complex multiplication),  the Tate curve shows there is no decomposition of the degree $\ell^2$ rational function. It even gives the following result. 

\begin{thm}[\OIT\ strong form] \label{OITsf} Suppose $j'\in \bar \bQ$ is not a complex multiplication point. Then, not only is it a $\GL_2$ point for any prime $\ell$, but the decomposition group $G_{j'}$ is actually $\GL_2(\bZ_\ell)/\{\pm1\}$ (rather than an open subgroup of this) for almost all primes $\ell$. \end{thm}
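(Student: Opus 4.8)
The plan is to follow Serre's own argument in \cite[Chap.~IV]{Se68}, organized around two independent ingredients: a \emph{qualitative} input (the image of Galois is open for every $\ell$, already available once $j'$ is not a \CM\ point by the combination of the Tate-curve argument of \eql{serreles}{serrelesb} at a prime of non-integrality together with the \CM-dichotomy) and a \emph{quantitative/uniform} input controlling what happens at the finitely many exceptional primes. First I would fix the notation: write $E/\bQ(j')$ for the elliptic curve with $j$-invariant $j'$, let $\rho_\ell\colon G_{\bQ(j')}\to \GL_2(\bZ_\ell)$ be the $\ell$-adic representation on the Tate module $T_\ell(E)$, and let $\bar\rho_\ell$ be its mod-$\ell$ reduction. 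Because we are comparing with the Hurwitz/\MT\ picture, the decomposition group $G_{j'}$ in the statement is $\rho_\ell(G_{\bQ(j')})$ modulo $\{\pm 1\}$, i.e. its image in $\PGL_2$; so it suffices to prove $\rho_\ell(G_{\bQ(j')})=\GL_2(\bZ_\ell)$ for almost all $\ell$.

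The argument then has three steps. Step~1 (\emph{reduction to the residual image}): invoke the standard group-theoretic lemma that a closed subgroup $H\le \GL_2(\bZ_\ell)$ whose reduction mod $\ell$ contains $\SL_2(\bZ/\ell)$ must contain $\SL_2(\bZ_\ell)$ — this is exactly the $\ell$-Frattini phenomenon of Prop.~\ref{PSLFrat} (for $\ell>3$ the cover $\SL_2(\bZ/\ell^{k\np1})\to \SL_2(\bZ/\ell)$ is Frattini, so no proper closed subgroup surjects). Combined with the Weil pairing, which forces $\det\circ\rho_\ell$ to be the full cyclotomic character $\bZ_\ell^\times$, this reduces everything to showing $\bar\rho_\ell$ is surjective onto $\GL_2(\bZ/\ell)$ for almost all $\ell$. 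Step~2 (\emph{classification of maximal subgroups of $\GL_2(\bZ/\ell)$}): by Dickson's classification, if $\bar\rho_\ell$ is \emph{not} surjective then its image lies in a Borel, in the normalizer of a (split or nonsplit) Cartan, or has projective image among the exceptional groups $A_4,S_4,A_5$. One must exclude each case for $\ell$ large. The exceptional (projective) cases are killed for $\ell>5$ by order considerations together with the fact that $\det$ is surjective; the Borel and Cartan-normalizer cases are the real content. Step~3 (\emph{eliminating Borel and Cartan images uniformly in $\ell$}): here one uses that $j'$ is not a \CM\ point. If $\bar\rho_\ell$ were Borel for infinitely many $\ell$, then $E$ (or a quadratic twist) would have an $\ell$-isogeny for infinitely many $\ell$, contradicting the boundedness of isogeny degrees; similarly a Cartan-normalizer image for infinitely many $\ell$ produces points on modular curves of Cartan type, and for large $\ell$ a non-\CM, non-cuspidal rational point on these curves does not exist. (This is precisely the place where, historically, an unpublished argument of Tate was replaced by an appeal to Faltings' theorem \cite{Fa83} on the finiteness of rational points on curves of genus $>1$, via the modular curves $X_{\mathrm{split}}(\ell)$, $X_{\mathrm{nonsplit}}(\ell)$; see the discussion in \S\ref{toughestpt} and \cite{Se97b}.) Putting Steps~1--3 together: for all $\ell$ outside a finite set $S(j')$, $\bar\rho_\ell$ is surjective, hence $\rho_\ell$ is surjective, hence $G_{j'}=\GL_2(\bZ_\ell)/\{\pm1\}$.

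The main obstacle is Step~3, the uniform-in-$\ell$ elimination of the Cartan-normalizer (especially the \emph{nonsplit} Cartan) images: unlike the Borel case, which follows from a comparatively soft isogeny bound, ruling out nonsplit Cartan images for all large $\ell$ requires knowing that the modular curves $X_{\mathrm{ns}}^+(\ell)$ have no non-\CM\ rational points once $\ell$ is large, and the only known route to this is the Mordell/Faltings finiteness theorem applied after establishing that these curves have genus $>1$ and then ruling out the finitely many sporadic points — an ineffective input, which is why Thm.~\ref{OITsf} is stated only as ``almost all $\ell$'' with no explicit bound on $S(j')$. Everything else — Step~1 (the Frattini/lifting lemma of Prop.~\ref{PSLFrat}), the Weil-pairing surjectivity of the determinant, and the Dickson classification of Step~2 — is essentially formal once the \CM\ hypothesis is in force.
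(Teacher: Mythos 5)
The paper does not actually prove Thm.~\ref{OITsf}: it quotes it as Serre's result, citing \cite[IV-20]{Se68} for the non-integral $j'$ case and then stating immediately afterward that ``Falting's theorem \cite{Fa83} (as in \cite{Se97b}) replaces the unpublished result of Tate.'' Your three-step skeleton --- the $\ell$-Frattini lifting reduction via Prop.~\ref{PSLFrat} and the Weil-pairing surjectivity of the determinant, Dickson's classification of maximal subgroups of $\GL_2(\bZ/\ell)$, and case-by-case elimination of Borel, Cartan-normalizer and exceptional images --- is the correct scaffolding and matches Serre's approach.

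But you have placed the Faltings input in the wrong step. Both the text following the theorem and \S\ref{toughestpt} locate Faltings at the earlier stage: proving that the image is \emph{open} in $\GL_2(\bZ_\ell)$ for a \emph{single} $\ell$ when $j'$ is an algebraic integer and not \CM\ --- this was Tate's unpublished contribution, later replaced by an appeal to Faltings. That is exactly the ``not only is it a $\GL_2$ point for any prime $\ell$'' half of the statement, which your proposal treats as ``already available.'' You instead invoke Faltings at Step~3, to eliminate Cartan-normalizer images for all large $\ell$ via rational points on modular curves of Cartan type. That route belongs to Serre's (harder, still open) \emph{uniformity-in-$j'$} question, whereas the theorem here is stated for a \emph{fixed} $j'$. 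For fixed $j'$ Serre's 1972 argument disposes of the Cartan-normalizer case without Faltings: the Cartan subgroup is abelian, so the representation becomes abelian over a quadratic extension, and the theory of abelian $\ell$-adic representations developed in \cite{Se68} itself then forces complex multiplication. So while the outline is sound, Step~3 as written conflates the per-$j'$ statement with the uniformity problem, and the role you assign Faltings contradicts the paper's own account of where it enters.
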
 

Falting's theorem \cite{Fa83} (as in \cite{Se97b})  replaces the unpublished result of Tate. The use of Faltings in both versions of the $r=4$ Main \MT\ conjecture for \MT s mean that both have inexplicit aspects, though the results are different on that (see  \sdisplay{\cite{CaTa09}} and \sdisplay{\cite{Fr06}}). 

So even today, being explicit on Thm.~\ref{OITsf} in the Schur covering property for the $\GL_2$ case still requires non-integral j-invariant \cite[\S6.2.1]{Fr05}. \cite[IV-21-22]{Se68} references Ogg's example \cite{O67} (or \cite[\S 6.2.2]{Fr05}), to give  $j'\in \bQ$ with the decomposition group $G_{j'}$ equal $\GL_2(\bZ_\ell)/\{\pm1\}$ for {\sl all\/} primes $\ell$. 

\Sdisplay{\cite{Ih86}} A similar title with \cite{Fr78} may be  Shimura's influence. Both played on interpreting braid group actions, a monodromy action that captures {\sl anabelian\/} data (from curves, rather than from abelian varieties). They also both thought on complex multiplication.  

Ihara's paper has a moduli interpretation for how to generate the field extension using Jacobi sums derived from Fermat curves , giving the second commutator quotient of $G_\bQ$; versus the first commutator quotient given (Kronecker-Weber) cyclotomic values.  Abstract result from it: An interpretation of Grothendieck-Teichm\"uller on towers of Hurwitz spaces \cite{IM95} (albeit,  Hurwitz spaces without fine moduli properties).

\Sdisplay{\cite{Se90a}}  At the top of \S\ref{overview},  example \eql{mainexs}{mainexsb} started my interaction over this approach to the \OIT.  That expanded quickly into using the Universal Frattini cover to construct the original \MT s.  

For simplicity assume a finite group $G$ is $\ell$-perfect \eql{MTconds}{MTcondsa}. Then, the lift invariant for a prime $\ell$ described below comes from considering {\sl central\/} Frattini extensions with $\ell$ group kernels. Using this gave precise statements on components of Hurwitz spaces. 

The sequence \eqref{FrVSns} is an easy-to-state result based on this tool,  giving a presentation of $G_\bQ$. It also produced a simply-stated conjecture. 
Assume for $K\subset \bar\bQ$ that $G_K$ is a projective profinite group.\footnote{Shafarevich's conjecture is the special case that $K$ is $\bQ$ with all roots of 1 adjoined.}

\begin{guess}[Generalizing Sharafavich's Conjecture] \label{FrVConj} Then, $K$ is Hilbertian if and only if $G_K$ is profree.\footnote{That $G_K$ profree implies it is Hilbertian is a consequence of a version of Chebotarev's field crossing argument. The \cite{FrV92} result starts with the assumption that $K$ is {\bf P}(seudo){\bf A}(lgebraically){\bf C}(losed).}  \end{guess} 

That sounds good. But, it didn't lead to more general use of Nielsen classes. So, \cite{Fr20} revamps how \eqref{FrVSns} arises}, recasting it as a classically motivated connector between the \RIGP\ and the \OIT\  (as in \S\ref{RIGPFaltings}). Historical support is below and in the next two discussions.

Now consider a cover $\phi_\bg: \prP^1_w\to \prP^1_z$ representing $\bg$ as given by the conditions \eqref{bcycs}. Then, consider constructing $Z \to \hat W \to \prP^1$ with $\hat W$ the Galois closure of $\phi_\bg$, and $Z\to \prP^1_z$ Galois with group $\Spin_n$.  Result: There is  an unramified $Z\to \hat W$ if and only if $\bg$ is in the image of $\ni(\Spin_n,\bfC_{3^r})$. 

\cite[Thm.~B]{Fr10} says, for $r\ge n$, the two braid orbits on $\ni(A_n,\bfC_{3^r})$ are distinguished by their lift invariants. See \sdisplay{\cite{Fr02b}}  and  \sdisplay{\cite{We05}}. 

This example, including using the same naming of the same order lift class, $\C_3$, of elements of order 3 in both $A_n$ and $\Spin_n$, has many of the ingredients that inspired the use of the Universal Frattini cover $\tilde G$. 

The conjugacy class $\C_3$ has the same cardinality in $\Spin_n$ as it has in $A_n$. If we included, even once, a product of two disjoint 2-cycles as an element of the Nielsen class $\ni(A_n,\bfC)$, this would kill the lift invariant. These examples are writ large in the Main Theorem of \cite{FrV92}, which \cite{Fr20} has  revamped and expanded.

Central Frattini extensions affected three kinds of results.  
\begin{edesc} \label{centfratuse} \item Describing components on a Hurwitz space $\sH(G,\bfC)$ assuming, if a class appears in $\bfC$ it does so with high multiplicity.  
\item Describing, as in Thm.~\ref{obstabMT}, the obstruction to there being a nonempty \MT\ supported by the Nielsen class $\ni(G,\bfC)$. 
\item As in  Ex.~\ref{exA43-2cont}, a tool for classifying cusps. \end{edesc}

\Sdisplay{\cite{Se90b}} A combination of this paper with \cite[\S 6]{Fr10} makes use of the lift invariant for any Nielsen class of odd-branched Riemann surface cover of the sphere in say, the Nielsen class $\ni(A_n,\bfC)$. 

It is a formula for the parity of a uniquely defined half-canonical class on any cover $\phi: W \to \prP^1_z$ in the Nielsen class that depends only on the spin lift invariant generalizing $s_{\Spin_n/A_n}$ defined above.  
From this \cite[\S6.2]{Fr10} produces {\sl Hurwitz-Torelli\/} automorphic functions on certain Hurwitz space  components through the production of even $\theta$-nulls. 

\Sdisplay{\cite{Se92} and \cite{Fr94}}  
Serre didn't use the braid monodromy (rigidity) method.  Fried makes the connection to braid rigidity through Serre's own exercises. The difference shows almost immediately in considering the realizations of Chevalley groups of rank $>1$. Serre records just three examples of Chevalley groups of rank $>1$ having known regular realizations at the publication date of his book. 

\cite{FrV91} and \cite{FrV92} constructed, for each finite $G$, a centerless covering group $G^*$ with infinitely many  collections of rational conjugacy classes $\bfC$ of $G^*$, also having a (faithful) representation $T^*: G^*\to S_{n^*}$. These had the following additional properties.

\begin{edesc} \label{Gconds}  \item \label{Gcondsa}  \!$G^*(T^*,1)$ (stablilizing 1) is \!self-normalizing.\footnote{As in \eql{fmodabsinn}{fmodabsinna}.}  \item \label{Gcondsb}  $N_{S_{n^*}}(G^*,\bfC)/G^*$\footnote{\S\ref{equivalences} for $N_{S_n}(G,\bfC)$.} consists of all outer automorphisms of $G^*$. 
\item \label{Gcondsc}  $\sH(G^*,\bfC)^\inn$, the resulting inner Hurwitz spaces  are irreducible and have definition field $\bQ$.  
\end{edesc}  

This allowed using the Hurwitz spaces as part of a {\sl field-crossing\/} argument over any \begin{center} {\bf P}(seudo) {\bf A}(lgebraically) {\bf C}(losed) field $F\subset \bar \bQ$:\end{center} any absolutely irreducible variety over $F$ has a Zariski dense set of $F$ points.  The result was that if $F$ was also Hilbertian, then $G_F$ is profree (see Conj.~\ref{FrVConj}).  A particular corollary was the presentation of $G_\bQ$ in \eqref{FrVSns}. 

Condition \eql{Gconds}{Gcondsa} is sufficient to say that any $K\subset \bar \bQ$ point on $\sH(G^*,\bfC)^\inn$ (satisfying \eql{Gconds}{Gcondsc}) corresponds to a $K$ regular realization of $G^*$, and therefore of $G$. This is because  $G^*$ will have no center, the condition that the inner Hurwitz space is then a fine moduli space. 

In myriad ways we can relax these conditions. Still, to use them effectively over say $\bQ$ requires finding $\bQ$ points on $\sH(G^*,\bfC)^\inn$. The usual method is to choose $\bfC$ so that $\sH(G^*,\bfC)^\inn$ is sufficiently close to the configuration space $U_r$, that $\bQ$ points are dense in it. If $r=4$, we may use Thm.~\ref{genuscomp}. Compute the genus of $\sH(G^*,\bfC)^{\inn,\rd}$, check if it has genus 0, and a degree 0 divisor of $\bQ$ $\implies$ $\infty$-ly many $\bQ$ points. 

Soon after \cite{FrV91}, V\"olklein and Thompson -- albeit powerful group theorists -- produced abundant high rank Chevalley groups based on this method. Locating specific high-dimensional uni-rational Hurwitz spaces was the key. Examples, and the elementary uses of Riemann's Existence Theorem, abound in \cite{Vo96}.

The Conway-Fried-Parker-Voelklein appendix of \cite{FrV91} was a non-explicit method for doing that. \cite{Fr10}  shows what  explicit can mean. 

\Sdisplay{\cite{DFr94}}  Ques.~\ref{C-realvshyp} gave the formulation of the Main \MT\ conjecture for dihedral groups. Equivalently, if it is false, the \BCL\ (Thm.~\ref{bclthm}) implies there is an  even integer $r$ ($\le r^*)$ and for each $k\ge 0$,  a dimension ${r\nm 2}\over 2$ {\sl hyperelliptic Jacobian\/} (over $\bQ$) with a $\bQ(e^{2\pi i/\ell^{k\np1}})$ torsion point, of order $\ell^{k\np1}$,  on whose group $G_\bQ$ acts as it does on $\lrang{e^{2\pi i/\ell^{k\np1}}}$. 

The {\sl Involution Realization Conjecture\/} says the last is impossible: There is a uniform bound as $n$ varies on $n$  torsion points on any hyperelliptic Jacobian of a fixed dimension, over a given number field. (The only proven case, $r=4$, is the Mazur-Merel result bounding torsion on elliptic curves.) If a subrepresentation of the cyclotomic character occurred on the $\ell$-Tate module of a hyperelliptic Jacobian (see \cite{Se68}), the Involution Realization Conjecture would be  false. This led to formulating Main \RIGP\ \MT\  conj.~\ref{mainconj}. 

Still missing: For any  prime $\ell > 2$, find  cyclotomic $\ell^{k\np1}$ torsion points on any hyperelliptic Jacobians for all (even infinitely many) values of $k$  (\S\ref{dihedtohyper}). 

\subsubsection{Constructions and Main Conjectures from 1995 to 2004} \label{95-04}   \S\ref{lfratquot} has the universal $\ell$-Frattini cover ${}_\ell\psi: {}_\ell\tilde G\to G$  (Def.~\ref{univFrattdef-ab}).  It is the minimal profinite cover of $G$ with its $\ell$-Sylow a pro-free pro-$\ell$ group \cite[Prop.~22.11.8]{FrJ86}$_2$. 

For  $P$ an $\ell$-group, ${}_{\text{\rm fr}}P\eqdef P^\ell[P,P]$ is its Frattini subgroup. As usual, ${}_{\text{\rm fr}}P$ is the closed subgroup of  $P$ containing generators from the $\ell$ powers and commutators of $P$. Then,  $P\to P/{}_{\text{\rm fr}}P$ is a Frattini cover.

Recover a cofinal family of finite quotients of $\fG \ell$ by taking $\ker_0=\ker({}_\ell \psi)$ denoting the sequence of {\sl characteristic kernels\/} of $\fG \ell$ as in \eqref{charlquots}: \begin{equation} \label{charlquots2} \ker_0> {}_{\text{\rm fr}} \ker_0\eqdef  \ker_1 \ge \dots \ge  {}_{\text{\rm fr}}\ker_{k{-}1} \eqdef \ker_k \dots,\end{equation}  $\fG \ell/\ker_k$ by $\tfG \ell k$, and the characteristic modules $\ker_k/\ker_{k'}={}_\ell M_{k,k'}$, etc.  

\Sdisplay{\cite{Fr95}}   Assume generating conjugacy classes, $\bfC$, of $G$.\footnote{The group generated by all entries of $\bfC$ is $G$.}  Then, with $N_{\bfC}$ the least common multiple of the orders of elements in $\bfC$:  
\begin{equation} \label{congcond} \begin{array}{c} \text{If $\ell \not| N_{\bfC}$,  Schur-Zassenhaus implies the classes $\bfC$ lift canonically}\\ \text{ to classes of elements of the same orders in each group $\tfG \ell k {}$.}  \end{array}\end{equation} 

This opens by recasting modular curves as Hurwitz spaces of sphere covers for dihedral groups, referring to \sdisplay{\cite[\S2]{Fr78}}. Then, applying \eqref{charlquots2}, that any group can be used to constructed modular curve-like towers. \S\ref{remainsects}  discussed the worth of this, emphasizing these: 

\begin{edesc} \item It works  with any $\ell$-perfect group $G$, replacing a dihedral group $D_\ell$, $\ell$ odd and  conjugacy classes $\bfC$ satisfying \eqref{congcond}; and 
\item This recasts the \RIGP\ and  the \OIT\ using points on substantive moduli spaces, allowing formulating a relation between them. \end{edesc} 

Without \eqref{congcond}, there is no unique assignment of lifts of classes in $\bfC$ to the characteristic $\ell$-Frattini cover groups. Reason: If $g\in G$ has order divisible by $\ell$, then the order of any lift $\tilde g\in \tfG \ell 1$ is $\ell\cdot \ord(g)$.

Given \eqref{congcond}, we canonically form towers of Nielsen classes, and associated Hurwitz spaces, from \eqref{charlquots} and their abelianizations: 
\begin{equation} \label{MTHur}  \text{$\{\sH(\tfG \ell k {},\bfC)^\inn\}_{k=0}^\infty$ and the abelianized version $\{\sH(\tfG \ell k {}\ab,\bfC)^\inn\}_{k=0}^\infty$.}\end{equation}  Originally we called these the \MT s. Now we prefer that a \MT\ -- Def.~\ref{MTdef} -- is a projective sequence of {\sl irreducible\/} components (from braid orbits on the Nielsen classes) of their respective levels.  

\cite[Part II]{Fr95} describes the characteristic modules for $G=A_5$ and primes $\ell=2,3,5$ dividing $|A_5|=60$. Thereby, for these cases, it describes the tower of Nielsen classes attached to the abelianized version of \eqref{MTHur}. 

We then required three immediate assurances. 
\begin{edesc} \label{mtneed} \item   \label{mtneeda} That we could decide when we are speaking of a non-empty \MT. 
\item  \label{mtneedb} That $K$ points on the $k$th tower level correspond to $K$ regular realizations in the Nielsen class  $\ni((\tfG \ell k {},\bfC)$ (or  $\ni((\tfG \ell k {}_\ab,\bfC)$).  
\item \label{mtneedc} That we know the definition field of $\sH(\tfG \ell k {},\bfC)^\inn \to U_r$ and the rest of the structure around $\sH(\tfG \ell k {},\bfC)^\inn$ as a moduli space. \end{edesc} 

\begin{proof}[Comments]  Response to \eql{mtneed}{mtneeda}:  The first necessary condition is that $G$ is $\ell$-perfect. Otherwise, no elements of $\bfC$ will generate a $\bZ/\ell$ image. 

A much tougher consideration, though, was what might prevent finding elements $\bg\in G^r\cap \bfC$ satisfying product-one (as in \S\ref{analgeom}). Thm.~\ref{obstabMT}, using the  {\sl lift invariant\/} resolves that \sdisplay{\cite{Fr02b}}  and  \sdisplay{\cite{We05}}. 

Response to \eql{mtneed}{mtneedb}:  Originally I formed \MT s to show that talking about rational points on them, vastly generalized talking about rational points on modular curve towers. Especially, that the \RIGP\ was a much tougher/significant problem than usually accepted. 

To assure $K$ points on the $k$th level correspond to regular realizations of the Frattini cover groups, we needed the fine moduli condition that each of the $\tfG \ell k\,$s has no center. The most concise is \cite[Prop.~3.21]{BFr02}: 
\begin{equation} \label{modcond} \text{If $G$ is centerless, and $\ell$-perfect, then so is each of the $\tfG \ell k\,$s.} \end{equation} 

Response to \eql{mtneed}{mtneedc}: The \BCL\ of \cite[\S5.1]{Fr77} perfectly describes moduli definition fields (Def.~\ref{moddeffield}) of Hurwitz spaces. The result is more complicated for absolute classes, but both results also apply to reduced classes.  We have given detailed modern treatments of this now in several places including \cite[App.~B.2]{Fr12}; also see \cite[Main Thm.]{FrV91}~and \cite{Vo96}. 

Using $N_{\bfC}$ (\S\ref{introhursp}), denote the field generated over $K$ by a primitive $N_{\bfC}$ root of 1 by $\Cyc_{K,\bfC}$ . Recall: $$G(\Cyc_{\bQ,\bfC}/\bQ)=(\bZ/N_{\bfC})^*, \text{ invertible integers } \!\!\!\mod N_{\bfC}.$$ Use the subgroup, $G(\Cyc_{K,\bfC}/K)$, fixed on $K\cap \Cyc_{\bQ,\bfC}$, to define:  
\begin{equation} \label{bcl} \begin{array}{rl} &\bQ_{G,\bfC} \!\eqdef\{m\in (\bZ/N_{\bfC})^* \!\!\mid \{g^m\mid g \in \bfC\} \eqdef\, \bfC^m\, =\,\bfC\}. \\ 
&\bQ_{G,\bfC,T} \!\eqdef \{m\in (\bZ/N_{\bfC})^* \!\!\mid \exists h\in N_{S_n}(G,\bfC) \text{ with } h \bfC^m h^{-1} =\bfC\}. \end{array} \end{equation} 

\begin{lem}[Branch Cycle] \label{branchCycle} As above, then $\bQ_{G,\bfC}$ (resp.~$\bQ_{G,\bfC,T}$) is contained in any definition field of any cover in the Nielsen class $\ni(G,\bfC)^\inn$ (resp.~$\ni(G,\bfC,T)^\abs\eqdef \ni(G,\bfC)^\abs$ if $T$ is understood) \cite[p.~62--64]{Fr77}.\end{lem}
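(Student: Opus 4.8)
\textbf{Proof plan for the Branch Cycle Lemma (Lem.~\ref{branchCycle}).}

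The plan is to reduce the statement to the behavior of the fundamental group of the configuration space under Galois conjugation, and to track how a cover's branch cycles transform when we apply $\sigma\in G_\bQ$. First I would fix a cover $\phi:W\to \prP^1_z$ in the Nielsen class $\ni(G,\bfC)^\dagger$, defined over some number field $L$, with branch point set $\bz$ and classical generators $\sP=\{\row P r\}$ of $\pi_1(U_\bz,z_0)$ as in \eqref{generatorspi}, producing branch cycles $\bg\in G^r\cap \bfC$ via Thm.~\ref{BCYCs}. The key input is that for $\sigma\in G_L$ the conjugate cover $\phi^\sigma: W^\sigma\to \prP^1_z$ is again in the Nielsen class, with branch points $\bz^\sigma$; and the crucial point is that $\sigma$ acts on a clockwise loop $\rho_i$ around $z_i$ not just by moving its center to $z_i^\sigma$, but by \emph{raising it to the power} $m=m_\sigma\in (\bZ/N_\bfC)^*$ determined by the cyclotomic action $\zeta_{N_\bfC}\mapsto \zeta_{N_\bfC}^m$. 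This is exactly the phenomenon behind \eqref{powseries}: the local branch-cycle generator comes from the substitution $z^{1/\bar e}\mapsto e^{2\pi i/\bar e}z^{1/\bar e}$, and a Galois automorphism sends $e^{2\pi i/\bar e}$ to $e^{2\pi i m/\bar e}$, so the local monodromy generator around $z_i^\sigma$ is $g_i^m$ up to the ambiguity of the labeling of the fiber.

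The next step is to assemble the local data globally. If $\phi$ is defined over $L$, then $\phi^\sigma=\phi$ (as a cover), so the $r$-tuple $\bg^{(\sigma)}$ of branch cycles one reads off from $\phi$ using classical generators based at $z_0^\sigma$ must be equivalent, under the relevant equivalence $\dagger$, to the $r$-tuple obtained by the above power-$m_\sigma$ recipe. Concretely: for inner equivalence this forces $\bg^{m_\sigma}$ (entrywise $m_\sigma$-th powers, possibly after a permutation of the tuple coming from how $\sigma$ permutes the branch points and a braid/path adjustment) to be conjugate in $G$ to $\bg$; hence $\bfC^{m_\sigma}=\bfC$, i.e.\ $m_\sigma\in \bQ_{G,\bfC}$. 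For absolute equivalence the adjustment is by an element of $N_{S_n}(G,\bfC)$, giving $m_\sigma\in \bQ_{G,\bfC,T}$. Running this over all $\sigma\in G_L$ shows $G_L$ fixes $\Cyc_{\bQ,\bfC}\cap \bar L$ only to the extent permitted by $\bQ_{G,\bfC}$ (resp.\ $\bQ_{G,\bfC,T}$), which is precisely the assertion that these groups are contained in any field of definition of any cover in the Nielsen class. The reference \cite[p.~62--64]{Fr77} supplies the careful bookkeeping for the path and labeling adjustments.

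The main obstacle is making the "path and labeling adjustment" rigorous: when $\sigma$ moves the branch points, the classical generators $\sP$ based at $z_0$ no longer directly compare with classical generators based at $z_0^\sigma$, and one must choose a path from $z_0^\sigma$ back to $z_0$ (as in \eqref{baseptch}) and track the induced braid, checking that it does not affect membership of $m_\sigma$ in $\bQ_{G,\bfC}$ because that set is defined up to exactly the conjugacy/permutation ambiguities that braids and relabelings introduce. A secondary subtlety is that the Nielsen class equivalence $\dagger$ is built into the statement, so one must verify the argument is compatible with inner, absolute, and reduced equivalence uniformly; for reduced equivalence the extra $\PSL_2(\bC)$-action is defined over $\bQ$ and commutes with the Galois action on branch cycles, so it contributes nothing new to the cyclotomic obstruction, but this should be remarked explicitly. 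Once these compatibilities are in hand, the containment $\bQ_{G,\bfC}\subseteq$ (definition field) follows formally.
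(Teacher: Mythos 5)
The paper does not actually prove this lemma — it defers to \cite[p.~62--64]{Fr77} — so there is no internal argument to compare against; your outline reproduces the standard Branch Cycle Lemma argument from that reference and is correct in substance. The one small imprecision is the direction of the cyclotomic exponent: with $\sigma(\zeta_{N_\bfC})=\zeta_{N_\bfC}^{m_\sigma}$, the branch cycle of $\phi^\sigma$ at $z_i^\sigma$ comes out conjugate (after the braid/labeling adjustment you rightly flag) to $g_i^{m_\sigma^{-1}}$ rather than $g_i^{m_\sigma}$; since $\{m\mid\bfC^m=\bfC\}$ and its $N_{S_n}(G,\bfC)$ variant in \eqref{bcl} are subgroups of $(\bZ/N_\bfC)^*$, hence closed under inversion, the conclusion $\bQ_{G,\bfC}\subseteq L$ (resp.~$\bQ_{G,\bfC,T}\subseteq L$) is unaffected.
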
 

Still, for $K$ points to exist, there must be a component with moduli definition field $K$ (Def.~\ref{moddeffield}). That is a much harder problem. \end{proof} 

For good reasons there can be more than one component (as in  \sdisplay{\cite{Se90b}} and Prop.~\ref{A43-2} on \cite[Thm.~B]{Fr10})  in any particular case. 

The \OIT\ contends with that at all levels \sdisplay{\cite{FrH20}}. In lieu of Main \RIGP\  Conj.~\ref{mainconj} (or  Conj.~\ref{MTconj}) for a given \MT\ consider two cases.    
\begin{edesc} \label{moddefbd} \item \label{moddefbda} Some number field $K$ is a  moduli definition field of all tower levels.  
\item  \label{moddefbdb} The moduli definition degrees rise with the tower levels.  \end{edesc} 

Recall Main \RIGP\ conj. \ref{MTconj}
\begin{guess} \label{MTconj} At high levels there will be no $K$ points on a \MT. Also, high levels will be algebraic varieties of general type. \cite{Fr95}.  \end{guess} 

\sdisplay{\cite{BFr02}} inspected the case $(A_5,\bfC_{3^4}, \ell=2)$. This found that significant cusps and the lift invariant revealed as much detail on this \MT\ as one would expect from a modular curve tower, despite interesting differences.

\begin{defn} \label{HMrep} An element $\bg\in \ni(G,\bfC)$ is a {\sl Harbater-Mumford\/} (\HM) representative if it has the form $(g_1,g_1^{-1},\dots,g_s,g_s^{-1})$ (so $2s=r$). A braid orbit $O$ is said to be \HM, if the orbit contains an \HM\ rep. \end{defn} 

The name {\sl Harbater-Mumford} comes from \cite{Mu72} which used a completely degenerating curve on the boundary of a space of curves. In a sense, \cite{Ha84}, for covers, consists of a \lq\lq germ\rq\rq\ of such a construction.

\cite[Thm.~3.21]{Fr95} got the most attention by showing that if $\bfC$ is a rational union (Thm.~\ref{bclthm}), then $G_\bQ$ permutes the \HM\ components. Further, it gave an explicit criterion, applying to any  $G$, for producing classes $\bfC$  so that $\sH(G,\bfC)$ has just one $\HM$ component. 

Thereby, it found for $G$, small explicit values of $r=r_\bfC$  with an attached \MT\ and $\bQ$ as a moduli definition field for all \MT\ levels, situation \eql{moddefbd}{moddefbda} with the \MT\ levels all \HM\ components. 

Motivated by this,
both \cite{DEm05} and \cite{W98} developed a Hurwitz space Deligne-Mumford stable-compactification. This put a Harbater degeneration on its  boundary. This allows a standard proof -- contrasting with the group theoretic use of {\sl specialization sequences\/} in Fried's result -- to show Harbater-Mumford cusps lie on Harbater-Mumford components.

Both compactifications suit the \MT\ construction extending the projective systems. Further,  from Grothendieck's famous theorem, other than primes $p$ dividing $|G|$,  the \MT\ system has good reduction $\mod p$. This topic thus applies to the full Frattini cover $\tilde G\to G$ (rather than abelianized version) in achieving \RIGP\ results over fields other than number fields.  \sdisplay{\cite{D06}, \cite{DDes04}  and \cite{DEm05}}  continues this discussion. 

\Sdisplay{\cite{FrK97}}  Suppose $G$ is a group with many known regular realizations. For example:  $A_n$  semidirect product some finite abelian group. (Say, a quotient of $\bZ^{n-1}$ on which $A_n$ acts through its standard representation; a special case of Prop.~\ref{splitembedding} in \sdisplay{\cite{CaD08}}). Consider, for some prime $\ell$ for which $G$ is $\ell$-perfect, if there are regular realizations of the whole series of $\tfG \ell k$, $k\ge 0$, over some number field $K$.  

The basic question: Could all such realizations have a uniform bound, say $r^*$, on numbers of branch points -- with no hypothesis on the classes $\bfC$. 

\begin{thm} \label{genRIGPbdr} Such regular realizations are only possible by restricting to $\ell'$ classes (elements of $\bfC$ with orders prime to $\ell$). If they do occur, there must exist a \MT\ over $K$ for some one choice of $r\le r^*$ classes, $\bfC$, with a $K$ point at every level \cite[Thm.~4.4]{FrK97}. \end{thm}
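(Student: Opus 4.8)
The plan is to follow the pattern of Prop.~\ref{bdrefreal} (this is the universal-$\ell$-Frattini analogue of that lattice-quotient statement, \cite[Thm.~4.4]{FrK97}), splitting the argument into an \emph{$\ell'$-reduction} and a \emph{compactness extraction} of the tower. Fix, for each $k$, a $K$-regular realization $\hat\phi_k\colon\hat W_k\to\prP^1_z$ of $\tfG\ell k$ with branch-cycle description $\bg^{(k)}\in\ni(\tfG\ell k,\bfC_k)$ and $r_k=r_{\bfC_k}\le r^{*}$, and push $\bg^{(k)}$ forward along $\tfG\ell k\to G$ to $\bar\bg^{(k)}\in\ni(G,\bar\bfC_k)$. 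The structural input for the first claim is that a lift to $\tfG\ell k$ of an element $\bar g\in G$ of order divisible by $\ell$ acquires an $\ell$-part of unbounded exponent as $k\to\infty$: the Sylow-$\ell$ of $\fG\ell$ is pro-free pro-$\ell$, hence torsion free, so such a lift already has infinite order in $\fG\ell$, and the Frattini filtration \eqref{charlquots} forces its image in $\tfG\ell k=\fG\ell/\ker_k$ to have order growing with $k$. Against this, because $[K:\bQ]$ is fixed the image of the cyclotomic character $G_K\to(\bZ/N_{\bfC_k})^{*}$ has index at most $[K:\bQ]$, and the Branch Cycle Lemma (Lem.~\ref{branchCycle}, Thm.~\ref{bclthm}) links the $G_K$-permutation of the branch points of $\hat\phi_k$ to the orbits of the classes of $\bfC_k$ under powering by that image; since there are at most $r^{*}$ classes, every such powering orbit has length $\le r^{*}$. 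Combining these forces the $\ell$-parts of the orders of the classes in $\bfC_k$ to stay bounded, so $\bar\bfC_k$ consists of $\ell'$ classes for all but finitely many $k$ --- the first assertion. By Schur-Zassenhaus \eqref{congcond} these $\ell'$ classes of $G$ have canonical, mutually compatible lifts through every $\tfG\ell j$, so henceforth we replace $\bfC_k$ by the canonical lift of $\bar\bfC_k$.

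For the second assertion I would extract a \MT\ by a double pigeonhole followed by a K\"onig/Tychonoff argument on the component tree. Only finitely many values $r\le r^{*}$ occur, and for each $r$ the finite group $G$ has only finitely many unordered $r$-tuples of conjugacy classes; passing to a subsequence of $k$'s we may assume $r_k=r$ and $\bar\bfC_k=\bfC$ are constant, with $\bfC$ an $\ell'$ collection. For each surviving $k$ and each $j\le k$, composing $\hat\phi_k$ with $\tfG\ell k\to\tfG\ell j$ gives a point $\bp^{(k)}_j\in\sH(\tfG\ell j,\bfC)^{\inn}(K)$, and these are compatible in $j$. Since each $\sH(\tfG\ell j,\bfC)^{\inn}$ has finitely many absolutely irreducible components (braid orbits on $\ni(\tfG\ell j,\bfC)$) and each level-$(j{+}1)$ component maps to one at level $j$, the component tree over $\sH(G,\bfC)^{\inn}$ is locally finite; a standard diagonal argument then selects a coherent projective system $\cdots\to\sH'_2\to\sH'_1\to\sH'_0$ of components such that, for every $j$, cofinitely many terms of the final subsequence contribute a point of $\sH'_j(K)$. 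By Def.~\ref{MTdef} this system is a \MT\ over $K$ (it is visibly nonempty at every level, equivalently it passes the lift-invariant obstruction of Thm.~\ref{obstabMT}), carrying a $K$-point at every level, as required.

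The main obstacle is Step~1: one must make ``the $\ell$-orders of lifts grow'' bite against a \emph{fixed} number field even though $|\tfG\ell k|\to\infty$ as well, so the contradiction has to come entirely from the arithmetic of the branch data (cyclotomic twisting of inertia through the Branch Cycle Lemma, together with the uniform bound $r^{*}$ on the number of classes) --- Riemann-Hurwitz \eqref{RH} imposes no constraint here, since high genus is harmless. A secondary point to pin down is that the level-$0$ pushforward of a realization of $\tfG\ell k$ genuinely records which braid orbit (hence which \MT) it lies over, so that the pigeonholing in the extraction step is legitimate; this is exactly what the component-tree formalism of \S\ref{outlineMTs} provides, and it is also why one works with inner (and, following the footnote in \S\ref{notMT}, if needed absolute) equivalence throughout.
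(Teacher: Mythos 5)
The paper does not prove this theorem in-text; it is stated and cited directly to \cite[Thm.~4.4]{FrK97}, so there is no in-paper proof to compare against. Judged on its own terms, your reconstruction has the right two-part skeleton --- an $\ell'$-reduction followed by a pigeonhole/K\"onig extraction of a projective system of components --- and Step~2 (finitely many $r\le r^*$, finitely many unordered $r$-tuples of $G$-classes, finitely many components per level, diagonal/K\"onig on the component tree) is sound and is indeed the standard way one turns a bounded-branch-point hypothesis into a \MT\ with $K$ points at every level, as in Prop.~\ref{bdrefreal}.

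The gap is in Step~1, at the sentence \lq\lq Combining these forces the $\ell$-parts of the orders of the classes in $\bfC_k$ to stay bounded.\rq\rq\ What the \BCL\ plus the bound $r^{*}$ actually give you is only that each class $\C$ appearing in $\bfC_k$ has powering orbit of length at most $r^{*}$ under the index-$\le[K:\bQ]$ subgroup of $(\bZ/N_{\bfC_k})^{*}$. That is compatible with the $\ell$-order of $\C$ being arbitrarily large, provided the powering \emph{stabilizer} $\{m : \C^m=\C\}$ is correspondingly large --- i.e., provided the image of the Weyl group $N_{\tfG\ell k}(\langle g\rangle)/C_{\tfG\ell k}(\langle g\rangle)$ in $\Aut(\langle g\rangle)\cong(\bZ/\ord(g))^{*}$ captures most of the $\ell$-part. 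Nothing you have invoked rules this out: torsion-freeness of the $\ell$-Sylow of $\fG\ell$ gives the \emph{growth} of the order of the lift (as the paper itself notes just after \eqref{congcond}), but says nothing by itself about the size of the Weyl-group image in $\tfG\ell k=\fG\ell/\ker_k$, which is a statement about the quotient, not about $\fG\ell$. So the central implication of the first assertion is asserted rather than proved; closing it requires a genuine structural input bounding that Weyl-group image (uniformly in $k$) for classes over an $\ell$-divisible class of $G$, and that is exactly where the $\ell$-perfect/pro-free structure has to be used in a way you haven't. A secondary, smaller issue: your argument at best constrains the pushed-down classes $\bar\bfC_k$ in $G$, yet you then silently replace $\bfC_k$ by the canonical Schur-Zassenhaus lift of $\bar\bfC_k$; the given realizations of $\tfG\ell k$ need not use that canonical lift (a lift $g$ of an $\ell'$ element $\bar g$ can still have nontrivial $\ell$-part in $\ker(\tfG\ell k\to G)$), so that substitution changes the Nielsen class and needs its own justification before the extraction in Step~2 can be run on a single $\bfC$.
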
 

The Thm.~\ref{genRIGPbdr} conclusion is contrary to Main \MT\ \RIGP\ Conj. \ref{MTconj};  proven for $r^*\le 4$ (\sdisplay{\cite{Fr06} and \cite{CaTa09}}).

\Sdisplay{\cite{BFr02}} This is a book of tools that has informed all later papers on \MT s. The thread through the book is checking phenomena on \MT s lying over one (connected) reduced Hurwitz space: For the Nielsen class $\ni(A_5,\bfC_{3^4})$; four repetitions of the conjugacy class of 3-cycles) and the prime $\ell=2$. It computes everything of possible comparison with modular curves about level one (and level 0) where we take the full (not just the abelianized, Def. \ref{univFrattDef}) universal $2$-Frattini cover of $A_5$. 

It shows the Main \MT\  Conjecture \ref{MTconj} for it: No $K$ points at high levels (K any number field). The inner space at level 0 has one component of genus 0. Level one has two components, of genus 12 and genus 9. This concludes with a conceptual accounting of all cusps, and all real points on any \MT\ over the level 0 space (none over the genus 9 component). 

Many points about \MT\ levels arise here; particular cases of them appear attached to the components at level 1. For example, a version of the spin cover (extending the domain of use of \cite{Se90a}) obstructs anything beyond level 1 for the genus 9 component. 
Also, the argument using a prime, $\ell$,  of good reduction, as in \S\ref{RIGPFaltings} appears first here.

\cite[\S2.10.2]{BFr02} introduces the {\sl shift-incidence matrix\/} applying it to an early version of Ex.~\ref{mainex}. This linear device detects braid orbits and organizes cusps. Recall the braid generators in \eqref{Hrgens}. 

Choose any one of the twists $q_v$  (for $r$ = 4 it suffices to choose $q_2$ on reduced Nielsen classes) and call it $\gamma_\infty$. Reference rows and columns of the matrix  by  orbits of $\gamma_\infty$ on reduced Nielsen classes as $\row O t$. Reduced Nielsen classes are special in the case $r=4$, as in  Thm.~\ref{genuscomp}.
$$\text{The $(i,j)$ entry of the matrix is then $|(O_i)\sh\cap O_j|$:}$$
\begin{center} Apply $\sh$ to all entries of $O_i$, intersect it with $O_j$; the $(i,j)$-entry is  the cardinality of the result. \end{center}
As in \S~\ref{shincex},  $\sh$-incidence  graphically displays a \MT\ component at a given level using a characteristic cusp distinguishing  that component.  

Since $\sh^2=1$ on reduced classes when $r=4$, for that case the matrix is symmetric. Braid orbits correspond to matrix blocks. \cite[Table 2]{BFr02} displays the one block and the genus calculation for $(A_5 , \bfC_{3^4})$. Then, \cite[\S8.5, esp.~Table 4]{BFr02} does a similar calculation for the level 1 \MT s, $({}_2^1 A_5, \bfC_{3^4})$.  (This and $\ni(A_4,\bfC_{3^4})$ are still the only cases in print going up to level 1 for the full universal  $\ell$-frattini cover.)

\begin{edesc} \item There are two kinds of cusps, \HM\ and near-\HM, with near-\HM\ having a special action under the complex conjugation operator. \item The components at  level 1  have resp.~genuses 12 and 9 ($>1$): Faltings kicks in here as in \S\ref{toughestpt}. 
\item Apply \cite{DFr90}: the genus 12 (resp.~9)  component has one (resp.~no) component of real points. \end{edesc} 

\cite{Fr20b} has more examples illustrating the theme of having cusp types -- based on using refined aspects of $G$ -- separate components. That includes those in \sdisplay{\cite{LO08} and and  \cite{Fr09}}. Recall the key issue.  

\begin{center} When Hurwitz spaces have several components,  identify moduli definition fields geometrically to recognize the $G_\bQ$ action on the components. \end{center}

\Sdisplay{\cite{Fr02b}   and   \cite{We05}}   
\cite[Chap.~9]{Se92} added material from \cite{Me90} on regularly realizing the $\psi_{A_n,\Spin_n}: \Spin_n\to A_n$ (see Ex.~\ref{gen0}) cover. Stated in my language he was looking at the Nielsen class extension $$\Phi_{A_n,\Spin_n}: \ni(\Spin_n, \bfC_{3^{n-1}})^\inn \to \ni(A_n, \bfC_{3^{n-1}})^\inn.$$

\begin{thm} \label{Anlift} \cite[Main Thm.]{Fr10}  For all $n$, there is one braid orbit for $\ni(A_n, \bfC_{3^{n-1}})^\inn$. For $n$ odd, $\Phi_{A_n,\Spin_n} $ is one-one, and the abelianized \MT\ is nonempty. For $n$ even, $\ni(\Spin_n, \bfC_{3^{n-1}})^\inn$ is empty. \end{thm}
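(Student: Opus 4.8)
The statement has three parts, and I would organize the proof around the spin lift invariant of Definition~\ref{Anliftdef} together with the braid-orbit techniques of \S\ref{shincex}. First, the claim of a single braid orbit on $\ni(A_n,\bfC_{3^{n-1}})^\inn$: the natural route is induction on $n$. One produces, for each Nielsen representative $\bg\in\ni(A_n,\bfC_{3^{n-1}})$, a braid path to a normalized form --- for instance a Harbater--Mumford-adjacent shape or a form in which a chosen pair of 3-cycles is made to act on a common set of $\le 4$ symbols --- and then uses the $r=4$ (respectively $r=5$) base cases, where the $\sh$-incidence matrix of \S\ref{shincex} can be computed by hand (Ex.~\ref{gen0} and Prop.~\ref{A43-2} already supply $n=4,5$). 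The induction step amounts to showing that conjugating/multiplying along braid generators one can reduce the ``support'' of the tuple, i.e.\ cut down to an $A_{n-1}$ (or $A_{n-2}$) subproblem while keeping product-one and generation; this is essentially the classical Conway--Parker/Conway--Fried--Parker type argument (the appendix of \cite{FrV91}) specialized to 3-cycles, where the relevant quotient of the Schur multiplier of $A_n$ is $\bZ/2$ for $n\ge4$. The key combinatorial lemma is that any two 3-cycles sharing a symbol generate an $A_4$ on which $H_4$ already acts transitively on the relevant genus-0 Nielsen class, so one can always ``braid away'' excess letters.

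Second, the injectivity of $\Phi_{A_n,\Spin_n}$ for $n$ odd. Since by Part~1 there is a single braid orbit downstairs, it suffices to show the fiber of $\Phi_{A_n,\Spin_n}$ over one convenient $\bg$ is a single point; equivalently, that the preimage in $\ni(\Spin_n,\bfC_{3^{n-1}})^\inn$ is a single braid orbit mapping bijectively. Here I would invoke Theorem~\ref{obstabMT} / the Obstruction Lemma \cite[Obst.~Lem.~3.2]{FrK97}: since $\Spin_n\to A_n$ is a central Frattini cover with kernel $\bZ/2=\one_{A_n}$, the map $\ni(\Spin_n,\bfC)\to\ni(A_n,\bfC)$ surjects exactly onto the braid orbits with trivial lift invariant, and on such an orbit the fiber is a torsor that, because the kernel is central of order $2$ and each $g_i$ has a \emph{unique} same-order (odd order $3$) lift $\tilde g_i$, collapses to a point. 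The computation $s_{\Spin_n/A_n}=(-1)^{n-1}$ from Ex.~\ref{gen0} then says this trivial-lift-invariant situation occurs precisely when $n$ is odd, giving both the nonemptiness and the one-one-ness; nonemptiness of the abelianized \MT\ follows because triviality of the lift invariant is exactly the Theorem~\ref{obstabMT} criterion for a nonobstructed orbit, and one then appeals to the Tychonoff argument of \S\ref{nonemptyMT} to get a full projective system of nonempty levels.

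Third, emptiness of $\ni(\Spin_n,\bfC_{3^{n-1}})^\inn$ for $n$ even. This is the cleanest part: by Ex.~\ref{gen0}, every $\bg\in\ni(A_n,\bfC_{3^{n-1}})$ has spin lift invariant $s_{\Spin_n/A_n}(\bg)=(-1)^{n-1}=-1\ne 1$ when $n$ is even, which means $\prod_i\tilde g_i=-1$ in $\ker(\psi)=\{\pm1\}$ for \emph{every} lift, so product-one fails upstairs and $\ni(\Spin_n,\bfC_{3^{n-1}})=\emptyset$. (That the lift invariant is a genuine invariant of the element, well-defined independently of the orbit, is guaranteed here because all covers in the class have genus $0$, so the ``genus 0'' clause of the result just after Def.~\ref{Anliftdef} applies; this is also where Part~1, single orbit, is reassuring but not strictly needed for emptiness.)

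\textbf{Main obstacle.} The hard step is Part~1, the single-braid-orbit statement for all $n$, and specifically the inductive reduction: one must show that every Nielsen tuple of $n-1$ three-cycles generating $A_n$ with product one can be braided to a standard form, which requires a careful ``letter-reduction'' argument controlling simultaneously generation, product-one, and the value of the lift invariant (the last being the only obstruction to merging orbits, per the Obstruction Lemma). The delicate point is the passage through the few small $n$ where $A_n$ has exceptional behavior ($n=4$ has two classes of $3$-cycles, $n=6$ has its extra outer automorphism); these must be handled as explicit base cases via the $\sh$-incidence matrix, and then the general induction glued on. I expect the bookkeeping in \cite[Main Thm.]{Fr10} to be exactly this, with the central-Frattini/spin-cover input of \cite{Fr02b} and \cite{We05} supplying the cohomological fact that $\bZ/2$ is the whole relevant obstruction.
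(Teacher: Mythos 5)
The paper itself gives no proof of Theorem~\ref{Anlift}: it is imported verbatim from \cite[Main Thm.]{Fr10}, and the only in-text justification is the sentence after the theorem explaining the $n$-even case via product-one failing in $\Spin_n$, together with the lift-invariant formula $s_{\Spin_n/A_n}=(-1)^{n-1}$ recalled in Ex.~\ref{gen0}. Your Parts 2 and 3 reproduce that reasoning exactly and correctly: for $n$ even the invariant is $-1$, so the unique same-order lift $\tilde\bg$ cannot satisfy product-one, hence $\ni(\Spin_n,\bfC_{3^{n-1}})=\emptyset$; for $n$ odd the invariant is $+1$, so the Obstruction Lemma gives surjectivity, while injectivity follows because elements of odd order have a unique same-order lift across a central $\bZ/2$-kernel (and generation upstairs is automatic since $\Spin_n\to A_n$ is Frattini). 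Your appeal to Thm.~\ref{obstabMT} plus the Tychonoff step for nonemptiness of the abelianized tower is also the right argument. So on the two parts the paper actually explains, you are on target.

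Part 1 — a single braid orbit on $\ni(A_n,\bfC_{3^{n-1}})^\inn$ for every $n$ — is the substantial content, and the paper defers it entirely to \cite{Fr10}. Your inductive letter-reduction in the Conway--Fried--Parker spirit is the right shape of argument, and you correctly identify the lift invariant as the only possible obstruction to merging orbits. But a couple of the combinatorial assertions as written would not survive inspection: two $3$-cycles ``sharing a symbol'' need not generate an $A_4$ (sharing exactly one symbol uses five letters and generically generates $A_5$; only sharing two symbols confines the action to $\le 4$ letters), so the reduction step needs a more careful case split on how many symbols consecutive entries share, and one must also verify that generation of $A_n$ and product-one survive each braid move used in the normalization. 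Also note $r=n-1$ is odd precisely when $n$ is even, so there is no Harbater--Mumford representative available in the even case; anchoring the normal form on an HM-adjacent shape therefore only works for $n$ odd, and the even case needs a different standard tuple. These are repairable, and the overall architecture matches what \cite{Fr10} actually does, but as stated Part 1 is a plausible sketch rather than a proof.
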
 

Here is the meaning for $n$ even. Lift the entries of  $\bg \in  \ni(A_n, \bfC_{3^{n-1}})^\inn$ to same order entries in $\Spin_n$, to get $\hat \bg$. Then, the result does not satisfy product-one: $\hat g_1, \dots, \hat g_{n-1} = -1$ (the {\sl lift invariant\/} in this case).

I used this to test many properties of \MT s. Here it showed that there is a {\sl  nonempty\/} Modular Tower over $\ni(A_n, \bfC_{3^{n-1}})^\inn$ for $\ell=2$ if and only if $n$ is odd.  In particular the characteristic Frattini extensions define the tower levels, but central Frattini extensions control many of their delicate properties. If you change $\bfC_{3^{n-1}}$ to $\bfC_{3^{r}}$, $r\ge n$, there are precisely two components, with one obstructed by the lift invariant, the other not. 

\cite[Thm.~2.8]{Fr02b} gives a procedure to describe the $\ell$-Frattini module for any $\ell$-perfect $G$, and therefore of the sequence $\{\tfG \ell k {}_\ab\}_{k=0}^\infty$. \cite[Thm.~2.8]{Fr02b}  labels Schur multiplier types, especially those called {\sl antecedent}. Example: In \MT s where $G=A_n$, the antecedent to the level 0 spin cover affects \MT\ components and cusps at all levels $\ge  1$ (as in \cite{Fr06}). 

\cite[I.4.5]{Se97a} extends the classical notion of {\sl Poincar\'e duality\/} to any pro-$\ell$ group. It applied to the pro-$\ell$ completion of $\pi_1(X)$ with $X$ any compact Riemann surface. \cite{We05} uses the extended notion, intended for groups that have extensions by pro-$\ell$ groups of any finite group.

Main Result: The universal ${\ell}$-Frattini cover $\fG \ell$ (and $\fG \ell {}_\ab$) is an $\ell$-Poincar\'e duality group of dimension 2.  The result was Thm.~\ref{obstabMT}  \cite[Cor.~4.19]{Fr06}.  \S\ref{Ancomps}  and \S\ref{exA43-2cont} applied this $\Spin_n\to A_n$, for $\ni(A_5,\bfC_{3^4})$ and $\ni(A_4,\bfC_{+3^2-3^2}$ for different $\ell$-Frattini lattice quotients (Def.~\ref{latquotdef}). 

\subsubsection{Progress on the \MT\ conjectures and the \OIT} \label{05-to-now}  
As with modular curves, the actual \MT\ levels come alive by recognizing moduli properties attached to particular (sequences) of cusps. It often happens with \MT s that level 0 of the tower has no resemblance to modular curves, though a modular curve resemblance arises at higher levels.

Level 0 of alternating group towers illustrate: They have little resemblance to modular curves. Yet, often level 1 starts a subtree of cusps that contains the cusptree of modular curves. We can see this from the  classification of cusps discussed in \sdisplay{ \cite[\S3]{Fr06}}. 

We leave much discussion of generalizing Serre's \OIT\ to \cite{Fr20}. Still, we make one point here, based on what is in \sdisplay{\cite{Se68}} and \sdisplay{\cite{Fr78}}.

With $\dagger$ either inner or absolute equivalence,  it is the interplay of two Nielsen classes that gives a clear picture of the bifurcation between the two types of decomposition groups, \CM\ and $\GL_2$. Those Nielsen classes are \begin{equation} \label{twoNCs} \text{$\ni(D_\ell,\bfC_{2^4})^{\dagger,\rd}$ and $\ni((\bZ/\ell)^2\xs \bZ/2,\bfC_{2^4})^{\dagger,\rd}$.}\end{equation}   

In the example(s) of \cite{Fr20}, the same thing happens. Of course, the $j$ values don't have the same interpretation as  for Serre's modular curve case. Further, as in \sdisplay{\cite{FrH20}}, there are nontrivial lift invariants, and more complicated, yet still tractible, braid orbits. 

\Sdisplay{\cite{D06}, \cite{DDes04}  and \cite{DEm05}} RETURNM \cite{D06}  has expositions on \cite{DFr94}, \cite{FrK97}, \cite{DDes04} and \cite{DEm05} in one place.

\cite{DDes04}  assumes the Main \MT\ Conjecture \eqref{MainConj} is wrong:  for some finite group $G$  satisfying the usual conditions for $\ell$ and $\bfC$ and some number field $K$, the corresponding \MT\ has a $K$ point at every level. Using \cite{W98} compactifications of the \MT\ levels, for almost all primes $\bp$ of $K$, this would give a projective system of $\sO_{K,\bp}$ (integers of $K$ completed at $\bp$) points on cusps. The results here considered what \MT s (and some generalizations) would support such points for almost all $\bp$ using Harbater patching (from \cite{Ha84}) around the Harbater-Mumford cusps. 

 We continue the concluding paragraphs of \sdisplay{\cite{Fr95}} on \HM\ components and cusps.  \cite{DEm05}  continues the results of \cite{DDes04}. It ties together notions of \HM\  components and the cusps that correspond to them, connecting several threads in the theory. They construct, for {\sl every\/} projective system $\{G_k\}_{k=0}^\infty$ -- not just those coming from a universal Frattini cover as characteristic quotients --  a tower of corresponding Hurwitz spaces, geometrically irreducible and defined over $\bQ$ (using the \cite[Thm.~3.21]{Fr95} criterion). 
 
 These admit  projective systems of points over the Witt vectors with algebraically closed residue field of $\bZ_p$, avoiding only $p$ dividing some $|G_k|$.  If you compactify the tower levels, you get complete spaces, with cusps lying on their boundary. The \MT\ approach gives precise labels to these cusps using elementary finite group theory \sdisplay{\cite{Fr06} and \cite{CaTa09}}.  

\Sdisplay{\cite{Fr06} and \cite{CaTa09}}  We continue \sdisplay{\cite{Fr02b},  \cite{We05}  and  \cite{DEm05}}.  
\cite[\S 3.2.1]{Fr06} has three generic cusp types that reflect on Hurwitz space components and properties of \MT s containing such components:  $$\text{$\ell$-cusps, g(roup)-$\ell'$ and o(nly)-$\ell'$.}$$ Modular curve towers have only the first two types, with the g-$\ell'$  cusps the special kind called shifts of \HM. 

\cite[\S 3.2.1]{Fr06} develops these cusps when $r=4$ (as alluded to in \eql{cusplift}{cuspliftb}). For $\bg=(g_1,g_2,g_3,g_4)$ in the cusp orbit (\S\ref{compsr=4}):\footnote{The names stand, respectively,  for group-$\ell'$ and only-$\ell$. }

\begin{edesc} \label{cusptype} \item  The cusp is g-$\ell'$ if $$H_{1,4}=\lrang{g_1,g_4}\text{ and } H_{2,4}=\lrang{g_2,g_3}\text{ are $\ell'$ groups.}$$
\item It is o-$\ell'$, if $\ell\not | g_2g_3$ but the cusp is not g-$\ell'$. 
\item It is an  $\ell$ cusp otherwise. \end{edesc} These generalize to all $r$. For example: 
\begin{defn}[g-$\ell'$ type] \label{gl'} For $\bg$ in a braid orbit $O$ on $\ni(G,\bfC)$, we say $O$ is g-$\ell'$ if $\bg=(\row g r)$  has a partition  with elements 
$$\text{$P=[g_{u},g_{u\np 1},\dots,g_{u\np u'}]$ (subscripts taken $\mod r$)}$$ and  $H_P=\lrang{g_{u},g_{u\np 1},\dots, g_{u\np u'}}$ is an $\ell'$ group for each partition element.  \end{defn} 

The following is in \cite[Prin.~3.6, Frattini Princ. 2]{Fr06}. 

\begin{thm}  \label{gl'thm} There is a full \MT\ over the Hurwitz space component corresponding to $O$ if $O$ contains a g-$\ell'$ representative (no need to check central Frattini extensions as in Thm.~\ref{obstabMT}). \end{thm}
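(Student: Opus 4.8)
The plan is to reduce the statement to the obstruction criterion Thm.~\ref{obstabMT} by showing that a g-$\ell'$ representative forces \emph{triviality of the lift invariant} at every level of the tower, and moreover that this triviality is preserved under the passage from level $k$ to level $k\np1$. The key structural fact to exploit is that a g-$\ell'$ partition of $\bg=(\row g r)$ groups the branch cycles into blocks $P=[g_u,\dots,g_{u\np u'}]$, each generating an $\ell'$ subgroup $H_P\le G$. First I would lift $\bg$ canonically to $\tilde\bg$ in $\ni({}_\ell G^\natural,\bfC)$ (using Schur--Zassenhaus to lift each class to the same order, as in \eqref{congcond}); the point is that each block, since $H_P$ is an $\ell'$ group, lifts \emph{isomorphically} into the central $\ell$-Frattini extension ${}_\ell G^\natural\to G$. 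Concretely: restricting ${}_\ell\alpha: {}_\ell G^\natural\to G$ over the $\ell'$ subgroup $H_P$ gives a central extension with $\ell$-group kernel, which splits (again Schur--Zassenhaus), so there is a canonical section $s_P: H_P\to{}_\ell G^\natural$; applying it to the entries of $P$ and multiplying gives a lift of $\prod_{g\in P}g$ that lies in $s_P(H_P)$, hence contributes nothing to the kernel when we multiply all blocks together.

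The second step is to assemble these per-block lifts and check the global product-one condition. Since $\prod_{i=1}^r g_i=1$ in $G$ and each block product $\pi_P\eqdef\prod_{g\in P}g$ lifts into the corresponding split copy $s_P(H_P)$, the total product $\prod_P s_P(\pi_P)$ lies in $\ker({}_\ell\alpha)$ and equals the lift invariant $s_{{}_\ell G^\natural/G}(\bg)$. The claim is that this element is trivial. Here I would invoke the argument pattern of \cite[Frattini Princ.~2]{Fr06}: the blocks, being $\ell'$, see none of the $\ell$-torsion of the central extension, and the compatibility of the canonical sections across the blocks (they all come from the unique splitting of an $\ell'$ restriction) forces the product to land in the image of a single splitting, hence to be $1$. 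This is exactly the content needed to say $\bg$ is in the image of $\ni({}_\ell G^\natural,\bfC)\to\ni(G,\bfC)$, which by Thm.~\ref{obstabMT} gives a \MT\ through the braid orbit $O$ at level $0$.

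The third step is to propagate this to all higher levels, i.e.\ to produce the full projective system $\bG=\{{}_k\bg\in\ni(\tfG\ell k{}_\ab,\bfC)\}_{k=0}^\infty$ compatible under $\tfG\ell{k\np1}{}_\ab\to\tfG\ell k{}_\ab$. The mechanism is that the g-$\ell'$ property is inherited: the blocks $H_P$ are $\ell'$ groups, so by the Frattini property they lift isomorphically through every characteristic $\ell$-Frattini quotient, and the same Schur--Zassenhaus splitting argument applies verbatim with $\tfG\ell{k\np1}$ (and the relevant central extension of $\tfG\ell k$ by the maximal trivial-action quotient of $\ker(\tfG\ell{k\np1}\to\tfG\ell k)$) in place of ${}_\ell G^\natural$. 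Thus at each stage the obstruction vanishes for the same reason, and one chooses ${}_{k\np1}\bg$ mapping to ${}_k\bg$; by the Tychonoff argument already used (the Lemma after Def.~\ref{MTdef}) a coherent projective system exists through $O$. I expect the main obstacle to be the second step --- making precise that the per-block splittings are mutually compatible so that the product of block-contributions is genuinely $1$ and not merely $\ell'$-order; this is where one must be careful that the central kernel is an $\ell$-group and that the ``no need to check central Frattini extensions'' claim is legitimate, i.e.\ that g-$\ell'$ is strictly stronger than ``lift invariant trivial for the maximal central extension.'' Everything else is a routine application of Schur--Zassenhaus and the Frattini lifting property together with Thm.~\ref{obstabMT}.
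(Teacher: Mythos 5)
Your plan to reduce to Thm.~\ref{obstabMT} is the right framework, and you correctly identify the local picture: each block $P$, being $\ell'$, has a \emph{unique} section $s_P\colon H_P\to{}_\ell G^\natural$ whose values are the canonical same-order lifts, so the lift invariant becomes $\prod_P s_P(\pi_P)$ with $\pi_P=\prod_{g\in P}g$. Note also that your step~3 is redundant once step~2 succeeds: Thm.~\ref{obstabMT} is an if-and-only-if for the \emph{full} tower, so triviality of the level-$0$ lift invariant already produces the projective system and nothing needs to be re-proved level by level.

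The gap is where you suspected it, and it is substantive rather than a matter of careful wording. The product $\prod_P s_P(\pi_P)$ does lie in $\ker({}_\ell\alpha)$ because $\prod_P\pi_P=1$ in $G$, but the $s_P$'s are sections over \emph{different} $\ell'$ subgroups, and nothing you have written forces a product taken across several of them to collapse into any single section's image. It does with \emph{two} blocks: then $\pi_2=\pi_1^{-1}\in\lrang{\pi_1}\le H_{P_1}$, so both same-order lifts already sit inside $s_{P_1}(H_{P_1})$ and cancel. It also holds trivially for Harbater--Mumford representatives (Def.~\ref{HMrep}), where every $\pi_P=1$. But for a partition into three or more blocks with nontrivial block products the argument does not close, and the issue is real: if the all-singleton partition were admissible (and the wording of Def.~\ref{gl'} does not visibly exclude $u'=0$), then \emph{every} $\bg\in\ni(G,\bfC)$ with $\bfC$ $\ell'$ would be g-$\ell'$, yet the braid orbit in $\ni(A_4,\bfC_{\pm3^2})$ with spin lift invariant $-1$ (Prop.~\ref{A43-2}, \S\ref{Ancomps}) is obstructed for the full $2$-Frattini lattice. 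So some constraint on the partition --- two blocks, all $\pi_P$ trivial, or a chaining condition guaranteeing the partial products $\pi_1,\pi_1\pi_2,\dots$ remain inside a single $\ell'$ subgroup --- is genuinely load-bearing, and your proof must invoke it explicitly. As written, ``the compatibility of the canonical sections across the blocks \dots forces the product to land in the image of a single splitting'' asserts the needed conclusion rather than proving it; the alternative route is the geometric construction behind \cite[Frattini Princ.~2]{Fr06} (mock covers / Harbater patching at the g-$\ell'$ cusp), which \emph{builds} a compatible lift at the cusp degeneration rather than computing a central obstruction.
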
  The approach to more precise results has been to consider a Harbater patching converse: Identify the type of a g-$\ell'$ cusp that supports a Witt-vector realization of $\fG \ell$. 

Typically we label a braid orbit  $O$ in  $\ni(G,\bfC)$ by the type of cusp it contains. In actual examples, say Thm.~\ref{level0MT}, even these generic names get refinements where we call particular o-$\ell'$ cusps {\sl double identity}. 

Both approaches to the Main \OIT\ Conj.~\ref{mainconj} give its truth for $r=4$, Thm.~\ref{truer=4}. Both use Falting's Theorem. First, \cite{CaTa09}, which is more general and far less explicit. As those authors say, the Main Conjecture \ref{MTconj}, which Tamagawa saw at my lectures  \cite{Fr02b}, motivated this.

Let $\chi: G_K \to  \bZ_p^*$ be a character, and $A[p^\infty](\chi)$ be the $p$-torsion on an abelian variety $A$ on which the action is through $\chi$-multiplication. Assume $\chi$ does not appear as a subrepresentation on any Tate module of any abelian variety (see \cite{Se68}, \cite{DFr94} and \cite{BFr02}). Then, for $A$ varying in a 1-dimensional family over a curve $S$ defined over $K$, there is a uniform bound on $|A_s[p^\infty](\chi)|$ for $s \in S(K)$. In particular, this gives the Main \RIGP\ conj.~\ref{mainconj} when $r=4$. 

It is the growth of the genus of these \MT\ levels, as in \cite[Prop. 5.15]{Fr06}, that assures Main \RIGP\ Conj.~\ref{mainconj} if a level contains at least 2  $\ell$-cusps. This gives the  Main \RIGP\ conj. \ref{mainconj} as equivalent to \eql{MainConj}{MainConja}. Also, we don't know at what level the surmised finite number of rational points will become {\sl no rational points}. If, on a \MT, there are no $\ell$-cusps at level 0, then we need  \cite[Princ.~4.23]{We05} to find a level  with an $\ell$-cusp lying  over an o-$\ell'$  cusp. \cite{Fr20b} does a deeper analysis relating these two distinct proofs -- noting \sh-incidence matrix aspects -- than we have space for here.

\Sdisplay{\cite{CaD08}}   Abelian groups and abelian varieties are obviously related going back to Abel. Here is a positive \RIGP\ result applied to $A\xs H$ with $A$ abelian, and $H$ any finite group acting on $A$. \begin{prop} \label{splitembedding} Regularly realizing $H$, acting on a finite abelian $A$, over an Hilbertian field $K$, extends to regularly  realizing $A\xs H$.  \end{prop}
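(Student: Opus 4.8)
\textbf{Proof proposal for Proposition~\ref{splitembedding}.}

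The plan is to realize $A \xs H$ as a regular Galois extension of $K(z)$ by building a tower: first the given regular realization of $H$, then an abelian extension on top of it whose Galois group over $K(z)$ produces the prescribed semidirect product. Concretely, suppose $L/K(z)$ is a regular Galois extension with $\Gal(L/K(z)) \cong H$, corresponding to a cover $\hat\psi\colon \hat W \to \prP^1_z$ defined over $K$ with geometric monodromy $H$. Because the realization is regular, $\hat W$ is a geometrically irreducible curve over $K$. First I would pass to the function field $F = K(\hat W) = L$ and seek an $A$-Galois extension $M/L$, regular over $K$, on which the natural (outer) $H$-action induced by $\Gal(L/K(z))$ lifts to an honest action realizing the twisting in $A \xs H$. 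The key point is that $M/K(z)$ is then Galois with group an extension of $H$ by $A$; one must arrange this extension to be \emph{split} in the way dictated by the given action of $H$ on $A$ — which is exactly what the semidirect product $A \xs H$ names.

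The main steps, in order: (1) Fix a realization $\hat\psi\colon\hat W\to\prP^1_z$ of $H$ over $K$, with $\hat W$ geometrically connected. (2) Choose a sufficiently general effective $K$-rational divisor $D$ on $\hat W$, stable under $\Gal(L/K(z))$ (available because $K$ is infinite — Hilbertian fields are infinite — so rational points/divisors on $\hat W$ abound after a base change, or one uses a Galois-stable fiber of $\hat\psi$). (3) Use $D$ to construct an explicit $A$-torsor over $\hat W$: for $A = (\bZ/n)^t$, take the cover obtained by adjoining $n$-th roots of functions with divisors supported on the $H$-orbit of $D$, chosen so that the $H$-action permutes these functions compatibly with the prescribed $H$-module structure on $A$; for general finite abelian $A$, decompose into cyclic pieces. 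This yields $M/L$ regular, with $\Gal(M/L)\cong A$ as $H$-modules. (4) Check that $M/K(z)$ is Galois — this needs the branch locus of $M/\hat W$ to be $\Gal(L/K(z))$-stable, which step (3) guarantees — and identify $\Gal(M/K(z))$. (5) Verify the resulting group extension $1 \to A \to \Gal(M/K(z)) \to H \to 1$ splits as $A\xs H$: arrange a section by picking the auxiliary functions to have an $H$-equivariant structure, equivalently by making the relevant class in $H^2(H,A)$ vanish — one has freedom to twist by coboundaries since $D$ can be varied, and with $A$ realized via Kummer theory the extension class is controlled by a computable cocycle that can be killed. (6) Finally invoke Hilbert irreducibility over $K$ (valid since $K$ is Hilbertian) to specialize $z\mapsto z_0 \in K$ so that $M/K(z)$ stays irreducible with full group $A\xs H$, though for the \emph{regular} realization statement steps (1)–(5) already suffice and (6) is only needed if one wants a field realization over $K$ itself.

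The hard part will be step (5): ensuring the extension is \emph{split} rather than merely an extension of $H$ by $A$ with the right $H$-action. The subtlety is that an arbitrary $A$-torsor over $\hat W$ compatible with the $H$-action gives a class in $H^2(H,A)$ that need not vanish; one must use the geometric freedom in choosing the divisor $D$ (and the supporting rational functions) to hit the trivial class. The standard device is to take $D$ large and generic enough that the relevant Kummer classes, as elements of a suitable cohomology group computed from the function field, decompose so that the obstruction cocycle becomes a coboundary — intuitively, a single $H$-orbit of independent branch points suffices to absorb the obstruction because the induced module $\mathrm{Ind}_1^H A$ has trivial $H^2$. Once the obstruction is seen to live in a cohomology of an induced (hence cohomologically trivial) module, splitting follows, and the rest is bookkeeping. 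A secondary technical point is maintaining regularity (geometric irreducibility) of $M$ over $K$ throughout the construction, which follows from choosing the Kummer functions to be non-$n$-th-powers over $\bar K$ and from $\hat W$ being geometrically irreducible, but should be checked carefully.
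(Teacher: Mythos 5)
Your proposal is in the right ballpark but takes a genuinely different, and noticeably more laborious, route than the paper. The paper's proof is a two-step reduction via the wreath product, citing \cite[Prop.~16.3.5]{FrJ86}$_2$ and \cite[\S4.2]{Se92}: (i) regularly realize $A$ itself over $K$; (ii) combine $|H|$ independent copies of that realization with the given $H$-realization to regularly realize the wreath product $A\wr H = A^{|H|}\rtimes H$, then observe that $A\rtimes H$ (for any prescribed $H$-action on $A$) is a quotient of $A\wr H$ via the $H$-equivariant surjection $A^{|H|}=\mathrm{Ind}_1^H A\to A$, $(a_h)_h\mapsto\sum_h h\cdot a_h$, whose kernel is normal in $A\wr H$. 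Because $A\wr H$ already comes equipped with a splitting of $H$, the quotient inherits one, so there is simply no $H^2(H,A)$ obstruction to address — the splitting is structural, not cohomological. What you do instead is build an $A$-torsor directly over $\hat W$ and then worry (correctly flagged as the crux in your step (5)) that the resulting extension of $H$ by $A$ might not split. The fix you propose — arrange matters so the obstruction lives in the cohomology of the induced module $\mathrm{Ind}_1^H A$, which is cohomologically trivial — is exactly the same mathematics underlying the wreath product trick, just unwrapped by hand. So your argument can be made to work, but it shoulders extra bookkeeping that the paper's approach avoids: you must check that your Kummer construction in step (3) yields the prescribed $H$-module $A$ rather than some twist of it, that the various branch loci can be kept disjoint and $K$-rational while preserving geometric irreducibility, and that your "decompose $A$ into cyclic pieces" doesn't break the $H$-equivariance. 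The wreath product route hands you all of this from one citation, with the splitting manifest. (Minor caveat: you suggest the Hilbertian hypothesis is irrelevant to the regular realization. The paper's references do invoke it — e.g.\ for moving branch loci and for the first step of realizing $A$ — so it is being spent, even if "infinite" might suffice for part of the geometric construction.)
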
  This comes from the two steps in \eqref{lconjs}.  
\begin{edesc} \label{lconjs} \item \label{lconjsa} There is a regular realization of $A$ over $K$.  
\item \label{lconjsb} Combine regular realizations of $H$ and $A^{|H|}$ to give one of $A\wr H$, and so of $A\xs H$.\end{edesc} 
An explicit (resp.~abstract) proof of these pieces is in  \cite[Prop.~16.3.5]{FrJ86}$_2$ (resp.~\cite[\S4.2]{Se92}). For \eql{lconjs}{lconjsa} it suffices to consider the case $A_u=\bZ/\ell^u$, and from the \BCL\ -- for $\ell$ odd -- the minimal number of possible branch points for such a realizing cover is $\ell^{u\nm1}(\ell-1)$. Conclude Lem.~\ref{bpstoinfty}, using these regular realizations.  

\begin{lem} \label{bpstoinfty} Consider any sequence of regular realizations of $\{A_u\}_{u=1}^\infty$ over $\bQ$, with corresponding classes $\bfC_u$, and branch point numbers $r_u$. Then, $r_u\mapsto \infty$. More generally, suppose $H$ acts on $\tilde A'=\bZ_\ell^m$. 

Then, this gives a projective sequence $\{A_u'\xs H\}_{u=0}^\infty$ with regular realizations and limit $\tilde A'\xs H$. Yet, branch point number  goes to $\infty$ with $u$.  \end{lem}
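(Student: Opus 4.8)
\textbf{Proof proposal for Lemma~\ref{bpstoinfty}.}

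The plan is to reduce the statement to the Branch Cycle Lemma (Lem.~\ref{branchCycle}, Thm.~\ref{bclthm}) together with the elementary count of minimal branch point numbers for cyclic groups. First I would treat the base case $\tilde A' = \bZ_\ell$, $H$ trivial, i.e. the sequence $A_u = \bZ/\ell^u$. A regular realization of $A_u = \bZ/\ell^u$ over $\bQ$ in a Nielsen class $\ni(\bZ/\ell^u,\bfC_u)$ requires generating conjugacy classes $\bfC_u$ of $\bZ/\ell^u$ with product one. Since $\bZ/\ell^u$ is abelian, conjugacy classes are elements; generation forces at least one entry of exact order $\ell^u$, and the product-one condition must hold. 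Now invoke the \BCL: if the realization is over $\bQ$ then $\bfC_u$ must be a rational union, so $\bfC_u$ is stable under $m\mapsto m\bfC_u$ for all $m\in(\bZ/\ell^u)^*$. An element of order $\ell^u$ therefore drags its whole $(\bZ/\ell^u)^*$-orbit into $\bfC_u$, and that orbit has size $\ell^{u-1}(\ell-1)=\phi(\ell^u)$. Hence $r_u = r_{\bfC_u} \ge \phi(\ell^u) = \ell^{u-1}(\ell-1)\to\infty$. This is exactly the count cited just before the lemma, and it gives the first assertion.

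Next I would handle the general split case $A_u'\xs H$ with $H$ acting on $\tilde A' = \bZ_\ell^m$ and $A_u' = (\bZ/\ell^u)^m$ (the characteristic quotients). Using Prop.~\ref{splitembedding} and its proof via \eqref{lconjs}, a regular realization of $A_u'\xs H$ is built from one of $H$ and one of $(A_u')^{|H|}$, assembled into $A_u'\wr H$; the projective system $\{A_u'\xs H\}$ is compatible with the reduction maps $\bZ/\ell^{u+1}\to\bZ/\ell^u$. The key point is that $A_u'\xs H$ surjects onto $A_u' = (\bZ/\ell^u)^m$ (quotient by $H$ acting, then abelianize appropriately), and more to the point any regular realization of $A_u'\xs H$ over $\bQ$ restricts/pushes forward to data bounding the branch locus from below: the ramification over each branch point in the $A_u'\xs H$-cover dominates that in the induced abelian subcover, and a generator of an order-$\ell^u$ cyclic piece of $A_u'$ must appear. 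Applying the \BCL\ in the same way — rational union of $\bfC_u$ for the cover of $A_u'\xs H$ forces the Galois orbit of any order-$\ell^u$ element to be present — yields $r_u \ge \ell^{u-1}(\ell-1) \to \infty$. Taking the projective limit over $u$ gives $\tilde A'\xs H$ with its regular realizations, and the branch point numbers diverge.

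I would organize the write-up as: (i) state the minimal branch-point count for $\bZ/\ell^u$ via the \BCL, citing the paragraph after \eqref{lconjs}; (ii) observe that an order-$\ell^u$ element in a generating, product-one, rationally-closed class collection forces $\phi(\ell^u)$ branch points; (iii) extend to $A_u' = (\bZ/\ell^u)^m$ verbatim since the argument only needs a single order-$\ell^u$ cyclic factor; (iv) pass to $A_u'\xs H$ by noting the induced subcover controls a lower bound on $r_u$, using Prop.~\ref{splitembedding}; (v) conclude $r_u\to\infty$ and form the limit.

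The main obstacle I anticipate is step (iv): making rigorous the claim that a regular realization of $A_u'\xs H$ over $\bQ$ genuinely forces at least $\sim\ell^{u-1}(\ell-1)$ branch points, rather than allowing the $H$-action to "spread out" the ramification of the abelian part among conjugate classes and thereby cheat the count. One has to be careful that conjugation by $H$ does not let a single branch point carry an element whose abelianized image has order $\ell^u$ while the \BCL-forced rational orbit lives entirely over that one point — this is fine, since the rational orbit has size $\phi(\ell^u)$ regardless — but one must verify the orbit elements are genuinely distinct branch-cycle entries and cannot collapse. The clean way around this is to push the cover forward along $A_u'\xs H \twoheadrightarrow (\bZ/\ell^u)^m \twoheadrightarrow \bZ/\ell^u$ (choosing a surjection onto a cyclic factor that survives the $H$-coinvariants, which exists after possibly enlarging $u$ by a bounded amount), obtaining an honest $\bZ/\ell^u$-cover over $\bQ$ whose branch locus is contained in that of the original; then step (i) applies directly. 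Handling the "bounded amount" bookkeeping for which cyclic quotients survive — equivalently, understanding $H$-coinvariants of $\bZ_\ell^m$ — is the one genuinely technical ingredient, but it does not affect the divergence $r_u\to\infty$.
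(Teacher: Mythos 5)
Your step (i)–(ii) for the cyclic case $A_u=\bZ/\ell^u$ is exactly the paper's route (the $\BCL$ forces the full $(\bZ/\ell^u)^*$-orbit of any class of an order-$\ell^u$ element into $\bfC_u$, so $r_u\ge\phi(\ell^u)$), and step (iii) for $(\bZ/\ell^u)^m$ is fine since that group honestly surjects onto $\bZ/\ell^u$. The gap is in step (iv), and it is fatal, not just technical. You propose to push a regular realization of $A_u'\xs H$ forward along $A_u'\xs H\twoheadrightarrow (\bZ/\ell^u)^m\twoheadrightarrow\bZ/\ell^u$, but $A_u'$ is the normal subgroup, not a quotient; the only quotients that factor through $A_u'$ are the $H$-coinvariants $(A_u')_H$. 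Those can be \emph{trivial for every} $u$, with no enlargement of $u$ helping: take $H=\bZ/2$ acting by $-1$ on $\bZ/\ell^u$ ($\ell$ odd), i.e.\ $A_u'\xs H=D_{\ell^u}$. Then $(\bZ/\ell^u)_{\bZ/2}=\bZ/\ell^u/2\bZ/\ell^u=0$, the abelianization of $D_{\ell^u}$ is $\bZ/2$, and nothing in the $\BCL$ compels an element of order $\ell^u$ into $\bfC_u$: one can (and does, cf.\ \eqref{Dl}) generate $D_{\ell^u}$ with involutions alone. So your claim that ``a generator of an order-$\ell^u$ cyclic piece of $A_u'$ must appear'' is false in precisely the first interesting case, and the conclusion $r_u\ge\phi(\ell^u)$ does not follow for arbitrary regular realizations of $A_u'\xs H$.

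There is also a misreading of what the lemma asserts. The first sentence is the uniform statement about \emph{any} realizations, but it is only about the cyclic $A_u$; the ``more generally'' clause produces, via Prop.~\ref{splitembedding}/\eqref{lconjs}, a particular projective system of regular realizations of $A_u'\xs H$, and asserts that \emph{those} have branch point numbers going to infinity. The reason is that the \eqref{lconjs} construction is literally built from a regular realization of $A_u'$ (over a fixed intermediate function field, base-changed from $H$'s realization), and that building block's branch points --- which already number at least roughly $\phi(\ell^u)$ by the cyclic argument --- survive into the composite cover. No statement is made that \emph{every} regular realization of $A_u'\xs H$ has diverging branch point number; indeed, for dihedral groups that stronger assertion is essentially the Main \RIGP\ Conj.~\ref{mainconj} (equivalently, the hyperelliptic Torsion Conjecture~\ref{Tormain}), which is precisely what this part of the paper is trying to motivate, not what it is proving. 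A correct write-up of step (iv) should therefore not try to bound branch points of arbitrary realizations of $A_u'\xs H$; it should track branch points through the explicit wreath/semidirect construction and observe that the $A_u'$-layer forces divergence.
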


\cite{CaD08} gives a new constraint for the \RIGP. Suppose a finite group $G$ has a regular realization over $\bQ$, and $P_\ell$ is an $\ell$-Sylow with $m=[G:P_\ell]$. Then the abelianization of $P_\ell$,  has order $\ell^u$ bounded by an expression involving
\begin{equation} \label{rigplsylow} \begin{array}{c} \text{$m$, the branch point number $r$  and the least} \\ \text{good reduction prime $\nu$ of the cover.}\end{array}\end{equation}

To wit: If $\ell^u$ is large compared to $r$ and $m$, branch points of the realizing cover must coalesce modulo some prime $\nu$; a $\nu$-adic measure of proximity to a cusp on the corresponding Hurwitz space.

Conj.~\ref{rmconj} is a stronger version of the Main \RIGP\ conj.~\ref{mainconj}. The parameters $m$ and $r$ are fixed as a function of the level $k$ for defining a \MT. So the Torsion Conjecture implies it, too. 

\begin{guess} \label{rmconj} Some expression in $r$ and $m$, independent of $\nu$, bounds $\ell^u$. This follows from the Torsion Conjecture on abelian varieties. \end{guess} 

Maybe we can turn this around. Instead of waiting for a proof of the Torsion Conjecture, use  $r=4$, as in \sdisplay{\cite{Fr06} and \cite{CaTa09}}  via \S\ref{prodladic}. That is, prove the Torsion Conjecture according to statement \eql{cusplift}{cuspliftc}: using cusps on a Hurwitz space to form a height on abelian varieties labeled by these cusps to test Conj.~\ref{rmconj}.

We remind of cases from \MT s. Choose from \S\ref{lfratlatqt} an $\ell$-Frattini lattice quotient $L^\star \to G^\star \to G=G_0$. For the maximal quotient $\fG \ell {}_\ab\to G$, the rank of $L^\star$  is necessarily $\ge 2$ unless $G_0$ is the $\ell$-supersolvable generalization of the dihedral case \cite{GriessFrat}.   

1-dimensional $\ell$-Frattini lattice quotients do include the superelliptic spaces of \cite{MaSh19}. These, though don't fit into using {\sl all\/} primes $\ell$ as a natural generalization to Serre's \OIT. Instead,  Thm.~\ref{level0MT} considers where the level 0 groups are $(\bZ/\ell)^2\xs \bZ/3={}_\ell G_0$ and the lattice rank is 2. $$\text{That is, take $L^\star \xs G_0$ instead of the $\ell$-Frattini cover.}$$
For $\ell\equiv 1 \mod 3$, superelliptic covers for $\bZ/3$ do appear, but not otherwise.  Lem.~\ref{bpstoinfty} now gives \RIGP\ realizations of all of the $(\bZ/\ell^{k\np1})^2\xs \bZ/3={}_\ell G_k$ over some fixed number field $K$. Yet, only by increasing the branch point number.  Again: The question of finding $\ell'$ \RIGP\ realizations for them and the hyperelliptic jacobian interpretation of this as an analog of Prop.~\ref{dihcase}. 

\Sdisplay{\cite{LO08}  and  \cite{Fr09}}  This gave an especially good place to see the sh-incidence matrix (discussion of \sdisplay{\cite{BFr02}}) in action on a variety of cusps with extra structure inherited from conjugacy classes in $A_n$. 

\cite{LO08} considered $\ni(G,\bfC)^\abs$ with these two conditions: The covers are genus 0; and  $\bfC$ is pure cycle:elements in the conjugacy classes have only one length $\ge$ 2 disjoint cycle.  They showed the Hurwitz space $\sH(G,\bfC)^\abs$ has one connected component. 

This overlaps with the 3-cycle result of \cite[Thm.~1.3]{Fr10}, the case of four 3-cycles in $A_5$ (\S\ref{l=2level0MT}).  \cite[\S5]{LO08} gives the impression that all these Hurwitz spaces are similar, without significant distinguishing properties. \cite[\S9]{Fr09}, however, dispels that. 

The stronger results come by considering the inner (rather than absolute) Hurwitz spaces. \cite[Prop.~5.15]{Fr09} uses the sh-incidence matrix to display cusps, elliptic fixed points, and  genuses of the inner Hurwitz spaces in two infinite lists of \cite{LO08} examples. In one there are two level 0 components (conjugate over a quadratic extension of $\bQ$). For the other just one. 

Further, using \eqref{cusptype}, the nature of the 2-cusps in the \MT s over them differ greatly. None have 2-cusps at level 0. For those with level 0 connected, the tree of cusps, starting at level 1, contains a subtree isomorphic to the cusp tree on a modular curve tower.\footnote{We call this a  {\sl spire}.} 

For the other list, there are 2-cusps, though not like those of modular curves. 
\cite{Fr20b} includes this case as one of its $\sh$-incidence matrix examples; another example where classical group theory -- here from  the Clifford Algebra -- shows in the naming of the cusps in the $\sh$-incidence display. 
 
\Sdisplay{\cite{FrH20}} The culminating topic of \cite{Fr20}  is the system of \MT s based on the Nielsen class $\ni_{\ell,3}\eqdef \ni((\bZ/\ell)^2\xs \bZ/3, \bfC_{+3^2-3^2})$ (as in Ex.~\ref{gen0}).  Notice our choice of conjugacy classes (at first in $\bZ/3$, but extended to $\ni_{\ell,3}$) is a rational union, as given by Thm.~\ref{bclthm}. Using the \BCL\ (Thm.~\ref{bcl}) the moduli definition field of the Hurwitz spaces is $\bQ$. 

This is parallel to Serre's dihedral group example. We have a series of groups $\{G_\ell\}_{\ell \text{ prime}}$ like the series of dihedral groups $\{D_\ell\}_{\ell \text{ prime}}$. 
This is an example of  \S \ref{normlgp} with the $\ell$-Sylow of $G_\ell$  normal. For  completeness: 
$$\begin{array}{c} \text{For a given $\ell$, the group sequences appearing in the canonical } \\  \text{$\ell$-Frattini lattice quotient here are $\{G_k\eqdef (\bZ/\ell^{k\np1})^2\xs \bZ/3\}_{k=0}^\infty$. }\end{array} $$ 

If we follow the general statement for a \MT\ that a prime $\ell$ is considered only if $G$ is $\ell$-perfect, then our condition would be $(\ell,3)=1$, and for each $\ni_{\ell,3}$, only $\ell$ would be involved in the \MT. 
\begin{edesc} \label{3vs2}  \item \label{3vs2a} Serre includes $\ell$;  how will we include $\ell=3$?   
\item \label{3vs2b} Then, for each $\ell$, why  leave out the $\ell= 3$ in applying to the particular case of $\ni_{\ell,3}$? \end{edesc} 

The trick is this: Since in Serre's case (resp.~our case), the $\bZ/2$ (resp.~$\bZ/3$) is a splitting coming from a semi-direct product $\bZ\xs \bZ/2$ (resp.~$(\bZ/2)^2\xs \bZ/3$), we are free to ignore the copy of $\bZ/2$ (resp.~$\bZ/3$) even if $\ell=2$ (resp.~$3$). For the same reason we ignore that prime if $\ell$ is not 2 (resp.~3).\footnote{But we don't include $\ell=3$ here.}  

Below we quote only the level 0 braid orbit description, but for all $\ell$. There are several different \MT s in our case. This doesn't occur in dihedral group cases. For one there is no central $\ell$-Frattini cover of $D_\ell$ (for $\ell$ odd). So, no lift invariant occurs in the Nielsen class that starts Serre's \OIT. 

There is, though as in Thm.~\ref{Anlift} in the alternating group case, though we didn't set that up with a lattice action as in this case. 

Consider the matrix $$ \label{InvHell} 
M(x,y,z)\eqdef \begin{pmatrix} 1 &x &z \\ 0& 1 &y \\ 0& 0& 1\end{pmatrix} , \text{ with inverse } 
 \begin{pmatrix} 1 &-x &xy\nm z \\ 0& 1 &-y \\ 0& 0& 1\end{pmatrix}. $$
With $R$ a commutative, the $3\times 3$  {\sl  Heisenberg group\/} with entries in $R$ is 
$$\bH_{R}=\{M(x,y,z)\}_{x,y,z\in
R}.$$ 
\cite[\S4.2.1]{FrH20}  shows the $\bZ/3$ action extends to the {\sl small Heisenberg group\/}
providing this Nielsen class with a non-trivial lift invariant: comments on \eql{A44}{A44c}. Even at level 0, that separates the braid orbits with theri respective  lift invariants in  0 from versus in $(\bZ/\ell)^*$.  

The lift invariant values grow with the tower level, because the Heisenberg group kernel grows. This is a case of Ex.~\ref{lcentext}. 

\cite[Prop.~4.18]{FrH20} gives a formula for the lift invariant in this case when $r=4$, the first such formula going beyond the Nielsen classes for $A_n$ and $\bfC$ odd order classes (as in the discussion \sdisplay{\cite{Se90a}}). 

Thm.~\ref{level0MT} is part of  \cite[Thm.~5.2]{FrH20}: level 0 components of the Hurwitz space.  For $\ell>3$ prime,  define $$K_\ell= \left\{\begin{array}{ll}{{\ell\np1}\over 6}, \text{ for }\ell \equiv -1\mod 3 \\   {{\ell\nm1}\over 6}, \text{ for }\ell \equiv +1\mod 3. \end{array} \right.$$
Prop.~\ref{A43-2} did $\ell=2$, with subtly different group theory. 

{\sl Double identity Nielsen class representatives\/}  have the form $(g,g,g_3,g_4)$, $$\text{o-$\ell'$ components as in  \eqref{cusptype}; nothing like \HM\ reps (Def.~\ref{HMrep}).}$$ 

\begin{thm}[Level 0 Main Result] \label{level0MT} For $ \ell> 3$ prime and $k = 0$  there are $K_\ell$  braid orbits with trivial (0) lift invariant. All are \HM\ braid orbits. 

The remaining orbits are distinguished by having nontrivial lift invariant (in $(\bZ/\ell)^*$; each value is achieved). Each contains a {\sl double identity\/}  cusp. \end{thm}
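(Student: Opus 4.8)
### Proof proposal for Theorem~\ref{level0MT}

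\textbf{Setup and strategy.}
The plan is to realize the Nielsen class $\ni((\bZ/\ell)^2\xs \bZ/3,\bfC_{+3^2-3^2})^{\inn,\rd}$ concretely, compute the braid action of $H_4/\sQ''$ on it, and then sort the resulting orbits by the lift invariant $s_{H/G}$ of Def.~\ref{Anliftdef} for the \emph{small Heisenberg} central extension $H$ of $G=(\bZ/\ell)^2\xs \bZ/3$ constructed in \cite[\S4.2.1]{FrH20}. The key point is that the module $(\bZ/\ell)^2$ is the standard nontrivial $\bZ/\ell[\bZ/3]$-module; a generator $\sigma$ of $\bZ/3$ acts with eigenvalues the primitive cube roots of unity $\omega,\omega^2\in \bF_\ell$ (this needs $\ell\equiv 1\bmod 3$) or acts irreducibly on $\bF_\ell^2\cong \bF_{\ell^2}$ (when $\ell\equiv -1\bmod 3$), as in the proof of Prop.~\ref{excGL2}. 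First I would parametrize a tuple $\bg=(g_1,g_2,g_3,g_4)$ in the Nielsen class as $g_i=(v_i,\sigma^{\epsilon_i})$ with $v_i\in (\bZ/\ell)^2$, $\epsilon_i\in \{\pm1\}$ recording whether $g_i$ lies in $\C_{+3}$ or $\C_{-3}$ (two of each by the $\bfC_{+3^2-3^2}$ condition), and impose the product-one and generation constraints \eqref{bcycs}. Modding by inner equivalence ($G$-conjugacy) and by $\sQ''$ (reduced equivalence) then cuts the parameter space down to an explicit finite set, whose cardinality I would compute as a function of $\ell$.

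\textbf{Counting orbits with trivial lift invariant.}
On the subset of $\bg$ with $s_{H/G}(\bg)=0$, I would show these are exactly the \HM\ braid orbits (Def.~\ref{HMrep}); by Thm.~\ref{gl'thm}/\cite[Prin.~3.6]{Fr06} such $\bg$ lift to \MT s automatically, and by \cite[Thm.~3.21]{Fr95} $G_\bQ$ permutes them. The combinatorial core is to enumerate $\Cu_4$-orbits (equivalently, reduced braid orbits) of \HM\ representatives $(g_1,g_1^{-1},g_2,g_2^{-1})$. Since $g_1^{-1}=(-\sigma^{-1}v_1,\sigma^{-1})$ etc., the product-one condition becomes a linear relation among $v_1,v_2$ over $\bF_\ell$ twisted by powers of $\sigma$, and generation forces $v_1,v_2$ to span $\bF_\ell^2$. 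Counting the solutions modulo conjugation and the middle-twist/shift action, I expect the answer to come out to $(\ell-1)/6$ when $\ell\equiv 1\bmod 3$ and $(\ell+1)/6$ when $\ell\equiv -1\bmod 3$ — i.e.\ exactly $K_\ell$ — the division by $6$ reflecting the $\bZ/3$ eigenvalue symmetry together with the reduced ($\sQ''$) identifications. This is where I would lean on the explicit lift-invariant formula \cite[Prop.~4.18]{FrH20} to certify which orbits have $s_{H/G}=0$ rather than re-deriving it; the genus-0/$\sh$-incidence techniques of Ex.~\ref{exA43-2} give an independent consistency check on small $\ell$.

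\textbf{The remaining orbits and double-identity cusps.}
For $\bg$ with $s_{H/G}(\bg)\ne 0$, the lift invariant takes values in $\ker(H\to G)\cong \bZ/\ell$, and I would show every nonzero value in $(\bZ/\ell)^*$ is attained: by Thm.~\ref{obstabMT}/\cite[Obst.~Lem.~3.2]{FrK97} the surjection of Nielsen classes from the central extension is onto braid orbits except on the lift-invariant-$0$ part, so nonvanishing orbits are unobstructed and the attained set of values is a $\gamma$-stable, hence full, subset of $(\bZ/\ell)^*$ (it is nonempty since the class is nonempty and not all orbits are \HM). Then I would exhibit, in each such orbit, a \emph{double identity} representative $(g,g,g_3,g_4)$: impose $g_1=g_2=g$, solve product-one for $(g_3,g_4)$, check generation, and verify via \eqref{cusptype} (with $H_{1,4}=\lrang{g_1,g_4}$, $H_{2,4}=\lrang{g_2,g_3}$) that the shift gives an o-$\ell'$ cusp — note $g_2g_3=g\cdot g_3$ has order prime to $\ell$ since it equals a conjugate of $(g_4g_1)^{-1}$ with $\ell'$ image in $\bZ/3$, while the cusp fails g-$\ell'$ because $\lrang{g_1,g_4}$ already surjects onto $G$ by generation. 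The main obstacle, I expect, is the orbit-separation step: proving that distinct nonzero lift-invariant values, and distinct \HM\ $\Cu_4$-orbits with value $0$, are genuinely in \emph{different} $H_4/\sQ''$-orbits and that there are no further orbits — this requires either a complete $\sh$-incidence matrix computation uniform in $\ell$ (in the spirit of \S\ref{shincex}) or a clean argument that the lift invariant together with the \HM/double-identity cusp data is a complete braid invariant here. Everything else is bookkeeping with the explicit Heisenberg cocycle.
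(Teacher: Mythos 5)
The paper does not supply a proof of Theorem~\ref{level0MT}; it explicitly defers to \cite[Thm.~5.2]{FrH20}, so there is no in-paper argument to compare against.  Your overall architecture -- explicit Nielsen-class parametrization of $(\bZ/\ell)^2\xs\bZ/3$, the Heisenberg lift invariant of \cite[\S4.2.1]{FrH20} as the component separator, the \BCL\ to account for the $\bQ(e^{2\pi i/\ell})$-conjugacy of the nonzero-invariant components, $\sh$-incidence as the enumeration engine -- is the right toolbox and matches what \S\ref{05-to-now} indicates is done in \cite{FrH20}.

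There is, however, a concrete error in your cusp-type verification, and the counting step is still absent.  For the double-identity representative $(g,g,g_3,g_4)$ with $g_1=g_2=g$: the class constraint $\bfC_{+3^2-3^2}$ forces $g_3,g_4$ to lie in the class \emph{opposite} to $g$, so $g_2g_3=g\,g_3=(g_4g_1)^{-1}$ has image $\sigma^{\mp1}\sigma^{\pm1}=1$ in $\bZ/3$.  It therefore lies in $(\bZ/\ell)^2$, i.e.\ it is a (nontrivial, by generation) $\ell$-element -- the exact opposite of the $\ell'$ order you assert.  This is not cosmetic: it changes the cusp type computed by \eqref{cusptype}.  The fix is to put the repeats in positions $2$ and $3$ (as the paper's own displayed double-identity cusps $O_{3,4}^1,O_{3,5}^1$ do): then $g_2g_3=g^2=g^{-1}$ has order $3$, so $\ell\nmid|g_2g_3|$, while $H_{1,4}=\lrang{g_1,g_4}$ contains the nontrivial $\ell$-element $g_1g_4^{-1}\in(\bZ/\ell)^2$ and hence is not $\ell'$; that gives o-$\ell'$.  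You conflate this with a ``shift'' but then run the check on the unshifted tuple, and for your placement the check fails.  Separately, the claim that every value in $(\bZ/\ell)^*$ is attained because the set of attained values is ``$\gamma$-stable, hence full'' needs the additional input that the cyclotomic character acts on the Heisenberg lift-invariant values by multiplication -- a \BCL-type statement you should prove rather than assume.  Finally, you yourself flag that the identification of exactly $K_\ell$ HM orbits with invariant $0$, and exactly one orbit per nonzero value, is unresolved in your write-up; that uniform-in-$\ell$ orbit count and separation is the substance of the theorem, so as it stands the proposal is a plan, not a proof.
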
 

Hurwitz space components for braid orbits with nontrivial lift invariant are conjugate over $\bQ(e^{2\pi i/\ell})$. As with Serre's \OIT, there is another Nielsen class, $\ni((\bZ/\ell)^4\xs\bZ/3,\bfC_{+3^2-3^2})^{\inn,\rd}$, with limit group $G^\lm=(\bZ/\ell)^4\xs\bZ/3$.  A new phenomena occurs in Thm.~\ref{level0MT}:  $$\text{the presence of more than one \HM\ braid orbit.}$$   
As with the case $\ell=2$, we used the $\sh$-incidence matrix to see this result. Putting the higher levels of the \MT\ in a graphical display is still a work in progress, but we expect to have it in \cite{Fr20b}.\footnote{That will also include how displays of the higher tower levels that already appear for $(A_5,\bfC_{3^4},\ell=2$) in \cite{BFr02}.} 

As in Def.~\ref{namedclasses}, use $\CM$ and $\GL_2$ for the respective Nielsen classes $\ni(D_\ell,\bfC_{2^4})$, $\ni((\bZ/\ell)^2,\bfC_{2^4})$ and the decorations that appear with them. 

In both \S\ref{earlyOIT-MT} and in \S\ref{pre95}, under \sdisplay{\cite{Se68}} we call attention to the eventually Frattini Def.~\ref{evenfratt} and its abstraction for the \CM\ and $\GL_2$ cases in \eqref{serreles}. Then, in  \sdisplay{\cite{Fr78}}  we take advantage of the relation between the two distinct Nielsen classes as follows. 
\begin{edesc} \label{CMGL2} \item \label{CMGL2a} The $\GL_2$ covers of the $j$-line are related to the \CM\ covers over the $j$-line, by the former being the Galois closure of the latter. 
\item \label{CMGL2b} For \CM\ or $\GL_2$, the extension of constants indicates that we have the appropriate description of the fiber according to the \OIT.  
\item \label{CMGL2c} In the $\GL_2$ case, in referring to \eql{CMGL2}{CMGL2b}, braids give the geometric elements, $\text{\rm SL}_2(\bZ/\ell^{k\np1})/\lrang{\pm1}$ in this case, of $N_{S_n}(G)/G$. That gives all the geometric monodromy of the $j$-line covers. \end{edesc} 

Comment on \eql{CMGL2}{CMGL2a}: This was what our discussion of \sdisplay{\cite{Fr78}} was about. 
From \eql{CMGL2}{CMGL2c} we \lq\lq see\rq\rq\  the $\GL_2$ geometric monodromy. The most well-known proof of \cite{Se68} is -- our language:  $
\{\text{\rm SL}_2(\bZ/\ell^{k\np1}/\lrang{\pm1}\}_{k=0}^{\infty}$ is  $\ell$-Frattini (resp.~eventually $\ell$-Frattini) for $\ell > 3$ (resp.~for all $\ell$). 

Such moduli spaces, affording refined ability to interpret cusps, enable objects of the style of the Tate curves (as in \sdisplay{\cite{Fr78}}{) around, say, the {\sl Harbater-Mumford\/} type cusps. This is compatible with those cusps in the discussions of \sdisplay{\cite[Thm.~3.21]{Fr95} ,  \cite{W98}  and  \cite{DEm05}}.  

As in the discussion of \sdisplay{\cite{Fr78}} we can expect inexplict versions of Faltings \cite{Fa83} if we can find any version of them at this time. Also, we must ask how far into one of the eventually $\ell$-Frattini strands we must go to assure the fiber over  $j'\in \bar \bQ$ has revealed itself? 

\begin{rem}[Rem.~\eqref{braidorbits1} Cont.] \label{braidorbits2} \cite{FrH20} shows regular behavior on the Thm.~\ref{level0MT} \MT\ sequences above level 0. There are different types of such towers, mostly depending on the powers of $\ell$ dividing lift invariants of the corresponding component sequences.  Even at level 0,  Thm.~\ref{level0MT}  shows an increasing numbers of components, many conjugate over $\bQ$, as $\ell$ changes. 

\begin{edesc} \label{unfinishedC3^4} \item  \cite{FrH20}
 will completely display all \MT\  levels. 
\item  \label{unfinishedC3^4b}  What are the $G_\bQ$ orbits on the \HM\ components with their moduli structures (carrying families of covers) of $\prP^1_z$? 
\end{edesc}
Having several \HM\ orbits  in  \eql{unfinishedC3^4}{unfinishedC3^4b} leaves a lift invariant puzzle. \cite[Ex.~9.3]{BFr02} has carefully diagnosed cases of this where $G=A_4$ and $A_5$ at level $k=1$ of their \MT s for $\ell=2$. Trivial lift invariants are often valuable. They give a distinguished component, as in \cite{FrV92}. Yet, when there is more than one, we don't yet know how  to geometrically distinguish them. 
\end{rem} 

\begin{rem}[Nielsen limit group] \label{nieslimrem} For Thm.~\ref{level0MT},  the analog of \S\ref{limgptie}, especially Ex.~\ref{ncmodcurves-cont} for $G^\lm$ works here, too. You construct the element in the limit group Nielsen class from those in the original Nielsen class. With that comes a natural braid action, which embeds $H_4$ into the symplectic group acting on $H^1(X,\bZ_\ell)$ of some hyperelliptic curve chosen as a base point on the Hurwitz space. 
\end{rem} 

\providecommand{\bysame}{\leavevmode\hbox to3em{\hrulefill}\thinspace}
\providecommand{\MR}{\relax\ifhmode\unskip\space\fi MR}
% \MRhref is called by the amsart/book/proc definition of \MR.
\providecommand{\MRhref}[2]{%
\href{http://www.ams.org/mathscinet-getitem?mr=#1}{#2}}
\providecommand{\href}[2]{#2}

%%%%%%%%%%%%%%%%%%% END: BIBLIOGRAPHY %%%%%%%%%%%%%%%%%%%%
%%%%%%

\end{document}